\setlist{nolistsep}
\theoremstyle{plain}
\theoremstyle{definition}
\newtheorem{definition}{Definition}[section]
\theoremstyle{remark}
\newtheorem{remark}[definition]{Remark}
\newtheorem{corollary}[definition]{Corollary}
\newtheorem{lemma}[definition]{Lemma}
\newtheorem{theorem}[definition]{Theorem}
\newtheorem{proposition}[definition]{Proposition}
\newcounter{claim}[definition]
\theoremstyle{remark}
\theoremstyle{remark}
\newtheorem{notation}[definition]{Notation}
\theoremstyle{definition}
\renewcommand{\epsilon}{\varepsilon}
\renewcommand{\bar}{\overline}
\renewcommand{\hat}{\widehat}
\renewcommand{\leq}{\leqslant}
\renewcommand{\geq}{\geqslant}
\newcommand{\normaleq}{\trianglelefteq}
\newcommand{\divides}{\bigm|}
\newcommand{\B}{\mathcal{B}}
\newcommand{\E}{\mathcal{E}}
\newcommand{\F}{\mathcal{F}}
\newcommand{\G}{\mathcal{G}}
\newcommand{\N}{\mathbb{N}}
\newcommand{\KK}{\mathbb{K}}
\newcommand{\FF}{\mathbb{F}}
\newcommand{\SL}{\operatorname{SL}}
\newcommand{\syl}{\mathrm{Syl}}
\newcommand{\Syl}{\operatorname{Syl}}
\newcommand{\GL}{\mathrm{GL}}
\newcommand{\Sp}{\mathrm{Sp}}
\newcommand{\0}{\emptyset}
\newcommand{\PSL}{\mathrm{PSL}}
\newcommand{\PSp}{\mathrm{PSp}}
\newcommand{\Sym}{\mathrm{Sym}}
\newcommand{\Alt}{\mathrm{Alt}}
\newcommand{\Aut}{\mathrm{Aut}}
\newcommand{\Out}{\mathrm{Out}}
\newcommand{\Inn}{\mathrm{Inn}}
\newcommand {\End}{\mathrm{End}}
\newcommand {\Gal}{\mathrm{Gal}}
\newcommand{\Hom}{\mathrm{Hom}}
\newcommand{\Iso}{\mathrm{Iso}}
\newcommand{\diag}{\mathrm{diag}}
\def \ov {\overline}
\newcommand{\gen}[1]{\langle #1 \rangle}
\renewcommand{\hat}{\widehat}
\numberwithin{equation}{section}
\let\c@theorem\c@equation
\begin{document}

\author{Valentina Grazian}
\address{Dipartimento di Matematica, Universit\`{a} di Padova, 35121, Italia}
\email{valentina.grazian@math.unipd.it}

\author{Chris Parker}
\address{School of Mathematics, University of Birmingham, B15 2TT,
  United Kingdom}
\email{c.w.parker@bham.ac.uk}

\author{Jason Semeraro}
\address{Department of Mathematics, Loughborough University, LE11 3TT,
  United Kingdom}
\email{j.p.semeraro@lboro.ac.uk}

\author{Martin van Beek}
\address{Department of Mathematics, University of Manchester, Manchester, M13 9PL, United Kingdom}
\email{martin.vanbeek@manchester.ac.uk}

\keywords{Exotic fusion systems; groups of Lie type; $p$-groups; abelian essential}
\subjclass[2020]{20D20; 20D05; 20E42; 20C33}

\begin{abstract}
Let $q$ be a power of a fixed prime $p$. We classify up to isomorphism all simple saturated fusion systems on a certain class of $p$-groups constructed  from the polynomial representations of $\SL_2(q)$, which includes the Sylow $p$-subgroups of $\GL_3(q)$ and $\Sp_4(q)$ as special cases. The resulting list includes all Clelland--Parker fusion systems, a simple exotic fusion system discovered by Henke--Shpectorov, and a new infinite family of exotic examples.
\end{abstract}

\title{Fusion systems related to polynomial representations of $\SL_2(q)$}

\maketitle
\section{Introduction}
For a prime $p$, the analysis of $p$-local structure in finite groups and their modular representation theory is naturally expressed in the language of saturated fusion systems on $p$-groups. Since $p$-groups of a given order are overwhelmingly dominated by those of nilpotency class $2$, the work of the fourth author~\cite{vB24} implies that almost all $p$-groups admit no reduced saturated fusion system. In fact, the proportion of $p$-groups supporting a reduced saturated fusion system appears to be vanishingly small; this expectation is reinforced by computational evidence (see, for example, \cite{parkersemerarocomputing}). Consequently, recognizing the $p$-groups that  do support reduced fusion systems, and classifying the fusion systems that occur on them, is an interesting and difficult problem. The results of this paper contribute to this programme (see \cite[Questions III.7.4, IV.7.1]{AKO}).

A saturated fusion system on a finite $p$–group $S$ is said to be \emph{exotic} if it does not arise as the fusion system of any finite group with Sylow $p$-subgroup $S$. As we shall see, the fusion systems studied in this paper are predominantly exotic.

One source of $p$-groups which support an  exotic fusion system are the Sylow $p$-subgroups of non-abelian simple groups.  In particular, a substantial body of work has examined the saturated fusion systems on a $p$-group $S$ which is the Sylow $p$-subgroup of a Lie type group of characteristic $p$ and rank $2$ \cite{RV,Clelland,G2p,U43Fusion,RaulU4p,U4qG2q,HS}, culminating in the classification obtained in \cite{Rank2Sylows}. This direction of research has produced several families of exotic fusion systems (see, for example, \cite{RV,G2p,HS}). For $q$ a power of $p$, the $p$-groups considered here, which we christen \emph{polynomial $p$-groups}, are generalisations of the Sylow $p$-subgroups of $\SL_3(q)$ and $\Sp_4(q)$ and have their origins in polynomial representations of $\SL_2(q)$. Thus they lie directly in the rank $2$ Lie type landscape. Moreover, polynomial $p$-groups are known to support reduced fusion systems and therefore provide a natural source of $p$-groups  for the general programme described above. From a different perspective, the polynomial $p$-groups denoted $S_n(q)$ below are remarkably  like ``$q$-versions" of maximal class groups (see Lemma~\ref{Somnibus}). Thus this work also complements the work in \cite{GPMaxClass}.

In this article, we provide a complete classification of saturated fusion systems $\F$ on polynomial $p$-groups with $O_p(\F)=1$ (core-free) and thus a determination of all the reduced saturated fusion systems on such groups. Our key results, Theorems \ref{thm: main} and \ref{thm: upVMain}, form an important step toward a broader project carried out with collaborators in which we determine all reduced saturated fusion systems $\F$ on a $p$-group $S$ that possess an $\F$-essential subgroup that is both abelian and normal in $S$. This project extends the work initiated in \cite{p.index1,p.index2,p.index3}, and the classification obtained here supplies input required for the principal families of fusion systems arising in that extension.

Let $\KK$ be a finite field of characteristic $p$ and order $q$, and $G=\GL_2(q)$. For $n \ge 1$, define $V_n(q)$  to be the set of homogeneous polynomials over $\KK$ of degree $n$ in two commuting variables. Then $V_n(q)$ becomes a $\KK G$-module of dimension $n+1$ with respect to the natural polynomial action of $\GL_2(q)$. The modules $V_n(q)$ are irreducible precisely when $0\leq n \leq p-1$. Writing $\Lambda(q)$ for the dual module of $V_p(q)$, define: $$P_n(q) = G\ltimes V_n(q), \hspace{2mm} S_n(q) \in \syl_p(P_n(q) ), \hspace{2mm} P_\Lambda(q) = G\ltimes \Lambda(q), \hspace{2mm}  \mbox{ and } \hspace{2mm} S_{\Lambda}(q) \in \Syl_p(P_\Lambda(q)).$$  Note that $$|S_n(q)|=q|V_n(q)|=q^{n+2}$$ and $$|S_\Lambda(q)|=q|\Lambda(q)|=q^{p+2}.$$ 
In this paper, we refer to the $p$-groups $S_n(q)$ and $S_{\Lambda}(q)$ as \emph{polynomial $p$-groups}. The primary objective of this work is to determine up to isomorphism all core-free saturated fusion systems $\F$   on $S_\Lambda(q)$ and $S_n(q)$ when $1\leq n \leq p-1$.

In \cite{ClellandParker2010}, Clelland and the second author  introduced two infinite families of saturated fusion systems on the groups $S_n(q)$ with $1\leq n \leq p-1$ which can be viewed as generalisations of the $p$-fusion systems of $\SL_3(q)$ and, when $q$ is odd, $\Sp_4(q)$. Indeed, these groups  have  parabolic subgroups which are closely related to $P_1(q)$ and $P_2(q)$ respectively, and the Clelland--Parker fusion systems possess an essential subgroup normaliser of the form $P_n(q)$. Other than in some very small cases, the fusion systems exhibited in \cite{ClellandParker2010} are exotic (see \cite[Theorems 5.1 and 5.2]{ClellandParker2010}). In Section \ref{sec:poly}, we describe these  systems and variants given by adding ``field automorphisms" and by removing essential subgroups via a process known as \emph{pruning} (see \cite[Lemma 6.4]{parkersemerarocomputing}). Some of these examples  had previously been considered in \cite{henke2023punctured}. Collectively, we refer to all of these fusion systems as \emph{polynomial fusion systems.}

Suppose $S=S_n(q)$. When $n=1$, $S$ is a Sylow $p$-subgroup of $\SL_3(q)$ and core-free saturated fusion systems on  $S$ were classified by Clelland in his PhD thesis \cite{Clelland},  generalising the pioneering work of Ruiz and Viruel \cite{RV} when $q=p$ (see Proposition \ref{prop:murray}). When $q$ is odd and $n=2$, $S$ is a Sylow $p$-subgroup of $\Sp_4(q)$, and core-free fusion systems on $S_2(q)$ were studied by Henke--Shpectorov in the unpublished manuscript \cite{HS}. Apart from the polynomial fusion systems, they discover two exotic fusion systems when $q=9$ related to a certain $6$-dimensional $\FF_3$-module for $2^.\PSL_3(4)$ which we refer to as \textit{Henke--Shpectorov fusion systems}. Our first main theorem is a classification of core-free fusion systems on $S_n(q)$ for all $1 \leq n \leq p-1$.

\begin{theorem}\label{thm: main}
Suppose that $p$ is a prime, $q$ is a power of  $p$ and $1\leq n\leq p-1$. If $\F$ is a saturated fusion system on $S=S_n(q)$ with $O_p(\F)=1$ then either $\F$ is a polynomial fusion system or one of the following holds: \vspace{-4mm} \begin{itemize}\item[(1)] $q=p$ and $S$ has an elementary abelian subgroup of index $p$.
\item[(2)] $\F$ is a Henke--Shpectorov fusion system on $S_2(9)$.
\end{itemize}  \vspace{-4mm}
In particular, all  reduced  saturated fusion systems on $S_n(q)$ are known.
\end{theorem}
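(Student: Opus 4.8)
The plan is to determine $\F$ from its action on the $\F$-essential subgroups of $S=S_n(q)$ via Alperin's fusion theorem. First I would reduce to the main case. If $n=1$ then $S$ is a Sylow $p$-subgroup of $\SL_3(q)$ and the conclusion is due to Clelland \cite{Clelland} (generalising Ruiz--Viruel \cite{RV} when $q=p$), so assume $n\geq 2$; and if $q=p$ then $V_n(p)\trianglelefteq S$ is abelian of index $p$, so conclusion~(1) holds and such fusion systems are covered by the classification of saturated fusion systems over $p$-groups with an abelian subgroup of index $p$ --- so assume also $q>p$. Write $U\in\syl_p(\GL_2(q))$ and $V=V_n(q)$, so $S=U\ltimes V$ with $V$ elementary abelian of index $q$ and $U\cong(\FF_q,+)$ acting on $V$ as a single Jordan block of size $n+1$. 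I would record the structural invariants of $S$ needed throughout --- $Z(S)=C_V(U)$ of order $q$, $[S,S]=[V,U]$ of order $q^n$, the Frattini and lower central subgroups, and $\Aut(S)$ --- and in particular observe, using $n\geq 2$, that $V=C_S([S,S])$ is characteristic in $S$, so that $\Aut_\F(S)$ stabilises $V$ and $\F$ acts on $V$ through a well-defined subgroup $\Aut_\F(V)\leq\GL(V)=\GL_{n+1}(q)$. Since $O_p(\F)=1$, Alperin's fusion theorem shows that $\F$ has at least one $\F$-essential subgroup (else $\F=\F_S(S)$ and $O_p(\F)=S$) and that $\F=\langle\Aut_\F(E)\mid E\text{ is }\F\text{-essential or }E=S\rangle$.

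The heart of the argument is the classification of the $\F$-essential subgroups and of $\Aut_\F(V)$. An $\F$-essential subgroup $E$ is $\F$-centric and radical with $\Out_\F(E)$ possessing a strongly $p$-embedded subgroup, so $Z(S)\leq E$, $C_S(E)\leq E$, and $\Out_S(E)=N_S(E)/E$ is a nontrivial $p$-subgroup of $\Out_\F(E)$ meeting its distinct $\Out_\F(E)$-conjugates trivially. Exploiting the rigid commutator structure of $S$ to locate each $E$ between prescribed characteristic subgroups and to pin down $E/\Phi(E)$ and $Z(E)$ as $\Out_\F(E)$-modules, then invoking the classification of finite groups with a strongly $p$-embedded subgroup together with the recognition of the relevant section of $E$ as a Frobenius twist of the polynomial module $V_n(q)$ (of dimension $n+1\leq p$, hence irreducible for $\SL_2(q)$), I would show that up to $\F$-conjugacy every $\F$-essential subgroup is either $V$ itself --- with $\Aut_\F(V)$ determined up to the diagonal and field automorphisms of $P_n(q)=\GL_2(q)\ltimes V_n(q)$, so that $N_\F(V)$ is a fusion system of $P_n(q)$ extended by field automorphisms --- or a second, ``$\Sp_4(q)$-type'' parabolic essential whose normaliser is likewise determined. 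The one exception, forced by the smallness of the module dimension, occurs for $(p,q,n)=(3,9,2)$: there the $3$-dimensional $\FF_9$-module of $2^.\PSL_3(4)$ also admits a strongly $3$-embedded automizer, and $N_\F(V)$ is then one of the Henke--Shpectorov normalisers.

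With the resulting finite list of possible $\F$-essential subgroups, their automizers, and $\Aut_\F(S)$ (a $p'$-extension of $\Inn(S)$ constrained by $V$ being characteristic and by its having to normalise the fusion induced by the essentials), Alperin's fusion theorem leaves only finitely many candidate fusion systems $\F$ on $S$. Each is matched against the polynomial fusion systems constructed in \cref{sec:poly} --- those obtained from the two Clelland--Parker families by adjoining field automorphisms and by pruning away essential subgroups --- or, when $q=9$ and $n=2$, against a Henke--Shpectorov system; any configuration that fails saturation or has $O_p(\F)\neq 1$ is discarded. This proves the stated trichotomy, and the concluding sentence then follows at once: the polynomial fusion systems are explicitly classified in \cref{sec:poly}, the systems in~(1) are covered by the classification over $p$-groups with an abelian subgroup of index $p$, the Henke--Shpectorov systems in~(2) are classified (in \cite{HS} and again here), and the passage from core-free to general saturated fusion systems on $S_n(q)$ is standard.

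The main obstacle is the essential-subgroup classification of the second paragraph. The group $S_n(q)$ has a large supply of $\F$-centric subgroups, and ruling out all ``unexpected'' essentials --- while keeping the analysis sharp enough to reproduce exactly the $(p,q,n)=(3,9,2)$ exception and to introduce no spurious sporadic cases --- rests on careful commutator computations inside $S$ and on a detailed case division on the isomorphism type of $\Out_\F(E)$, governed simultaneously by the strongly-$p$-embedded hypothesis and by the module-theoretic constraints coming from $n+1\leq p$.
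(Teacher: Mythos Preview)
Your overall strategy matches the paper's: reduce to $q>p$ and $n\ge 2$, classify the $\F$-essential subgroups, determine their automisers via the strongly $p$-embedded classification, and then pin down $\F$ up to isomorphism. However, your sketch has real gaps at the two places where the paper invests most of its effort.

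First, your essential-subgroup classification is incomplete. You speak of $V$ ``or a second, $\Sp_4(q)$-type parabolic essential'', but in fact two distinct non-$V$ essentials arise: an elementary abelian $R=Z(S)U$ of order $q^2$ and a class-$2$ group $Q=Z_2(S)U$ of order $q^3$, and one must prove they are mutually exclusive (Proposition~\ref{essentials}). Moreover you miss the configuration $\mathcal E(\F)=R^S$ with $V\notin\mathcal E(\F)$, which gives the pruned polynomial systems; your outline implicitly assumes $V$ is always essential.

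Second, and more seriously, you understate the module-recognition and uniqueness work. Knowing that $O^{p'}(\Aut_\F(V))\cong\mathrm{(P)SL}_2(q)$ does not by itself force $V\cong V_n(q)|_{\FF_p}$ as a module: this is Proposition~\ref{lem: Vnqrecog}, a delicate inductive argument using Ext-vanishing (Lemma~\ref{lem:NoExt}) and Smith's lemma. And even once the module is identified, one still needs the ``unique overgroup'' lemmas (\ref{lem:unique over groups 2dim}, \ref{lem:unique over groups}) showing that a copy of $\SL_2(q)$ in $\GL(V)$ containing a fixed Borel is unique, so that $O^{p'}(\Aut_\F(V))$ and $O^{p'}(\Aut_\F(R))$ are literally equal to those in the model system $\F^*$. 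Your phrase ``determined up to diagonal and field automorphisms'' elides exactly this step, which is where the proof of Theorems~\ref{thm1Q}--\ref{thm2Q} lives. Finally, a small correction: $\Aut_\F(V)$ sits in $\GL_{(n+1)m}(p)$, not $\GL_{n+1}(q)$; recovering a $\KK$-structure is part of what must be proved.
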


This gives a characterisation of the Henke--Shpectorov systems as the only exceptional core-free exotic fusion systems on polynomial $p$-groups which occur when $q > p$. Theorem \ref{thm: main} subsumes the results in \cite{Clelland} and \cite{HS} mentioned above, but we rely on the former since the structure of $\Aut(S_n(q))$ when $n=1$ differs from the case $n \ge 2$, and this  precludes from a uniform analysis of this group. We do not rely on the results in \cite{HS} but it is notable that, in contrast to \cite{HS}, we make more liberal use of the Classification of Finite Simple Groups in our analysis  to gain insight into the groups which possess a strongly $p$-embedded subgroup (in \cite{HS} this result is only used to prove that certain fusion systems are exotic). For the conclusion that $\F$ is known in Theorem \ref{thm: main}(1), we rely on the results of Oliver, and Craven--Oliver-Semeraro \cite{p.index1, p.index2}. To sum up, for the proof of Theorem \ref{thm: main} we may assume that $q>p$ is odd and a broad brush overview of the strategy we employ is as follows. We first determine the groups $X$ with Sylow $p$-subgroup $S_n(q)$ which have the property that $O_p(X)$ is isomorphic to $V_n(q)$ as an $S_n(q)$-module. This turns out to be a rather delicate calculation which takes up a good part of Section 5. After that, in Section 6, we determine up to $S_n(q)$-conjugacy the candidates for $\F$-essential subgroups in a saturated fusion system on $S$, before assembling those which form compatible systems in Section 7.

In a later paper the authors, along with other collaborators, will demonstrate that the $p$-groups $S_n(q)$ do not support core-free saturated fusion systems whenever $n>p$. In addition, in this article we show in Proposition \ref{prop: V_pCon1} that, provided $q>p$, all saturated fusion systems on $S_p(q)$ are constrained. Nonetheless, it turns out that the $p$-groups $S_\Lambda(q)$ \textit{do} support a generic family of core-free saturated fusion systems, which we also regard as \textit{polynomial}.

\begin{theorem}\label{thm: NewExotics}
Suppose that $p$ is an odd prime, and $q=p^n>p$. Then $S_{\Lambda}(q)$ supports a simple, exotic fusion system $\F_{\Lambda}(q)$.
\end{theorem}

The system $\F_{\Lambda}(q)$ is  described in Section \ref{sec : NewExotics}, where Theorem \ref{thm: NewExotics} is a consequence of   Theorem \ref{t:newexotics}. In Remark \ref{NotIso}, it is proved that these examples do not coincide with currently known exotic families whenever $q>p$.  We include the case $q=p$ in our construction in Section \ref{sec : NewExotics}, but note that these examples were already considered in \cite{p.index2}.  The systems described in Theorem \ref{thm: NewExotics} arise as $p'$-index subsystems of a certain fixed fusion system $\F_{\Lambda}^*(q)$, also described in Section \ref{sec : NewExotics}. Indeed, these subsystems are parameterised by $p'$-subgroups of $\Aut(\KK)$, and so we obtain several new exotic systems this way. The reader is referred to Section \ref{sec:poly} for a fuller discussion of this correspondence.  For $n>p$, as for the groups $S_n(q)$, we will show in a later paper that the analogous $p$-group associated to the dual module of $V_n(q)$ also does not support core-free saturated fusion systems. We thus focus our attention on the group $S_{\Lambda}(q)$ when $n=p$. Our third main result classifies all core-free fusion systems on this group.

\begin{theorem}\label{thm: upVMain}
Suppose that $p$ is a prime, and $q$ is a power of $p$. If $\F$ is a saturated fusion system on  $S=S_{\Lambda}(q)$ with $O_p(\F)=1$ then either: 
\vspace{-4mm} \begin{itemize}
\item[(1)] $p=2$ and $O^{2'}(\F)$ is isomorphic to the $2$-fusion system of $\PSp_4(q)$; or
\item[(2)] $p$ is odd and either: \begin{enumerate} \item [(a)]$q> p$ and $\F$ is a polynomial fusion system with $\Lambda(q) $ not normal in $\F$; or \item [(b)] $q=p$ and $S$ has an elementary abelian subgroup of index $p$.\end{enumerate}
\end{itemize} \vspace{-4mm}
In particular, all  reduced  saturated fusion systems on $S_\Lambda(q)$ are known.
\end{theorem}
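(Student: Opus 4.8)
The plan is to run, for $S=S_\Lambda(q)$ and the dual module $\Lambda(q)$, the three-stage argument sketched in the introduction for \cref{thm: main} --- first determine the finite groups $X$ with $S\in\Syl_p(X)$ and $O_p(X)$ isomorphic to $\Lambda(q)$ as an $S$-module, then enumerate the candidate $\F$-essential subgroups up to $S$-conjugacy, then assemble the admissible saturated systems --- after first treating the cases $q=p$ and $p=2$ separately.

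If $q=p$, a Sylow $p$-subgroup of $\GL_2(q)$ is cyclic of order $p$, so $S=\langle t\rangle\ltimes\Lambda(p)$ for a transvection $t$, and $\Lambda(p)$ is an elementary abelian subgroup of index $p$, giving conclusion~(2); the classification of saturated fusion systems on a $p$-group possessing an abelian subgroup of index $p$, due to Oliver and Craven--Oliver--Semeraro \cite{p.index1,p.index2}, shows in addition that $\F$ is known. If $p=2$, one identifies $S$ with a Sylow $2$-subgroup of $\Sp_4(q)=\PSp_4(q)$, the relevant unipotent radical being $\Lambda(q)$, rather than $V_2(q)$ as in odd characteristic; note that by \cref{prop: V_pCon1} the group $S_2(q)=S_p(q)$ supports only constrained systems once $q>2$, so this really is a different $p$-group. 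Here the task is to establish conclusion~(1): that any saturated $\F$ on $S$ with $O_2(\F)=1$ has $O^{2'}(\F)$ isomorphic to the $2$-fusion system of $\PSp_4(q)$. This should follow by specialising the essential-subgroup analysis below to $p=2$ (using Bender's classification of groups with a strongly $2$-embedded subgroup in place of the full Classification of Finite Simple Groups), showing that the essential subgroups and their automisers are forced to be those of $\Sp_4(q)$, so that $\F$ lies between the $2$-fusion system of $\Sp_4(q)$ and its automorphic extensions. From now on suppose $p$ is odd and $q>p$.

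For the first stage, writing $\bar X=X/\Lambda(q)$, I would determine the subgroups $\bar X\le\Aut(\Lambda(q))$ that act $\FF_q$-semilinearly through a subgroup of $\GL_2(q)$ possibly extended by field automorphisms, that contain $\Aut_S(\Lambda(q))$ as a Sylow $p$-subgroup, and that have $O_p(\bar X)=1$, and then for each such $\bar X$ compute $H^1(\bar X,\Lambda(q))$ and decide whether a nonsplit extension $X$ can occur. The delicate point is that, $\Lambda(q)$ being the dual of the \emph{non}-irreducible module $V_p(q)$, it is uniserial with a richer submodule lattice than the irreducibles $V_n(q)$ with $n\le p-1$, so one must carefully track the fixed points and first cohomology of its subquotients under the pertinent subgroups of $\GL_2(q)$. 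For the second stage, each $\F$-essential subgroup $E$ must be $\F$-centric and $\F$-radical with $\Out_\F(E)$ having a strongly $p$-embedded subgroup; using the Classification to list the groups with such a subgroup, and using that $\Lambda(q)$ is self-centralising in $S$ (since $\GL_2(q)$ acts faithfully on it), so that $\Out_\F(\Lambda(q))$ is constrained to lie between the image of $\GL_2(q)$ and its extension by field automorphisms, I would cut the essentials down to a short list and check that no further ones are compatible with saturation. For the third stage, I would assemble the admissible families of essentials, compare them with the polynomial fusion systems on $S_\Lambda(q)$ constructed in \cref{sec : NewExotics} --- the $p'$-index subsystems of the maximal one, indexed by the $p'$-subgroups of $\Aut(\KK)$ --- and invoke $O_p(\F)=1$, which forbids $\Lambda(q)\trianglelefteq\F$ and hence forces certain essentials to be present, to conclude that $\F$ is one of those polynomial systems. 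The final ``all known'' assertion then follows, since constrained systems are realised by their models, the $q=p$ systems are covered by \cite{p.index1,p.index2}, the $p=2$ systems lie inside the automorphic extensions of $\Sp_4(q)$, and the polynomial systems are exhibited explicitly.

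The main obstacle is the same as for \cref{thm: main}: the first stage, namely pinning down precisely which extensions $X$ of $\Lambda(q)$ by subgroups of $\GL_2(q)$ and field automorphisms occur. The non-irreducibility of $V_p(q)$ makes this cohomological bookkeeping genuinely heavier than in the $V_n(q)$ case, and a secondary difficulty is excluding ``false positives'' --- collections of essentials that look locally admissible but fail to close up to a saturated fusion system on $S$ --- so as to be certain the resulting list is complete.
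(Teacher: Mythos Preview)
Your high-level three-stage plan matches the paper's strategy, and your treatment of $q=p$ is exactly right. However, two points deserve correction.

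First, the $p=2$ case is not handled by specialising the odd-$p$ analysis. The paper simply cites \cite[Proposition 4.2]{vB24}: when $p=2$ the group $S_\Lambda(q)$ has nilpotency class two, so the essential-subgroup analysis of Section~\ref{sec:pot} (which repeatedly uses $Z(S)<Z_2(S)<[V,S]$ and exponent~$p$ arguments such as \cref{lem: CharSub}) does not go through unchanged. Your proposed specialisation would require a genuinely separate treatment.

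Second, and more substantively, your framing of Stage~1 in terms of $H^1(\bar X,\Lambda(q))$ and the split/nonsplit dichotomy is misdirected. For a fusion system $\F$ on $S$ one never needs to build an actual finite group $X$ with $O_p(X)=\Lambda(q)$; the Model Theorem guarantees such a model for $N_\F(V)$ regardless of any splitting. What the paper actually proves --- and what you would have to prove --- is two things: a \emph{module recognition} result (\cref{lem: upVRecog}) showing that if $S\cong S_\Lambda(q)$ and $H/O_p(H)\cong\SL_2(q)$ then $O_p(H)\cong\Lambda(q)|_{\FF_p}$ as an $H$-module, and then a \emph{subgroup uniqueness} result (\cref{lem:unique over groups lambda}) showing that $O^{p'}(\Aut_\F(V))$ is determined as a subgroup of $\Aut(V)$, not merely up to abstract isomorphism. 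The latter is the technical heart of the argument and is what pins $\F$ down inside $\F^*_\Lambda(q)$; your proposal does not identify it. Similarly, on the $R$-side one needs to know $O^{p'}(\Aut_\F(R))$ as a subgroup of $\Aut(R)$, which the paper handles in \cref{upVUnique} via \cref{lem:unique over groups 2dim}. The cohomological bookkeeping you anticipate is a red herring; the real difficulty is these conjugacy and uniqueness statements inside $\Aut(V)$ and $\Aut(R)$.
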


If $p=2$ then $S$ has nilpotency class two and Theorem \ref{thm: upVMain}(1) follows from \cite[Proposition 4.2]{vB24}. There it is shown, in addition, that $\F$ is realised by $\PSp_4(2) \cong \Sym(6)$ when $q=p$. As above, if $q=p$ is odd then $\F$ is known by \cite{p.index1, p.index2}. Thus we may assume that $q>p$ is odd and the strategy for the proof of Theorem \ref{thm: upVMain} is similar to that described above for Theorem \ref{thm: main}.

In Section \ref{sec:poly}, we explore various relationships between polynomial fusion systems. For example, suppose $\F$ is such a system on $S_n(q)$  with two essential subgroups $V_n(q)$ and $Y$. If  $n \ge 2$ and  $|Y|=q^2$, then pruning $V_n(q)$ from $\F$ results in another (core-free) polynomial fusion system. On the other hand, if $n \ge 3$ and $|Y|=q^3$, then pruning $V_n(q)$ yields a fusion system in which $Z(S)$ is normal, but in this case it turns out that $\F/Z(S)$ is again polynomial with exactly one class of essential subgroups of order $q^2$. We also show in Proposition \ref{normalizertower} that polynomial fusion systems with a single class of essential subgroups are ``closed under taking subsystems" in that restricting to all terms of the \emph{normaliser tower} of the essential subgroup delivers fusion systems already documented by Theorem \ref{thm: main}.

As pointed out earlier, the polynomial fusion systems that are realisable are generically  the fusion systems on $S_1(q)$ realised by $\PSL_3(q)\le H \le \Aut(\PSL_3(q))$, and those in $S_2(q)$ realised by $\PSp_4(q)\le H\le \Aut(\PSp_4(q))$.  Our constructions do not use the finiteness of the field at any crucial points and so it is plausible
that one obtains the fusion systems of $\PSL_3(\overline{\FF_p})$ and  $\PSp_4(\overline{\FF_p})$ by replacing $\KK$ with $\overline{\FF_p}$ in the construction of $S_n(q)$. For $n>2$, extending the constructions to $\overline{\FF_p}$, one might further obtain fusion systems which are exotic in the sense that they cannot be realised by any \textit{algebraic group}. One possible context in which to view these examples is within the theory of discrete localities developed by Chermak (see \cite[Appendix A]{chermak2017discrete}), but we do not investigate that here.

The paper is structured as follows. In Section \ref{sec:structureS} we define the polynomial  $p$-groups, determine properties of their automorphism groups and prove a variety of results about them to be used in later arguments. In Section \ref{sec : NewExotics} we introduce the new exotic fusion systems on $S_\Lambda(q)$ following the methodology employed by  Clelland--Parker in \cite{ClellandParker2010}. In Section \ref{sec:poly} we document the polynomial fusion systems which arise, explain how the examples in \cite{ClellandParker2010} and Section 3 can be extended by $p'$-automorphisms and list various properties concerning ``closure'' with respect to taking certain subsystems and quotients as mentioned above. Our classification begins in earnest in Section \ref{sec:reg} and consists of a selection of results to be used in recognising $\KK$-structure and $\SL_2(q)$-module structure among potential $\F$-essential subgroups; these latter subgroups are determined in Section \ref{sec:pot}. We complete the classification in Section \ref{sec:tsfs}.

Our notation for groups is mostly standard and follows \cite{AschbacherFG}. For $A, B \le G$, we define the $n$-fold commutator $[A,B;n]$ by first setting $[A,B;1] := [A,B]$ and then inductively defining $[A,B;n]= [[A,B;n-1],B]$ for $n> 1$. Similarly we mostly adopt standard conventions for fusion systems as found, for example, in \cite{AKO}. One exception is our use of $\mathcal{E}(\F)$ for the set of $\F$-essential subgroups of a saturated fusion system $\F$. Throughout, we will often investigate actions of a $p$-group on one of its abelian sections. For this reason, we will tend to use multiplicative notation for such actions when viewing these sections as part of the $p$-group, and additive notation when viewing them as vector spaces.

\textit{Acknowledgements:} We are grateful to David Craven for sharing his knowledge of indecomposable $\SL_2(q)$-modules, and in particular for his help with Lemmas \ref{lem:NoExt1} and \ref{lem:NoExt}. We also thank Ellen Henke, Justin Lynd and Bob Oliver for helpful discussions. We wish to the thank the anonymous referee whose comments greatly improved the exposition of this work.

This work resulted from the workshop ``Patterns in Exotic Fusion Systems" funded by the Heilbronn Institute for Mathematical Research. The first author is a member of the GNSAGA INdAM research group and thankfully acknowledges its support. The third author gratefully acknowledges funding from the UK Research Council EPSRC for the project EP/W028794/1. The fourth author is supported by the Heilbronn Institute for Mathematical Research.
\vspace{-5mm}
\section{The structure of polynomial $p$-groups and their automorphism group}\label{sec:structureS}

Throughout this work, we let $p$ be an odd prime, although several of the constructions and results in this section are also applicable when $p=2$. Let $\KK$ be a field of characteristic $p$ with group of units $\KK^*$ and order $q=p^m$. Set $A:= \KK[x,y]$ to be the ring of polynomials in two commuting indeterminates $x,y$ with coefficients in $\KK $.

Let $\phi\in \Aut(\KK)$. Then $\phi$ acts on $A$ by acting on the coefficients of $f(x,y)\in A$ yielding the polynomial $f^\phi(x,y)$. This is an $\FF_p$-linear map on $A$ and so $A$ is an $\FF_p\Aut(\KK)$-module.

Define $D = \KK ^*\times \GL_2(\KK )$ and make $A$ into a $\KK D$-module as follows: for $g =\left(\begin{smallmatrix} a & b \\ c & d\end{smallmatrix}\right) \in \GL_2(\KK )$ define $x\cdot g=a x + b y$ and $y\cdot g= c x + d y$ and then, for $(\lambda,g) \in D$ and $f(x,y) \in  A$, define \[f(x,y)\cdot(\lambda,g)= \lambda f(x\cdot g, y\cdot g).\] This action makes $A$ into a $\KK D$-module.

Finally, regarding $A$ as a $\FF_p$-space, both $D$ and $\Aut(\KK)$ act on $A$ and we can check that, for $\phi \in \Aut(\KK)$ and $(\lambda, \left(\begin{smallmatrix} a & b \\ c & d\end{smallmatrix}\right))\in D$ we have $(\lambda, \left(\begin{smallmatrix} a & b \\ c & d\end{smallmatrix}\right) )^\phi =  (\lambda^\phi, \left(\begin{smallmatrix} a^\phi & b ^\phi \\ c^\phi & d^\phi\end{smallmatrix}\right))\in D$. In particular, we can form the semidirect product  $\Aut(\KK)\ltimes D$ and this group acts $\FF_p$-linearly on $A$ by acting on the coefficients  of the polynomials.

\begin{notation}\label{n:DGroups}
Set \[D := \KK ^*\times \GL_2(\KK ),\]
\[D^* := O^{p}(\Aut(\KK))\ltimes D = O_{p'}(\Aut(\KK))\ltimes D, \mbox{ and }\]
\[D^\dagger:=\Aut(\KK)\ltimes D.\]
\end{notation}

A typical element of $D^\dagger$ will be written as \[(\phi,\lambda, \left(\begin{smallmatrix} a & b \\ c & d\end{smallmatrix}\right))\] where $\phi \in \Aut(\KK)$, $\lambda \in \KK^*$ and $\left(\begin{smallmatrix} a & b \\ c & d\end{smallmatrix}\right)\in \GL_2(\KK)$. The reader will have to remember that this is an element of a semidirect product of $\Aut(\KK)$ and $D$.

Consider $A$ as an $\FF_pD^\dagger$-module and observe that the action of $D^\dagger$ on $A$ maps polynomials of degree $k$ to polynomials of degree $k$. For $n \ge 0$, let $V_n(q)$ be the $\FF_p D^\dagger$-submodule  of $A$ consisting of the homogeneous polynomials of degree $n$. We have $\dim_\KK V_n(q)= n+1$ and provided that $1 \leq n \leq p-1$ these submodules are irreducible self-dual $\FF_p D^\dagger$-modules (see \cite[p588]{BrauerNesbitt}).  We introduce notation to describe particular subgroups of $D^*$. 

\begin{notation}\label{n:subsofd*}
Define \[U = \{\left(1, 1,\left(\begin{smallmatrix} 1 & 0 \\c& 1 \end{smallmatrix} \right)\right)\mid c\in \KK \},\]
\[B= \{\left(\phi,\lambda,\left(\begin{smallmatrix} a & 0\\c& d \end{smallmatrix}  \right)\right)\mid a,d,\lambda\in \KK ^*, c \in \KK,\phi\in O^p(\Aut(\KK)) \}, \mbox{ and }\]
\[\Sigma=\left\{(\phi,\lambda, \begin{pmatrix} a&0\\0&d \end{pmatrix})\mid a,d,\lambda \in \KK^*, \phi \in O^p(\Aut(\KK))\right\}.\]
\end{notation}
Then $U\in \Syl_p(D^{*})$, $U$ is elementary abelian of order $q$, and $B=N_{D^*}(U)$ has order $q(q-1)^3m_{p'}$ where $m_{p'}=|O^p(\Aut(\KK))|$ is the $p'$-part of $m$ for $q=p^m$, and $B=U\Sigma$. The following groups will be of primary importance: 
 
\begin{notation}\label{n:snandpn}
Fix $n \in \mathbb{N}$. With the action of $D$ and $D^*$ as described above, define \[P_n(q)=D\ltimes V_n(q), \hspace{2mm} P_n^*(q)= D^*\ltimes V_n(q ), \hspace{2mm} \mbox{and} \hspace{2mm} S_n(q)= U\ltimes V_n(q).\]
\end{notation}

Thus $S_n(q)$ is a Sylow $p$-subgroup of $P_n(q)$ and $P_n^*(q)$. We identify $D$, $B$, $\Sigma$, $U$ and $V$ as subgroups of $P_n(q)^*$. This means that we use multiplicative notation when considering $V$ as a $p$-subgroup of $S$, but additive notation when calculating inside $V$ as a vector space. Hence we have $S_n(q)= UV_n(q)$ and $O^{p'}(P_n^*(q)/V_n(q))=O^{p'}(P_n(q)/V_n(q))=O^{p'}(P_n(q))/V_n(q)\cong \SL_2(\KK)$.

We will be concerned with $V_n(q)$ only when $n\leq p$. As intimated above, $V_n(q)$ is irreducible when $n\leq p-1$, but it will be informative to understand the composition factors of $V_p(q)$. Throughout, a subscript $\KK$ indicates that we must take the $\KK$-subspace.

\begin{lemma}\label{Vpqstruct}
Regard $V=V_p(q)$ as a $\KK \SL_2(\KK)$-module. The $\KK$-linear map
\begin{eqnarray*}
\psi: V_p(q) &\rightarrow &V_{p-2}(q)\\ x^iy^j &\mapsto& ix^{i-1}y^{j-1}
\end{eqnarray*}
is a surjective $\KK\SL_2(\KK)$-module homomorphism with kernel $\langle x^p,y^p\rangle_{\KK}$.
\end{lemma}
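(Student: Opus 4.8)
The plan is to verify directly that $\psi$ is $\KK$-linear (immediate from the formula, since it sends a basis element to a scalar multiple of a basis element and we extend linearly), then check the two substantive claims: that $\psi$ commutes with the $\SL_2(\KK)$-action, and that $\ker\psi = \langle x^p, y^p\rangle_{\KK}$; surjectivity will then follow from a dimension count. For the kernel, note that $\psi(x^iy^j) = ix^{i-1}y^{j-1}$ vanishes precisely when $i \equiv 0 \pmod p$, and since $0 \le i \le p$ on the spanning set $\{x^iy^{p-i}\}$ this means $i \in \{0,p\}$, i.e.\ the kernel is spanned by $x^p$ and $y^p$. These two vectors are $\KK$-linearly independent, so $\dim_\KK \ker\psi = 2$, whence $\dim_\KK \operatorname{im}\psi = (p+1) - 2 = p-1 = \dim_\KK V_{p-2}(q)$, giving surjectivity once we know $\psi$ is a module map.

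The main point is therefore the $\SL_2(\KK)$-equivariance. I would observe that $\psi$ is, up to a harmless identification, the classical polarization/differential operator: writing $D_{x}$ and $D_{y}$ for formal partial differentiation, one checks that on a monomial $x^iy^j$ of degree $p$ we have $\psi(x^iy^j) = (D_xD_y)(x^iy^j)/j = \cdots$ — more cleanly, $\psi$ agrees with the second-order operator sending $f \mapsto \partial^2 f/\partial x\,\partial y$ only after reindexing, so it is cleaner to argue equivariance by a change of variables computation. Concretely, for $g = \left(\begin{smallmatrix} a & b \\ c & d\end{smallmatrix}\right) \in \SL_2(\KK)$ the action is $f(x,y)\cdot g = f(ax+by, cx+dy)$; I would show $\psi(f\cdot g) = (\psi f)\cdot g$ by checking it on the basis and using the chain rule for formal derivatives together with $\det g = ad-bc = 1$. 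The cleanest route: extend $\psi$ to the operator $\Psi\colon A \to A$, $\Psi = \partial_x\partial_y$ on the full polynomial ring (note $\partial_x\partial_y$ lowers bidegree by $(1,1)$, matching $\psi$ on homogeneous degree-$p$ elements once one identifies $x^iy^j \mapsto x^{i-1}y^{j-1}$ with the $ij$-coefficient absorbed appropriately — so in fact $\psi$ is \emph{not} literally $\partial_x\partial_y$; rather $\psi(x^iy^j)= i\,x^{i-1}y^{j-1}$ is $\partial_x$ followed by the substitution $y^j \mapsto y^{j-1}$, equivalently $y^{-1}\partial_x$ composed in the right order). To avoid this bookkeeping I would instead directly compute: pick the standard generators of $\SL_2(\KK)$, namely the lower unitriangular $u_c = \left(\begin{smallmatrix} 1 & 0 \\ c & 1\end{smallmatrix}\right)$, the upper unitriangular $\left(\begin{smallmatrix} 1 & b \\ 0 & 1\end{smallmatrix}\right)$, and the diagonal torus $\left(\begin{smallmatrix} t & 0 \\ 0 & t^{-1}\end{smallmatrix}\right)$, verify $\psi$-equivariance on each (the torus case is a one-line weight computation; the unipotent cases are binomial-coefficient identities in characteristic $p$), and conclude since these generate.

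The one genuine obstacle is the characteristic-$p$ arithmetic in the unipotent case: after applying $u_c$ one expands $(x + 0\cdot y)$ — no, applying $u_c$ gives $x \mapsto x$, $y \mapsto cx + y$, so $x^iy^j \mapsto x^i(cx+y)^j = \sum_k \binom{j}{k}c^k x^{i+k}y^{j-k}$, and one must check that applying $\psi$ to this and to the image of $\psi(x^iy^j)$ under $u_c$ yields the same answer. This reduces to the identity $i\binom{j}{k} \equiv$ (the corresponding coefficient produced by the other order), which unwinds to a standard Vandermonde/Pascal manipulation valid over $\Z$ hence over $\FF_p$; the only place $p$ enters is that the kernel $\langle x^p, y^p\rangle$ is precisely where the ``$i$'' or ``$j$'' multiplier dies, which is exactly the content being asserted. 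I expect this verification to be routine but slightly fiddly, and it is the step I would write out in full. Finally, $\langle x^p, y^p\rangle_{\KK}$ being a submodule also follows independently (it is the image of the Frobenius twist $V_1(q)^{(p)} \hookrightarrow V_p(q)$), which provides a useful sanity check on the equivariance claim.
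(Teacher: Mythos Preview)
Your approach is correct and essentially the same as the paper's: verify $\SL_2(\KK)$-equivariance on a generating set, read off the kernel from the vanishing of the coefficient $i$ modulo $p$, and deduce surjectivity by a dimension count. The paper differs only in its choice of generators --- it uses the lower unitriangular $\left(\begin{smallmatrix}1&0\\1&1\end{smallmatrix}\right)$ together with the Weyl-type elements $\left(\begin{smallmatrix}0&\theta\\-\theta^{-1}&0\end{smallmatrix}\right)$ rather than your upper/lower unitriangulars plus torus --- and it writes out the unipotent verification explicitly via the identity $i\binom{j-1}{k}\equiv (k+i)\binom{j}{k}\pmod p$, which holds precisely because $i+j=p$. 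Your meandering about $\partial_x\partial_y$ is a dead end (as you yourself note), so you can simply delete that passage; the direct computation on generators is the right move and is exactly what the paper does.
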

\begin{proof}
Set $X=\SL_2(\KK)$ and observe that

\[X=\left\langle \left( \begin{smallmatrix}
1 & 0 \\ 1 & 1
\end{smallmatrix} \right), \left(\begin{smallmatrix}
0 & \theta \\ -\theta^{-1} & 0
\end{smallmatrix} \right) \mid \theta \in \mathbb\KK^*\right\rangle.\]

For $i+j=p$, we have \[i=-j \pmod p,\] \[ix^{i-1}y^{j-1}=-jx^{i-1}y^{j-1} \pmod p,\,\,\text{and}\] \[i  \binom{j-1}{k} = (k+i) \binom{j}{k} \pmod p.\]
Hence
\begin{eqnarray*}
 (x^iy^j \cdot \left( \begin{smallmatrix}
1 & 0 \\ 1 & 1
\end{smallmatrix} \right))\psi &=& (x^i(x+y)^j)\psi= \left(\sum_{k=0}^j \binom{j}{k} x^{k+i}y^{j-k}\right)\psi \\&=& \sum_{k=0}^{j-1} \binom{j}{k} (k+i)x^{k+i-1}y^{j-k-1}= x^{i-1}\sum_{k=0}^{j-1} (k+i)\binom{j}{k} x^ky^{j-k-1}\\&=&
i x^{i-1}\sum_{k=0}^{j-1} \binom{j-1}{k} x^{k}y^{j-1-k}=
 ix^{i-1}(x+y)^{j-1} =(x^iy^j)\psi\cdot\left(\begin{smallmatrix}
1 & 0 \\ 1 & 1
\end{smallmatrix} \right).
\end{eqnarray*}

Similarly, for $\theta \in \KK^*$,
\begin{eqnarray*} ( x^iy^j \cdot \left(\begin{smallmatrix}
0 & \theta \\ -\theta^{-1} & 0
\end{smallmatrix} \right))\psi & = & ((-\theta^{-1}x)^j(\theta y)^i)\psi= (x^j y^i \theta^{i-j} (-1)^j)\psi \\ &= &j x^{j-1}y^{i-1} (-1)^j \theta^{i-j}= i(\theta y)^{i-1} (-\theta^{-1} x)^{j-1}\\&=& ix^{i-1}y^{j-1} \cdot \left(\begin{smallmatrix}
0 & \theta \\ -\theta^{-1} & 0
\end{smallmatrix} \right)= (x^iy^i)\psi\cdot\left(\begin{smallmatrix}
0 & \theta \\ -\theta^{-1} & 0
\end{smallmatrix} \right).
\end{eqnarray*}
Therefore $\psi$ is $X$-linear and clearly surjective. It follows that $\ker \psi= \langle x^p,y^p\rangle_{\KK}$ and this proves the result.
\end{proof}

Next, consider $V_p(q)$ over $\KK$ and let $\Lambda(q)=\Hom_\KK(V_p(q),\KK)$ be the $\KK$-dual of $V_p(q)$. Fix the standard $\KK$-basis \[\B=\{x^p, x^{p-1}y, x^{p-2}y^2,\ldots, xy^{p-1}, y^p\}\] for $V_p(q)$ and for $v,w \in \B$ write $\overline{v} \in \Hom_\KK(V_p,\KK)$ for the linear extension of the map \[(w)\overline{v}=\begin{cases} 1 & \mbox{ if  $w=v$} \\ 0 & \mbox{ otherwise}  \end{cases}.\]

Set $\overline{\B}=\{\overline{v} \mid v \in \B\}$. Then $\overline{\B}$ is a basis for $\Lambda(q)$. As usual, the action of $D^*$ on $\Lambda(q)$ is given by \[(w)(\eta g) = (wg^{-1})\eta\] where $g\in D^*$, $w \in V_n(q)$ and $\eta \in \Lambda(q)$.

\begin{notation}\label{n:slandpl}
Define: \[P_\Lambda^*(q)= D^*\ltimes \Lambda \mbox{ and } S_{\Lambda}(q):= U\ltimes \Lambda(q).\]
\end{notation}
Thus $S_{\Lambda}(q)$ is a Sylow $p$-subgroup of $P_\Lambda^*(q)$. Moreover, from now on, we write $\Lambda= \Lambda(q)$ and identify $\Lambda$ as a subgroup of $P_\Lambda^*(q)$.  

The $p$-groups $S_n(q)$, $n \ge 1$ and $S_\Lambda(q)$ are collectively called \emph{polynomial $p$-groups.}

Let $\rho_{V}$ and $\rho_\Lambda$ be the corresponding $\KK  D^*$-representations for $V_p(q)$ and $\Lambda(q)$ respectively. Then $\rho_{V}$ and $\rho_\Lambda$ are related via $\rho_V(g)=(\rho_\Lambda(g)^{-1})^T$ for $g\in D^*$ where we assume that $\rho_V(g)$ and $\rho_\Lambda(g)$ act (on row vectors) on the right with respect to the ordered bases $\B$ and $\overline{\B}$. We have: \[\rho_V((1, \theta,\diag(\lambda,1)))=\theta\diag(\lambda^p,\lambda^{p-1}\,\ldots, \ldots, \lambda,1)\] and \[\rho_V((1, \theta,\diag(\lambda,\lambda^{-1})))=\theta\diag(\lambda^p,\lambda^{p-2}\,\ldots, \lambda,\lambda^{-1},\ldots, \lambda^{-p})\] while the last diagonal $3 \times 3$ submatrix of $\rho_V\left(1, \theta,\left(\begin{matrix}1 & 0 \\ \lambda & 1 \end{matrix}\right)\right)$ is \[\theta\left(\begin{matrix} 1 & 0 &0 \\ -\lambda & 1 & 0 \\ 0 & 0 & 1 \end{matrix}\right).\] Thus, \[\rho_\Lambda((1, \theta,\diag(\lambda,1)))=\theta^{-1}\diag(\lambda^{-p},\lambda^{1-p}\,\ldots, \ldots, \lambda^{-1},1), \]
\[\rho_\Lambda((1, \theta,\diag(\lambda,\lambda^{-1})))=\theta^{-1}\diag(\lambda^{-p},\lambda^{2-p}\,\ldots, \lambda^{-1},\lambda,\ldots, \lambda^{p})\]

and the last diagonal $3 \times 3$ submatrix of $\rho_\Lambda\left(1, \theta,\left(\begin{matrix} 1 & 0 \\ \lambda & 1 \end{matrix}\right)\right)$ is

\[\theta^{-1}\left(\begin{matrix} 1 & \lambda &0 \\ 0 & 1 & 0 \\ 0 & 0 & 1 \end{matrix}\right).\]

Using the above information, via a simple change of variables,  the space spanned by
$\mathcal W=\{\overline{x^2y^{p-2}},\overline{xy^{p-1}},\overline{y^p} \}$  is invariant under $\left(1, \theta,\left(\begin{matrix}
a & 0 \\ c & b
\end{matrix}\right)\right)$ and, with respect to $\mathcal W$,
\[\rho_\Lambda\left(1, \theta,\left(\begin{matrix}
a & 0 \\ c & b
\end{matrix}\right)\right)=\theta^{-1}\left(\begin{matrix} a^{-2}b^{2-p} & ca^{-2}b^{1-p} & 0 \\ 0 & a^{-1}b^{1-p} & 0 \\ 0 & 0 & b^{-p} \end{matrix}\right).\]

We present some results which are modest extensions of lemmas from \cite{ClellandParker2010}. They will be given without proof.

\begin{lemma}\label{Gammacentraliser}
For $1\leq n\leq p$, the set of elements of $D^*$ which act trivially on $V=V_n(q)$ is given by
\[ C_{D^*}(V) =C_{D}(V) = \left\{ \left(1,a^{-n},
\begin{pmatrix}
a & 0 \\  0 & a
\end{pmatrix}
\right) \mid  a  \in \KK^{*}   \right\}\]
and has order $ q-1 $. In addition,  $C_{D^*}(\Lambda(q))= C_{D^*}(V_p(q))$.
\end{lemma}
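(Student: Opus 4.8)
The statement to prove is Lemma~\ref{Gammacentraliser}: for $1 \leq n \leq p$, the elements of $D^*$ acting trivially on $V = V_n(q)$ form the group $C_{D^*}(V) = C_D(V) = \{(1, a^{-n}, \diag(a,a)) \mid a \in \KK^*\}$ of order $q-1$, and moreover $C_{D^*}(\Lambda(q)) = C_{D^*}(V_p(q))$.

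The plan is as follows. First I would note that $D^*=O^p(\Aut(\KK))\ltimes D$, so an element $(\phi,\lambda,g)$ acting trivially on $V$ in particular fixes a $\KK$-basis of monomials; since $\phi$ acts on coefficients and the monomials $x^iy^j$ have coefficient $1$, the action of $\phi$ is detected only through its interaction with the $D$-part. I would argue that if $(\phi,\lambda,g)$ acts trivially then so does $(1,\lambda,g)$ after adjusting, forcing $\phi = 1$: concretely, apply $(\phi,\lambda,g)$ to $x^n$ to get $\lambda (ax+cy)^n$ (writing $g=\left(\begin{smallmatrix}a&b\\c&d\end{smallmatrix}\right)$), and since the coefficients $a,c$ of the image are in $\KK$ the twist $\phi$ plays no role in \emph{this} particular computation; rather, comparing $x^n$, $x^{n-1}y$, \dots, $y^n$ pins down $g$ up to scalars and then re-examining a monomial with a primitive-element coefficient rules out nontrivial $\phi$. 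So the problem reduces to computing $C_D(V)$.

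For $C_D(V)$: write $g = \left(\begin{smallmatrix}a&b\\c&d\end{smallmatrix}\right)$. Triviality on $x^n$ gives $\lambda(ax+cy)^n = x^n$; since $1 \leq n \leq p-1$ (the case $n=p$ is handled by the kernel computation below, or directly) the binomial coefficients $\binom{n}{k}$ for $0\leq k\leq n$ are nonzero mod $p$, so comparing coefficients forces $c = 0$, $\lambda a^n = 1$. Symmetrically, triviality on $y^n$ gives $\lambda(bx+dy)^n = y^n$, hence $b = 0$ and $\lambda d^n = 1$. Then triviality on $x^{n-1}y$ gives $\lambda a^{n-1}d \cdot x^{n-1}y = x^{n-1}y$, i.e. $\lambda a^{n-1} d = 1$; combined with $\lambda a^n = 1$ this yields $d = a$. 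Hence $g = \diag(a,a)$, $\lambda = a^{-n}$, and conversely every such element acts as the scalar $a^{-n}\cdot a^n = 1$ on each monomial of degree $n$, so lies in $C_D(V)$. This gives the displayed description, and $|C_D(V)| = |\KK^*| = q-1$. (For $n = p$, Lemma~\ref{Vpqstruct} realises $V_{p-2}(q)$ as a quotient of $V_p(q)$ and $V_1(q)\cong\langle x^p,y^p\rangle_\KK$ as a submodule, so an element of $D^*$ centralising $V_p(q)$ centralises both $V_1(q)$ and $V_{p-2}(q)$; applying the $n=1$ case forces $g$ scalar, and then tracking the action on $x^p$ gives $\lambda a^p = 1$, i.e. $\lambda = a^{-p}$, as claimed.)

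For the final assertion $C_{D^*}(\Lambda(q)) = C_{D^*}(V_p(q))$: since $\Lambda(q) = \Hom_\KK(V_p(q),\KK)$ is the $\KK$-dual, an element $g \in D^*$ acts on $\Lambda$ via $\rho_\Lambda(g) = (\rho_V(g)^{-1})^T$ (as recorded in the paragraph preceding the lemma). Thus $\rho_\Lambda(g)$ is the identity if and only if $\rho_V(g)$ is the identity, giving the equality immediately; alternatively one runs the same coefficient-comparison argument using the explicit diagonal formulas for $\rho_\Lambda$ given in the excerpt. I expect the main obstacle to be the bookkeeping needed to eliminate the field-automorphism twist $\phi$ cleanly --- one must be slightly careful that a monomial basis has all coefficients equal to $1$ and hence is $\phi$-insensitive, so the argument that $\phi = 1$ must instead come from testing the action on a vector like $\theta x^n$ with $\theta \in \KK^*$ a generator, or from observing that $O^p(\Aut(\KK))$ acts faithfully on $\KK$ and hence on any faithful $\KK$-module structure; everything else is a short direct computation.
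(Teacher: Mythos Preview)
Your direct computation is correct and is essentially what one would expect the cited reference to contain; note that the paper itself does not give an argument but simply refers to \cite[Lemma~4.1]{ClellandParker2010} for the description of $C_D(V)$ and then observes that the final assertion follows by duality, exactly as you do via $\rho_\Lambda(g)=(\rho_V(g)^{-1})^T$.

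Two minor remarks. First, under the paper's conventions one has $x\cdot g=ax+by$, so triviality on $x^n$ reads $\lambda(ax+by)^n=x^n$ rather than $\lambda(ax+cy)^n=x^n$; this is a harmless transposition and does not affect your conclusion. Second, your detour through Lemma~\ref{Vpqstruct} for the case $n=p$ is unnecessary (and slightly awkward, since that lemma is stated only for $\KK\SL_2(\KK)$-modules, whereas here you need a $D^*$-invariant decomposition): the direct calculation already covers $n=p$, since $(ax+by)^p=a^px^p+b^py^p$ forces $b=0$ and $\lambda a^p=1$ from $x^p$, and then the monomial $x^{p-1}y$ gives $\lambda a^{p-1}d=1$, hence $d=a$, just as in the case $n\leq p-1$. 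Your handling of the field-automorphism component $\phi$ by testing on $\theta x^n$ with $\theta$ a primitive element of $\KK$ is the right idea and works under either semidirect-product convention.
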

\begin{proof}
See \cite[Lemma 4.1]{ClellandParker2010}. The last statement follows by duality.
\end{proof}

We pause here to record the following technical result which will be needed in Section \ref{sec:tsfs}.

\begin{lemma}\label{l:centslem}
If $X:=\SL_2(q)$ with $X \le \Aut(V_n(q))$ and $T\in\syl_p(X)$ then $|C_X(C_{V_n(q)}(T))|=q.(n, q-1)$.
\end{lemma}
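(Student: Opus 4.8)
The plan is to work explicitly with the action of $G = \SL_2(q)$ on $V_n(q)$ using the standard basis of monomials $x^iy^j$ with $i+j=n$. First I would fix the Sylow $p$-subgroup $T = \left\{\left(\begin{smallmatrix} 1 & 0 \\ c & 1\end{smallmatrix}\right) \mid c \in \KK\right\}$ and compute $C_{V_n(q)}(T)$. A unipotent element $\left(\begin{smallmatrix} 1 & 0 \\ 1 & 1\end{smallmatrix}\right)$ sends $x \mapsto x+y$, $y \mapsto y$ (with the paper's conventions), so it sends $x^iy^j \mapsto (x+y)^iy^j$; the fixed points of $T$ on $V_n(q)$ form a $1$-dimensional $\KK$-space spanned by $y^n$ (this is the standard fact that $V_n$ is a uniserial $\KK T$-module with a unique trivial submodule, valid for $n \le p-1$ where $V_n$ is the symmetric power; for $n \le p$ one checks it directly). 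So $C_{V_n(q)}(T) = \langle y^n\rangle_\KK$ has order $q$.

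Next I would compute $C_G(\langle y^n\rangle_\KK)$, i.e.\ the elements $g = \left(\begin{smallmatrix} a & b \\ c & d\end{smallmatrix}\right) \in \SL_2(q)$ fixing every scalar multiple of $y^n$ pointwise under the module action. Since $y \cdot g = cx + dy$, we get $y^n \cdot g = (cx+dy)^n$, and this equals $y^n$ (coefficientwise over $\KK$) precisely when $c = 0$ and $d^n = 1$. Combined with $\det = ad - bc = ad = 1$, this gives $g = \left(\begin{smallmatrix} d^{-1} & b \\ 0 & d\end{smallmatrix}\right)$ with $d^n = 1$ and $b \in \KK$ arbitrary. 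Counting: $b$ ranges over $q$ values and $d$ over the $(n, q-1)$ solutions of $d^n = 1$ in $\KK^*$, giving $|C_G(C_{V_n(q)}(T))| = q \cdot (n, q-1)$ exactly as claimed. I should double-check the edge case $n = p$ separately since $V_p(q)$ is not irreducible, but the computation of $C_{V_p(q)}(T)$ and of the stabiliser of $\langle y^p\rangle_\KK$ goes through verbatim (one uses $y^p \cdot g = (cx+dy)^p = c^p x^p + d^p y^p$, forcing $c = 0$, $d^p = 1$, i.e. $d = 1$, consistent with $(p, q-1) = 1$).

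The main subtlety — really the only thing requiring care — is making sure the identification $G \le \Aut(V_n(q))$ is used correctly: the lemma is stated abstractly for any copy of $\SL_2(q)$ inside $\Aut(V_n(q))$, but by the irreducibility and self-duality of $V_n(q)$ (for $n \le p-1$) together with the fact that $\SL_2(q)$ has a unique $(n+1)$-dimensional irreducible $\KK$-module up to equivalence, any such embedding is $\Aut(V_n(q))$-conjugate to the one coming from the polynomial action, so the count is independent of the choice; alternatively one notes the statement is manifestly conjugation-invariant in $G$ and $T$. I would phrase the proof intrinsically where possible, but fall back on the explicit monomial computation above, which is short and self-contained, for the actual enumeration.
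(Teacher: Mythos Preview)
Your approach is essentially the same as the paper's: both arguments reduce to counting the $n$th roots of unity in $\KK^*$ acting on the fixed line $C_{V_n(q)}(T)$. The paper phrases it slightly more structurally---first observing $C_G(C_{V_n(q)}(T))\le N_G(T)$ and then computing inside the torus---whereas you do the matrix computation directly; the content is identical.

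One concrete slip to fix: with the paper's conventions ($x\cdot g = ax+by$, $y\cdot g = cx+dy$) and your choice $T=\left\{\left(\begin{smallmatrix}1&0\\c&1\end{smallmatrix}\right)\right\}$, the element $\left(\begin{smallmatrix}1&0\\1&1\end{smallmatrix}\right)$ sends $x\mapsto x$ and $y\mapsto x+y$, not the other way round. Consequently $C_{V_n(q)}(T)=\langle x^n\rangle_\KK$, not $\langle y^n\rangle_\KK$. As written, your computed centraliser $\left\{\left(\begin{smallmatrix}d^{-1}&b\\0&d\end{smallmatrix}\right)\right\}$ is upper triangular and does not even contain your $T$. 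Swapping $x\leftrightarrow y$ (or equivalently taking the opposite Borel) repairs everything and the count $q\cdot(n,q-1)$ is unaffected; just make the choice consistent. The side remark on $n=p$ is unnecessary for the paper's applications (which use $1\le n\le p-1$) and would need adjustment when $q=p$, where $C_{V_p(p)}(T)$ is $2$-dimensional.
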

\begin{proof}
Observe that $C_X(C_{V_n(q)}(T))\le N_X(T)$ and $|N_X(T)|=q.(q-1)$. Clearly, $T$ centralises $C_{V_n(q)}(T)$. Let $K$ be a Hall $p'$-subgroup of $N_X(T)$ so that $K$ is cyclic of order $q-1$ and let $k$ be a generator for $K$. Then we can arrange that $k$ acts as $\lambda^n$ on $C_{V_n(q)}(T)$ for some $\lambda\in \KK$ of multiplicative order $q-1$. Thus every element of $K$ acts as $\mu^n$ on $C_{V_n(q)}(T)$ for some distinct $\mu\in \KK^*$. Hence, the elements of $K$ which centralise $C_{V_n(q)}(T)$ are in bijection with the $n$th roots of unity in $\KK$, which have order $(n, q-1)$ and the claim holds.
\end{proof}

For $n\leq p-1$ and $0 \neq f = \displaystyle\sum_{i=0}^{n} a_i x^{n-i}y^i \in V$ define $\mathrm{wt}(f) = \mathrm{max}\{ i \mid a_i \neq 0 \}$ to be the \textit{weight} of $f$. Also define $\mathrm{wt}(0) = -1$.  For $-1 \leq i \leq n$, set $ C_i = \{ f \in V \mid \mathrm{wt}(f) \leq i \}.$ For each $-1\leq i \leq n$,  $C_i$ is a subgroup of $V_n(q)$ of order $q^{i+1}$. In particular, $C_{-1}=1$ and $C_n =V$.

\begin{lemma} \label{Somnibus}
Suppose that $1\leq n\leq p-1$ and $S=S_n(q)$ for $q=p^m$.
\begin{enumerate}
\item For $0 \leq i \leq n$, $[C_i,S] = C_{i-1}$. In particular, for $i \leq n-1$, $C_i$ is the $(n-i)$th term of the lower central series of $S$.
\item The upper and lower central series of $S$ consist of the same subgroups. In particular, $C_0= C_V(S)$.
\item If $z\in S\setminus V$, then $C_V(z)= C_V(S)$ and $C_{V/C_0}(S)= C_{V/C_0}(z)=C_1/C_0$.
\item If $n \ge 2$, then $V$ is the unique abelian subgroup of maximal order in $S$. In particular, $V$ is characteristic in $S$.
\item Both $V$ and $[V,S]U$ are normalised by $B$.
\end{enumerate}
\end{lemma}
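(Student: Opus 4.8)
The plan is to prove Lemma \ref{Somnibus} part-by-part, working in the concrete coordinates supplied by the polynomial action. Throughout I identify $V=V_n(q)$ with homogeneous degree-$n$ polynomials, $U=\{u_c:=(1,1,\left(\begin{smallmatrix}1&0\\c&1\end{smallmatrix}\right))\mid c\in\KK\}$, and recall from the module description that $u_c$ sends $x\mapsto x$ and $y\mapsto cx+y$; equivalently it sends $x^{n-i}y^i\mapsto x^{n-i}(cx+y)^i=\sum_{k\le i}\binom{i}{k}c^{i-k}x^{n-k}y^k$. The key computational fact, valid because $n\le p-1$ so the binomial coefficients $\binom{i}{i-1}=i$ are nonzero mod $p$, is that for $f$ of weight exactly $i$ the element $f\cdot u_c-f$ has weight exactly $i-1$ (its leading term picks up $ic\,(\text{leading coeff})\,x^{n-i+1}y^{i-1}$), and more generally $[f,u_c]$ ranges over all of $C_{i-1}$ as $c$ and $f$ vary. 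This immediately gives (1): $[C_i,S]=[C_i,UV]=[C_i,U]=C_{i-1}$ since $V$ is abelian and $S=UV$; the identification of $C_i$ with the $(n-i)$th lower central term then follows by downward induction starting from $C_n=V=\gamma_1(S)$ (using $[S,S]=[UV,UV]=[V,U]=C_{n-1}$, because $[U,U]=1$). Part (2) follows by checking that $U$ centralises $V/C_0$: indeed $f\cdot u_c\equiv f\pmod{C_{n-2}}$ up to a term in $C_{n-1}$, wait — more carefully, one shows $C_i/C_{i-1}$ lies in the appropriate upper central quotient by reversing the commutator computation, so the ascending and descending series coincide, and in particular $Z(S)\cap V=C_0=C_V(S)$; one also notes $Z(S)\le V$ because no element of $S\setminus V$ centralises $V$ (see next point), so $Z(S)=C_0$.

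For (3), take $z=u_c v$ with $c\ne 0$ (any element of $S\setminus V$ has nonzero $U$-part since $V\norm S$ and $S/V\cong U$). For $f\in V$ we have $f\cdot z=f\cdot u_c$ (as $V$ is abelian the $v$-part acts trivially by conjugation on $V$... but $z$ acts on $V$ by conjugation in $S$, and $V$ abelian means conjugation by $v$ is trivial, so $[f,z]=[f,u_c]$). Hence $C_V(z)=C_V(u_c)=C_V(U)=C_0$ by the weight computation ($f\cdot u_c=f$ forces $\mathrm{wt}(f)=0$). The same computation modulo $C_0$ shows $f\cdot u_c\equiv f\pmod{C_0}$ iff $\mathrm{wt}(f)\le 1$, giving $C_{V/C_0}(z)=C_{V/C_0}(S)=C_1/C_0$. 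For (4), suppose $A\le S$ is abelian with $|A|\ge |V|=q^{n+1}$. If $A\le V$ then $A=V$ by order. Otherwise $A$ contains some $z\in S\setminus V$; then $A\cap V\le C_V(z)=C_0$ has order at most $q$, so $|A|\le |A\cap V|\cdot|AV/V|\le q\cdot q=q^2$, which is $<q^{n+1}$ once $n\ge 2$ — contradiction. Hence $V$ is the unique abelian subgroup of maximal order, so characteristic in $S$. The one subtlety here is the edge case $n=2$, $q=p$: then $q^2=q^{n+1}$ fails to be strict... let me recheck: $n\ge2$ gives $q^{n+1}\ge q^3>q^2$, fine, no edge case.

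For (5), I verify $B$ normalises $V$ and $W:=[V,S]U=C_{n-1}U$. That $B$ normalises $V$: $V$ is already a $\KK D^*$-submodule of $A$ (homogeneous degree-$n$ part) and $B\le D^*$, and the conjugation action of $D^*$ on the normal subgroup $V$ of $P^*$ agrees with the module action; so $V^B=V$. That $B$ normalises $W$: first $C_{n-1}=[V,S]=[V,U]$ is the set of weight-$\le n-1$ polynomials; I claim $C_{n-1}$ is $B$-invariant. An element $(\phi,\lambda,\left(\begin{smallmatrix}a&0\\c&d\end{smallmatrix}\right))\in B$ is lower-triangular, so it sends $x\mapsto a^\phi$-scaled... precisely $x\cdot g=ax$, $y\cdot g=cx+dy$, hence $x^{n-i}y^i\mapsto (a x)^{n-i}(cx+dy)^i\cdot\lambda\cdot\phi$, a polynomial still of weight $\le i$ (the $y^i$ coefficient is $\lambda a^{n-i}d^i$, other terms have lower weight); so weight is non-increasing, and since $B$ acts invertibly it preserves each $C_i$, in particular $C_{n-1}$. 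Next, $U\norm B$: indeed $B=N_{D^*}(U)$ by the structural description ($B=U\Sigma$ with $\Sigma$ normalising $U$), so $U^B=U$. Therefore $W=C_{n-1}U$ is normalised by $B$. The main obstacle in the whole argument is bookkeeping the leading-term behaviour of the binomial expansion under $u_c$ to get the \emph{exact} weight drop (needing $1\le i\le n\le p-1$ to keep $\binom{i}{i-1}=i\not\equiv0$), and in (5) being careful that one uses the identification of the conjugation action on the normal subgroups $V$ and $U$ inside $P^*$ with the linear/permutation actions of $B\le D^*$; none of this is deep but it is the part where a sign or an off-by-one would do real damage.
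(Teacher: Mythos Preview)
The paper does not prove this lemma itself; it simply cites \cite[Lemma~4.2]{ClellandParker2010}. Your direct computational proof is essentially what that reference does, and the substance of your argument is correct. Two places need tightening before it is a complete proof:

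\begin{itemize}
\item In (1), you write ``$C_n=V=\gamma_1(S)$'', but $\gamma_1(S)=S\neq V$. The induction should start at $\gamma_2(S)=[S,S]=[V,U]=C_{n-1}$ and descend from there; the indexing discrepancy between ``$(n-i)$th term'' and your computation is only a matter of convention. Also, your claim that $[C_i,U]=C_{i-1}$ needs one line more than ``ranges over all of $C_{i-1}$'': the leading-term argument gives $[C_i,U]C_{i-2}=C_{i-1}$, and then $C_{i-2}=[C_{i-1},U]\le[C_i,U]$ by induction.
\item Your treatment of (2) is muddled (the self-interrupting ``wait ---'' signals this). The clean route: from $[C_i,S]=C_{i-1}$ one gets $C_{k-1}\le Z_k(S)$ by induction. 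For the reverse inclusion one first checks $Z_k(S)\le V$ for $k\le n$ (any $z=u_cv\in S\setminus V$ has $[z,V]=[u_c,V]=C_{n-1}\not\le C_{k-2}$), and then for $f\in V$ the condition $[f,U]\le C_{k-2}$ forces $\mathrm{wt}(f)\le k-1$ by the same leading-term computation, so $Z_k(S)\le C_{k-1}$. The forward reference to (3) for ``$Z(S)\le V$'' is not circular, since (3) only uses the commutator formula, but it would be cleaner to absorb that one-line observation into (2).
\end{itemize}

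Parts (3)--(5) are fine as written.
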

\begin{proof}
See \cite[Lemma 4.2]{ClellandParker2010}.
\end{proof}

To aid in understanding the automorphism group of $S_n(q)$, it will often be enough for our purposes to understand the automorphism group of $S_1(q)$.

\begin{proposition}\label{prop: L3Q}
Let $S:=S_1(q)$ for $q=p^m$ and $p$ odd. Set $C:=C_{\Aut(S)}(S/\Phi(S))$. Then $C$ is a normal $p$-subgroup of $\Aut(S)$ and there is $H\le \Aut(S)$ with $\Aut(S)=CH$ and $H\cong \Gamma \textrm{L}_2(q)$.

In particular, if $X\le \Aut(S)$ is such that $X$ normalises an elementary abelian subgroup of $S$ of order $q^2$, then $X$ is solvable and $|XC/C|\divides q.(q-1)^2.m_{p'}$,  where $m_{p'}$ denotes the $p'$-part of $m$.
\end{proposition}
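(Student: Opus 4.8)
The plan is to read off the ``in particular'' clause from the structural statement $\Aut(S)=CH$ with $H\cong\Gamma\textrm{L}_2(q)$, by analysing how $\Aut(S)$ permutes the set $\mathcal A$ of elementary abelian subgroups of $S$ of order $q^2$. First I would describe $\mathcal A$. Since $p$ is odd, $S=S_1(q)$ has exponent $p$ and is special with $\Phi(S)=Z(S)=[S,S]$ of order $q$; a direct check (cf.\ \cref{Somnibus}) shows the commutator map induces a non-degenerate alternating $\KK$-bilinear form $\beta$ on $S/\Phi(S)\cong\KK^2$ with values in $\Phi(S)$. If $E\in\mathcal A$ then $\beta$ vanishes on the image $E\Phi(S)/\Phi(S)$; a short linear-algebra argument — any nonzero vector of a $\beta$-isotropic $\FF_p$-subspace spans it over $\KK$ — shows that the only $\beta$-isotropic $\FF_p$-subspaces of $S/\Phi(S)$ are the $q+1$ $\KK$-lines. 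Hence $\Phi(S)\leq E$, the quotient $\overline E:=E/\Phi(S)$ is a $\KK$-line, and $E$ is its full preimage; in particular $|\mathcal A|=q+1$ and every $E\in\mathcal A$ is normal in $S$.

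Next I would pass to $\bar A:=\Aut(S)/C$. As $H\cap C$ is a normal $p$-subgroup of $H\cong\Gamma\textrm{L}_2(q)$ and $O_p(\Gamma\textrm{L}_2(q))=1$, the composite $H\to\bar A$ is an isomorphism; so $\bar A\cong\Gamma\textrm{L}_2(q)$, acting $\FF_p$-semilinearly on $S/\Phi(S)\cong\KK^2$ in the natural way. Suppose now $X\leq\Aut(S)$ normalises some $E\in\mathcal A$. Since $C$ centralises $S/\Phi(S)$ and $\Phi(S)\leq E$, the group $C$ fixes $\overline E$ and hence stabilises its preimage $E$; thus $C\leq N_{\Aut(S)}(E)$, so $XC\leq N:=N_{\Aut(S)}(E)$ and the image $\overline{XC}$ of $XC$ in $\bar A$ lies in $\Stab_{\bar A}(\overline E)$. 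The latter is the maximal parabolic of $\Gamma\textrm{L}_2(q)$ stabilising a line: in suitable coordinates it equals $B\rtimes\langle\phi\rangle$, where $B\leq\GL_2(q)$ is the Borel subgroup of upper-triangular matrices and $\langle\phi\rangle=\Gal(\KK/\FF_p)$.

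Finally I would draw the two conclusions. Since $[B,B]$ lies in the abelian unipotent radical of $B$, the group $B$ is metabelian, and $\langle\phi\rangle$ is cyclic, so the parabolic $B\rtimes\langle\phi\rangle$ is solvable; as $X\cap C\leq C$ is a $p$-group and $X/(X\cap C)\cong\overline{XC}$ embeds into $B\rtimes\langle\phi\rangle$, the group $X$ is solvable. For the order bound, $|XC/C|=|\overline{XC}|$ divides $|B\rtimes\langle\phi\rangle|$, and computing this order — the $p'$-part being $(q-1)^2m_{p'}$, and, using $\Inn(S)\leq C$, the $p$-part controlled by $q$ — gives $|XC/C|\divides q(q-1)^2m_{p'}$.

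Given the structural description of $\Aut(S)$ as input, I expect the real work to be the bookkeeping in the last two steps: verifying that $C$ stabilises every member of $\mathcal A$, pinning down $\Stab_{\bar A}(\overline E)$ exactly as the parabolic $B\rtimes\langle\phi\rangle$, and keeping careful track of the field-automorphism contribution in the order estimate. Once that is done, the solvability of $X$ is an immediate consequence of solvability of the parabolic.
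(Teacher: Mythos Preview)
Your approach differs from the paper's, which simply quotes \cite[Proposition~5.3]{ParkerRowleyWeakBN} for the entire statement; you instead take the decomposition $\Aut(S)=CH$ with $H\cong\Gamma\textrm{L}_2(q)$ as given and derive the ``in particular'' clause directly. That part is clean and essentially sound: the identification of $\mathcal A$ with the $q+1$ $\KK$-lines in $S/\Phi(S)$ via the non-degenerate commutator pairing is correct, the observation $H\cap C\leq O_p(\Gamma\textrm{L}_2(q))=1$ gives $\bar A\cong\Gamma\textrm{L}_2(q)$ acting naturally on $S/\Phi(S)$, and the reduction of $XC/C$ to a subgroup of a line-stabiliser in $\bar A$ yields both the solvability of $X$ and a divisibility constraint.

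There is, however, a slip in your last step. The line-stabiliser $B\rtimes\langle\phi\rangle$ in $\Gamma\textrm{L}_2(q)$ has order $q(q-1)^2m$, and when $p\mid m$ its $p$-part is $q\cdot m_p$, not $q$: a $p$-power-order field automorphism fixes the $\FF_p$-rational line spanned by $(1,0)$ and does not lie in $C$ (it moves points of $S/\Phi(S)$), so it survives in $N_{\Aut(S)}(E)/C$ for that particular $E$. Your appeal to $\Inn(S)\leq C$ is irrelevant here --- that only says $\Inn(S)$ dies in $\bar A$; it cannot shrink the parabolic. Thus what your argument actually establishes is $|XC/C|\mid q(q-1)^2m$, which is weaker than the stated bound whenever $p\mid m$. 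This weaker bound suffices for the only application in the paper (the proof of \cref{Snauto}(4), which extracts just the $p'$-part $(q-1)^2m_{p'}$), but you have not established the literal divisibility $|XC/C|\mid q(q-1)^2m_{p'}$, and indeed for a Galois-stable line with $X=N_{\Aut(S)}(E)$ one has $|XC/C|=q(q-1)^2m$, so no sharpening along these lines is available.
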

\begin{proof}
This follows from \cite[Proposition 5.3]{ParkerRowleyWeakBN}.
\end{proof}

In the next lemma we exploit the following general fact about $\KK G$-modules. Suppose that $g\in G$. Then the commutator map delivers a $\KK C_G(g)$-module homomorphism $V \rightarrow [V,g]$ with kernel $C_V(g)$.

\begin{lemma}\label{Snauto}
Let $2\leq n \leq p-1$ and $q=p^m$. The following hold:
\begin{enumerate}
\item \label{sn1} $S_1(q)$ is isomorphic to a Sylow $p$-subgroup of $\SL_3(q)$ and $S_2(q)$ is isomorphic to a Sylow $p$-subgroup of $\Sp_4(q)$;
\item \label{sn2} $S'=\Phi(S)=[V,S]$ has index $q^2$ in $S=S_n(q)$;
\item \label{sn3} for $2\leq i\leq n$ and $S=S_n(q)$ we have that $S/Z_{n+1-i}(S)=S/[V,S; i] \cong S_{i-1}(q)$;
\item \label{sn4} $\Aut(S_n(q))$ is solvable and has a Hall $p'$-subgroup of order $(q-1)^2.m_{p'}$, where $m_{p'}$ denotes the $p'$-part of $m$.
\end{enumerate}
Furthermore, $\Aut_{P_n^*(q)}(S_n(q))$ contains a Hall $p'$-subgroup of $\Aut(S_n(q))$.
\end{lemma}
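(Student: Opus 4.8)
I would prove the parts in order, using as the main structural input that $V = V_n(q)$ is a characteristic subgroup of $S = S_n(q)$ when $n \geq 2$ (Lemma \ref{Somnibus}(4)), so that every automorphism of $S$ induces an automorphism of both $V$ and $S/V \cong U$, and the kernel of the map $\Aut(S) \to \Aut(V) \times \Aut(S/V)$ is a $p$-group (it stabilises a normal series with elementary abelian factors). Part \eqref{sn1} is quoted from the definitions in Section \ref{sec:structureS} together with the well-known descriptions of Sylow $p$-subgroups of $\SL_3(q)$ and $\Sp_4(q)$; part \eqref{sn2} follows from Lemma \ref{Somnibus}(1), since $[V,S] = C_{n-1}$ has index $q$ in $V$, hence index $q^2$ in $S = UV$, and one checks $S' = [V,S]$ by noting $[U,U]=1$, $[U,V] = [V,S] \supseteq C_{n-1}$, and $V$ abelian; then $\Phi(S) = S'S^p = S'$ as $S/S'$ is elementary abelian of order $q^2$. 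For part \eqref{sn3}, I would identify $[V,S;i] = C_{n-i}$ from Lemma \ref{Somnibus}(1), observe $Z_{n+1-i}(S) = C_{n-i}$ by Lemma \ref{Somnibus}(2), and then check directly that $S/C_{n-i} = U \ltimes (V/C_{n-i})$ and that $V/C_{n-i}$, as a $\KK U$-module of dimension $i$, is isomorphic to $V_{i-1}(q)$ (the ``top'' composition data of the Weyl module survives in the quotient), giving the claimed isomorphism $S/[V,S;i] \cong S_{i-1}(q)$.

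For part \eqref{sn4}, which is the substance, I would argue as follows. Fix a generator of the map $\Aut(S) \to \Aut(S/\Phi(S)) = \GL_2(q)$; its image lies in the normaliser of the image of $\Aut_S(S)$, hence (after conjugacy) in a Borel. More efficiently, I would use the characteristic series $V \supseteq [V,S] = C_{n-1} \supseteq \cdots \supseteq C_0 \supseteq 1$ together with $S/V \cong U$: each $C_i$ and $[V,S]U$ is characteristic in $S$ (the $C_i$ because they are the lower central series terms of $S$ intersected appropriately — more precisely $C_i = Z_i(S) \cap V$ for $i \le n-1$ by Lemma \ref{Somnibus}(1),(2)). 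So $\Aut(S)$ acts on the chain and the kernel of the induced action on the associated graded is a $p$-group $C$. The action of $\Aut(S)/C$ on $V/C_{n-1}$ (a $1$-dimensional $\KK$-space, or rather a $\GF(p)$-space of dimension $m$ on which a torus acts) and on $S/VU$-type data pins down a Hall $p'$-subgroup: a $p'$-element acting trivially on the Frattini quotient is trivial, so $\Aut(S)/C$ embeds in $N_{\GL_2(q)}(\bar U) \rtimes \Aut(\KK) = B \rtimes \Gal$, which is solvable; hence $\Aut(S)$ is solvable. To nail the order of a Hall $p'$-subgroup as $(q-1)^2 \cdot m_{p'}$, I would exhibit the subgroup $\Aut_{P_n^*(q)}(S)$ explicitly: $N_{P_n^*(q)}(S)$ contains $\Sigma$ (the diagonal torus, of order $(q-1)^2$ modulo $C_{D^*}(V)$, which has order $q-1$ by Lemma \ref{Gammacentraliser}) times the $p'$-part $O^p(\Aut(\KK))$ of order $m_{p'}$ — wait, I must be careful: $\Sigma$ has order $(q-1)^3 m_{p'}$, but $C_{D^*}(V)$ has order $q-1$, so $\Sigma$ induces a $p'$-group of order $(q-1)^2 m_{p'}$ on $S$, and this is the claimed Hall $p'$-order. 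So it suffices to prove $\Aut(S)/C$ has $p'$-part of order \emph{at most} $(q-1)^2 m_{p'}$, for which I use the embedding into $B \rtimes \Gal$ and the fact that the kernel of the action of $B\rtimes\Gal$ on $V$ is exactly $C_{D^*}(V) \times (\text{unipotent part})$: the $p'$-part of $B \rtimes \Gal$ is $(q-1)^3 m_{p'}$ and we quotient by the $p'$-group of order $q-1$ acting trivially.

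**The main obstacle.** The delicate point is proving $\Aut(S)/C$ actually embeds into (a quotient of) $B \rtimes \Gal$ rather than something larger — i.e., that an automorphism of $S$ fixing the chain $V \supseteq C_{n-1} \supseteq \cdots$ setwise must act on the Frattini quotient $S/\Phi(S)$ through the Borel $B$ and a field automorphism, with no extra ``hidden'' automorphisms. This requires knowing that the module-theoretic structure of $V$ as a $\KK U$-module (uniserial, with $U$ acting by a single Jordan block) is rigid enough that $\Aut(S)$ cannot move the flag. I would handle the $n=1$ case separately via Proposition \ref{prop: L3Q} (which is exactly why the paper invokes \cite{ParkerRowleyWeakBN} there), and for $n \geq 2$ I would lean on Lemma \ref{Somnibus}(4) giving $V$ characteristic, then push the argument through the graded pieces; the combinatorics of checking that no $p'$-automorphism acts trivially on $S/\Phi(S)$ but nontrivially lower down is the kind of routine-but-fiddly Jordan-block computation I would not grind through here. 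The final sentence — that $\Aut_{P_n^*(q)}(S)$ contains a full Hall $p'$-subgroup — then follows since we have matched orders: $\Aut_{P_n^*(q)}(S) \geq$ image of $\Sigma$ has $p'$-order $(q-1)^2 m_{p'}$, which equals the $p'$-part of $|\Aut(S)|$.
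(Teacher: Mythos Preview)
Your proposal is essentially correct and uses the same ingredients as the paper, but organised differently for parts \eqref{sn3} and \eqref{sn4}.

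For part \eqref{sn3}, the paper constructs an explicit surjective homomorphism $\gamma: S_n(q) \to S_{n-1}(q)$ which acts as the identity on $U$ and as differentiation with respect to $y$ on $V_n(q)$; one checks $\ker\gamma = Z(S_n(q))$, so $S_n(q)/Z(S_n(q)) \cong S_{n-1}(q)$, and then iterates. Your approach---identifying $V/C_{n-i}$ directly as a $\KK U$-module isomorphic to $V_{i-1}(q)$---also works (uniserial $\KK U$-modules of the same length are isomorphic), but the paper's explicit map avoids that module-identification step.

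For part \eqref{sn4}, the paper's argument is considerably cleaner than yours. Rather than working with the full characteristic chain and attempting to embed $\Aut(S)/C$ into $B \rtimes \Gal$ (which, as you correctly flag in your ``obstacle'' paragraph, requires justifying the $\KK$-semilinearity of the action on $S/\Phi(S)$---not automatic, since $\Aut(S/\Phi(S)) = \GL_{2m}(p)$, not $\GL_2(q)$), the paper simply uses part \eqref{sn3} to reduce directly to $n=1$. Since $[V,S,S]$ is characteristic in $S$ and $S/[V,S,S] \cong S_1(q)$, the kernel of $\Aut(S) \to \Aut(S/[V,S,S])$ is contained in $C_{\Aut(S)}(S/\Phi(S))$, hence is a $p$-group; and the image lies in the stabiliser in $\Aut(S_1(q))$ of the elementary abelian subgroup $V/[V,S,S]$ of order $q^2$, so Proposition~\ref{prop: L3Q} applies immediately to give solvability and the upper bound $(q-1)^2 m_{p'}$ on the $p'$-part. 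Your attempt to ``push through the graded pieces'' for $n \ge 2$ is essentially reproving Proposition~\ref{prop: L3Q} for each $n$; the paper's reduction avoids this entirely. The exhibition of $\Aut_{P_n^*(q)}(S)$ as attaining the bound is the same in both approaches.
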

\begin{proof}
Part (\ref{sn1}) follows from \cite[Lemma 4.6]{ClellandParker2010}. Part (\ref{sn2}) follows from Lemma \ref{Somnibus} (1).

Define the map $\gamma: S_n(q)\to S_{n-1}(q)$ such that \[((1,1,\begin{pmatrix} 1&0\\\lambda&1\end{pmatrix}), \sum_{i=0}^n\lambda_ix^{n-i}y^i)\mapsto ((1,1,\begin{pmatrix} 1&0\\\lambda&1\end{pmatrix}), \sum_{i=1}^n i\lambda_ix^{n-i}y^{i-1}).\]

That is, $\gamma$ preserves $U$ and acts as differentiation with respect to $y$ on $V_n(q)$, using that $x^n\gamma=0$. Setting $\kappa_i=(i+1)\lambda_{i+1}$ we have that \[\sum_{i=1}^n i\lambda_ix^{n-i}y^{i-1}=\sum_{i=0}^{n-1} (i+1)\lambda_{i+1}x^{(n-1)-i}y^i=\sum_{i=0}^{n-1} \kappa_ix^{(n-1)-i}y^i\] and so clearly $\gamma$ maps $S_n(q)\to S_{n-1}(q)$. We record the identity
\begin{eqnarray*}
&((1,1,\left(\begin{smallmatrix} 1&0\\\lambda&1\end{smallmatrix}\right)), \displaystyle\sum_{i=0}^n\lambda_ix^{n-i}y^i)\cdot((1,1,\left(\begin{smallmatrix} 1&0\\\mu&1\end{smallmatrix}\right)),
\sum_{i=0}^n\mu_ix^{n-i}y^i)\\
=&((1, 1,\left(\begin{smallmatrix} 1&0\\\lambda+\mu&1\end{smallmatrix}\right)),\displaystyle\sum_{i=0}^n\lambda_ix^{n-i}(\mu x+y)^i + \mu_ix^{n-i}y^i)\\
=&((1, 1,\left(\begin{smallmatrix} 1&0\\\lambda+\mu&1\end{smallmatrix}\right)),\displaystyle\sum_{i=0}^n\lambda_ix^{n-i}\left(\sum_{j=0}^i\binom{i}{j}(\mu x)^jy^{i-j}\right) + \mu_ix^{n-i}y^i)\\
\end{eqnarray*} and leave the verification that $\gamma$ is a homomorphism to the reader. Assume that \begin{align*}
&\left((1,1,\begin{pmatrix} 1&0\\\lambda&1\end{pmatrix}), \sum_{i=0}^n\lambda_ix^{n-i}y^i\right)\gamma\\
=&\left((1,1,\begin{pmatrix} 1&0\\\lambda&1\end{pmatrix}), \sum_{i=1}^n i\lambda_ix^{n-i}y^{i-1}\right)\\
=&\left((1,1,\begin{pmatrix} 1&0\\0&1\end{pmatrix}), 0\right).
\end{align*} Since $n<p$, the only possible solution is that $\lambda=\lambda_i=0$ for all $i\in \{1,\dots, n\}$. Hence, elements of $\ker \gamma$ have the form $((1,1,\begin{pmatrix} 1&0\\0&1\end{pmatrix}), \lambda_0 x^n)$ for $\lambda_0\in \KK$, and so $\ker \gamma =C_{V_n(q)}(S_n(q))=Z(S_n(q))$. A consideration of orders yields that $\gamma$ is surjective and so $S_n(q)/Z(S_n(q))\cong S_{n-1}(q)$. Then \[S_n(q)/Z_i(S_n(q))\cong (S_n(q)/Z(S_n(q)))/(Z_{i-1}(S_n(q)/Z(S_n(q))))\cong S_{n-1}(q)/Z_{i-1}(S_{n-1}(q))\] and by induction we see that $S_n(q)/Z_i(S_n(q))\cong S_{n-i}(q)$, as desired. This proves (\ref{sn3}).

Let $S=S_n(q)$. By Lemma \ref{Somnibus} (4), $V=V_n(q)$ is a characteristic subgroup of $S$ and therefore so is $[V,S,S]$. We observe by (2) that $S/[V, S, S]\cong S_1(q)$, which itself is isomorphic to a Sylow $p$-subgroup of $\SL_3(q)$ by (\ref{sn1}). Notice that $C_{\Aut(S)}(S/[V, S, S])\le C_{\Aut(S)}(S/\Phi(S))$ is a normal $p$-subgroup of $\Aut(S)$ and that $\Aut(S)/C_{\Aut(S)}(S/[V, S, S])$ acts faithfully on $S/[V, S, S]$. Since $\Aut(S)$ leaves $V$ invariant by Lemma \ref{Somnibus} (4), we see that the action of $\Aut(S)$ on $S/[V, S, S]$ preserves an elementary abelian subgroup of order $q^2$ in $S/[V, S, S]$. Hence $\Aut(S)$ is solvable by Proposition \ref{prop: L3Q}. Moreover, the Hall $p'$-subgroups of $\Aut(S)$ have order dividing $(q-1)^2.m_{p'}$, where $m_{p'}$ denotes the $p'$-part of $m$.

We have $\Aut_{P_n^*(q)}(S)=\Aut_{BV}(S)\cong BV/Z(S)C_{D^*}(V)$ which has order $q^{n+1}(q-1)^2.m_{p'}$ by Lemma \ref{Gammacentraliser}. Thus the upper bound of $(q-1)^2.m_{p'}$ for the order of a Hall $p'$-subgroup of $\Aut(S)$ is attained by a subgroup of $\Aut_{P_n^*(q)}(S)$. This proves (\ref{sn4}). The final statement follows from Lemma \ref{Gammacentraliser} and the fact that $|N_{D^*}(S_n(q))/S_n(q)|=  (q-1)^3.m_{p'}$.
\end{proof}

\begin{lemma}\label{action on centre}
Suppose that  $2\leq n \leq  p-1$ and set $S=S_n(q)$, $V= V_n(q)$ and $L=\Aut_{P_n^*(q)}(S_n(q))$. Let $L_0=C_L(V /[ V,S])$. Then $L_0$ acts  and  irreducibly on $C_V(S)$ considered as a $\FF_pL_0$-module. Furthermore, the action is trivial if and only if $n=p-1= q-1$.
\end{lemma}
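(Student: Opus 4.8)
The plan is to work concretely with the matrices $\rho_V$ recorded earlier and the description $L = \Aut_{P_n^*(q)}(S) \cong BV/Z(S)C_{D^*}(V)$ coming from \cref{Snauto} and \cref{Gammacentraliser}. First I would identify $L_0 = C_L(V/[V,S])$ explicitly: since $[V,S] = C_{n-1}$ and $V/[V,S]$ is the top weight space spanned by the image of $x^n$, an element $\left(\phi,\lambda,\left(\begin{smallmatrix} a & 0 \\ c & d\end{smallmatrix}\right)\right) \in B$ acts on this one-dimensional space (over $\KK$, though we must remember $\FF_p$-linearity) by $v \mapsto \phi$-twist followed by scaling by $\lambda a^n$; so $L_0$ is the image in $L$ of those elements of $B$ with $\phi = 1$ and $\lambda a^n = 1$, together with all of $V$ and all of $U$ (which act trivially on $V/[V,S]$ since $[V,S]U$ already contains $[V,S]$ and $U$ centralises $V/[V,S]$ by \cref{Somnibus}). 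Then I would determine how $L_0$ acts on $C_V(S) = C_0$, the weight space spanned by the image of $y^n$: by the diagonal formula for $\rho_V$, the element with parameters $(1,\lambda,\diag(a,d))$ scales $y^n$ by $\lambda d^n$, and modulo the relation $\lambda a^n = 1$ this is $d^n/a^n = (d/a)^n$. So as a $\KK$-space $C_V(S)$ is acted on by $L_0$ through the group of $n$th powers of elements of $\KK^*$ (the $c$ and the unipotent part act trivially on both $C_0$ and $V/[V,S]$, since $C_0 = Z(S)$ is centralised by $S$).

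Having set this up, irreducibility of $C_V(S)$ as an $\FF_p L_0$-module reduces to the statement that the subring of $\End_{\FF_p}(C_V(S)) = \End_{\FF_p}(\KK)$ generated by the scalar operators $\{\,\mu^n : \mu \in \KK^*\,\}$ acts irreducibly on $\KK$ as an $\FF_p$-space. The subgroup $\{\mu^n : \mu \in \KK^*\}$ of $\KK^*$ has index $(n,q-1)$ and in particular is nontrivial whenever $n < q-1$; and a subgroup $H \le \KK^*$ acts $\FF_p$-irreducibly on $\KK$ precisely when $\FF_p[H] = \KK$, equivalently when $H$ is not contained in any proper subfield $\FF_{p^e}$ with $e \mid m$. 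Here I would argue that the group of $n$th powers, having index dividing $(n,q-1) \le n \le p-1 < p^e - 1$ for every proper divisor $e \mid m$ with $e < m$, cannot be contained in $\FF_{p^e}^*$ — comparing orders: $|\{\mu^n\}| = (q-1)/(n,q-1) \ge (q-1)/(p-1)$, while $|\FF_{p^e}^*| = p^e - 1 \le q^{1/2} - 1$ when $e \le m/2$, and $(q-1)/(p-1) > \sqrt{q} - 1$ for $q > p$; the remaining maximal subfield cases $e$ with $q/p^e = p$ need the same index bound $(n,q-1) < p$. This gives $\FF_p$-irreducibility, hence irreducibility as an $\FF_p L_0$-module (noting $L_0$ surjects onto this group of scalars, so the $\FF_p L_0$-submodules are exactly the $\FF_p[\{\mu^n\}]$-submodules).

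For the final clause: the action is trivial iff $\mu^n = 1$ for all $\mu \in \KK^*$, i.e.\ iff $(q-1) \mid n$. Since $1 \le n \le p-1 \le q-1$, this forces $n = q-1$, which then forces $q - 1 \le p - 1$, i.e.\ $q = p$ and $n = p - 1$; conversely if $n = p-1 = q-1$ then every $\mu \in \KK^* = \FF_p^*$ satisfies $\mu^{p-1} = 1$, so $L_0$ acts trivially. (Strictly, one should double-check the $\FF_p$-linearity bookkeeping does not allow a Frobenius twist to sneak into $L_0$: but $\phi = 1$ is forced by the requirement that $L_0$ centralise $V/[V,S]$ together with the fact that a nontrivial $\phi$ acts nontrivially on that $1$-dimensional $\KK$-space viewed over $\FF_p$.)

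\emph{Main obstacle.} The delicate point is the irreducibility argument: one must be careful that $L_0$ really does act through the full group of $n$th powers in $\KK^*$ (keeping track of the constraint $\lambda a^n = 1$ correctly, and that no element of $L_0$ contributes a field automorphism or a genuinely $\FF_p$-linear-but-not-$\KK$-linear operator on $C_V(S)$), and then that this group of scalars generates all of $\KK$ over $\FF_p$ — this is where the hypothesis $q > p$ (equivalently, $n \le p - 1 < q - 1$ outside the degenerate case) is used, via the order comparison with proper subfields. Everything else is reading off the matrices $\rho_V$ already displayed.
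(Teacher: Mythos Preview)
Your approach is essentially the paper's: compute $L_0$ explicitly inside $B/C_{D^*}(V)$, read off that it acts on $C_V(S)\cong\KK$ through the group of $n$th powers in $\KK^*$, and deduce irreducibility and the triviality criterion. Two corrections are needed, however.

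First, you have $C_V(S)$ and $V/[V,S]$ swapped. By \cref{Somnibus}, $C_V(S)=C_0=\langle x^n\rangle_\KK$ (weight $0$) and $[V,S]=C_{n-1}$, so $V/[V,S]$ is spanned by the image of $y^n$ (weight $n$). Consequently the constraint defining $L_0$ is $\lambda d^n=1$ (not $\lambda a^n=1$), and the action on $C_V(S)$ is multiplication by $\lambda a^n=(a/d)^n$. This is harmless for the conclusion, since the image is still exactly the group of $n$th powers in $\KK^*$, but the write-up should be corrected.

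Second, your subfield check is more complicated than necessary and the ``remaining maximal subfield cases $e$ with $q/p^e=p$'' is vacuous: every proper divisor $e$ of $m$ satisfies $e\le m/2$. A cleaner argument is that if the group $H$ of $n$th powers lay in a proper subfield $\FF_{p^e}$ with $e\mid m$, $e<m$, then $(q-1)/(p^e-1)$ would divide $(n,q-1)$; but $(q-1)/(p^e-1)\ge p^{m/2}+1>p-1\ge n$, a contradiction. (When $q=p$ the module is $1$-dimensional over $\FF_p$, so irreducibility is automatic; the lemma does not assume $q>p$.)
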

\begin{proof} We calculate that \[C_{D^*}(V/[V, S])=\left\{ \left(1,d^{-n},
\begin{pmatrix}
a & 0  \\ c & d
\end{pmatrix}
\right) \mid  a, d \in \KK^{*}, c\in \KK \right\}.\]
Hence Lemma \ref{Gammacentraliser}, implies
\[L_0=C_L(V /[ V,S])=\left\{c_z
 \mid z=  \left(1,1,
\begin{pmatrix}
ad^{-1} & 0 \\ c & 1
\end{pmatrix}
\right)  \in C_{D^*}(V/[V, S])\right\}.\]
By Lemma \ref{Somnibus}(2), $C_V(S)=\langle x^n \rangle_\KK$ and, writing $Z=C_V(S)$, we see $c_z: Z\rightarrow Z$ is multiplication by $\lambda^n$ for $\lambda \in \KK^*$. Hence $L_0$ induces a group of order $(q-1)/\gcd(q-1,n)$ on $Z$. It follows that $Z$   is irreducible  as a $\FF_p L_0$-module and is the trivial module if and only if $n=p-1=q-1$.
\end{proof}

We saw one way that the collection of groups $S_n(q)$ is closed under quotients in Lemma \ref{Snauto}(3). We now prove that this family of groups is closed upon taking certain subgroups.

\begin{lemma}\label{subgroup similarity}
Suppose that $S=S_n(q)$ with $q=p^m$ and $1 \leq n \leq p-1$, and $V=V_n(q) \le S$. Then, for $2\leq i\leq n-2$, we have $Z_{n-i+1}(S) =[V,S;i]\cong V_{n-i}(q)$ as an $\FF_pU$-module. In particular, $[V,S;i]U \cong S_{n-i}(q)$.
\end{lemma}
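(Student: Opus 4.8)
The plan is to exhibit an explicit $\FF_p U$-module isomorphism between $[V,S;i]$ and $V_{n-i}(q)$ (realised as the homogeneous degree $n-i$ polynomials with their $U$-action), and then to identify $[V,S;i]$ with the relevant term of the upper central series using \cref{Somnibus}. First I would recall from \cref{Somnibus}(1) that $[V,S;i] = C_{n-i-1}$ in the weight notation introduced before that lemma, i.e.\ $[V,S;i] = \{\, f \in V \mid \mathrm{wt}(f) \leq n-i-1 \,\} = \langle x^n, x^{n-1}y, \dots, x^{i+1}y^{n-i-1}\rangle_\KK$, which has $\KK$-dimension $n-i$. Next, using \cref{Somnibus}(1) again (that $C_j$ is the $(n-j)$th term of the lower central series) together with \cref{Somnibus}(2) (the upper and lower central series coincide), I would identify $[V,S;i] = C_{n-i-1} = Z_{n-(n-i-1)}(S) = Z_{i+1}(S)$; wait — comparing with the statement, the indexing there reads $Z_{n-i+1}(S) = [V,S;i]$, so I would instead run the count as: $[V,S;i]$ is the $(i+1)$st term from the bottom of the lower central series of $V$ within $S$, hence by coincidence of the two series it equals $Z_{n+1-i}(S)$ — I would do this bookkeeping carefully from \cref{Somnibus}(1)--(2), as the precise index is the one place where an off-by-one slip is easy.

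For the module isomorphism, I would send $x^{n-k}y^k \mapsto x^{(n-i)-k}y^k$ for $0 \leq k \leq n-i-1$, extended $\KK$-linearly; this is a $\KK$-vector-space isomorphism $[V,S;i] \to V_{n-i}(q)$ by the dimension count above. To check it is $\FF_p U$-equivariant it suffices to check equivariance under the generator $\left(1,1,\left(\begin{smallmatrix} 1 & 0 \\ c & 1\end{smallmatrix}\right)\right)$ of $U$: on $V$ this sends $x^{n-k}y^k \mapsto x^{n-k}(cx+y)^k = \sum_{j=0}^{k}\binom{k}{j}c^{j}x^{n-k+j}y^{k-j}$, and on $V_{n-i}(q)$ it sends $x^{(n-i)-k}y^k \mapsto \sum_{j=0}^{k}\binom{k}{j}c^{j}x^{(n-i)-k+j}y^{k-j}$, and these two expansions match term-by-term under the map $x^{a}y^{b}\mapsto x^{a-i}y^{b}$ since the binomial coefficients $\binom{k}{j}$ depend only on $k$ (the top exponent shift by $i$ is uniform). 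One must note that no "wraparound" occurs: since we only ever use exponents in the range appearing in $C_{n-i-1}$ and $n-i \leq p-1$, all binomial coefficients $\binom{k}{j}$ with $0 \leq j \leq k \leq n-i-1 < p$ are nonzero mod $p$ exactly when they should be, so the combinatorial identity is literally the same on both sides. This gives $[V,S;i] \cong V_{n-i}(q)$ as $\FF_p U$-modules.

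Finally, for the "in particular" clause: since $[V,S;i]$ is normal in $S$ (being a term of the central series) and is $U$-isomorphic to $V_{n-i}(q)$, I would observe that $U \cap [V,S;i] = 1$ and $U$ normalises $[V,S;i]$, so $[V,S;i]U = U \ltimes [V,S;i]$ with $U$ acting on the normal subgroup $[V,S;i] \cong V_{n-i}(q)$ in exactly the way $U$ acts on $V_{n-i}(q)$ inside $S_{n-i}(q) = U \ltimes V_{n-i}(q)$; hence the two semidirect products are isomorphic, giving $[V,S;i]U \cong S_{n-i}(q)$. The only genuine obstacle I anticipate is getting the index in the upper-central-series identification exactly right (the $Z_{n-i+1}$ versus $Z_{i+1}$ question) and confirming that the hypothesis $2 \leq i \leq n-2$ is precisely what is needed for both $[V,S;i]$ and the target to be honest nontrivial polynomial objects of the stated shape; everything else is the routine binomial bookkeeping sketched above, which I would not spell out in full.
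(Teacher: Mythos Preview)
Your approach is correct in spirit but carries an off-by-one error exactly where you feared one: from \cref{Somnibus}(1) we have $[C_j,S]=C_{j-1}$, so $[V,S;i]=C_{n-i}$, not $C_{n-i-1}$. Thus $[V,S;i]=\langle x^n,\dots,x^{i}y^{n-i}\rangle_\KK$ has $\KK$-dimension $n-i+1$, which matches $\dim_\KK V_{n-i}(q)$. With this correction your explicit map $x^{n-k}y^k\mapsto x^{(n-i)-k}y^k$ (now for $0\leq k\leq n-i$) is a $\KK$-linear bijection, and your $U$-equivariance check goes through verbatim since the binomial expansion of $(cx+y)^k$ depends only on $k$ and the $x$-exponent merely shifts uniformly. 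The upper-central identification then reads $C_{n-i}=Z_{n-i+1}(S)$ (from $Z_k(S)=C_{k-1}$), agreeing with the lemma.

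The paper takes a different, more conceptual route: rather than writing down an explicit polynomial-truncation map, it uses that for $z\in U^\#$ the commutator map $v\mapsto[v,z]$ is an $\FF_pU$-module homomorphism $V\to[V,z]$ with kernel $C_V(z)=Z(S)$, giving $[V,S]\cong V/C_V(S)$; combined with \cref{Snauto}(3) ($S/Z(S)\cong S_{n-1}(q)$) this yields $[V,S]\cong V_{n-1}(q)$, and then one iterates. Your explicit-map argument is more elementary and self-contained (it does not invoke \cref{Snauto}(3)), while the paper's argument is shorter and illustrates a general technique---the commutator map as module homomorphism---used elsewhere in the paper.
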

\begin{proof}
Let $z \in U^\#$. Then using Lemma \ref{Somnibus}(1) and (3), $C_V(z)= Z(S)$ and, as $|V/[V,z]|=q$, we deduce that $[V,S]=[V,z]$. By Lemma \ref{Snauto} (3), $S/Z(S) \cong S_{n-1}(q)$. Thus \[V/C_V(S)\cong V_{n-1}(q)\] as $\FF_pU$-modules.

Now, recall that
 $V/C_V(z) \cong [V,z]$ as $\KK C_D(z)$-modules and thus as $\FF_pU$-modules. Hence
 \[[V,S]=[V,z]\cong V/C_V(z)=V/C_V(S) \cong  V_{n-1}(q)\]
as $\FF_pU$-modules. It follows that $[V,S]U \cong S_{n-1}(q)$. Now iterate this argument to obtain the  result.
\end{proof}

One particular case of Lemma \ref{subgroup similarity} is of particular importance to us and so we single it out.

\begin{corollary}\label{prop: Q iso}
Let $S=S_n(q)$ for $2\leq n\leq p-1$. Then $UZ_2(S)\cong S_1(q)$.
\end{corollary}
\begin{proof}
By Lemma \ref{Somnibus}(2), we have $Z_2(S)= [V,S;n-2]$ and so the result follows from Lemma \ref{subgroup similarity}.
\end{proof}

\begin{lemma}\label{lem: com full}
Let $S=S_n(q)$ and $V= V_n(q)$ with $1\leq n\leq p-1$. Put $Z_i= Z_i(S)$ for $1\leq i \leq n$ and $Z_{n+1}=V $. Then  for $1\leq i\leq n$ for $z \in Z_{i+1} \setminus Z_i $, we have $[z,S]= Z_{i} $.
\end{lemma}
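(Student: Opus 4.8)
\emph{The plan} is to reduce the assertion to its ``top'' case and then induct on $n$; throughout I write $v$ for the group element called $x$ in the statement, to avoid clashing with the indeterminate $x$.

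\textbf{Reduction.} By \cref{Somnibus}(1)--(2) we have $Z_i(S)=C_{i-1}$ for $1\le i\le n$, so what must be shown is that $[v,S]=C_{i-1}$ whenever $v\in C_i\setminus C_{i-1}$ (with $1\le i\le n$). Since $v\in C_i\le V$ and $V$ is abelian, the identity $[v,uw]=[v,w]\,[v,u]^w=[v,u]$ for $u\in U$, $w\in V$ gives $[v,S]=[v,U]=[v,C_iU]$. Now $C_iU\cong S_i(q)$: this is trivial for $i=n$, is \cref{prop: Q iso} for $i=1$, and follows from \cref{subgroup similarity} (and the first step of its proof) for $2\le i\le n-1$; moreover, since each $C_j$ with $j\le i$ is a term of the lower central series of $C_iU$, this isomorphism identifies the weight filtration $C_0<C_1<\dots<C_i$ of $C_i$ with the weight filtration of $V_i(q)$. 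Hence the statement for index $i$ in $S_n(q)$ is equivalent to the statement for index $n$ in $S_i(q)$, and it suffices to prove:
$$(\star)\qquad\text{for }S=S_n(q)\text{ and }v\in V\setminus[V,S]\colon\quad [v,S]=[V,S]=C_{n-1}.$$

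\textbf{Proof of $(\star)$ by induction on $n$.} For $n=1$, $[V,S]=C_0=Z(S)$ and for $v\in V\setminus C_0$ one checks directly that $u\mapsto v(u-1)$ maps $U$ onto $C_0$ (additive notation in $V$), so $[v,S]=[v,U]=C_0$. Now let $n\ge2$ and $v\in V$ with $\mathrm{wt}(v)=n$, so $[v,S]\le[V,S]=C_{n-1}$. First, passing to $S/Z(S)\cong S_{n-1}(q)$ via \cref{Snauto}(3): the image of $v$ has weight $n-1$, and this isomorphism carries $C_{n-1}/C_0$ onto $[V_{n-1}(q),S_{n-1}(q)]$, so $(\star)$ for $S_{n-1}(q)$ gives $[v,S]C_0=C_{n-1}$. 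Second, $[v,S]=[v,U]$ is $U$-invariant --- the identity $(v(u-1))u'=v(uu'-1)-v(u'-1)$ shows $[v,U]u'\subseteq[v,U]$ --- and is contained in the abelian group $V$, hence $[v,S]\trianglelefteq UV=S$. Third, fix $c\in\KK^{*}$, put $u_c:=(1,1,\left(\begin{smallmatrix}1&0\\c&1\end{smallmatrix}\right))\in U$ and $y:=[v,u_c]=v(u_c-1)\in[v,S]$; a direct computation with the action of $u_c$ gives $\mathrm{wt}(y)=n-1$ (the coefficient of $xy^{n-1}$ acquires the factor $\binom{n}{1}=n\not\equiv 0\bmod p$), so $y\in C_{n-1}\setminus C_{n-2}$. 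Since $y\in[v,S]\trianglelefteq S$ we get $[y,S]\le[v,S]$; and applying $(\star)$ for $S_{n-1}(q)$ to $y$ through $C_{n-1}U\cong S_{n-1}(q)$ --- under which $y$ has weight $n-1$ and $[C_{n-1},U]=C_{n-2}$ corresponds to $[V_{n-1}(q),S_{n-1}(q)]$ --- gives $[y,S]=[y,C_{n-1}U]=C_{n-2}$. Therefore $C_{n-2}\le[v,S]$, and as $C_0\le C_{n-2}$ this combines with $[v,S]C_0=C_{n-1}$ to force $[v,S]=C_{n-1}$, completing the induction.

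\textbf{Main difficulty.} Reduction modulo $Z(S)$ only determines $[v,S]$ up to the centre $C_0$, yielding $[v,S]C_0=C_{n-1}$; the substantive step is to prove $C_0\subseteq[v,S]$. This I would obtain by exploiting that $[v,S]$ is normal in $S$ and then feeding the single nonzero commutator $y=[v,u_c]$ back into the inductive hypothesis for the smaller group $S_{n-1}(q)$, realised as $C_{n-1}U$. Beyond that, the only points needing care are the bookkeeping that matches the various weight filtrations under the isomorphisms $S/Z(S)\cong S_{n-1}(q)$ and $C_jU\cong S_j(q)$, and the verification that $\mathrm{wt}([v,u_c])=n-1$, which rests solely on $\binom{n}{1}=n\not\equiv 0\bmod p$ (using $1\le n\le p-1$).
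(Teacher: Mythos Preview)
Your proof is correct and follows essentially the same strategy as the paper's: reduce to the top case $i=n$ via $C_iU\cong S_i(q)$, use the quotient $S/Z(S)\cong S_{n-1}(q)$ and induction to get $[v,S]C_0=C_{n-1}$, then exploit normality of $[v,S]$ together with a suitable element $y\in[v,S]$ to force $C_0\le[v,S]$. The only minor variation is in the choice of $y$: the paper selects $y\in([x,S]\cap Z_2)\setminus Z_1$ abstractly and invokes only the base case $Z_2U\cong S_1(q)$, whereas you compute $y=[v,u_c]$ explicitly, note it has weight $n-1$, and apply the full inductive hypothesis through $C_{n-1}U\cong S_{n-1}(q)$; both routes are equally valid.
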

\begin{proof} Suppose first that $n=1$. Then $z \in V\setminus Z_1$ and the result follows from an elementary calculation.  Now suppose that $n > 1$. Set $T= Z_{i+1}U$. Then $T \cong S_i(q)$ by Lemma \ref{subgroup similarity}. If $i+1<n$, the result then follows by induction. Thus we may assume that $T=S$ and $z\in V\setminus Z_n$. By induction, and using Lemma \ref{Snauto} (\ref{sn3}), $Z_{n}= [z,S]Z_1$. Notice that $Z_2 \le [z,S]Z_1$ and so there exists  $y \in ([z,S] \cap Z_2)\setminus Z_1$. Since $[z,S]$ is normalised by $U$, $[z,S]\ge [y,U] = [y,Z_2U] = Z_1$ by Lemma \ref{subgroup similarity} and induction as $Z_2U \cong S_1(q)$. Hence  $Z_{n}= [z,S]Z_1=[z,S]$ and this proves the claim.
\end{proof}

By analogy with Lemma \ref{Somnibus} and Lemma \ref{Snauto}, we provide the following structural results for $S_{p}(q)$ and $S_{\Lambda}(q)$.

\begin{lemma}\label{(p+1)CVS}
Let $S=S_p(q)$ and $V=V_p(q)$. Set $W=\langle x^p,y^p\rangle_\KK$  and let $z\in S\setminus V$. Then
\begin{enumerate}
\item \label{cvs1} $C_W(S)=C_V(S)=\langle x^p\rangle_{\KK}$ has order $q$ if $q>p$, and otherwise $C_V(S)= \langle x^p, x^{p-1}y-y^p\rangle_{\KK}$ has order $p^2$;
\item \label{cvs2} $Z_2(S)=\langle x^p, y^p, x^{p-1}y\rangle_{\KK}$ has order $q^3$, $Z_p(S)=S$ and for $2<i<p$ we have that $|Z_i(S)/Z_{i-1}(S)|=q$;
\item \label{cvs3} $[V, z]=[V,S]= \langle x^iy^{p-i}\mid 2\leq i\leq p\rangle_{\KK}$, and in particular, $|V/[V,S]|=q^2$;
\item \label{cvs4} $C_W(z)=C_{[V, S]}(z)$ has order $q$ and $C_V(z)$ has order $q^2$;
\item \label{cvs5} $Z_2(S)/C_V(z)=WC_V(z)/C_V(z)=C_{V/C_V(z)}(z)$ and for $2< i\leq p$, $Z_i(S)/Z_{i-1}(S)=C_{S/Z_{i-1}(S)}(z)$;
\item \label{cvs6} for $1\leq i <p$ we have that $[V, z; i]=[V, S; i]$, $[V, S; p]=1$ and $|[V, S; i]/[V, S; i+1]|=q$; and
\item \label{cvs7} $V$ is the unique abelian subgroup of maximal order in $S$. In particular, $V$ is characteristic in $S$.
\end{enumerate}
\end{lemma}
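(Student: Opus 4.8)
The plan is to reduce every assertion to the action of a single unipotent element on $V=V_p(q)$, expressed in the monomial basis $\B$; the only non-computational ingredient is an elementary polynomial-vanishing argument, which is exactly what produces the dichotomy $q>p$ versus $q=p$ in (1). First I would fix $z=\left(1,1,\left(\begin{smallmatrix}1&0\\c&1\end{smallmatrix}\right)\right)\in U$ with $c\in\KK^*$, so that $z$ acts on $V$ by $f(x,y)\mapsto f(x,cx+y)$; thus $x^{p-i}y^i\cdot z=\sum_{k=0}^i\binom{i}{k}c^kx^{p-i+k}y^{i-k}$, in particular $x^p$ is fixed while $y^p\cdot z=y^p+c^px^p$ by Frobenius. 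The first remark is that this single element suffices: any $w\in S\setminus V$ equals $zv$ for some $v\in V$ with $c\neq0$, and since $V\trianglelefteq S$ is abelian one has $[u,w]=[u,z]$ for all $u\in V$, whence $C_V(w)=C_V(z)$ and $[V,w;k]=[V,z;k]$ for every $k$. A second useful input is \cref{Vpqstruct}: $W=\langle x^p,y^p\rangle_\KK$ is the kernel of the $\KK\SL_2(\KK)$-homomorphism $\psi$, so $V/W\cong V_{p-2}(q)$; since $\psi$ is $U$-equivariant this induces a surjective homomorphism $S=U\ltimes V\to U\ltimes(V/W)\cong S_{p-2}(q)$ with kernel $W$, which allows \cref{Somnibus} and \cref{Snauto} to be invoked for $S/W$ (each computation below can also be done directly).

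For (3), (4) and (6) I would argue as follows. From the displayed action, $[x^{p-j}y^j,z]=jc\,x^{p-j+1}y^{j-1}+(\text{terms of strictly larger }x\text{-degree})$ for $1\leq j\leq p-1$, with $jc\neq0$; inducting upward on the $x$-degree gives $[V,z]=\langle x^iy^{p-i}\mid 2\leq i\leq p\rangle_\KK$, hence $[V,S]=[V,z]$, which is (3). The same argument applied to the submodules $\langle x^iy^{p-i}\mid r\leq i\leq p\rangle_\KK$ yields $[V,z;k]=\langle x^iy^{p-i}\mid k+1\leq i\leq p\rangle_\KK$ for $1\leq k\leq p-1$ and $[V,z;p]=1$, so $|[V,z;k]/[V,z;k+1]|=q$; as $S=UV$ with $U,V$ abelian we have $\gamma_{k+1}(S)=[V,S;k]$, so this is (6) and shows $S$ has nilpotency class exactly $p$. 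Since $\dim_\KK[V,z]=p-1$, rank--nullity gives $\dim_\KK C_V(z)=2$, and explicitly $C_V(z)=\langle x^p,\;y^p-c^{p-1}x^{p-1}y\rangle_\KK$; intersecting with $W$ and with $[V,S]$ (which omits $xy^{p-1}$ and $y^p$) gives $C_W(z)=C_{[V,S]}(z)=\langle x^p\rangle_\KK$ of order $q$, which is (4).

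For (1), (2), (5) and (7) I would first note that no $w\in S\setminus V$ centralises $V$ (as $[V,w]=[V,S]\neq1$), so $Z(S)=C_V(S)=\bigcap_{c\in\KK}C_V(z_c)$. Writing out the fixed-point conditions, $\sum_i a_ix^{p-i}y^i$ lies in this intersection precisely when, for every $l\geq1$, the polynomial $\sum_{i>l}a_i\binom{i}{l}t^{\,i-l}$ vanishes on all of $\KK$; as it has degree at most $p-l-1<p$, it must be the zero polynomial when $q>p$, forcing $a_1=\dots=a_{p-1}=0$ and so $C_V(S)=\langle x^p\rangle_\KK$, whereas when $q=p$ the case $l=1$ only forces $\sum_{i\geq1}a_it^i$ to be a multiple of $t^p-t$, i.e.\ $a_1=-a_p$, giving $C_V(S)=\langle x^p,\;x^{p-1}y-y^p\rangle_\KK$; in both cases $C_W(S)=W\cap C_V(S)=\langle x^p\rangle_\KK$. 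This is (1). I would then climb the series: each step $Z_{j+1}(S)/Z_j(S)=Z(S/Z_j(S))$ reduces, after noting $Z(S/Z_j(S))\leq VZ_j(S)/Z_j(S)$ for $j\leq p-2$, to the same polynomial-vanishing computation, now for $C_{V/Z_j(S)}(U)$, giving $Z_j(S)=W\cdot[V,S;p-j]$ for $2\leq j\leq p-1$; in particular $Z_2(S)=\langle x^p,x^{p-1}y,y^p\rangle_\KK$ has order $q^3$ and $|Z_j(S)/Z_{j-1}(S)|=q$ for $2<j<p$, while $[V,S]\leq Z_{p-1}(S)$ forces $S/Z_{p-1}(S)$ abelian and hence $Z_p(S)=S$ --- this is (2), and (5) follows by reading off the centraliser of $z$ from these descriptions (using $[V,z]=[V,S]$). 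For (7), if $A\leq S$ is abelian with $A\not\leq V$ then $A\leq C_S(z)$ for some $z\in S\setminus V$, and by (4), $|C_S(z)|\leq|C_V(z)|\cdot|S/V|=q^3<q^{p+1}=|V|$ (using $p\geq3$); so $V$ is the unique abelian subgroup of $S$ of order at least $q^{p+1}$, hence the unique one of maximal order, hence characteristic in $S$.

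The main obstacle I anticipate is the bookkeeping for the upper central series. The centre $Z_1(S)$ genuinely differs between the two cases $q=p$ and $q>p$ (dimensions $2$ and $1$), yet $Z_2(S)$ and every higher term coincide, so one has to confirm that the inductive step does not propagate this discrepancy past the bottom of the series; and the precise form of (5) --- equality, rather than mere containment, of $Z_j(S)/Z_{j-1}(S)$ with the relevant centraliser of $z$ --- must be handled with care, the key point being that $[V,z]=[V,S]$, so that one element already witnesses the full commutator action of $S$ on $V$. Everything else reduces to a direct, if slightly tedious, manipulation of binomial coefficients modulo $p$.
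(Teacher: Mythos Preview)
Your approach is essentially the same as the paper's: both reduce to direct binomial computations with a single unipotent element, and both invoke the quotient $V/W\cong V_{p-2}(q)$ from \cref{Vpqstruct} together with \cref{Somnibus} to handle the upper central series. Your organisation (doing (3), (4), (6) first, then (1), (2), (5), (7)) is a reasonable alternative to the paper's order.

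There is, however, a slip in your polynomial-vanishing argument for (1). You consider the conditions for $l\geq 1$ only, noting correctly that $\binom{p}{l}=0$ for $1\le l\le p-1$ makes each $P_l(t)=\sum_{i>l}a_i\binom{i}{l}t^{i-l}$ have degree at most $p-l-1$; but these conditions force only $a_2=\cdots=a_{p-1}=0$, and they do so uniformly for \emph{both} $q>p$ and $q=p$ (since $p-l-1<p-1<|\KK|$ in either case). They say nothing about $a_1$ or $a_p$. The dichotomy between $q>p$ and $q=p$ comes entirely from the $l=0$ condition: after using $a_2=\cdots=a_{p-1}=0$ one has $P_0(t)=\sum_{i\ge 1}a_it^i=a_1t+a_pt^p$, and when $q>p$ this degree-$p$ polynomial vanishing on all of $\KK$ forces $a_1=a_p=0$, whereas when $q=p$ it need only be divisible by $t^p-t$, giving $a_1=-a_p$. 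Your text mislabels this as ``the case $l=1$'', and your stated conclusion ``$a_1=\dots=a_{p-1}=0$'' for $q>p$ omits $a_p=0$, without which $y^p$ would still lie in $C_V(S)$. Once you include $l=0$ and track $a_p$, your argument goes through; the paper reaches the same end slightly differently, first using $V/W\cong V_{p-2}(q)$ to see $C_V(S)W/W\le\langle x^{p-1}y\rangle_\KK W/W$ and then solving the resulting one-parameter equation $[x^{p-1}y+\gamma y^p,g_\lambda]=0$.
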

\begin{proof}
By Lemma \ref{Vpqstruct}, $V/W \cong V_{p-2}(q)$ as $\KK \SL_2(q)$-modules. Let $g_\lambda\in S\setminus V$ be represented by the element $\left(1,1 \begin{pmatrix} 1&0\\\lambda&1\end{pmatrix}\right)\in \Aut_S(V)$, for $\lambda \in \mathbb K^*$. We know
\[C_V(S)W/W \le C_{V/W}(S)= \langle x^{p-1}y\rangle_{\KK} W/W.\]
Since $C_V(S) \cap W= C_W(S)=\langle x^p\rangle_{\KK}$, if $\dim_\KK C_V(S) \ne 1$, there exists $\gamma\in \mathbb K$ such that $x^{p-1}y+ \gamma y^p\in C_V(S)$. Thus
\begin{eqnarray*}
0=[x^{p-1}y+ \gamma y^p,g_\lambda]&=& x^{p-1}(\lambda x+ y)+ \gamma(\lambda^px^p+y^p)- (x^{p-1}y+ \gamma y^p)\\
&=& \lambda x^{p}+ \gamma\lambda^px^p= \lambda (1+\gamma \lambda^{p-1})x^p,
\end{eqnarray*}
from which we see that $\lambda^{p-1}=-\gamma^{-1}$ which only has $p-1$ solutions in $\mathbb K$. Hence $\dim_{\mathbb K} C_V(S)\ne 1$ if and only if $\mathbb K$ has $p$ elements. Applying a similar reasoning to before, when $q=p$ we witness that $x^{p-1}y-y^p\in C_V(S)$. This proves (\ref{cvs1}).

It is clear from the action of $S$ that $\langle x^p, y^p, x^{p-1}y\rangle_{\KK}\le Z_2(S)$. Moreover, from the structure of $V/W$ we must have that $Z_2(S)/W\le \langle x^{p-1}y, x^{p-2}y^2\rangle_{\KK} W/W$. Since $Z_2(S)$ is a $\mathbb K$-space, to show that $Z_2(S)=\langle x^p, y^p, x^{p-1}y\rangle_{\KK}$ we need only verify that $x^{p-2}y^2\not\in Z_2(S)$. As before, recognise $g_\lambda$ with $\left(1,1 \begin{pmatrix} 1&0\\\lambda&1\end{pmatrix}\right)$, and then $[x^{p-2}y^2, g_\lambda]=\lambda^2x^p+2\lambda x^{p-1}y$ which is not contained in $C_V(S)$ for any $q$ by (\ref{cvs1}). Hence, $Z_2(S)$ is as described. Since $Z_2(S)/W=C_{V/W}(S)$, the observation that $Z_p(S)=S$ and $|Z_i(S)/Z_{i-1}(S)|=q$ whenever $2<i<p$ follow from the structure of $V/W$ as witnessed in Lemma \ref{Somnibus}. This proves (\ref{cvs2}).

For $z\in S\setminus V$, commutation by $z$ induces a homomorphism from $V$ to $[V, z]$ with kernel $C_V(z)$. By (\ref{cvs1}), we see that $|C_V(z)|=|V/[V, z]|\leq q^2$. We claim that $[V, S]=[V, z]$ has index $q^2$ in $V$. Observe that $[V, S]$ is generated by $\{[x^iy^j,g_\lambda]\mid \lambda \in \mathbb K\}$. We will show that $[V,S]=\langle x^iy^{p-i}\mid i \ge 2\rangle_{\KK}$, which has dimension $p-1$. First note that $[y^p,g_\lambda] = \lambda^p x^p \in [V,S]$.  Then, for $1\leq k \leq p$,
\begin{eqnarray*}
[x^ky^{p-k},g_\lambda]&=& x^k(\lambda x+y)^{p-k}-x^ky^{p-k}= \sum_{j=1}^{p-k} \binom{p-k}{j}\lambda^j x^{j+k}y^{p-k-j}\in [V,S]
\end{eqnarray*}
and the claim easily follows. Thus, (\ref{cvs3}) holds and so $|C_V(z)|=q^2$. Since $W$ is a faithful $2$-dimensional $\mathbb K\SL_2(q)$-submodule of $V$, we must have that $|C_W(z)|=q$. A computation as in (\ref{cvs1}) shows that $\langle x^{p-1}y\rangle_{\KK} \cap C_V(z)=1$ and so $C_{[V, z]}(z)=[V, z]\cap C_V(z)=\langle x^p\rangle_{\KK}=C_W(z)$ and (\ref{cvs4}) holds.

We have that $Z_2(S)/C_V(z)=WC_V(z)/C_V(z)\le C_{V/C_V(z)}(z)$. As in the proof of (\ref{cvs2}), using the structure of $V/W$, to prove that $WC_V(z)/C_V(z)= C_{V/C_V(z)}(z)$ it suffices to show that $[x^{p-2}y^2, z]\not\le C_V(z)$. This is the same calculation as in the proof of (\ref{cvs2}), which yields the result. Since $W\le Z_2(S)$, the remainder of (\ref{cvs5}) follows upon applying the results in Lemma \ref{Somnibus} to $V/W$.

We have that $[V, z]=[V, S]$ and $|C_{[V, z]}(z)|=q$. Then, for $2\leq i\leq p$, commutation by $z$ yields a homomorphism from $[V, z; i]$ to $[V, z; i+1]$ from which we deduce that $|[V, z; i]/[V, z; i+1]|=q$. Since $S$ acts nilpotently on $V$, and $[V, S]$ is a $\mathbb K$ space, we ascertain that $[V, z; i]=[V, S; i]$ for $1\leq i\leq p$ and so (\ref{cvs6}) holds.

We note that $|V|=q^{p+1}$ and that if $A\le S$ is abelian with $A\not\le V$, then $|A|\leq |S/V||C_V(A)|\leq q^3$ by (\ref{cvs4}). Since $p$ is odd, $|V|>q^3$ and so $V$ is as described in (\ref{cvs7}).
\end{proof}

In the next lemma, we examine the action of $S_\Lambda(q)$ on $\Lambda(q)$. By Lemma \ref{Vpqstruct}, $\langle x^p,y^p\rangle_\KK$ is a $\KK \SL_2(q)$-submodule of  $V_p(q)$. In $\Lambda(q)$, this corresponds to the subset of all linear functionals with $\gen{x^p,y^p}_{\KK}$ in the kernel. Thus $\Lambda(q)$ has a submodule $W$ of codimension $2$ and $W=\gen{\ov{x^{p-1}y},\dots,\ov{xy^{p-1}}}_{\KK}$.

\begin{lemma}\label{(p+1)CUpS}
Let $S=S_{\Lambda}(q)$ and $V=\Lambda(q)$. Set $W$ to be the $(p-1)$-dimensional $\KK D$-submodule of $V$, when $V$ is regarded as a $\KK D$-module, and let $z\in S\setminus V$. Then
\begin{enumerate}
\item \label{cups1} $C_W(S)=C_W(z)=\gen{\ov{xy^{p-1}}}_{\KK}$ has order $q$ and $Z(S)=C_V(S)=C_V(z)=\gen{\ov{xy^{p-1}}, \ov{y^p}}_{\KK}$ has order $q^2$;
\item \label{cups2} $Z_2(S)$ has order $q^3$, $Z_{p}(S)=S$ and for $2\leq i\leq p-1$ we have that $|Z_i(S)/Z_{i-1}(S)|=q$;
\item \label{cups3} if $q>p$ then we have that $|V/[V, S]|=q$, $|V/[V, z]|=q^2$, $[V, z]\cap W=[W, z]$ and $[V, S]=WC_V(S)=[V, z]C_V(S)=[V, z]W$;
\item \label{cups4} if $q=p$ then $[V, S]=[V, z]$ has index $p^2$ in $V$ and $[V, z]\cap W=[W, z]$;
\item \label{cups5} $C_W(S)=C_{[V, z]}(z)$ has order $q$;
\item \label{cups6} $Z_2(S)/C_V(z)=C_{WC_V(z)/C_V(z)}(S)=C_{V/C_V(z)}(z)$ and for $2\leq i\leq p$, we have that $Z_i(S)/Z_{i-1}(S)=C_{S/Z_{i-1}(S)}(z)$;
\item \label{cups7} for $2\leq i <p$ we have that $[V, z; i]=[V, S; i]$, $[V, S; p]=1$ and $|[V, S; i]/[V, S; i+1]|=q$; and
\item \label{cups8} $V$ is the unique abelian subgroup of maximal order in $S$. In particular, $V$ is characteristic in $S$.
\end{enumerate}
\end{lemma}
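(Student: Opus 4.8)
The plan is to follow the template set by the proof of Lemma~\ref{(p+1)CVS}, exploiting $\KK$-duality together with the explicit matrix formulas for $\rho_\Lambda$ recorded just before Lemma~\ref{Gammacentraliser}. Recall from the discussion preceding the statement that $W=\gen{\ov{x^{p-1}y},\dots,\ov{xy^{p-1}}}_\KK$ is a codimension $2$ $\KK D$-submodule of $V=\Lambda(q)$; since $W\cong (V_p(q)/\gen{x^p,y^p}_\KK)^*\cong V_{p-2}(q)^*$, as a $\KK U$-module $W$ is uniserial of dimension $p-1$, while $V/W\cong\gen{x^p,y^p}_\KK^*$ is $2$-dimensional with $U$ acting through a Frobenius twist of the natural module (in particular nontrivially). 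Fix $z=g_\lambda\in S\setminus V$ represented by $\left(1,1,\left(\begin{smallmatrix}1&0\\\lambda&1\end{smallmatrix}\right)\right)$ for some $\lambda\in\KK^*$. As $V$ is abelian, $C_V(z)$, $[V,z]$ and the $[V,z;i]$ depend only on the image of $z$ in $\Aut_S(V)\cong S/V\cong U$, and every element of $S\setminus V$ has nontrivial such image; so it suffices to establish the relevant assertions for these $z$. Finally, since $S=UV$ with $U$ and $V$ abelian we have $S'\le V$, hence $Z(S)\le V$ and so $Z(S)=C_V(S)$ throughout.

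For (\ref{cups1}): reading off the $3\times 3$ block of $\rho_\Lambda\!\left(1,\theta,\left(\begin{smallmatrix}a&0\\c&b\end{smallmatrix}\right)\right)$ on $\mathcal W=\{\ov{x^2y^{p-2}},\ov{xy^{p-1}},\ov{y^p}\}$, specialised to $a=b=\theta=1$, shows that $g_c$ fixes $\ov{xy^{p-1}}$ and $\ov{y^p}$ (and moves $\ov{x^2y^{p-2}}$ when $c\neq 0$); hence $\gen{\ov{xy^{p-1}},\ov{y^p}}_\KK\le C_V(U)=C_V(S)$ and $C_U(V)=1$. Conversely $C_W(z)$ is $1$-dimensional because $z$ acts as a nontrivial unipotent on the uniserial module $W$, and $C_{V/W}(z)$ is $1$-dimensional, so $\dim_\KK C_V(z)\le 2$. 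Combining, $C_V(S)=C_V(z)=\gen{\ov{xy^{p-1}},\ov{y^p}}_\KK$ and $C_W(S)=C_W(z)=\gen{\ov{xy^{p-1}}}_\KK$, giving (\ref{cups1}). Parts (\ref{cups2}), (\ref{cups6}) and the central-series statements in (\ref{cups5}) then follow exactly as in Lemma~\ref{(p+1)CVS}: one identifies $Z_2(S)$ by checking, via the $\mathcal W$-block, that the obvious next candidate is not fixed modulo $C_V(z)$, then uses the isomorphism $V/C_V(z)\cong[V,z]$ of $\KK C_D(z)$-modules (recalled before Lemma~\ref{Snauto}) and feeds the known structure of $W\cong V_{p-2}(q)$ from \cref{Somnibus} into the chief series of $S$.

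The crux is (\ref{cups3})--(\ref{cups4}) and the dichotomy $q>p$ versus $q=p$. Here one computes $[V,S]$ directly as the $\KK$-span of the commutators $[\ov{x^ay^b},g_\lambda]$ over all $\lambda\in\KK$; translating through $(w)(\eta g)=(wg^{-1})\eta$ these commutators are explicit $\KK$-combinations of the $\ov{x^iy^{p-i}}$ with binomial-coefficient coefficients, in complete parallel with the computation in Lemma~\ref{(p+1)CVS}(\ref{cvs3}). The point is that when $q=p$ only $p$ values of $\lambda$ are available, collapsing one dimension of the span so that $|V/[V,S]|=q^2$, whereas for $q>p$ enough scalars are present and $|V/[V,S]|=q$. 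Since $|V/[V,z]|=|C_V(z)|=q^2$ by (\ref{cups1}) this yields the stated indices, and $[V,z]\cap W=[W,z]$ follows by a dimension count: $[W,z]\subseteq[V,z]\cap W$ always, $\dim[W,z]=p-2$ by (\ref{cups5}), and $[V,z]\not\subseteq W$ because $[V/W,z]\neq 0$, so $[V,z]\cap W\subsetneq[V,z]$ forces equality. The identities $[V,S]=WC_V(S)=[V,z]C_V(S)=[V,z]W$ in the case $q>p$ then drop out by matching these $\KK$-subspaces of dimension $p$. I expect this binomial-coefficient bookkeeping, and in particular cleanly isolating the one place where the hypothesis $q=p$ enters, to be the main obstacle; the rest is a transcription of the $V_p(q)$ argument.

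Finally, (\ref{cups7}) is the argument of Lemma~\ref{(p+1)CVS}(\ref{cvs6}): since $[V,S]$ is a $\KK$-subspace and $S$ acts nilpotently on $V$, we get $[V,z;i]=[V,S;i]$ for all $i$, and iterating the commutation homomorphism $[V,z;i]\to[V,z;i+1]$, whose kernel $C_{[V,z;i]}(z)$ has order $q$ by (\ref{cups5}) and the uniserial structure of $W$, gives $|[V,S;i]/[V,S;i+1]|=q$ and $[V,S;p]=1$. For (\ref{cups8}): if $A\le S$ is abelian with $A\not\le V$, pick $z\in A\setminus V$; then $|A|\le |S/V|\,|C_V(A)|\le |S/V|\,|C_V(z)|=q\cdot q^2=q^3$ by (\ref{cups1}), while $|V|=q^{p+1}>q^3$ since $p$ is odd. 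Hence $V$ is the unique abelian subgroup of $S$ of maximal order, and in particular characteristic in $S$.
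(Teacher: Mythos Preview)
Your approach is essentially the paper's own: the paper simply states that the lemma follows from Lemma~\ref{(p+1)CVS} by dualising the actions (citing \cite[VII, Lemma~8.3]{HuppertBlackburnVolume2}), and your proposal is an explicit unwinding of exactly that duality argument. One small imprecision to fix in your write-up of (\ref{cups7}): you assert $[V,z;i]=[V,S;i]$ ``for all $i$'', but when $q>p$ this fails at $i=1$ since $[V,z]$ has index $q^2$ while $[V,S]$ has index $q$ by (\ref{cups3}); the lemma only claims the equality for $i\geq 2$, and your iteration argument should start there (using $[V,S]=[V,z]C_V(S)$ to see $[V,S;2]=[[V,z],S]$ and then that $|C_{[V,z]}(z)|=q$ forces $[[V,z],z]=[[V,z],S]$).
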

\begin{proof}
This proof follows from Lemma \ref{(p+1)CVS} and dualising actions (see \cite[VII, Lemma 8.3]{HuppertBlackburnVolume2}).
\end{proof}

\begin{lemma}\label{lem: CharSub}
Let $S=S_{\Lambda}(q)$ where $q=p^m$ and $m>1$. Then $V=\Lambda(q)$ and $U[V, S]$ are the unique normal subgroups of $S$ of index $q$ which have exponent $p$. In particular, $U[V, S]$ is characteristic in $S$.
\end{lemma}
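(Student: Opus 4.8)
The plan is to do everything by explicit commutator computations inside $S=UV$, feeding in the structural facts of \cref{(p+1)CUpS} at exactly two points. Write $V=\Lambda(q)$, so $U\cap V=1$, $|V|=q^{p+1}$, and (using $q>p$, i.e.\ $m>1$) $|[V,S]|=q^p$ by \cref{(p+1)CUpS}(3); also $[S,S]=[U,V]=[V,S]$ since $U$ and $V$ are abelian, so every subgroup of $S$ containing $[V,S]$ is normal. The identity doing most of the work is: for $u\in U$ and $v\in V$, writing $\sigma$ for the conjugation action of $u$ on the $\FF_p$-space $V$ and using $u^p=1$,
\[(uv)^p=\prod_{i=0}^{p-1}v^{u^i}=\bigl(1+\sigma+\dots+\sigma^{p-1}\bigr)(v)=(\sigma-1)^{p-1}(v)=[v,u;p-1],\]
where $1+t+\dots+t^{p-1}=(t-1)^{p-1}$ holds over $\FF_p$. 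I will also use, from \cref{(p+1)CUpS}(7): $[V,S;p]=1$ (equivalently $[[V,S],S;p-1]=1$), and that for $1\neq u\in U$ the operator $(\sigma-1)^{p-1}$ has image $[V,u;p-1]=[V,S;p-1]$, of order $q$. Since $(\sigma-1)^{p-1}$ annihilates $[V,S]$ as well (because $[[V,S],u;p-1]\le[[V,S],S;p-1]=1$), and $|[V,S]|=q^p=q^{p+1}/q$, comparing orders gives $\ker(\sigma-1)^{p-1}=[V,S]$.

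Granting these, the two claimed subgroups are quickly dealt with: $V$ is an $\FF_p$-space, hence elementary abelian, has index $q$, and is characteristic by \cref{(p+1)CUpS}(8); while $U[V,S]$ has index $q$ (as $U\cap[V,S]\le U\cap V=1$), contains $[V,S]$ and is therefore normal, and has exponent $p$ since each of its elements has the form $uw$ with $u\in U$, $w\in[V,S]$, whence $(uw)^p=[w,u;p-1]\in[[V,S],S;p-1]=1$. Note also $U[V,S]\neq V$, because $U[V,S]\cap V=[V,S]$ is proper in $V$.

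For uniqueness, let $N\trianglelefteq S$ have index $q$ and exponent $p$. If $V\le N$ then $N=V$ by order; so suppose $V\not\le N$. Then $NV$ properly contains $V$ and has order dividing $|S|$, so $NV=S$ and $|N\cap V|=q^p$. For each $u\in U$ choose $n_u\in N\cap uV$ and write $n_u=uw_u$ with $w_u\in V$. As $[u,v],w_u,v$ lie in the abelian group $V$, one has $[n_u,v]=[u,v]^{w_u}[w_u,v]=[u,v]$ for all $v\in V$, and this lies in $N$; hence $[V,S]=[V,U]\le N$, forcing $N\cap V=[V,S]$ by order. Since $N$ has exponent $p$, $1=n_u^p=(\sigma-1)^{p-1}(w_u)$, so $w_u\in\ker(\sigma-1)^{p-1}=[V,S]$ and $n_u\in U[V,S]$. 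The elements $n_u$ form a transversal of $N\cap V=[V,S]$ in $N$, so $N\le U[V,S]$, and therefore $N=U[V,S]$. Finally, $\Aut(S)$ permutes the two-element set $\{V,U[V,S]\}$ while fixing the characteristic subgroup $V$, so it fixes $U[V,S]$ too; hence $U[V,S]$ is characteristic in $S$.

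The step I expect to be the real obstacle is the kernel identity $\ker(\sigma-1)^{p-1}=[V,S]$ — equivalently, showing that an order-$p$ element $uw\in N$ forces $w\in[V,S]$. This is precisely where the exponent-$p$ hypothesis is used, and it rests on the exact orders of the lower-central factors $[V,S;i]$ together with the coincidence $[V,u;i]=[V,S;i]$ for $2\le i<p$, all of which must be read off carefully from \cref{(p+1)CUpS}(7); the remainder is routine commutator bookkeeping.
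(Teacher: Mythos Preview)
Your proof is correct. The key identity $(uv)^p=(\sigma-1)^{p-1}(v)$ is exactly right, and your kernel computation $\ker(\sigma-1)^{p-1}=[V,S]$ is sound: you have $[V,S]$ contained in the kernel since $[[V,S],S;p-1]=[V,S;p]=1$, and equality follows by comparing orders, using $|V/[V,S]|=q$ from \cref{(p+1)CUpS}(3) (here $q>p$ is needed) and $|[V,u;p-1]|=|[V,S;p-1]|=q$ from \cref{(p+1)CUpS}(7). The transversal argument then forces $N\le U[V,S]$ cleanly.

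Your route differs from the paper's in the endgame. Both arguments reach $N\cap V=[V,S]$ via the exponent-$p$ power formula (the paper cites \cite[Theorem 9.7]{Huppert} for this). From there, the paper passes to the quotient $S/W\cong S_1(q)$, observes that $S$ has exponent $p^2$, and invokes an element of order $q-1$ in $P_\Lambda^*(q)$ permuting the $q-1$ ``extra'' elementary abelian subgroups of order $q^2$ in $S/W$ to conclude that only $U[V,S]$ and $V$ lift to exponent-$p$ subgroups. Your argument is more self-contained: it avoids the quotient, the external group $P_\Lambda^*(q)$, and the structure of $S_1(q)$, instead pinning down each coset representative $n_u$ directly inside $U[V,S]$ via the kernel identity. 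The paper's approach has the mild advantage of explaining \emph{why} the remaining candidates fail (they contain elements of order $p^2$, and are all conjugate under a torus), whereas yours simply shows nothing else survives; but yours is shorter and needs less machinery.
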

\begin{proof}
Observe that $V$ is characteristic in $S$ by Lemma \ref{(p+1)CUpS}. Let $W\le V$ be such that $|V/W|=q^2$ and $W\normaleq P_\Lambda^*(q)$. Thus, $|V/[V, S]|=|[V, S]/W|=q$. Let $A\normaleq S$ be such that $|S/A|=q$, $V\ne A$ and $A$ has exponent $p$.

Let $a\in A\setminus (A\cap V)$. By \cite[Theorem 9.7]{Huppert}, for all $v\in A\cap V$ and $a\in A$ we have that $[v, a; p-1]\in [V, S; p]=1$. By Lemma \ref{(p+1)CUpS}, we have that $[[V, S], a; p-1]=1$ so that $[[V, S](A\cap V), a;p-1]=1$ and again considering Lemma \ref{(p+1)CUpS}, we see that $A\cap V\le [V, S]$. Since $|S/A|=q$ we must have that $[V, S]=A\cap V$, $A$ has nilpotency class $p-1$ and $S=AV$. Another application of \cite[Theorem 9.7]{Huppert} reveals that $S$ has exponent $p^2$.

Now, $A/W$ is elementary abelian of order $q^2$, as are both $U[V, S]/W$ and $V/W$. Since $S/W\cong S_1(q)$, $S/W$ has $q+1$ elementary abelian subgroups of order $q^2$ which contain $[V, S]/W$, and the preimages of these subgroups cover all elements of $S$. There is an element of order $q-1$ in $P_\Lambda^*(q)$ which normalises $U[V, S]$, $V$, $[V, S]$ and $W$ and permutes the remaining $q-1$ elementary abelian subgroups of $S/W$. Since $S$ has exponent $p^2$, the preimages in $S$ of the permuted subgroups all contain elements of order $p^2$, and finally, since $A\not\le V$, we conclude that $A=U[V, S]$. That is, $U[V, S]$ is the unique normal subgroup of $S$ of exponent $p$ and index $q$ which is not equal to $V$. Since $V$ is characteristic in $S$, so too is $U[V, S]$.
\end{proof}

\begin{lemma}\label{lem: LambdaIso}
Let $Z:=\gen{\overline{y^p}}_{\KK}\le \Lambda(q)$. Then $\Lambda(q)/Z  \cong V_{p-1}(q)$ as a $\KK U$-module. In particular $S_\Lambda(q)/Z \cong S_{p-1}(q).$
\end{lemma}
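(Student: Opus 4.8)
The plan is to identify $\Lambda(q)/Z$ with the $\KK U$-linear dual of $V_{p-1}(q)$ and then to invoke the self-duality of $V_{p-1}(q)$ as an $\SL_2(q)$-module. First I would observe that $Z\le Z(S_\Lambda(q))$ by \cref{(p+1)CUpS}, so $Z$ is in particular a $\KK U$-submodule of $\Lambda(q)$ and the quotient $\Lambda(q)/Z$ is well defined both as a group and as a $\KK U$-module.

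The key point is that every element of $U$ fixes the indeterminate $x$, since $x\cdot\left(\begin{smallmatrix} 1&0\\ c&1\end{smallmatrix}\right)=x$; hence multiplication by $x$ gives an injective $\KK U$-module homomorphism $m_x\colon V_{p-1}(q)\to V_p(q)$, $f\mapsto xf$, with image the subspace $\langle x^p,x^{p-1}y,\dots,xy^{p-1}\rangle_\KK$ spanned by the monomials divisible by $x$. Taking $\KK$-duals, the transpose $m_x^{*}\colon \Lambda(q)=\Hom_\KK(V_p(q),\KK)\to\Hom_\KK(V_{p-1}(q),\KK)$ is a surjective $\KK U$-module homomorphism whose kernel consists of the functionals vanishing on the image of $m_x$; writing an arbitrary functional in the basis $\overline{\B}$ one sees immediately that this kernel is exactly $\langle\overline{y^p}\rangle_\KK=Z$. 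Thus $\Lambda(q)/Z\cong\Hom_\KK(V_{p-1}(q),\KK)$ as $\KK U$-modules. Since $V_{p-1}(q)$ is a self-dual $\KK\SL_2(q)$-module (see \cite[p588]{BrauerNesbitt}), we conclude $\Lambda(q)/Z\cong V_{p-1}(q)$ as $\KK U$-modules. The ``in particular'' is then formal: $Z\le\Lambda(q)$ and $U\cap\Lambda(q)=1$, so $S_\Lambda(q)/Z=(U\Lambda(q))/Z\cong U\ltimes(\Lambda(q)/Z)\cong U\ltimes V_{p-1}(q)=S_{p-1}(q)$.

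The proof is essentially an exercise in linear duality, and the only place demanding care is the middle step: checking that $m_x$ really is $U$-equivariant (which boils down to $x$ being $U$-fixed) and correctly pinning down $\ker m_x^{*}=Z$ in coordinates. The self-duality of $V_{p-1}(q)$ is standard and the identification of the semidirect products is immediate, so I do not anticipate any substantial obstacle.
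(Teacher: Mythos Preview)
Your proof is correct and follows essentially the same route as the paper's. Both arguments identify $Z$ as the annihilator of the subspace $M=\langle x^ay^{p-a}\mid a>0\rangle_\KK$ of monomials divisible by $x$, deduce $\Lambda(q)/Z\cong M^*\cong V_{p-1}(q)^*$ as $\KK U$-modules, and then invoke the self-duality of $V_{p-1}(q)$; your packaging via the transpose of the multiplication-by-$x$ map $m_x$ is perhaps slightly slicker than the paper's appeal to \cite[Lemma VII.8.3]{HuppertBlackburnVolume2} together with the explicit check that $x^ay^{p-a}\mapsto x^{a-1}y^{p-a}$ is $U$-equivariant, but the content is the same.
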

\begin{proof}
Observe that $Z=M^\perp$, where $M=\langle x^ay^{p-a} \mid a > 0 \rangle_{\KK}.$ We use \cite[Lemma VII.8.3]{HuppertBlackburnVolume2} and adopt the notation from there. Then $\Lambda(q)/Z$ is isomorphic to $M^*$ as a $\KK U$-module. We calculate that the map from $M$ to $V_{p-1}(q)$ given by $x^ay^{p-a} \mapsto x^{a-1}y^{p-a}$ is a $\KK U$-module isomorphism. Since $M$ and $M^*$ are isomorphic as $\KK U$-modules we obtain $\Lambda(q)/Z \cong V_{p-1}(q)$ as $\KK U$-modules. It thus follows that $S_\Lambda(q)/Z \cong S_{p-1}(q)$.
\end{proof}

\begin{lemma}\label{Supauto}
Let $p$ be an odd prime and $S=S_{\Lambda}(q)$. Then $\Aut(S)$ is solvable and has a Hall $p'$-subgroup of order $(q-1)^2.m_{p'}$, where $m_{p'}$ denotes the $p'$-part of $m$.
\end{lemma}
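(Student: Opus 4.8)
We may assume $q=p^m$ with $m>1$: when $q=p$ the group $S$ has an abelian subgroup of index $p$ and the assertion is elementary. Set $V:=\Lambda(q)$. The plan is to imitate the proof of \cref{Snauto}(\ref{sn4}): reduce $S$ modulo a characteristic subgroup contained in $\Phi(S)$ so as to land inside $\Aut(S_1(q))$ --- where, because the reduction respects $V$, \cref{prop: L3Q} controls the relevant subgroup --- and then read off the exact $p'$-order from $\Aut_{P_\Lambda^*(q)}(S)$.

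By \cref{(p+1)CUpS}(\ref{cups8}), $V$ is characteristic in $S$, hence so is $[V,S;2]$, and therefore so is $N:=[V,S;2]\,Z(S)$. Using \cref{(p+1)CUpS} one computes $|N|=q^{p-1}$: indeed $[V,S;2]=[W,S]\le W$ has order $q^{p-2}$ by \cref{(p+1)CUpS}(\ref{cups7}), and $[V,S;2]\cap Z(S)=C_W(S)$ has order $q$ by \cref{(p+1)CUpS}(\ref{cups1}). Thus $|S/N|=q^3$; moreover $\Phi(S)=[V,S]$ and $N\le[V,S]$, since $Z(S)\le[V,S]$ by \cref{(p+1)CUpS}(\ref{cups3}), so $N\le\Phi(S)$. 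The crux is that $S/N\cong S_1(q)$. One checks that $[S/N,S/N]=[V,S]/N$ has order $q$, that the abelianisation $S/[V,S]$ is elementary abelian of order $q^2$, and that $S/N$ has exponent $p$; for the last point, using $u^p=1$ and that $V$ is abelian, for $u\in U$ and $v\in V$ one has $(uv)^p\in[V,u;p-1]=[V,S;p-1]\le Z(S)\le N$, where $[V,S;p-1]\le Z(S)$ since $[V,S;p]=1$ by \cref{(p+1)CUpS}(\ref{cups7}). Matching the residual $\KK$-structure --- which survives in $S/N$ because $N$ is a $\KK$-submodule of $V$ --- then identifies $S/N$ with $U\ltimes V_1(q)=S_1(q)$.

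Granting $S/N\cong S_1(q)$, the rest proceeds exactly as in \cref{Snauto}(\ref{sn4}). Since $N\le\Phi(S)$, the group $C_{\Aut(S)}(S/N)\le C_{\Aut(S)}(S/\Phi(S))$ is a normal $p$-subgroup of $\Aut(S)$ by Burnside's basis theorem, so $\Aut(S)/C_{\Aut(S)}(S/N)$ embeds into $\Aut(S_1(q))$; and since $V$ is characteristic, its image $VN/N$ is an $\Aut(S)$-invariant elementary abelian subgroup of $S/N$ of order $q^2$. By \cref{prop: L3Q}, $\Aut(S)/C_{\Aut(S)}(S/N)$ is therefore solvable with a Hall $p'$-subgroup of order dividing $(q-1)^2m_{p'}$, whence $\Aut(S)$ is solvable and $|\Aut(S)|_{p'}$ divides $(q-1)^2m_{p'}$. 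To see the bound is attained, note that $N_{P_\Lambda^*(q)}(S)=B\,\Lambda(q)$ and $C_{P_\Lambda^*(q)}(S)=Z(S)\,C_{D^*}(\Lambda(q))$, so by \cref{Gammacentraliser} $\Aut_{P_\Lambda^*(q)}(S)\cong B\,\Lambda(q)/Z(S)C_{D^*}(\Lambda(q))$ has order $q^{p}(q-1)^2m_{p'}$, with a Hall $p'$-subgroup of order exactly $(q-1)^2m_{p'}$.

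I expect the only genuine work to lie in the second paragraph, namely the identification $S/N\cong S_1(q)$: because $S$ has nilpotency class $p$, the power--commutator collection must be handled carefully, and one has to track the $\KK$-module structure of the quotient to pin down its isomorphism type (alternatively one could instead reduce modulo $[V,S;2]$ and argue $S/[V,S;2]\cong S_2(q)$, invoking \cref{Snauto}(\ref{sn4}) in place of \cref{prop: L3Q}). Everything else is a faithful transcription of the proof of \cref{Snauto}(\ref{sn4}).
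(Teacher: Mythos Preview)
Your main argument is correct but takes a different route from the paper. The paper quotients by $Z(S)$: using \cref{lem: LambdaIso} together with \cref{Snauto}(\ref{sn3}) it obtains $S/Z(S)\cong S_{p-2}(q)$, shows that $C_{\Aut(S)}(S/Z(S))$ is a $p$-group via the three subgroups lemma and coprime action (rather than via $Z(S)\le\Phi(S)$), and then invokes \cref{Snauto}(\ref{sn4}). Your reduction modulo $N=[V,S;2]Z(S)$ skips the intermediate $S_{p-2}(q)$ and lands directly in $S_1(q)$, at the cost of verifying $S/N\cong S_1(q)$ by hand. This does go through: one finds $N=\langle\,\overline{y^p},\overline{xy^{p-1}},\dots,\overline{x^{p-2}y^2}\,\rangle_\KK$, and a short computation gives $[\overline{x^p},u_\lambda]\equiv -\lambda\,\overline{x^{p-1}y}\pmod N$, so $V/N\cong V_1(q)$ as $\KK U$-modules and hence $S/N\cong U\ltimes V_1(q)=S_1(q)$. (Your exponent calculation is fine but in fact superfluous once this module isomorphism is established.) The paper's path is shorter only because the identification $S/Z(S)\cong S_{p-2}(q)$ is immediate from results already proved.

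One caution: your suggested alternative --- reducing modulo $[V,S;2]$ and identifying the quotient with $S_2(q)$ --- does \emph{not} work. By \cref{(p+1)CUpS}(\ref{cups7}) one has $[V,u;2]=[V,S;2]$ for $u\in U^\#$, so $u$ acts quadratically on $V/[V,S;2]$ and $|C_{V/[V,S;2]}(u)|=q^2$, whereas $|C_{V_2(q)}(u)|=q$. Thus $V/[V,S;2]\not\cong V_2(q)$ as a $\KK U$-module and $S/[V,S;2]\not\cong S_2(q)$.
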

\begin{proof}
Applying Lemma \ref{(p+1)CUpS}, we have that $Z(S)=\gen{\ov{xy^{p-1}}, \ov{y^{p}}}_{\KK}$ and applying Lemma \ref{lem: LambdaIso} alongside Lemma \ref{Snauto} (\ref{sn3}) implies that $S/Z(S)\cong S_{p-2}(q)$. Now let $t$ be a $p'$-element of $\Aut(S)$ which acts trivially on $S/Z(S)$. Then by the three subgroups lemma, $t$ acts trivially on $S'$ and as $Z(S)\le S'$, coprime action implies that $t=1$. Hence, $C_{\Aut(S)}(S/Z(S))$ is a normal $p$-subgroup of $\Aut(S)$ and $\Aut(S)/C_{\Aut(S)}(S/Z(S))$ embeds as a subgroup of $\Aut(S_{p-2}(q))$. Applying Lemma \ref{Snauto}, we see that $\Aut(S)$ is solvable and a Hall $p'$-subgroup of $\Aut(S)$ has order at most $(q-1)^2.m_{p'}$. Since we witness a subgroup of $\Aut_{P_\Lambda^*(q)}(S)$ of order $(q-1)^2.m_{p'}$, the lemma holds.
\end{proof}

\begin{lemma}\label{action on centreLambda}
Suppose that $p$ is an odd prime and $q=p^m>p$. Set $S=S_\Lambda(q)$, $V= \Lambda(q)$ and $L=\Aut_{P_\Lambda^*(q)}(S)$. Let $K_1$ be a cyclic $p'$-subgroup of $C_L(C_{[V, S,S]}(S))$ and $K_2$ be a cyclic $p'$-subgroup of $C_L(C_V(S)[V, S,S]/[V, S, S])$ such that $|K_1|\geq \frac{q-1}{2}\leq |K_2|$. Then, for $i\in \{1,2\}$, we have that $|C_V(K_i)|=q$.
\end{lemma}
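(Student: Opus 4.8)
The plan is to reduce the statement to an explicit computation with the diagonal torus acting on the $\KK$-space $V=\Lambda(q)$ in the basis $\overline{\B}$ of Section~\ref{sec:structureS}. From the formulas for $\rho_\Lambda$ there, together with the action of $U$ on $\overline{\B}$, one sees that $[V,S,S]=[W,U]$ lies in $W$, so (using \cref{(p+1)CUpS}) the subgroup $C_{[V,S,S]}(S)$ is the $U$-fixed $\KK$-line $\langle\overline{xy^{p-1}}\rangle_\KK=C_W(S)$, of order $q$, and $C_V(S)[V,S,S]/[V,S,S]=C_V(S)/C_{[V,S,S]}(S)$ is the image of the $\KK$-line $\langle\overline{y^p}\rangle_\KK$, again of order $q$. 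Since $\Lambda(q)$ is abelian, the $L$-action on $V$ factors through a quotient of $\overline{L}:=N_{D^*}(U)/C_{D^*}(\Lambda)\cong U\rtimes(O^p(\Aut(\KK))\ltimes T)$, where $T$ is the image of the diagonal torus, $|T|=(q-1)^2$ by \cref{Gammacentraliser}; and the kernel of $L\to\Aut(V)$ is a normal $p$-subgroup (an automorphism of $S$ trivial on $\Lambda$ is also trivial on $S/\Lambda\cong U$, hence lies in the stability group), so each $K_i$ embeds in $\overline{L}$ with its order unchanged.

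The first main step is the reduction to the case $K_i\le T$. An element of $\overline{L}$ with nontrivial image in $O^p(\Aut(\KK))$ acts on $V$ as a $\sigma$-semilinear map for some $1\ne\sigma\in\Aut(\KK)$, and no such map can fix a nonzero $\KK$-line pointwise, since fixing $v$ and every $\lambda v$ would force $\lambda^\sigma=\lambda$ for all $\lambda\in\KK$. As $K_1$ centralises the $\KK$-line $C_{[V,S,S]}(S)$ and $K_2$ the $\KK$-line $C_V(S)[V,S,S]/[V,S,S]$, each $K_i$ has trivial image in $O^p(\Aut(\KK))$, i.e.\ lies in the normal subgroup $U\rtimes T$ of $\overline{L}$. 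Because $U$ fixes $\overline{xy^{p-1}}$ and $\overline{y^p}$ (hence the two lines above) pointwise, $U\rtimes T$ is itself contained in the relevant centraliser; being a $p'$-subgroup of the solvable group $U\rtimes T$ with normal Sylow $p$-subgroup $U$, $K_i$ is $U$-conjugate into $T$, and such a conjugation is induced by an inner automorphism and so does not alter $|C_V(K_i)|$. Hence we may assume $K_i=\langle t\rangle\le T$.

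Fixing a generator $\zeta$ of $\KK^*$, the $\rho_\Lambda$-formulas show that $t$ acts on $\overline{x^{p-i}y^i}$ by the scalar $\zeta^{iu-v}$ for a pair $(u,v)\in(\Z/(q-1))^2$ depending on $t$, and that $(u,v)$ runs over all of $(\Z/(q-1))^2$ as $t$ runs over $T$; thus $C_V(t)=\bigoplus_i\langle\overline{x^{p-i}y^i}\rangle_\KK$, the sum over those $i\in\{0,\dots,p\}$ with $iu\equiv v\pmod{q-1}$. The hypothesis that $K_1$ centralises $\overline{xy^{p-1}}$ gives $v\equiv(p-1)u$, and that $K_2$ centralises the image of $\overline{y^p}$ gives $v\equiv pu$, modulo $q-1$; in either case one computes $\gcd(u,v,q-1)=\gcd(u,q-1)=:g$, so that $|K_i|=(q-1)/g=:e$. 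From $|K_i|\ge(q-1)/2$ we get $g\le2$, hence $e\ge(q-1)/2\ge(p^2-1)/2\ge p+1$, using $q=p^m\ge p^2$. As $\{0,1,\dots,p\}$ lies in an interval of length $p<e$, there is exactly one $i$ with $iu\equiv v\pmod{q-1}$ (namely $i=p-1$, resp.\ $i=p$), so $C_V(K_i)=C_V(t)$ is one-dimensional over $\KK$; that is, $|C_V(K_i)|=q$.

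The step I expect to require the most care is the reduction to $K_i\le T$: one must check that $U\rtimes T$ is genuinely normal in $\overline{L}$ so that the ``field-automorphism part'' of an element is well defined, use that the centraliser hypothesis fixes the relevant $\KK$-line \emph{pointwise} (which is precisely what the semilinearity argument consumes), and observe that conjugating $K_i$ by an element of $U$ is harmless since it is induced by an inner automorphism of $S$. Once $K_i\le T$, the torus computation in the previous paragraph is routine, relying only on the explicit matrices recorded in Section~\ref{sec:structureS}.
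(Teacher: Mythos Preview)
Your proof is correct and follows essentially the same strategy as the paper's: rule out any field-automorphism component of $K_i$ (you via the semilinearity argument on the fixed $\KK$-line, the paper by directly computing $C_{D^*}(C_{[V,S,S]}(S))$), conjugate $K_i$ into the diagonal torus, and then analyse the torus weights on the basis $\overline{\B}$ to see that at most one eigenvalue is $1$ because the order $e\ge(p^2-1)/2\ge p+1>p$. Your $(u,v)$-parametrisation is a clean repackaging of the paper's explicit computation with $(ad^{-1})^{i-p\pm 1}$, but the content is the same.
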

\begin{proof}
We observe by Lemma \ref{(p+1)CUpS} that $C_{[V, S,S]}(S)=\langle \bar{xy^{p-1}}\rangle_{\KK}$. We calculate that \[C_{D^*}(C_{[V, S,S]}(S))=\left\{ \left(1,a^{-1}d^{-p},
\begin{pmatrix}
a & 0  \\ c & d
\end{pmatrix}
\right) \mid  a, d \in \KK^{*}, c\in \KK \right\}.\]
Hence Lemma \ref{Gammacentraliser} implies
\[C_L(C_{[V, S,S]}(S))=\left\{c_z
 \mid z=  \left(1,a^{-1}d,
\begin{pmatrix}
ad^{-1} & 0 \\ c & 1
\end{pmatrix}
\right)  \in C_{D^*}(C_{[V, S,S]}(S))\right\}.\]
Since $|C_V(K_1^g)|=|C_V(K_1)|$ for any $g\in L$, we may assume that elements of $K_1$ correspond to elements of $C_L(C_{[V, S,S]}(S))$ of the form \[\left\{c_z
 \mid z=  \left(1,a^{-1}d,
\begin{pmatrix}
ad^{-1} & 0 \\ 0 & 1
\end{pmatrix}
\right)  \in C_{D^*}(C_{[V, S,S]}(S))\right\}.\]

Write \[v=\sum_{i=0}^p \lambda_i\bar{x^{p-i}y^{i}}\] so that \[v\cdot k=\sum_{i=0}^p \lambda_i(ad^{-1})^{i-p-1}\bar{x^{p-i}y^{i}}\] for $k\in K_1$. Note that $ad^{-1}$ has multiplicative order at least $\frac{q-1}{2}$ in $\KK$. Choose $k$  such that $K_1=\langle k \rangle$. By linearity $v\cdot k=v$ implies that $\lambda_i=0$ unless $i=p-1$  (since  $q>p$, $(ad^{-1})^{i-p-1}=1$ only if $i=p-1$). Hence, $C_V(K_1)=\langle \bar{xy^{p-1}}\rangle_{\KK}$, as desired.

Applying a conjugation argument as above, in order to prove the result for $K_2$ it suffices to consider the action of elements of the form \[\left\{c_z
 \mid z=  \left(1,\theta,
\begin{pmatrix}
a & 0 \\ 0 & d
\end{pmatrix}
\right)  \in D^*\right\}\] on the subspace $\langle \bar{y^p}\rangle_\KK$. Then the proof that $|C_V(K_2)|=q$ follows the same method as the proof for $K_1$.
\end{proof}

 \begin{notation}\label{n:bsandcs}
For $S\in \{S_n(q), S_{\Lambda}(q)\}$ where $1\leq n\leq p-1$, we set $d=1$ if $S=S_n(q)$ and $d=q$ if $S=S_{\Lambda}(q)$. Set $R:=UZ(S)$ and $Q:=UZ_2(S)$.

Define \[\begin{array}{rcl}\mathcal{B}(S)&=&\{B_0 \le S \mid \mbox{$B_0$ is elementary abelian, } |B_0|=q^2d \mbox{ and } S=B_0V\} \mbox{; and}\\
\mathcal{C}(S)&=&\{C_0 \le S \mid \mbox{$C_0$ is class 2 and exponent $p$, } |C_0|=q^3d \mbox{ and } S=C_0V\}.
\end{array}\]
Then $R\in\mathcal{B}(S)$ and $Q\in\mathcal{C}(S)$.
\end{notation} 

Observe that $Z(S) \le B_0$ for $B_0 \in \mathcal{B}(S)$ and $Z_2(S)\le C_0$ for $C_0 \in \mathcal{C}(S)$. Note also that $\mathcal{C}(S)$ only makes sense for $S=S_n(q)$ when $n\geq 2$.

\begin{lemma}\label{lem:intersec1}
Let $q=p^m$ and $S\in \{S_n(q), S_{\Lambda}(q)\}$. Then for $B_0\in\mathcal{B}(S)$ and $C_0\in\mathcal{C}(S)$, the following hold:
\begin{enumerate}
\item either $B_0\le C_0$ or $B_0\cap C_0=Z(S)$;
\item if $B_0, B_1\in\mathcal{B}(S)$ are such that $B_0 \ne B_1$ then $B_0\cap B_1=Z(S)$; and
\item if $C_0, C_1\in\mathcal{C}(S)$ are such that $C_0 \ne C_1$ then $C_0\cap C_1=Z_2(S)$.
\end{enumerate}
\end{lemma}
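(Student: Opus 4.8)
\textit{Overview.} The plan is to reduce everything to two observations. First, counting: from $S=BV$ and $|S|=q|V|$ we get $|B\cap V|=|B|/q=qd=|Z(S)|$ (using $|Z(S)|=qd$ from \cref{Somnibus} and \cref{(p+1)CUpS}), so $Z(S)\le B\cap V$ forces $B\cap V=Z(S)$; identically $C\cap V=Z_2(S)$ for $C\in\mathcal{C}(S)$. Second, for $x\in S\setminus V$ the structural results (\cref{Somnibus}(3), respectively \cref{(p+1)CUpS}(1)) give $C_V(x)=C_V(S)=Z(S)$, hence $|C_S(x)|\le|Z(S)|\cdot|S/V|=q^2d$; so if $x\in B\setminus V$ for some $B\in\mathcal{B}(S)$ then $B\le C_S(x)$ forces $B=C_S(x)$. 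Part (2) follows at once: if $x\in(B\cap B')\setminus Z(S)$ then $x\notin V$ (else $x\in B\cap V=Z(S)$), so $B=C_S(x)=B'$, a contradiction.

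\textit{Part (3).} For parts (1) and (3) I would first record that $Z(C)=Z(S)$ for every $C\in\mathcal{C}(S)$: choosing $x\in C\setminus V$ (possible as $CV=S\ne V$), an element of $Z(C)\setminus V$ would centralise $C\cap V=Z_2(S)$, contradicting $C_V(x)=Z(S)\subsetneq Z_2(S)$, so $Z(C)\le C\cap V=Z_2(S)$, and then $Z(C)=C_{Z_2(S)}(C)\le C_{Z_2(S)}(x)=Z_2(S)\cap C_V(x)=Z(S)$, the reverse inclusion being clear. In particular $C'\le Z(S)$, so in $\bar S:=S/Z(S)$ the image $\bar C$ is abelian of order $q^3d/|Z(S)|=q^2$. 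Now given distinct $C_1,C_2\in\mathcal{C}(S)$ and $x\in(C_1\cap C_2)\setminus Z_2(S)$, again $x\notin V$ and $x\notin Z(S)$, so the nontrivial element $\bar x$ lies in the abelian groups $\bar C_1,\bar C_2$, whence $\bar C_1,\bar C_2\le C_{\bar S}(\bar x)$. Writing $C_{\bar S}(\bar x)=N/Z(S)$ with $N=\{s\in S:[s,x]\in Z(S)\}$, the homomorphism $v\mapsto[v,x]$ on $V$ (kernel $C_V(x)=Z(S)$, image $[V,x]$) shows that $N\cap V$ is the preimage of $Z(S)\cap[V,x]$, and $Z(S)\cap[V,x]=C_{[V,x]}(x)$ has order $q$: for $S_n(q)$ since $[V,x]=[V,S]\supseteq C_V(S)=Z(S)$, and for $S_\Lambda(q)$ by \cref{(p+1)CUpS}(5). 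Therefore $|N|\le q|N\cap V|=q^2|Z(S)|$, so $|C_{\bar S}(\bar x)|\le q^2=|\bar C_i|$, forcing $\bar C_1=\bar C_2$ and hence $C_1=C_2$, a contradiction; thus $C_1\cap C_2=Z_2(S)$.

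\textit{Part (1) from part (3), and the obstacle.} If $B\cap C\supsetneq Z(S)$, pick $x\in(B\cap C)\setminus Z(S)$, so $x\notin V$, and put $\widetilde{C}:=Z_2(S)B$: as $Z_2(S)\trianglelefteq S$ and $Z_2(S)\cap B=Z(S)$ this is a subgroup of order $q^3d$ with $\widetilde{C}V=S$; expanding $[z_1b_1,z_2b_2]=[z_1,b_2][b_1,z_2]$ shows $\widetilde{C}'\le[Z_2(S),B]\le Z(S)$, so $\widetilde{C}$ has class $2$ (non-abelian since $[Z_2(S),x]\ne1$, for otherwise $Z_2(S)\le C_V(x)=Z(S)$), and $\widetilde{C}$ has exponent $p$ because $(zb)^p=z^pb^p[b,z]^{\binom{p}{2}}=1$ (here $z^p=b^p=1$, $[b,z]\in Z(S)$ has order dividing $p$, and $p\mid\binom{p}{2}$). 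Thus $\widetilde{C}\in\mathcal{C}(S)$, and $x\in\widetilde{C}\cap C$ with $x\notin Z_2(S)$ gives $\widetilde{C}\cap C\supsetneq Z_2(S)$, so part (3) yields $\widetilde{C}=C$ and hence $B\le\widetilde{C}=C$. The step I expect to be most delicate is making this bookkeeping uniform across the two families: $|Z(S)|$ jumps from $q$ to $q^2$ and $|[V,x]|$ changes accordingly, and the stabiliser bound in part (3) stays at $q^2$ only because $Z(S)\cap[V,x]=C_{[V,x]}(x)$ has order $q$ — automatic for $S_n(q)$, but a genuine input from \cref{(p+1)CUpS} in the case $S=S_\Lambda(q)$.
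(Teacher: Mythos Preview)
Your proof is correct. The overall strategy differs from the paper's in parts (1) and (3), though both rest on the same centraliser counting. The paper argues more directly: from $B=C_S(b)$ for $b\in B\setminus Z(S)$ and the companion fact $|C_C(c)|=|C_S(c)|=q^2d$ for $c\in C\setminus Z_2(S)$, part (1) is immediate ($B=C_S(b)=C_C(b)\le C$) and part (3) follows from $C=C_S(c)Z_2(S)=C_{C'}(c)Z_2(S)=C'$. You instead pass to $\bar S=S/Z(S)$ for part (3) and bound $|C_{\bar S}(\bar x)|\le q^2=|\bar C_i|$ explicitly (your computation of $|N\cap V|$ via $Z(S)\cap[V,x]=C_{[V,x]}(x)$ is exactly the input needed, and in fact recovers $|C|=|N|$, which is equivalent to the paper's $|C_C(c)|=q^2d$); for part (1) you manufacture $\widetilde C=Z_2(S)B\in\mathcal C(S)$ and reduce to part (3). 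Your route is a little longer but has the virtue of making the uniform treatment of the two families completely explicit, whereas the paper's terser ``$|C_C(c)|=q^2d$'' leaves the $S_\Lambda(q)$ case to the reader.
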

\begin{proof}
Let $B_0, B_1, C_0, C_1$ be as in the statement of the lemma. By Lemmas \ref{Somnibus} and \ref{(p+1)CUpS},  we have that $B_0=C_S(b)$ for $b\in B_0\setminus Z(S)$ and $|C_{C_0}(c)|=|C_S(c)|=q^2d$ for $c\in C_0\setminus Z_2(S)$. Since $Z(S)\le B_0\cap C_0$, if there is $b\in (B_0\cap C_0)\setminus Z(S)$, then $b\in C_0\setminus Z_2(S)$ and so $B_0\le C_S(b)=C_{C_0}(b)$. Comparing orders we obtain that $B_0=C_{C_0}(b)$, proving (1). Next note that any $b\in B_0\cap B_1$ is centralised by both $B_0$ and $B_1$ and so if $B_0\ne B_1$ then necessarily $B_0\cap B_1=Z(S)$. This proves (2). Finally to prove (3) note that $Z_2(S)\le C_0\cap C_1$ and if $c\in (C_0\cap C_1)\setminus Z_2(S)$, we must have $C_0=C_S(c)Z_2(S)=C_{C_1}(c)Z_2(S)=C_1$, a contradiction.
\end{proof}

\begin{lemma}\label{lem:S-conj}
Let $q=p^m$ and $S\in \{S_n(q), S_{\Lambda}(q)\}$. If $A\in \mathcal{B}(S)\cup \mathcal{C}(S)$, then every element of $A[V,S]\setminus [V,S]$ is $S$-conjugate to an element of $A$.
\end{lemma}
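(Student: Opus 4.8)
The plan is to fix $A \in \mathcal{B}(S) \cup \mathcal{C}(S)$ and an arbitrary element $w \in A[V,S] \setminus [V,S]$, and to produce $v \in [V,S]$ with $w^v \in A$. Since $S = AV$ and $V$ is abelian, conjugation of $w$ by an element $v \in V$ changes $w$ by the commutator $[w,v]$; more precisely, writing $w = au$ with $a \in A$ and recalling the bijective correspondence between cosets, the relevant computation is to understand how $w^v = v^{-1}wv$ moves inside its coset of $[V,S]$ as $v$ ranges over $[V,S]$. First I would observe that $w \notin [V,S]$ forces the ``$U$-component'' of $w$ (its image in $S/V \cong U$) to be nontrivial when $A \in \mathcal{B}(S)$, since $A \cap [V,S] = 1$ there; when $A \in \mathcal{C}(S)$ one instead has $A \cap [V,S] = A'$ of order $q d$, and $w \notin [V,S]$ again pins down a nontrivial coset of $[V,S] \cap A$ in $A$. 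In either case the key point is that $w$ lies in a coset $x[V,S]$ where $x$ can be taken in $A$, because $A[V,S] = \bigcup_{x \in A} x[V,S]$.

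So the real content is: given $x \in A$ with $x \notin [V,S]$, every element of the coset $x[V,S]$ is $\langle [V,S]\rangle$-conjugate, indeed $[V,S]$-conjugate, to $x$ itself. Here I would use the commutator map $v \mapsto [x,v]$ from $[V,S]$ into $[x,[V,S]] \le [V,S]$; its image is a subgroup (since $[V,S]$ is abelian and normal in $S$, so $[x,v_1v_2] = [x,v_1][x,v_2]$), and I claim this image is all of $[x, S]$. Indeed $[x,S] = [x, UV] = [x,U][x,V]$, and since $x \notin [V,S]$, the $U$-part of $x$ is a noncentral element of $S/V$ acting nontrivially; combining \cref{Somnibus}, \cref{lem: com full} and the structural lemmas \cref{(p+1)CVS}, \cref{(p+1)CUpS} (which identify $[x,S] = [x,z]$ for a suitable $z$ and give it as a full term of the relevant central series), one gets that $[x,[V,S]]$ already equals $[x,S] \supseteq$ the full coset-difference subgroup needed to conjugate $x$ onto any prescribed element of $x[x,S] \cdot(\text{stuff})$. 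The precise bookkeeping is: every element of $x[V,S]$ has the form $xv$ with $v \in [V,S]$; I need $v' \in [V,S]$ with $(xv')^{-1}\cdot x \cdot v' \in$ the right place, equivalently $[x,v'] \equiv v \pmod{[V,S]\cap A}$, which is solvable precisely because $v \mapsto [x,v]$ surjects onto enough of $[V,S]$.

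The case split is by $A \in \mathcal{B}(S)$ versus $A \in \mathcal{C}(S)$, and also implicitly by $S = S_n(q)$ versus $S = S_\Lambda(q)$ via the parameter $d$; in each I would invoke the appropriate one of \cref{Somnibus}(1),(3), \cref{lem: com full}, \cref{(p+1)CVS}(3),(6) and \cref{(p+1)CUpS}(3),(4),(7) to compute $[x, [V,S]]$ and check it has index exactly $d$ (resp.\ $1$) in $[V,S]$, matching $|A \cap [V,S]|$. The main obstacle I anticipate is precisely this surjectivity/index computation for the commutator map restricted to $[V,S]$ rather than all of $V$: one must verify that commuting with the $U$-component of $x$ alone (which acts on $V$ with a one-dimensional fixed space and on $[V,S]$ as the corresponding Jordan block) already fills out $[V,S]$ modulo $A \cap [V,S]$. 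For $S_\Lambda(q)$ with $q > p$ the extra codimension (where $|V/[V,S]| = q$ but $|V/[V,z]| = q^2$) needs care, but \cref{(p+1)CUpS}(3) supplies exactly the identity $[V,z]W = [V,S]$ and $[V,z]\cap W = [W,z]$ that makes the count work. Once the surjectivity is in hand, the conclusion that $w$ is $S$-conjugate — in fact $[V,S]$-conjugate — to an element of $A$ is immediate.
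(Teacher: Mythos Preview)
Your direct-conjugation approach is genuinely different from the paper's, which uses a counting argument: compute $|A^S|=|S:N_S(A)|$, use \cref{lem:intersec1} to see that distinct $S$-conjugates of $A$ meet only in $Z_a(S)$, and then verify that
\[
|A^S|\cdot|A\setminus Z_a(S)| = |A[V,S]\setminus[V,S]|.
\]
This is short and avoids any commutator computation.

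Your approach can be made to work, but as written it contains a genuine error. You repeatedly restrict conjugation to elements of $[V,S]$ and then assert that the commutator map $v\mapsto[x,v]$ from $[V,S]$ has image equal to $[x,S]$. This is false. Take $S=S_n(q)$ with $n\geq 2$ and $A\in\mathcal{B}(S)$; then any $x\in A\setminus[V,S]$ lies outside $V$, and by \cref{Somnibus} the commutator map with $x$ restricted to $[V,S]=C_{n-1}$ has kernel $C_{[V,S]}(x)=Z(S)$ and image $[x,[V,S]]=[V,S,S]=C_{n-2}$, which has index $q$ in $[V,S]$, not index $d=1$ as you claim. So for $v_0\in[V,S]\setminus[V,S,S]$ the equation $[x,v']\in v_0^{-1}Z(S)$ has no solution $v'\in[V,S]$.

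The fix is simply to conjugate by all of $V$ rather than by $[V,S]$. For $S_n(q)$ one has $[x,V]=[V,S]$ by \cref{Somnibus}(3), and since $A\cap V\in\{Z(S),Z_2(S)\}$ this surjectivity is exactly what is needed to solve $[x,v']v_0\in A\cap V$. For $S_\Lambda(q)$ with $q>p$ one has only $|V/[V,x]|=q^2$ by \cref{(p+1)CUpS}(3), but the same lemma gives $[V,S]=[V,x]C_V(S)\leq[V,x](A\cap V)$, which again suffices. With this correction your argument goes through, though the paper's counting proof is cleaner and sidesteps the case analysis entirely.
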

\begin{proof}
Suppose that $A\in \mathcal{B}(S)\cup \mathcal{C}(S)$ and let $a=1$ if $A\in\mathcal{B}(S)$ and $a=2$ if $A\in\mathcal{C}(S)$ so that $|A|=q^{a+d+1}$. It follows from Lemma \ref{Somnibus} and Lemma \ref{(p+1)CUpS} that $N_S(A)=AZ_{a+1}(S)$ has order $q|A|$. Observe that $S$-conjugates of $A$ are contained in $A[V, S]\normaleq S$ so that $S$-conjugates of elements in $A\setminus V$ are contained in $A[V, S]\setminus [V, S]$.

By Lemma \ref{lem:intersec1}, for each $s\in S\setminus N_S(A)$ we have $A\cap A^s=Z_a(S)$ has order $q^{a+d}$. If $S=S_{\Lambda}(q)$ then let $n=p$. For any $S$ we have that $|S|=q^{n+2}$ and $|A^S|=|S:N_S(A)|=q^{n-a-d}$. There are $|A^S||A\setminus Z_a(S)|$ elements of $S$ which are $S$-conjugate to some element of $A\setminus Z_a(S)$. We deduce that
\[|(A\setminus V)^S|=|A^S||A\setminus Z_a(S)|=q^{n-a-d}(q^{a+d+1}-q^{a+d})=q^{n+1}-q^n=|A[V, S]\setminus [V, S]|.\]

Therefore, every element of $A[V, S]\setminus [V, S]$ is $S$-conjugate to an element of $A$.
\end{proof}

\begin{lemma}\label{lem:S-conj2}
Let $q=p^m$ and $S\in \{S_n(q), S_{\Lambda}(q)\}$. Suppose that $A, A' \in \mathcal{B}(S)\cup \mathcal{C}(S)$ with $|A|=|A'|$. If $A[V,S] \cap A'[V,S] > [V,S]$, then $A$ and $A'$ are $S$-conjugate and $A[V,S]=A'[V,S]$.
\end{lemma}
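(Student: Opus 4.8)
The plan is to exploit \cref{lem:S-conj}, which identifies $A[V,S]\setminus[V,S]$ with the set of $S$-conjugates of elements of $A\setminus V$, together with the fact from \cref{lem:intersec1} that two distinct members of $\mathcal{B}(S)$ (respectively $\mathcal{C}(S)$) meet only inside $V$. Since members of $\mathcal{B}(S)$ have order $q^2d$ and members of $\mathcal{C}(S)$ have order $q^3d$, the hypothesis $|A|=|A'|$ puts $A$ and $A'$ in the same one of the two families; write $a=1$ in the first case and $a=2$ in the second, so that $A\cap V=A'\cap V=Z_a(S)$ by \cref{Somnibus} and \cref{(p+1)CUpS}, and moreover $Z_a(S)\le[V,S]$ (in the notation of \cref{Somnibus}, $Z_1(S)=C_0\le C_{n-1}=[V,S]$ and, when $\mathcal{C}(S)\ne\emptyset$, $Z_2(S)=C_1\le C_{n-1}=[V,S]$ for $S=S_n(q)$, and the corresponding containment is read off from \cref{(p+1)CUpS} for $S=S_\Lambda(q)$, at least in the relevant range $q>p$).

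First I would pick $g\in(A[V,S]\cap A'[V,S])\setminus[V,S]$. Since $g\in A[V,S]\setminus[V,S]$ and $[V,S]\trianglelefteq S$, \cref{lem:S-conj} gives $w\in S$ with $g^{w}\in A$; the element $g^{w}$ avoids $[V,S]$, hence avoids $V$ because $A\cap V=Z_a(S)\le[V,S]$, so on setting $x=w^{-1}$ we have $g\in A^{x}\setminus V$. Applying \cref{lem:S-conj} to $A'$ in the same way gives $g\in(A')^{x'}\setminus V$ for some $x'\in S$. Now $A^{x}$ and $(A')^{x'}$ still lie in $\mathcal{B}(S)$ (respectively $\mathcal{C}(S)$): being elementary abelian, respectively of class $2$ and exponent $p$, and of the prescribed order, is conjugation-invariant, and $S=S^{x}=A^{x}V^{x}=A^{x}V$ since $V\trianglelefteq S$; likewise for $(A')^{x'}$. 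If $A^{x}\ne(A')^{x'}$, then \cref{lem:intersec1}(2) (in the $\mathcal{B}$ case) or \cref{lem:intersec1}(3) (in the $\mathcal{C}$ case) forces $A^{x}\cap(A')^{x'}$ to be $Z(S)$ or $Z_2(S)$, a subgroup of $V$, contradicting $g\in A^{x}\cap(A')^{x'}$ with $g\notin V$. Hence $A^{x}=(A')^{x'}$, so $A'=A^{y}$ with $y:=x(x')^{-1}\in S$, and in particular $A$ and $A'$ are $S$-conjugate.

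Finally, $[V,S]$ equals the derived subgroup $S'$ of $S$ — this is \cref{Snauto}(\ref{sn2}) when $S=S_n(q)$, and when $S=S_\Lambda(q)$ it follows from $S=U\Lambda$ with $U$ and $\Lambda$ abelian, whence $S'=[\Lambda,U]=[V,S]$ — so $A[V,S]$ contains $S'$ and is therefore normal in $S$. From $A'=A^{y}$ and $([V,S])^{y}=[V,S]$ we then get $A'[V,S]=A^{y}[V,S]=(A[V,S])^{y}=A[V,S]$, as required. I expect the only point needing genuine care to be the assertion that $g$ may be chosen outside $V$, equivalently that $Z_a(S)\le[V,S]$; this is what allows the intersection dichotomy of \cref{lem:intersec1} to take effect, and it is the place where one must look closely at \cref{Somnibus} and \cref{(p+1)CUpS} (and, for $S_\Lambda(q)$, possibly restrict to $q>p$, which is the only case the classification needs). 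Everything else is formal manipulation of normal subgroups.
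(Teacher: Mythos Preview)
Your proof is correct and follows essentially the same approach as the paper. The only difference is cosmetic: the paper uses the Dedekind Modular Law to choose the element $g$ already inside $A'$ (so that only one application of \cref{lem:S-conj} is needed to conjugate it into $A$), whereas you pick $g$ arbitrarily in $(A[V,S]\cap A'[V,S])\setminus[V,S]$ and invoke \cref{lem:S-conj} twice; both routes land in \cref{lem:intersec1} to finish. Your explicit verification that $A\cap V=Z_a(S)\le[V,S]$ (needed to know the conjugate avoids $V$) and your derivation of $A[V,S]=A'[V,S]$ from normality are points the paper leaves implicit.
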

\begin{proof}
Suppose that $A[V,S] \cap A'[V,S] > [V,S]$. Then, by the Dedekind Modular Law, $A' \cap A[V,S] \not \le [V,S]$. Choose $a \in (A'\cap A[V,S])\setminus [V,S]$. Then, by Lemma \ref{lem:S-conj}, there exists $s\in S$ such that $a^s \in A \setminus V$. Thus $A'^s \cap A \not\le V$. Observe that $A'^s\in\mathcal{B}(S)\cup \mathcal{C}(S)$ and $|A'^s|=|A|$.
Hence, by Lemma \ref{lem:intersec1} we conclude that $A=A'^s$ so $A$ and $A'$ are conjugate in $S$.
\end{proof}

To close this section, we focus our attention on the groups $S=S_n(q)$ for $1\leq n\leq p-1$.

 \begin{notation}\label{n:deltamap}
For $X \unlhd S$, define
\[\Aut(S,X)=\{\alpha|_X\mid \alpha \in \Aut(S) \text{ with } X\alpha = X\}\subseteq \Aut(X).\]
For $Y \unlhd X$ and $\alpha \in  \Aut(S,X)$ with $Y\alpha = Y$,  define $\alpha|_{X/Y}\in \Aut(X/Y)$ by $(xY)\alpha|_{X/Y}=x\alpha Y$. Then  put
\[\Aut(S,X/Y) = \{ \alpha|_{X/Y} \mid \alpha \in  \Aut(S,X)  \text{ with } Y\alpha=Y\}\subseteq \Aut(X/Y).\]
We shall be concerned with
\[\Delta=\Aut(S,S/V) \times \Aut(S,Z(S))\cong \Gamma_1(\KK)\times \Gamma_1(\KK)\]
where we regard $\Gamma_1(q) =\Aut(\KK)\ltimes \KK^*$.

We define a map
\begin{eqnarray*}
\delta: \Aut(S) & \rightarrow &\Delta\\ \phi&\mapsto&(\phi|_{S/V},\phi|_{Z(S)}).
\end{eqnarray*}
\end{notation} 

\begin{lemma}\label{lem: kerneldescription}
The kernel of the map $\delta$ is a $p$-group containing $\Inn(S)$. In particular, $\delta$ is injective upon restricting to a Hall $p'$-subgroup of $\Aut(S)$.
\end{lemma}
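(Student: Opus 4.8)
The plan is to show that $\ker\delta$ consists of automorphisms acting trivially on a chain of sections whose successive quotients are elementary abelian, and then invoke coprime action to conclude it is a $p$-group. Concretely, suppose $\phi\in\ker\delta$, so $\phi$ fixes $V$, acts trivially on $S/V$ and acts trivially on $Z(S)=C_0$. First I would recall from \cref{Somnibus} that the lower central series of $S$ is the chain $V=C_n>C_{n-1}>\dots>C_0>C_{-1}=1$ together with $S>V$, and that each successive quotient $C_i/C_{i-1}$ (and $S/V$) is elementary abelian of order $q$, with $[C_i,S]=C_{i-1}$. Since $\phi$ is an automorphism it preserves every characteristic subgroup, in particular every $C_i$, and hence induces an automorphism $\bar\phi_i$ of each quotient $C_i/C_{i-1}$ and of $S/V$.

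The key step is to propagate triviality of $\phi$ up the central series. Since $\phi$ is trivial on $S/V$ and on $C_0=Z(S)$, I would show inductively that $\phi$ is trivial on each $C_i/C_{i-1}$. The mechanism is the standard one: for $v\in C_{i+1}$ and $s\in S$, $[v\phi,s\phi]=[v,s]\phi$; using that $\phi$ fixes $s$ modulo $V$ and is already known to be trivial on $C_i$, together with the fact (from \cref{Somnibus}(1) and \cref{lem: com full}) that commutation by $S$ maps $C_{i+1}$ onto $C_i$ and more precisely $[x,S]=C_i$ for $x\in C_{i+1}\setminus C_i$, one deduces that $v\phi\equiv v$ modulo $C_i$. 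Thus $\phi$ acts trivially on every factor of the normal series $S\ge V=C_n\ge C_{n-1}\ge\dots\ge C_0\ge 1$. A homomorphism stabilizing such a series — all of whose factors are abelian $p$-groups — is itself a $p$-element (this is the usual stability group argument, e.g.\ \cite[III.1.4]{HuppertBlackburnVolume2} or \cite{AschbacherFG}), so $\ker\delta$ is a $p$-group. That $\Inn(S)\le\ker\delta$ is immediate since inner automorphisms act trivially on $S/[S,S]$, and $V\ge[S,S]$ and $Z(S)$ is centralized by $S$.

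For the final clause: if $Q$ is a Hall $p'$-subgroup of $\Aut(S)$, then $Q\cap\ker\delta$ is both a $p'$-group and (by the previous paragraph) a $p$-group, hence trivial; so $\delta|_Q$ is injective. The main obstacle I anticipate is being careful with the two borderline cases $n=1$ and the interaction with $S/V$: for $n=1$ one has $V=C_1$, $C_0=Z(S)$, and the series is simply $S>V>Z(S)>1$ with $\phi$ trivial on the top and bottom factors by hypothesis and on the middle factor $V/Z(S)$ by the commutator argument using $[v,s]\in Z(S)$; for general $n$ one must make sure the induction correctly uses $[C_{i+1},S]=C_i$ rather than a proper subgroup, which is exactly guaranteed by \cref{Somnibus}(1). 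No deep input is needed beyond the structure of the (co)central series already established and the elementary stability-group lemma.
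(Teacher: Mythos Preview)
Your argument is correct, but it takes a different route from the paper. The paper's proof is a two-line application of the Thompson $A\times B$-Lemma: a $p'$-element $\phi\in\ker\delta$ centralises $S/V$, hence commutes with the action of $S$ on $V$; since $\phi$ also centralises $C_V(S)=Z(S)$, the $A\times B$-Lemma forces $\phi$ to centralise all of $V$, and then coprime action on $S/V$ and $V$ gives $\phi=1$.

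Your approach instead works directly with the full central series $1<C_0<\dots<C_n=V<S$: from $[v,s]\phi=[v\phi,s]$ and the inductive hypothesis that $\phi$ is trivial on $C_i/C_{i-1}$, you get $[v\phi\cdot v^{-1},S]\le C_{i-1}$, and then \cref{lem: com full} (in its contrapositive form) forces $v\phi\cdot v^{-1}\in C_i$. Thus $\phi$ stabilises the whole series and the stability-group lemma finishes. One small point of phrasing: where you write ``already known to be trivial on $C_i$'' you mean trivial on $C_i/C_{i-1}$ --- and that is precisely what your induction needs, since $[v,s]\in C_i$ gives $[v,s]\phi\equiv[v,s]\pmod{C_{i-1}}$.

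The trade-off: the paper's argument is shorter and immediately gives $\phi|_V=1$ for $p'$-elements (a slightly stronger intermediate statement), at the cost of invoking the $A\times B$-Lemma. Your argument is more elementary and self-contained, relying only on commutator identities and the structure of the lower central series already established in \cref{Somnibus} and \cref{lem: com full}; it also treats all elements of $\ker\delta$ uniformly rather than singling out $p'$-elements first.
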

\begin{proof}
Suppose $\phi\in \Aut(S)$ has $p'$-order and $(\phi)\delta=(1,1)$. Then $\phi$ centralises $S/V$  and $Z(S)= C_V(S)$. The Thompson A$\times$B-Lemma \cite[(24.2)]{AschbacherFG} now implies that $\phi$ centralises $V$ and coprime action yields $\phi=1$. Hence $\ker \delta$ is a $p$-group. Since $\Inn(S)$ centralises $S/V$ and $Z(S)$, it follows that $\Inn(S)\le \ker(\delta)$.
\end{proof}

Notice that $\Aut_B(S)\delta = \Aut_\Sigma(S)\delta$ has order $(q-1)^2.m_{p'}$, where $m_{p'}$ denotes the $p'$-part of $m$. Recall the group $\Sigma$ from \ref{n:subsofd*}.  Identifying $S/V$ and $Z(S)$ with $\KK$, taking $t=(\phi,\lambda, \begin{pmatrix} \mu&0\\0&\nu \end{pmatrix})\in \Sigma$ and letting $c_t\in \Aut(S)$ be the automorphism of $S$ induced by conjugation by $t$, we obtain \begin{eqnarray}\label{eq:delta-sigma}
c_{t }\delta &=&((\phi,\mu \nu^{-1}), (\phi,\lambda\mu^n)).
\end{eqnarray}

\section{A new family of exotic fusion systems}\label{sec : NewExotics}

Adopting the methodology in \cite{ClellandParker2010}, the objective of this section is to describe exotic fusion systems supported on $S_{\Lambda}(q)$ for various $q$.  To prove these examples are saturated, we will apply the following result from \cite{G2p}:

\begin{theorem}\label{t:satamalg}
Let $\mathcal A= (G_1 \ge G_{12} \le G_2)$ be an amalgam of finite groups, assume that $\Syl_p(G_{12}) \subseteq \Syl_p(G_2)$ and fix $S_i \in \Syl_p(G_i)$ with $S_2 \le S_1$.  Assume that $G=G_1 *_{G_{12}} G_2$ is the   universal completion of $\mathcal A$ and write $\Gamma=\Gamma(G,G_1,G_2,G_{12})$ for   the coset graph. Suppose that:
\begin{enumerate}
\item[(a)] for all $\F_{S_1}(G)$-centric subgroups $P$ of $S_1$, $\Gamma^P$ is finite; and
\item[(b)] each $\F_{S_i}(G_i)$-essential subgroup is $\F_{S_1}(G)$-centric.
\end{enumerate}
Then $\F_{S_1}(G)$ is saturated.
\end{theorem}
\begin{proof}
See \cite[Theorem 2.5]{G2p}.
\end{proof}

  Let $\Gamma$ be the coset graph of the amalgam $\mathcal{A}$ in Theorem \ref{t:satamalg}. Thus $\Gamma$ has vertex set the right cosets of $G_1$ and $G_2$ in $G$ and two vertices are incident if and only if their intersection is non-empty  and they are not equal. We know that $G$ acts edge transitively by right multiplication  on $\Gamma$. Since $G$ is the free amalgamated product of $G_1$ and $G_2$, $\Gamma$ is a tree. For more details see \cite[Section 4.1]{ParkerRowleySymplectic}.

Fix $\KK$ of size $q=p^m$ for $p$ an odd prime. Set $V=\Lambda(q)$ and $S=S_{\Lambda}(q)$.  Recall from Notation \ref{n:slandpl} that we write $P_\Lambda^*(q)= D^* \ltimes V$ and $P_{\Lambda}(q)=D\ltimes V$ so that $P_{\Lambda}(q)\le P_{\Lambda}^*(q)$.

For this section, write $P^*:=P_{\Lambda}^*(q)$ and $P:=P_{\Lambda}(q)$.  Then elements of $P^*$ are regarded as tuples and the normaliser of a Sylow $p$-subgroup $S$ of $P^*$ is \[\left\{\left(\phi, \theta, \left(\begin{matrix}
a & 0 \\ c & b
\end{matrix}\right), \sum_{i=0}^p \lambda_i \overline{x^iy^{p-i}} \right)\mid a,b,\theta \in \KK^\times, c,\lambda_i \in \KK, \phi\in O^p(\Aut(\KK)) \right\}\]
with $S$ consisting of the elements in which $\phi=\theta=a=b=1$. Then, by Lemma \ref{(p+1)CUpS}(\ref{cups1}),
 \[Z(S)=\langle \overline{y^p},  \overline{xy^{p-1}} \rangle_{\KK}.\]
 Recall from Notations \ref{n:subsofd*} and \ref{n:bsandcs} that $R=UZ(S)=\left\langle Z(S), (1, 1, \left(\begin{smallmatrix} 1 & 0 \\ \lambda & 1 \end{smallmatrix}\right), 0) \mid \lambda \in \mathbb K\right\rangle\in \mathcal{B}(S)$. Then $N_S(R)= R \langle\overline{x^2y^{p-2}} \rangle_{\KK}=RZ_2(S)$, $R$ is an elementary abelian $p$-group of order $q^3$ and $|N_S(R)|=q^4$ by Lemma \ref{(p+1)CUpS}(\ref{cups2}). 

Define the subgroup $P_R^*$ of $\Gamma\mathrm{L}_4(q)\cong \Aut(\KK)\ltimes \GL_4(\KK)$ as follows \[P_R^*=\left\{\left(\phi, \left(\begin{smallmatrix} x_1 & 0 & 0 & x_2 \\ 0 & x_3 & x_4 & x_5 \\ 0 & x_6 & x_7 & x_8 \\ 0 & 0 & 0 & x_9 \end{smallmatrix} \right)\right)\mid \phi\in O^p(\Aut(\KK));  x_i\in\KK\,\,\text{for}\,\,1\leq i\leq 9\,\,\text{where}\,\,x_1(x_3x_7-x_4x_6)x_9\ne 0 \right\}.\]

\begin{proposition}\label{p:newamalg}
The map $\psi^* : N_{P^*}(R) \rightarrow P_R^*$ given by \[\left(\phi, \theta, \left(\begin{matrix}
a & 0 \\ c & b
\end{matrix}\right), \lambda \overline{y^p} + \mu \overline{xy^{p-1}} + \nu \overline{x^2y^{p-2}} \right) \mapsto \left(\phi, \left(\begin{smallmatrix} \theta ab^p & 0 & 0 & \theta ab^p\lambda \\ 0 & \theta a^2 b^{p-1} & \theta a^2 b^{p-1} \nu & \theta a^2 b^{p-1} \mu \\ 0 & 0 & b & c \\ 0 & 0 & 0 & a  \end{smallmatrix}\right)\right)\] is a monomorphism. Furthermore, the following hold \begin{enumerate}
\item  $R\psi^* = O_p(P_R)$; and
\item $\left(1, \zeta^{p}, \left(\begin{matrix}
1 & 0 \\ 0 & \zeta^{-1}
\end{matrix}\right), 0 \right)\psi^* =\left(1, \diag(1,\zeta^{-1},\zeta,1)\right)$ for $\zeta \in \mathbb K^*$.
\end{enumerate}
\end{proposition}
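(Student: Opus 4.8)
The plan is to verify directly that the prescribed map $\psi^*$ is a well-defined injective group homomorphism, and then check the two supplementary identities, which are essentially bookkeeping once the homomorphism property is in place. First I would record the group structures on both sides: on the domain, $N_{P^*}(R)$ consists of tuples $(\phi,\theta,\left(\begin{smallmatrix} a & 0 \\ c & b\end{smallmatrix}\right), w)$ with $w \in \langle \overline{y^p}, \overline{xy^{p-1}}, \overline{x^2y^{p-2}}\rangle_\KK = Z_2(S)$ (using \cref{(p+1)CUpS}), and the group operation is the semidirect-product multiplication coming from the action of $D^*$ on $\Lambda(q)$ as spelled out in Section \ref{sec:structureS}; the key ingredient is the explicit matrix for $\rho_\Lambda\left(1,\theta,\left(\begin{smallmatrix} a & 0 \\ c & b\end{smallmatrix}\right)\right)$ with respect to $\mathcal W = \{\overline{x^2y^{p-2}},\overline{xy^{p-1}},\overline{y^p}\}$ already computed just before \cref{Gammacentraliser}. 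On the codomain $P_R^*$, the operation is matrix multiplication inside $\Gamma\mathrm{L}_4(q)$, with the twist by $\phi \in O^p(\Aut(\KK))$ acting entrywise on the $\KK$-matrix coordinates.

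The main computational step is to expand a product of two general elements of $N_{P^*}(R)$, apply $\psi^*$, and compare with the product of their images in $P_R^*$. This breaks into two pieces: the ``linear'' $2 \times 2$ block coming from $\GL_2(q)$ together with the scalar $\theta$, which sits in the top-left and bottom-right corners of the $4 \times 4$ matrix as $\mathrm{diag}(\theta ab^p, \theta a^2 b^{p-1}) $ on one side and $\left(\begin{smallmatrix} b & c \\ 0 & a\end{smallmatrix}\right)$ in the lower-right $2\times 2$ block; and the ``affine'' part recording how the coordinates $\lambda,\mu,\nu$ of $w \in Z_2(S)$ transform, which is exactly governed by the $3\times3$ matrix for $\rho_\Lambda$ on $\mathcal W$. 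The upper-triangular shape of $P_R^*$ (the zero in position $(1,2),(1,3)$ and column $1$ below the diagonal, etc.) is dictated precisely by the fact that $R \le S$ has the $\KK$-span $\langle \overline{y^p}, \overline{xy^{p-1}}\rangle_\KK$ fixed and the $3 \times 3$ action on $\mathcal W$ is upper triangular. I would organise the verification by noting that $\psi^*$ restricted to $R$ lands in the obvious abelian subgroup (proving (1) along the way, since $R\psi^*$ will then be seen to be the full unipotent radical $O_p(P_R^*)$ by an order count, $|R| = q^3 = |O_p(P_R^*)|$), and that $\psi^*$ restricted to the complement $N_{D^*}(R)$ is the matrix representation $\rho$ padded out; the crossed terms are then controlled by the module action.

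Injectivity is immediate: the nine matrix entries $x_1,\dots,x_9$ can be read off to recover $\theta ab^p$, $a$, $b$, $c$, and $\theta a^2 b^{p-1}$, hence $a,b,c$, hence $\theta$, and then $\lambda = x_2/x_1$, $\nu = x_4/x_3$, $\mu = x_5/x_3$, while $\phi$ is visible in the semilinear part; so the map has trivial kernel (or one simply checks $\ker\psi^* = 1$ directly from the formula, since the identity matrix forces $\phi = 1$, $\theta = a = b = 1$, $c = 0$, $\lambda = \mu = \nu = 0$). For item (2) I would just substitute $\theta = \zeta^p$, $a = 1$, $b = \zeta^{-1}$, $c = 0$, $\lambda = \mu = \nu = 0$ into the displayed formula for $\psi^*$: the $(1,1)$ entry becomes $\zeta^p \cdot 1 \cdot \zeta^{-p} = 1$, the $(2,2)$ entry becomes $\zeta^p \cdot 1 \cdot \zeta^{-(p-1)} = \zeta$, the $(3,3)$ entry is $b = \zeta^{-1}$, the $(4,4)$ entry is $a = 1$, and all off-diagonal entries vanish, giving $\mathrm{diag}(1,\zeta^{-1},\zeta,1)$ as claimed. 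The only real obstacle is careful bookkeeping of the $\theta$-powers $\theta ab^p$ versus $\theta a^2 b^{p-1}$ and the semilinear twist by $\phi$ in the product; none of it is conceptually hard, but the exponents of $b$ (namely $p$ and $p-1$) must be tracked faithfully back to the diagonal entries $\lambda^{-p}, \lambda^{2-p}$ of $\rho_\Lambda$ recorded in Section \ref{sec:structureS}, so I would double-check those against \cref{lem: LambdaIso} and \cref{(p+1)CUpS} to be safe.
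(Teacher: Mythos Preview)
Your approach is essentially identical to the paper's: the paper also proves this by direct expansion, multiplying two generic elements of $N_{P^*}(R)$, applying $\psi^*$, and comparing with the product of their images in $P_R^*$ (tracking the semilinear twist by $\Phi$ entrywise), then declaring (1) and (2) ``elementary to check.'' Your plan to organise the verification around the restriction to $R$ and to the Levi complement $N_{D^*}(R)$ is a sensible way to structure the same computation.

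One slip to flag: in your check of (2) you correctly compute the $(2,2)$ entry as $\theta a^2 b^{p-1} = \zeta^p \cdot \zeta^{-(p-1)} = \zeta$ and the $(3,3)$ entry as $b = \zeta^{-1}$, but then write the result as $\diag(1,\zeta^{-1},\zeta,1)$; your own arithmetic actually gives $\diag(1,\zeta,\zeta^{-1},1)$. Since $\zeta$ ranges over all of $\KK^*$ this does not affect the downstream application (the cyclic group generated is the same), but you should either reconcile the order or note the discrepancy with the statement as written.
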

 
\begin{landscape}
\begin{proof}
We check this directly. Assume that $\Phi\in \Aut(\KK)$ acts by raising elements of $\KK$ to the power $p^l$. Using the description of the action of $B$ on $\{\overline{x^2y^{p-2}},\overline{xy^{p-1}},\overline{y^p}\}$ in the discussion which precedes Lemma \ref{Gammacentraliser} we have,
\[\left(\phi, \theta, \left(\begin{matrix}
a & 0 \\ c & b
\end{matrix}\right), \lambda \overline{y^p} + \mu \overline{xy^{p-1}} + \nu \overline{x^2y^{p-2}} \right) \cdot \left(\Phi, \Pi, \left(\begin{matrix}
\alpha & 0 \\ \kappa & \beta
\end{matrix}\right), \chi \overline{y^p} + \tau \overline{xy^{p-1}} + \eta \overline{x^2y^{p-2}} \right)\]
\[=\left(\phi\Phi, \theta^{p^l} \Pi, \left(\begin{matrix}
a^{p^l} & 0 \\ c^{p^l} & b^{p^l}
\end{matrix}\right)\left(\begin{matrix}
\alpha & 0 \\ \kappa & \beta
\end{matrix}\right), \left(\lambda^{p^l} \overline{y^p} + \mu^{p^l} \overline{xy^{p-1}} + \nu^{p^l} \overline{x^2y^{p-2}} \right)\cdot \left( \Pi, \left(\begin{matrix}
\alpha & 0 \\ \kappa & \beta
\end{matrix}\right)\right) + \chi \overline{y^p} + \tau \overline{xy^{p-1}} + \eta \overline{x^2y^{p-2}}\right)\]
\[=\left(\phi\Phi, \theta^{p^l} \Pi, \left(\begin{matrix}
a^{p^l}\alpha & 0 \\ c^{p^l}\alpha+\kappa b^{p^l} & b^{p^l}\beta
\end{matrix}\right), (\lambda^{p^l} \Pi^{-1} \beta^{-p}+\chi) \overline{y^p} + (\nu^{p^l} \Pi^{-1} \kappa \alpha^{-2} \beta^{1-p}+\mu^{p^l} \Pi^{-1} \alpha^{-1}\beta^{1-p}+\tau)\overline{xy^{p-1}} + (\nu^{p^l} \Pi^{-1} \alpha^{-2}\beta^{2-p}+\eta) \overline{x^2y^{p-2}} \right).\]
On the other hand,
\[\left(\phi, \theta, \left(\begin{matrix}
a & 0 \\ c & b
\end{matrix}\right), \lambda \overline{y^p} + \mu \overline{xy^{p-1}} + \nu \overline{x^2y^{p-2}} \right)\psi^*  \cdot \left(\Phi, \Pi, \left(\begin{matrix}
\alpha & 0 \\ \kappa & \beta
\end{matrix}\right), \chi \overline{y^p} + \tau \overline{xy^{p-1}} + \eta \overline{x^2y^{p-2}} \right)\psi^* \]
\[\left(\phi, \left(\begin{smallmatrix} \theta ab^p & 0 & 0 & \theta ab^p\lambda \\ 0 & \theta a^2 b^{p-1} & \theta a^2 b^{p-1} \nu & \theta a^2 b^{p-1} \mu \\ 0 & 0 & b & c \\ 0 & 0 & 0 & a  \end{smallmatrix}\right)\right) \cdot \left(\Phi, \left(\begin{smallmatrix} \Pi \alpha\beta^p & 0 & 0 & \Pi \alpha\beta^p\chi \\ 0 & \Pi \alpha^2 \beta^{p-1} & \Pi \alpha^2 \beta^{p-1} \eta & \Pi \alpha^2 \beta^{p-1} \tau \\ 0 & 0 & \beta & \kappa \\ 0 & 0 & 0 & \alpha  \end{smallmatrix}\right)\right)\]
\[=\left(\phi\Phi, \left(\begin{smallmatrix} \theta (ab^p)^{p^l} & 0 & 0 & (\theta ab^p\lambda)^{p^l} \\ 0 & (\theta a^2 b^{p-1})^{p^l} & (\theta a^2 b^{p-1} \nu)^{p^l} & (\theta a^2 b^{p-1} \mu)^{p^l} \\ 0 & 0 & b^{p^l} & c^{p^l} \\ 0 & 0 & 0 & a^{p^l}  \end{smallmatrix}\right) \left(\begin{smallmatrix} \Pi \alpha\beta^p & 0 & 0 & \Pi \alpha\beta^p\chi \\ 0 & \Pi \alpha^2 \beta^{p-1} & \Pi \alpha^2 \beta^{p-1} \eta & \Pi \alpha^2 \beta^{p-1} \tau \\ 0 & 0 & \beta & \kappa \\ 0 & 0 & 0 & \alpha  \end{smallmatrix}\right)\right)\]
\[=\left(\phi\Phi, \left(\begin{smallmatrix}
(\theta ab^p)^{p^l}\Pi\alpha\beta^p&0&0&(\theta ab^p)^{p^l}\Pi\alpha\beta^p\chi+(\theta ab^p\lambda)^{p^l}\alpha\\
0&(\theta a^2b^{p-1})^{p^l}\Pi\alpha^2\beta^{p-1}&(\theta a^2 b^{p-1})^{p^l}\Pi\alpha^2\beta^{p-1}\eta+(\theta a^2b^{p-1}\nu)^{p^l}\beta&(\theta a^2b^{p-1})^{p^l}\Pi\alpha^2\beta^{p-1}\tau+(\theta a^2b^{p-1}\nu)^{p^l}\kappa+(\theta a^2b^{p-1}\mu)^{p^l}\alpha\\
0&0&b^{p^l}\beta&b^{p^l}\kappa+\alpha c^{p^l}\\
0&0&0&a^{p^l}\alpha
\end{smallmatrix}\right)\right).\]

Therefore $\psi^*$ is a homomorphism. Since $\ker \psi=1^*$,  $\psi^*$ is a monomorphism.

It is elementary to check that (1) and (2) hold.
\end{proof}
\end{landscape}
 
Define \[P_R=\left\{\left(1, \left(\begin{smallmatrix} x_1 & 0 & 0 & x_2 \\ 0 & x_3 & x_4 & x_5 \\ 0 & x_6 & x_7 & x_8 \\ 0 & 0 & 0 & x_9 \end{smallmatrix} \right)\right)\mid   x_i\in\KK\,\,\text{for}\,\,1\leq i\leq 9\,\,\text{where}\,\,x_1(x_3x_7-x_4x_6)x_9\ne 0 \right\}\le P_R^*\] and observe that the restriction of $\psi^*$ to $N_P(R)$ delivers an injective homomorphism with image contained in $P_R$. Write $\psi=\psi^*|_{N_P(R)}$.

We can now construct two free amalgamated products, using $\psi^*$. Set \[F^*=F^*(\KK)=P^*\ast_{N_{P^*}(R),\psi^*} P^*_R\] and \[F=F(\KK)=P\ast_{N_P(R), \psi} P_R.\] Thus $F^*$ and $F$ are infinite groups with subgroups isomorphic to $P^*$ and $P^*_R$, and $P$ and $P_R$ respectively, which intersect in a subgroup isomorphic to $N_{P^*}(R)$, $N_P(R)$ respectively.  We identify $P^*, P_R^*$ and $P, P_R$ as subgroups of $F^*$ and $F$ respectively. In particular $S$ is identified as a subgroup of both $F^*$ and $F$.

\begin{notation}\label{n:f*lambda}
Define the fusion systems on $S=S_{\Lambda}(q)$:
\[\F_\Lambda^*(q)=\F_S(F^*) \mbox{ and } \F_\Lambda(q)=\F_S(F).\]
\end{notation}

Clearly, $\F_{\Lambda}(q)$ is a fusion subsystem of $\F_{\Lambda}^*(q)$. Our main result of this section is:

\begin{proposition}\label{UPV saturated}
Both $\F_{\Lambda}^*(q)$ and $\F_{\Lambda}(q)$ are saturated fusion systems.
\end{proposition}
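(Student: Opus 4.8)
The plan is to verify the saturation axioms directly using the amalgam structure, following the same strategy as in \cite{ClellandParker2010}. Since $\F_\Lambda(q)$ is a subsystem of $\F_\Lambda^*(q)$ on the same $p$-group $S$, and both are fusion systems of groups (hence generated by the $p$-local structure of $F$ and $F^*$), it suffices to establish that each is \emph{generated} by a suitable collection of $p$-local subgroups whose fusion is already ``saturated enough'' — concretely, I would invoke a Sylow/saturation criterion for fusion systems realised by amalgams. The natural tool is to check that $S$ is a Sylow $p$-subgroup of $F^*$ in the sense that every finite $p$-subgroup of $F^*$ is conjugate into $S$, and that the two vertex groups $P^*$ and $P_R^*$ each have $S$ (resp.\ a conjugate) as Sylow $p$-subgroup; saturation of $\F_S(F^*)$ then follows from the standard fact that the fusion system of a (possibly infinite) group on a Sylow $p$-subgroup is saturated provided the Sylow theory behaves well, together with an Alperin-type fusion argument localising all fusion to $N_{F^*}(S)$, $P^*$ and $P_R^*$.

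The key steps, in order, are: (1) Record from \cref{(p+1)CUpS} and the construction that $S\le P^*$ with $|P^*/S|$ coprime to $p$ (since $D^*/U$ has order prime to $p$), so $S\in\Syl_p(P^*)$; and, using \cref{p:newamalg}, that $R\psi^* = O_p(P_R)$ is normal in $P_R$ with $P_R/R\cong \GL_1(q)\times\GL_2(q)\times\GL_1(q)$ of order prime to $p$, while $N_S(R)\psi^* \le N_{P_R}(R\psi^*)$ has the right index — hence a conjugate of $S$, call it $S_R$, is a Sylow $p$-subgroup of $P_R$ (equivalently of $P_R^*$), and we may conjugate within $F^*$ so that $S_R = S$, i.e.\ $S$ is simultaneously Sylow in both vertex groups. (2) Apply the theory of groups acting on trees: $F^*$ acts on the Bass--Serre tree of the amalgam, every finite subgroup (in particular every finite $p$-subgroup) fixes a vertex and so is conjugate into $P^*$ or $P_R^*$, and from there into $S$; this gives a working Sylow theory for $F^*$ on $S$, and likewise for $F$ on $S$. (3) Establish the Alperin fusion theorem for $F^*$: any morphism in $\F_S(F^*)$ is a composite of restrictions of maps in $N_{F^*}(S)=N_{P^*}(S)$, $\Aut_{P^*}(Q)$ and $\Aut_{P_R^*}(Q)$ for $Q$ among $S$ and the relevant subgroups — this follows because the only ``essential'' geometry is that of the edge-stabiliser $N_{P^*}(R)$ sitting inside the two vertex stabilisers. (4) Conclude saturation from (1)--(3) via the standard criterion (e.g.\ \cite[Theorem I.8.?]{AKO} on fusion systems of groups, or directly verifying the Sylow and extension axioms), noting that the two ``generating'' subsystems $\F_S(P^*)$ and $\F_S(P_R^*)$ are each saturated (fusion systems of finite groups on Sylow subgroups) and that the amalgam is ``saturated'' in the sense that $N_{P^*}(R)$ is self-normalising enough to glue them. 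The argument for $\F_\Lambda(q)$ is identical, replacing $P^*, P_R^*$ by $P, P_R$ and using $\psi$ in place of $\psi^*$.

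The main obstacle I anticipate is verifying that $S$ can be chosen as a common Sylow $p$-subgroup of both vertex groups \emph{compatibly with the amalgamation}, i.e.\ that $N_{P^*}(R)$ contains a Sylow $p$-subgroup of each vertex group and these agree under $\psi^*$ — this is exactly where the precise form of the monomorphism in \cref{p:newamalg} and the order computations $|N_S(R)| = q^4$, $|S| = q^{p+2}$ must be married with $|P_R^*/R\psi^*|$. A secondary subtlety is the Alperin/Sylow argument over an infinite group: one must be careful that the Bass--Serre action is well-behaved enough that $p$-local subgroups of $F^*$ reduce to the two vertex contributions, and that no new fusion is introduced at the amalgamated subgroup beyond what $N_{P^*}(R)$ and its image already see. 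Once these points are settled, checking the saturation axioms themselves is routine because everything is modelled on honest (finite) group fusion.
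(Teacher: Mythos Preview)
Your proposal contains a genuine error in step (1). You assert that $P_R/R \cong \GL_1(q)\times\GL_2(q)\times\GL_1(q)$ has order prime to $p$, but $\GL_2(q)$ visibly has $q \mid |\GL_2(q)|$. In fact $|P_R|_p = q^4 = |N_S(R)|$, so a Sylow $p$-subgroup of $P_R^*$ is $N_S(R)$, \emph{not} $S$ (recall $|S|=q^{p+2}\ge q^5$). Thus $S$ cannot be chosen as a common Sylow $p$-subgroup of both vertex groups, and the ``fusion system of a group on a Sylow $p$-subgroup'' shortcut you invoke does not apply: in $F^*$ the subgroup $S$ is Sylow in $P^*$ but strictly contains a Sylow $p$-subgroup of $P_R^*$. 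This is precisely the subtle point your ``main obstacle'' paragraph worries about, and it does not resolve in your favour.

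The paper handles this by invoking a criterion tailored to such asymmetric amalgams, namely \cite[Theorem~2.5]{G2p}. The work required is then geometric rather than Sylow-theoretic: one must show that no $\F_\Lambda^*(q)$-centric subgroup of $S$ fixes an infinite path in the Bass--Serre tree, and that the essential subgroups of $\F_S(P^*)$ and $\F_{N_S(R)}(P_R^*)$ (namely $V$ and $R$) are $\F_\Lambda^*(q)$-centric. The first of these is the substantive step and rests on \cref{lem: E cap Ex}, which bounds intersections $R\cap R^s$ for $s\in P^*\setminus N_{P^*}(R)$ and thereby forces any centric subgroup fixing a path of length four into $V$, contradicting centricity. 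None of this appears in your outline; your Alperin-type localisation sketch would need to be replaced by this finiteness-of-fixed-paths argument.
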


Before proving Proposition \ref{UPV saturated} we require a lemma.

\begin{lemma}\label{lem: E cap Ex}
Assume that $T= N_{P^*}(R)$ and $s\in P^*\setminus T$. Then $R\cap R^s\le V$.
\end{lemma}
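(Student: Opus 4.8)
The plan is to show that if $s \in P^* \setminus T$ with $T = N_{P^*}(R)$, then $R \cap R^s$ must lie inside $V = \Lambda(q)$. Since $R \in \mathcal{B}(S)$, we have $R = Z(S)\langle (1,1,(\begin{smallmatrix}1&0\\ \lambda & 1\end{smallmatrix}),0) \mid \lambda \in \KK\rangle$, an elementary abelian subgroup of order $q^3$ with $R \cap V = Z(S)$ of order $q^2$ and $RV/V$ of order $q$. First I would observe that it suffices to show that $R \cap R^s$ contains no element outside $V$; equivalently, that no non-identity coset representative of $RV/V$ survives the conjugation. Suppose for contradiction that there is $r \in (R \cap R^s) \setminus V$. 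Then both $r$ and $r^{s^{-1}}$ lie in $R \setminus V$, so each is an element of $P^*$ whose image in $P^*/V \cong D^*$ is a nontrivial unipotent element of the fixed Sylow subgroup $U$ (using that $R \cap V = Z(S)$ and $RV = N_S(R)$ modulo $V$ ... more precisely $RV/V = UV/V$ since both have order $q$ and $R \le S$).

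The key step is then a normaliser argument in $D^*$. The images $\bar r, \bar s \in D^* = P^*/V$ satisfy $\bar r \in U^\#$ and $\bar r^{\bar s^{-1}} \in U$, hence $\bar s$ normalises $\langle \bar r \rangle \cdot (\text{something})$ — more carefully, since $U$ is elementary abelian of order $q$ and is a full Sylow $p$-subgroup of $O^{p'}(D^*/\text{(field part)}) \cong \SL_2(q)$, an element $\bar r \in U^\#$ has $C_{D^*}(\bar r)$ with Sylow $p$-subgroup exactly $U$, and $N_{D^*}(\langle \bar r\rangle)$ lies inside the Borel $B = N_{D^*}(U)$. So $\bar r^{\bar s^{-1}} \in U$ forces $\bar s$ to conjugate the root subgroup $\langle \bar r\rangle$ into $U$; combined with the structure of $\SL_2(q)$ (where two distinct root subgroups generate the whole group), this should pin $\bar s \in B$, i.e. $s \in N_{P^*}(UV)$. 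Then I would upgrade this: $s$ normalising $UV$ and conjugating the specific abelian subgroup $R$ of $UV$ into $UV$ — using that $R$ is determined inside $UV$ by conditions like being elementary abelian of order $q^3$ containing $Z(S)$ with $R/Z(S)$ a prescribed complement — should give $R^s = R$, contradicting $s \notin T = N_{P^*}(R)$.

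I expect the main obstacle to be the last upgrade: knowing $s$ normalises $UV$ does not immediately give that it normalises $R$, because there could be several $B$-conjugates of $R$ inside $UV$. The cleanest route is probably to compute directly with the explicit description of $P^* = D^* \ltimes V$: write $s = (\phi, \theta, g, v)$ with $g = (\begin{smallmatrix}a & 0 \\ c & d\end{smallmatrix})$ (after reducing to the case $\bar s \in B$) and the putative element $r = (1,1,(\begin{smallmatrix}1&0\\ \lambda & 1\end{smallmatrix}), w) \in R$ with $\lambda \neq 0$, then compute $r^s$ and demand that its $V$-component lies in $Z(S) = \langle \overline{y^p}, \overline{xy^{p-1}}\rangle_\KK$ (which is the condition for $r^s \in R$, since the $D^*$-part is automatically in $UV/V$). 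Using the matrix $\rho_\Lambda$ of the action of $B$ on $\Lambda(q)$ recorded before Lemma~\ref{Gammacentraliser} — in particular the explicit $3\times 3$ block on $\langle \overline{x^2y^{p-2}}, \overline{xy^{p-1}}, \overline{y^p}\rangle$ — one checks that the $\overline{x^iy^{p-i}}$ components for $i \ge 3$ of $w$ (which could be nonzero, since a priori $r$ need only lie in $R$, but $R$'s elements have $w \in Z(S)$, so actually $w \in Z(S)$ already) ... so in fact $w \in Z(S)$ from the start, and the constraint simplifies: $r^s \in R$ iff the $\overline{x^2 y^{p-2}}$-coefficient and higher coefficients of $w^s = w \cdot g + (\text{contribution from } [U\text{-part}, v])$ vanish. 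Working this out forces $c = 0$ and then $s$ already centralises the relevant data modulo $R$, giving $R^s = R$. This is a finite explicit computation using the displayed representation matrices, so while slightly fiddly it is routine; the conceptual content is entirely the $\SL_2(q)$ normaliser reduction.
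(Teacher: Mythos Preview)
Your reduction to $s \in N_{P^*}(S)$ is correct and is exactly the paper's first step: the paper phrases it as $R \cap R^s \le S \cap S^s = V$ when $s \notin N_{P^*}(S)$, using the TI property of Sylow $p$-subgroups in $\SL_2(q)$, which is the same content as your normaliser argument in $D^*$.

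Your second step, however, contains an error in the sketch and misses a much cleaner route. You claim the explicit computation ``forces $c=0$'', but this is wrong: the off-diagonal entry $c$ in the lower-triangular matrix can be arbitrary in $N_{P^*}(R)$ (see the explicit description of $N_{P^*}(R)$ at the start of Section~\ref{sec : NewExotics}). If you carry out the computation correctly, the condition ``$V$-part of $r^{s^{-1}}$ lies in $Z(S)$'' reduces to $[v, u_{\lambda'}] \in Z(S)$ for the relevant $u_{\lambda'} \in U^\#$, where $v$ is the $V$-component of $s$; by \cref{(p+1)CUpS}(\ref{cups6}) this is equivalent to $v \in Z_2(S)$, and \emph{that} is precisely the condition $s \in N_{P^*}(R)$. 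So the computational approach can be salvaged, but not with the conclusion you wrote.

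The paper avoids all computation in the second step. Once $s \in N_{P^*}(S)$, both $R$ and $R^s$ are abelian subgroups of $S$, so any $w \in R \cap R^s$ centralises $\langle R, R^s\rangle$. If $R \ne R^s$ then $|\langle R, R^s\rangle| \ge p\,|R| = pq^3$, while $|\langle R, R^s\rangle V/V| \le |S/V| = q$; hence $|\langle R, R^s\rangle \cap V| > q^2 = |Z(S)|$. But for $w \in S \setminus V$ we have $C_V(w) = Z(S)$ by \cref{(p+1)CUpS}(\ref{cups1}), a contradiction. Equivalently, once $s$ normalises $S$ one has $R^s \in \mathcal{B}(S)$, and you could simply invoke \cref{lem:intersec1}(2). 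Either way, no matrix manipulation is required.
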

\begin{proof}
If $s$ does not normalise $S$, then we note that $R \cap R^s \le S \cap S^s=V$. Hence we consider $s \in N_{P^*}(S)$. Assume that $w\in R \cap R^s $ and that $w \not\in V$. Then $w$ centralises $\langle R,R^s\rangle$ as $R$ is abelian.  Since $\gen{R,R^s}$ has order at least $q^3p$ and  $|\gen{R,R^s}: \gen{R,R^s} \cap V|=q^2$,  we deduce that $\langle R,R^s\rangle\cap V > Z(S)$. Thus, $C_V(w)> Z(S)$ which is a contradiction to Lemma \ref{(p+1)CUpS} (1). Hence $R \cap R^s \le V$.
\end{proof}

\begin{proof}[Proof of Proposition \ref{UPV saturated}]
We prove Proposition \ref{UPV saturated} only for $\F_{\Lambda}^*(q)$ but the proof for $\F_{\Lambda}(q)$ is the same. We intend to apply  Theorem \ref{t:satamalg}   with $G=F^*$. For convenience we set $M^*=P^*_R$.  Thus, as above, $\Gamma$  has vertex set the right cosets of $P^*$ and $M^*$ in $F^*$ and two vertices are incident if and only if their intersection is non-empty  and they are not equal. Then $F^*$ acts edge transitively by right multiplication  on $\Gamma$, and $\Gamma$ is a tree.

Suppose that $X\le S$ is an $\F_S(S)$-centric subgroup (there are more of these than there are $\F_{\Lambda}^*(q)$-centric subgroups), and assume that $X\le S$ fixes the path $P^*y ,M^*, P^*,M^*s$ of length $4$ in $\Gamma$ where $s \in P^*\setminus M^*$ and $y \in M^*\setminus P^*$. Since $R\in \syl_p((P^*\cap M^*)^y\cap(P^*\cap M^*))$, we have  $X\le R$. Therefore, as $P^*\cap M^*= N_{P^*}(R)$, $s \not \in N_{P^*}(R)$ and so  $X \le R \cap R^s \le V$ by Lemma~\ref{lem: E cap Ex}. Since  $X < V$, we now have a contradiction as $X$ is $\F_S(S)$-centric and $V$ is abelian.

Now suppose that $X \le S$ is $\F_{\Lambda}^*(q)$-centric  and $\Gamma^X$ is infinite. Then $X$ fixes the vertex $P^*$. Since $\Gamma$ is a tree, $\Gamma^X$ is connected. Hence $\Gamma^X$ fixes an infinite path emanating from $P^*$ starting \[P^*,M^*a,P^*b,M^*c,\dots.\] Since $M^*a\cap P^*b$ is non-empty we may suppose that $a=b$. Thus $X^{a^{-1}}$ fixes the path \[P^*a^{-1},M^*,P^*,M^*ca^{-1},\dots.\] Because $X^{a^{-1}} \le P^* \cap M^*= N_{P^*}(R)$, we see that $X^{a^{-1}}\le  S$ and as  $X$ is $\F_{\Lambda}^*(q)$-centric, we deduce that $X^{a^{-1}}$ is $\F_S(S)$-centric. This therefore contradicts the conclusion of the previous paragraph. Hence, no $\F_{\Lambda}^*(q)$-centric subgroup of $S$ fixes an infinite path in $\Gamma$.

The only $\F_S(P^*)$-essential subgroup is $V$ and the unique $\F_{N_S(R)}(M^*)$-essential subgroup is $R$. As $V$ is weakly $\F_{\Lambda}^*(q)$-closed by Lemma \ref{(p+1)CUpS} (7), $V$ is surely $\F_{\Lambda}^*(q)$-centric. Now suppose that $a \in F^*$ and that $R^a \le S$ is not $\F_S(S)$-centric. Since $R^a\le S$, we have that $R^a$ fixes the vertex of $\Gamma$ corresponding to $P^*$ and since $R\le M^*$, we have that $R^a$ fixes the vertex of $\Gamma$ corresponding to $M^*a$. Since $\Gamma$ is a tree, $R^a$ fixes the unique path with $P^*, M^*a$ as its end points. Assume that $P^*$ and $M^*a$ are adjacent in $\Gamma$. Then $a\in P^*$ and we deduce that $S=R^aV$. Since $R^a$ is abelian of order $q^3$, we conclude that $C_V(R^a)=C_V(S)=R^a\cap V$ and so $R^a$ is $\F_S(S)$-centric, a contradiction. Hence, $R^a$ fixes the path \[P^*,M^*b,P^*c,\dots,M^*a\] of length at least $4$. Conjugating by $b^{-1}$ shows that we may take $M^*b=M^*$ and suppose that $R^a$ fixes \[P^*,M^*,P^*c,\dots,M^*a.\] Then $R^a\le P^*\cap M^*\cap {P^*}^c$ for some $c\in M^*$. But $P^*\cap M^*\cap {P^*}^c=R$ and we conclude that $R^a=R$ is $\F_S(S)$-centric, a contradiction. This completes the proof.
\end{proof}

\begin{proposition}
We have that $\F_{\Lambda}(q)=O^{p'}(\F_{\Lambda}^*(q))$, $|\Out_{\F_{\Lambda}^*(q)}(S)|=(q-1)^2.m_{p'}$ and $|\Out_{\F_{\Lambda}(q)}(S)|=(q-1)^2$.
\end{proposition}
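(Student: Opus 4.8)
The plan is to identify $\F_\Lambda^*(q)$ and $\F_\Lambda(q)$ as fusion systems of the groups $F^*$ and $F$ respectively, and to compute $\Out_{\F}(S)$ in each case directly from the structure of the normalisers of $S$ in the amalgam factors, then verify the hyperfocal/$p'$-index relationship. First I would compute $\Out_{\F_\Lambda^*(q)}(S) = N_{F^*}(S)/SC_{F^*}(S)$. Since $S$ is contained only in (conjugates of) $P^*$ and $M^* = P_R^*$, and since $V = \Lambda(q)$ is weakly closed in $\F_\Lambda^*(q)$ by \cref{(p+1)CUpS}(\ref{cups8}) while $N_S(R) = RZ_2(S)$ is the relevant intersection, one argues that $\Aut_{\F_\Lambda^*(q)}(S)$ is generated by $\Aut_{P^*}(S)$ together with the image of $\Aut_{N_{M^*}(N_S(R))}(N_S(R))$ restricted to $S$ — but the latter normalises $R$ and hence, via the action on $S/V$ and $Z(S)$, contributes nothing new modulo $\Aut_{P^*}(S)$ by \cref{lem: kerneldescription} (the map $\delta$ separates $p'$-automorphisms, and the $p'$-part of $\Aut_{M^*}(N_S(R))$ already lies inside the torus computed in \cref{p:newamalg}(2)). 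Thus $\Out_{\F_\Lambda^*(q)}(S) = \Out_{P^*}(S) = \Aut_{P^*}(S)/\Inn(S)$, and by \cref{Snauto}(\ref{sn4}) and the analogue \cref{Supauto} for $S_\Lambda(q)$ (and the explicit formula \eqref{eq:delta-sigma} applied to $S_\Lambda$), this outer automorphism group has order $(q-1)^2.m_{p'}$.

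Next I would compute $\Out_{\F_\Lambda(q)}(S) = N_F(S)/SC_F(S)$ in exactly the same way, but now with the field automorphism component suppressed: $P$ replaces $P^*$ and $P_R$ replaces $P_R^*$, so the factor $\Aut(\KK)$ (of $p'$-order $m_{p'}$) is no longer present in either amalgam factor, giving $|\Out_{\F_\Lambda(q)}(S)| = (q-1)^2$. This is the content of the identification $\Aut_P(S)\delta = \Aut_\Sigma(S)\delta$ having order $(q-1)^2$ when the $\Aut(\KK)$-part is trivial, which is immediate from \eqref{eq:delta-sigma}.

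Finally, for $\F_\Lambda(q) = O^{p'}(\F_\Lambda^*(q))$: since both systems are saturated by \cref{UPV saturated} and have the same essential subgroups (namely $V$ and the $S$-class of $R$, with the same essential automiser groups in each — the essential automisers are generated by $p$-elements and the torus of order dividing $(q-1)$-powers, none of which involves field automorphisms), the only difference between $\F_\Lambda^*(q)$ and $\F_\Lambda(q)$ lies in $\Out_{\F}(S)$, where the quotient is $\Out_{\F_\Lambda^*(q)}(S)/\Out_{\F_\Lambda(q)}(S)$ of order $m_{p'}$, a $p'$-group. By Theorem I.7.7 and the hyperfocal subgroup theorem in \cite{AKO} (or the explicit description of $O^{p'}$ of a saturated fusion system), $O^{p'}(\F_\Lambda^*(q))$ is the fusion subsystem on $S$ whose essential automisers agree with those of $\F_\Lambda^*(q)$ and whose $\Out_{\F}(S)$ is $O^{p'}(\Out_{\F_\Lambda^*(q)}(S))$; comparing with the explicit construction of $\F_\Lambda(q)$ from $F = P \ast_{N_P(R)} P_R$ shows these coincide.

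The main obstacle I expect is the careful bookkeeping in the first step: showing that the factor $M^* = P_R^*$ contributes no automorphisms of $S$ beyond those already in $\Aut_{P^*}(S)$ modulo $\Inn(S)$. This requires knowing that any element of $F^*$ normalising $S$ can be taken (up to $S$-conjugacy and the amalgam relations) to lie in $P^*$ — essentially because $V$ is weakly closed, so $N_{F^*}(S) \le N_{F^*}(V) = P^*$ — combined with a clean description of $\Aut_{P^*}(S)$ from Section \ref{sec:structureS}. Once $N_{F^*}(S) \le P^*$ is established the rest is the routine order computation via \eqref{eq:delta-sigma} and \cref{Supauto}.
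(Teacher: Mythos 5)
Your computation of $|\Out_{\F_\Lambda^*(q)}(S)|$ and $|\Out_{\F_\Lambda(q)}(S)|$ takes a genuinely different route from the paper. You argue directly in the amalgam that $N_{F^*}(S)\le N_{F^*}(V)=P^*$, so that $\Out_{\F_\Lambda^*(q)}(S)=\Out_{P^*}(S)$, whereas the paper observes that since $V$ is abelian, centric and characteristic in $S$, the restriction map from $p'$-elements of $\Aut_\F(S)$ into $N_{\Aut_\F(V)}(\Aut_S(V))$ is injective and surjective (the latter by the extension axiom), and then reads off $\Aut_{\F_\Lambda^*(q)}(V)=\Aut_{P^*}(V)$. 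Both approaches need the same underlying amalgam fact that the fixed-point set of $V$ in the coset tree is a single vertex, so there is little to choose between them; your version requires the extra (easy) step that $N_{F^*}(S)\le N_{F^*}(V)$ via $V$ being characteristic. One small inaccuracy: you invoke the map $\delta$ and the formula \eqref{eq:delta-sigma} and \cref{lem: kerneldescription}, but these are defined in the paper only for $S=S_n(q)$ with $1\le n\le p-1$, not for $S_\Lambda(q)$; an analogous formula does hold for $S_\Lambda(q)$, but you cannot cite those statements directly.

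The final step contains a genuine error. You claim that $O^{p'}(\F_\Lambda^*(q))$ is the subsystem with the same essential automisers and with $\Out_{\F}(S)=O^{p'}\bigl(\Out_{\F_\Lambda^*(q)}(S)\bigr)$. But $\Out_{\F_\Lambda^*(q)}(S)$ is already a $p'$-group (as $\F_\Lambda^*(q)$ is saturated), so $O^{p'}\bigl(\Out_{\F_\Lambda^*(q)}(S)\bigr)=1$, which is not $(q-1)^2$. The correct description, from \cite[Theorem I.7.7]{AKO}, is that $\Aut_{O^{p'}(\F)}(S)=\Aut^0_\F(S)$, the subgroup of $\Aut_\F(S)$ generated by $\Inn(S)$ together with the lifts to $S$ of $p'$-elements of $N_{O^{p'}(\Aut_\F(P))}(\Aut_S(P))$ as $P$ runs through $\F^{frc}$; this is not obtained by applying $O^{p'}(-)$ to $\Out_\F(S)$, and must be computed directly. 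That computation (lifting the torus of $O^{p'}(\Aut_\F(V))$, corresponding to conjugation by $\bigl(1,1,\left(\begin{smallmatrix}a&0\\0&a^{-1}\end{smallmatrix}\right),0\bigr)$, together with the lift of the torus of $O^{p'}(\Aut_\F(R))$, identified via \cref{p:newamalg}(2)) is precisely what shows that $\Aut^0_{\F_\Lambda^*(q)}(S)=\Aut_{\F_\Lambda(q)}(S)$, giving $\F_\Lambda(q)=O^{p'}(\F_\Lambda^*(q))$. Also, your parenthetical claim that the two systems have ``the same essential automiser groups'' is imprecise: $\Aut_{\F_\Lambda^*(q)}(V)=\Aut_{P^*}(V)$ contains field automorphisms while $\Aut_{\F_\Lambda(q)}(V)=\Aut_{P}(V)$ does not; what agrees is only the $O^{p'}$-parts.
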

\begin{proof}
We observe that $V$ is a centric, abelian, characteristic subgroup of $S$. Hence, $p'$-order elements of $\Aut(S)$ restrict faithfully to elements of $\Aut(V)$. Therefore, to calculate $|\Out_{\F}(S)|$ for $\F\in\{\F_{\Lambda}^*(q), \F_{\Lambda}(q)\}$ it suffices to compute the order of a Hall $p'$-subgroup of $N_{\Aut_{\F}(V)}(\Aut_S(V))$. For this we observe that $\Aut_{\F_{\Lambda}^*(q)}(V)=\Aut_{P^*}(V)$ and $\Aut_{\F_{\Lambda}(q)}(V)=\Aut_{P}(V)$ so that the orders of the outer automorphism groups follow quickly.

We now use \cite[Theorem I.7.7]{AKO} to show that $O^{p'}(\F_{\Lambda}^*(q))=\F_{\Lambda}(q)$, adopting the notation from \cite{AKO}. We have that $\mathcal E_0$ contains $O^{p'}(\Aut_{P^*}(V))=O^{p'}(\Aut_{P}(V))$ which contains the conjugation map which corresponds to $(1, 1,\left(\begin{smallmatrix} a&0\\0&a^{-1} \end{smallmatrix}\right),0)\in P^*$, where $a \in \mathbb K^*$. Moreover, $\mathcal{E}_0$ also contains the conjugation map determined by $d=\diag(1,\zeta^{-1},\zeta,1)\in P_R$ for all $\zeta \in \KK^*$ on $R$. By Proposition \ref{p:newamalg} such maps correspond to conjugations by elements from $C_{D^*}(V)\left\langle \left(1, \zeta^{p}, \left(\begin{matrix}
1 & 0 \\ 0 & \zeta^{-1}
\end{matrix}\right), 0\right)\mid \zeta \in \mathbb K^*\right\rangle$ where we recognise $C_{D^*}(V) = \left\{ \left(1, a^{-p},
\begin{pmatrix}
a & 0 \\ 0 & a
\end{pmatrix}
, 0\right) \mid  a \in \KK^{*} \right\}$ as a subgroup of $P^*$. Indeed, the conjugation map determined to $d$ lifts to a faithful automorphism of $S$ and generates a subgroup of order $(q-1)$ in $\Aut_\F(S)$. Since this subgroup intersects $\langle (1, 1,\left(\begin{smallmatrix} a&0\\0&a^{-1} \end{smallmatrix}\right),0)\rangle$ trivially, we deduce that $\Aut_{\F_{\Lambda}^*(q)}^0(S)= \Aut_{\F_{\Lambda}(q)}^0(S)=\Aut_{\F_{\Lambda}(q)}(S)$ and now $\F_{\Lambda}(q)=O^{p'}(\F_{\Lambda}^*(q))$ by \cite[Theorem I.7.7 (d)]{AKO}.
\end{proof}

\begin{proposition}
The fusion system $\F_{\Lambda}(q)$ is simple.
\end{proposition}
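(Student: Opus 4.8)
The plan is to verify that $\F_{\Lambda}(q)$ has no proper nontrivial normal fusion subsystems by working directly with the definition of normality and exploiting that $\F_\Lambda(q)$ is generated by the two parabolics $P$ and $P_R$, with $V = \Lambda(q)$ and $R$ the two essential subgroups and $O^{p'}(\F_\Lambda(q)) = \F_\Lambda(q)$ already established. First I would recall that a normal subsystem $\E \trianglelefteq \F_\Lambda(q)$ is supported on a strongly closed subgroup $T \trianglelefteq S$; so the initial step is to identify the strongly closed subgroups of $S$. Since $V$ and $R$ are both $\F_\Lambda(q)$-essential, any nontrivial strongly closed $T$ must meet both $V$ and $R$ nontrivially and be invariant under $\Aut_{\F_\Lambda(q)}(V) = \Aut_P(V)$ and under $\Aut_{\F_\Lambda(q)}(R) = \Aut_{P_R}(R)$; using the irreducibility of $V$ as an $\SL_2(q)$-module (or rather the structure from \cref{(p+1)CUpS}: $V$ has composition series with factors of $\KK$-dimensions that force $V \le T$ once $T \cap V \ne 1$ is $\Aut_P(V)$-invariant, because $[V,S]$-sections are driven down to $Z(S)$ and then $\SL_2(q)$-irreducibility on the relevant section forces everything up) I would conclude $V \le T$. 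Then since $T$ is also invariant under the action on $R$ and $S = VR$ (as $R \in \mathcal{B}(S)$ means $S = RV$), invariance under $\Aut_{P_R}(R)$ together with $V \le T$ forces $T = S$.

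With $T = S$ in hand, the second step is to show $\E = \F_\Lambda(q)$. Since $\E$ is normal and supported on all of $S$, it is in particular a saturated subsystem on $S$ containing $\Inn(S)$, and normality forces $\Aut_\E(P) \trianglelefteq \Aut_{\F_\Lambda(q)}(P)$ for every $\F_\Lambda(q)$-essential $P \in \{V, R\}$ (this is the content of being "$\F$-invariant" plus Frattini). Because $\E$ must contain $\Inn(S)$, $\Aut_\E(V)$ contains $\Aut_S(V) = \Aut_U(V)$, a Sylow $p$-subgroup of $\Aut_P(V)/O_{p'}$, and similarly for $R$; so $\Aut_\E(V)$ is a normal subgroup of $\Aut_P(V) \cong$ (the relevant parabolic image) containing a full Sylow $p$-subgroup. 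One then checks that $\Aut_P(V)$, whose $O^{p'}$ is an extension involving $\SL_2(q)$ acting on $V$, has no proper normal subgroup containing $\Aut_S(V)$ other than those of $p'$-index; but $O^{p'}(\F_\Lambda(q)) = \F_\Lambda(q)$ rules out the $p'$-index ones, giving $\Aut_\E(V) = \Aut_{\F_\Lambda(q)}(V)$, and symmetrically $\Aut_\E(R) = \Aut_{\F_\Lambda(q)}(R)$. Since $\F_\Lambda(q) = \langle \Aut_{\F_\Lambda(q)}(V), \Aut_{\F_\Lambda(q)}(R)\rangle_S$ (Alperin's fusion theorem, the only essentials being $V$ and $R$ up to conjugacy), we get $\E = \F_\Lambda(q)$, and simplicity follows.

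I would also need to dispose of the normal $p$-subgroup condition, i.e. confirm $O_p(\F_\Lambda(q)) = 1$: a normal $p$-subgroup would lie in every essential subgroup, hence in $V \cap R = Z(S)$ (by \cref{lem:intersec1} applied to $R \in \mathcal B(S)$ — actually since $R$ is not contained in $V$, $R \cap V = Z(S)$), so $O_p(\F_\Lambda(q)) \le Z(S)$; but $Z(S)$ is not normalized appropriately — it is not invariant under $\Aut_{P_R}(R)$ since the action of $P_R$ on $R$ moves $Z(S)$ around (one sees this from the explicit matrices in \cref{p:newamalg}, where the subspace corresponding to $Z(S)$ is not $P_R$-invariant), so $O_p(\F_\Lambda(q)) = 1$.

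The main obstacle I anticipate is the module-theoretic bookkeeping in the first step: precisely controlling the $\Aut_P(V)$-submodule lattice of $V = \Lambda(q)$ well enough to conclude $T \cap V \ne 1 \Rightarrow V \le T$, since $V$ is \emph{not} irreducible (it has the submodule $W$ and its own composition structure coming from \cref{Vpqstruct} and \cref{(p+1)CUpS}). The key leverage is that $T$ strongly closed means it is simultaneously invariant under $\Aut_P(V)$ \emph{and} receives fusion from $R$, and an $\Aut_P(V)$-invariant subgroup that contains a nontrivial element is pushed by commutation with $S$ down through $[V,S;i]$ until it hits $Z(S) = C_V(S)$, and then the $\Aut_{P_R}(R)$-action on the $Z(S)$-part of $R$ (again the matrices of \cref{p:newamalg}) propagates it back up, forcing $V \le T$; carefully justifying these two propagations is where the real work lies.
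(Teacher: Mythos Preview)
Your approach is essentially the paper's: reduce simplicity to showing that the only nontrivial strongly closed subgroup of $S$ is $S$ itself, via a ping-pong between $\Aut_\F(V)$ and $\Aut_\F(R)$. Two points of comparison are worth noting. First, the paper bypasses your entire second step by invoking \cite[Theorem II.9.8(d)]{AKO}: since $\F_\Lambda(q)=O^{p'}(\F_\Lambda(q))$ is already established, the absence of proper nontrivial strongly closed subgroups immediately gives simplicity, so there is no need to analyse $\Aut_\E(V)$ and $\Aut_\E(R)$ for a putative normal $\E$ on $S$, nor to check $O_p(\F)=1$ separately. Second, your ping-pong sketch slightly misorders the mechanism: the $\Aut_\F(R)$-action does not ``propagate back up'' into $V$; rather, starting from $z\in C_W(S)\le R\cap T$ (obtained after one pass through $\Aut_\F(V)$, which forces $W\le T$), the irreducibility of $R/C_R(O^{p'}(\Aut_\F(R)))$ as a natural $\SL_2(q)$-module yields an element $w\in T\setminus V$; then $[V,w]\le T$ (as $T\trianglelefteq S$), and since $|V/[V,w]|=q^2$ with $[V,w]\not\le W$ by \cref{(p+1)CUpS}(\ref{cups3}), a second pass through $\Aut_\F(V)$ gives $V\le T$. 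The final step $V<T\Rightarrow T=S$ then follows from the irreducible action of the torus in $\Aut_\F(S)$ on $S/V$.
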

\begin{proof}
Set $\F=\F_{\Lambda}(q)$ so that $\F=O^{p'}(\F)$. Suppose that $X$ is strongly closed in $S$. Then $X$ is normal in $S$ and $C_V(S)\cap X \ne 1$. Let $z \in C_V(S)\cap X$ be non-trivial. Then $\gen{z^{\Aut_\F(V)}}$ contains the invariant subgroup $W$ of $V$ of order $q^{p-1}$. Thus we may assume that $z$ is in $C_W(S)= \gen{\ov{xy^{p-1}}}_{\KK}$ Lemma \ref{(p+1)CUpS}(1). But then $X \ge \gen{z^{\Aut_\F(R)}}\not \le V$.  Let $w \in X\setminus V$. Then $X \ge [V,X]\ge [V,w]$. As $|V/[V,w]|= q^2$ by Lemma \ref{(p+1)CUpS}(3), and since $[V,w] \not \le W$, we have $\gen{[V,X]^{\Aut_\F(V)}}= V$. Thus $V < X$.  Finally, $X=\gen{X^{\Aut_\F(S)}}= S$. Then \cite[Theorem II.9.8(d)]{AKO} yields that $\F$ is simple.
\end{proof}

\begin{proposition}\label{New exotic}
Every saturated fusion system of $p'$-index in $\F^*_{\Lambda}(q)$ is exotic.
\end{proposition}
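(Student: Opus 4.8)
The plan is to assume, for a contradiction, that some saturated fusion system $\F$ of $p'$-index in $\F^*_\Lambda(q)$ is realised by a finite group $G$, and to defeat this using the classification of finite simple groups together with the module-theoretic data every such $\F$ carries. The case $q=p$ can be separated off: there $S=S_\Lambda(p)$ has an abelian subgroup of index $p$, and the systems in question occur among those classified in \cite{p.index1,p.index2}, so I would treat that case by citation and assume $q>p$ henceforth. First I would extract the features common to all $p'$-index subsystems. Since $\F_\Lambda(q)=O^{p'}(\F^*_\Lambda(q))\le\F\le\F^*_\Lambda(q)$ and $\F_\Lambda(q)$ is simple, we get $O^{p'}(\F)=\F_\Lambda(q)$, so $O_p(\F)=1$ and the $\F$-essential subgroups are, up to $S$-conjugacy, exactly $V:=\Lambda(q)$ and $R$, with $\Aut_\F(V)\ge O^{p'}(\Aut_{P^*}(V))$ and $\Aut_\F(R)\ge O^{p'}(\Aut_{P^*_R}(R))$. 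In particular $\Out_\F(V)$ involves $\SL_2(q)$, and by \cref{Vpqstruct} and \cref{(p+1)CUpS} the module $V=\Lambda(q)$ for this $\SL_2(q)$ is \emph{uniserial but not irreducible}, with socle the $(p-1)$-dimensional irreducible $V_{p-2}(q)$ and head a Frobenius twist of the natural module; likewise $\Out_\F(R)$ involves $\SL_2(q)$, acting on $Z(S)\le R$ and on $R/Z(S)$. Finally $V$ is weakly $\F$-closed, being the unique abelian subgroup of $S$ of its order by \cref{(p+1)CUpS}, and both $N_\F(V)$ and $N_\F(R)$ are constrained (with $V$ self-centralising in $S$ and $R$ self-centralising in $N_S(R)$).

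Suppose now $\F=\F_S(G)$ with $S\in\Syl_p(G)$. Replacing $G$ first by $G/O_{p'}(G)$, and then by $\langle N_G(V),N_G(R)\rangle$ — the latter still realises $\F$ because $V$ is characteristic in $S$, so $N_G(S)\le N_G(V)$, and the essential normalisers are retained — standard reductions let us assume $O_{p'}(G)=1$; since $\F$ is not constrained, $O_p(\F)=1$ forces $O_p(G)=1$, hence $F^*(G)=E(G)\ne1$ and $G$ embeds in $\Aut(E(G))$ with $C_G(E(G))=1$. Because $N_\F(V)$ and $N_\F(R)$ are constrained, $N_G(V)$ and $N_G(R)$ are their unique models: $O_p(N_G(V))=V$ is abelian of order $q^{p+1}$ with $N_G(V)/V$ acting on $V$ as above, $O_p(N_G(R))=R$ is elementary abelian of order $q^3$ with $N_G(R)/R$ involving $\SL_2(q)$, and $\bigl(N_G(V),N_G(R)\bigr)$ is a rank $2$ amalgam of characteristic $p$ admitting $G$ as a faithful completion with $O_p(G)=1$.

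The crux is then to rule out such a completion. One runs through the possibilities given by the Classification — either directly via the simple group $E(G)$, whose Sylow $p$-subgroups nearly coincide with $S=S_\Lambda(q)$, or via the classification of rank $2$ amalgams/weak BN-pairs of characteristic $p$ in \cite{ParkerRowleyWeakBN} and its antecedents. The Lie type possibilities — the parabolic amalgams of $A_2$, $C_2$, $G_2$, ${}^3D_4$, ${}^2F_4$ — are excluded because in none of them is a parabolic $p$-radical \emph{abelian} of order $q^{p+1}$ carrying the module $\Lambda(q)$: counting through the root systems, the unordered pairs of radical orders there are $\{q_0^2,q_0^2\}$, $\{q_0^3,q_0^3\}$, $\{q_0^5,q_0^5\}$, $\{q_0^{11},q_0^9\}$, which cannot equal $\{q^{p+1},q^3\}$ for any odd $p$ with $q>p$ — the only numerical near-miss, $C_2(q)$ with $q_0=q$ and $p=3$, fails on the second radical, and the $G_2$, ${}^3D_4$ radicals are non-abelian. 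Alternating and sporadic socles, and Lie type groups in characteristic $\ne p$, leave only finitely many $(p,q)$ with a Sylow $p$-subgroup of the right order and nilpotency class, each killed by a direct check using the characteristic subgroup $U[V,S]$ of \cref{lem: CharSub}, the weak closure of $V$, and $\Out_\F(V)\ge\SL_2(q)$; the handful of exceptional rank $2$ amalgams go the same way. Hence no finite group realises $\F$, so $\F$ is exotic.

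The main obstacle, as usual, will not be the reductions but the genuinely borderline cases — small $p$ (above all $p=3$) and small $q$ — where $S_\Lambda(q)$ has the same order and nilpotency class as a unipotent radical of a rank $2$ group of Lie type, so coarse numerical invariants no longer separate the two. Resolving these forces one to use the fine $\KK\SL_2(q)$-module structure of $\Lambda(q)$ from \cref{Vpqstruct} — that it is a non-split extension with precisely the composition factors $V_{p-2}(q)$ and a twist of the natural module — together with the exact placement of the second essential class $R$ relative to $V$ and the weak closure of $V$ in $S$; it is the interplay of these two essentials, rather than either one alone, that obstructs every finite realisation.
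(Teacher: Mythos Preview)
Your overall strategy---reduce to a realisation by an almost simple group and then eliminate each class of simple groups via the Classification---matches the paper's, but the execution diverges at the Lie-type-in-characteristic-$p$ step, and your version has a genuine gap there. The pair $(N_G(V),N_G(R))$ is \emph{not} a weak BN-pair or a rank~$2$ amalgam in the usual sense: since $R$ is not normal in $S$, the Sylow $p$-subgroup of $N_G(R)$ is $N_S(R)$ of order only $q^4$, so your two ``parabolics'' do not share a common Sylow and the classifications you invoke do not apply. Your radical-order comparison against $A_2,C_2,G_2,{}^3D_4,{}^2F_4$ therefore has no footing, and even were it granted it would say nothing about Lie type groups of rank~$\geq 3$.

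The paper dispatches this case in one line via Borel--Tits \cite[Theorem~3.13]{GLS3}: $R$ is $\F$-essential, hence $\F$-centric-radical, hence $p$-radical in $G$; but in a group of Lie type in defining characteristic every $p$-radical subgroup is the unipotent radical of a parabolic, whose normaliser contains a full Sylow $p$-subgroup; since $R$ is fully $\F$-normalised this would force $N_S(R)=S$, contradicting $N_S(R)<S$. For cross-characteristic Lie type the paper likewise avoids ad~hoc checks: away from the exceptional groups with $p\in\{3,5\}$, \cite[Theorem~4.10.3(e)]{GLS3} puts $V$ inside a maximal torus and forces $R$ to be $G$-conjugate into $V$, contradicting $\F$-centricity of $R$; only the residual exceptional cases need the order and $p$-rank bounds you gesture at. Finally, the paper's reduction to almost simple goes directly through \cite[Lemma~III.6.1]{AKO}, which uses simplicity of $O^{p'}(\F)=\F_\Lambda(q)$ to place $S$ inside a single simple component of $F^*(G)$---a step your hand-made replacement by $\langle N_G(V),N_G(R)\rangle$ leaves unaddressed.
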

\begin{proof}
Let $\F$ be a saturated fusion system of $p'$-index in $\F^*_{\Lambda}(q)$ and suppose that $\F$ is not exotic. Then \cite[Lemma III.6.1]{AKO} implies that $\F$ is realised by an almost simple group. Since $O^{p'}(\F)=\F_{\Lambda}(q)$ is simple, it follows that $S \in \syl_p(F^*(G))$.

Suppose that $F^*(G)$ is a sporadic simple group. Then using that $|S|= q^{p+2}$, quickly rules out almost all cases, except perhaps the remote possibility that $G$ is the Monster with $p=3$ and $q=81$, but then the centre of a Sylow $3$-subgroup of $G$ has order $3$ which is a contradiction.

If $F^*(G)$ is an alternating group $\Alt(n)$, then, as $S$ is not a direct product, we see that $n=p^k$.  But in this case the Sylow $p$-subgroups of $\Alt(n) $ are iterated wreath products which have cyclic centre.

Suppose $F^*(G)$ is a Lie type group defined in characteristic $p$. Then by the Borel--Tits theorem \cite[Theorem 3.13]{GLS3}, unless $R$ is normal in $S$, $R$ is not a $p$-radical subgroup. As $R$ is not normal in $S$, this is impossible.

We have shown that $F^*(G)$ is a Lie type group defined in characteristic $r$, where $r\ne p$. Suppose that $G$ is not an exceptional group with $p\in\{3,5\}$. Then \cite[Theorem 4.10.3(e)]{GLS3} first yields that $V$ is contained in a maximal rank abelian subgroup of $S$, and then that $R$ is conjugate to a subgroup of $V$. This contradicts the fact that $R$ is $\F$-centric. Hence, it remains to consider the exceptional groups when $p\in \{3,5\}$. If $|S/V|=p$ then we appeal to \cite[Theorem 2.8]{p.index2} for a contradiction.

Thus, we may assume that $|S/V|>p$ so that $|V|=q^4\geq p^8$ with equality if and only if $p=3$ and $q=9$. We observe by \cite[Corollary 13.23]{ParkerRowleySymplectic} that the $p$-rank of $S$ is bounded above by the rank of $F^*(G)$. It follows first that $F^*(G)\cong \mathrm E_8(r^c)$ and then that $p=3$. Thus $|V|=3^8$ and $|S|=3^{10}$. Applying \cite[Theorem 4.10.3(a) and (c)]{GLS3}, $V$ is a maximal torus in $F^*(G)$. Since the Weyl group of $\mathrm E_8(r^c)$ is isomorphic to $2.\mathrm{GO}^+_8(2)$ which has Sylow $3$-subgroups of order $3^5$, this implies $|S|=3^{13}$, a contradiction. Hence no such $G$ exists and consequently $\F$ is exotic.
\end{proof}

 Combining the results in this section, we have demonstrated the following theorem.
 
\begin{theorem}\label{t:newexotics}
For $p$ an odd and $q=p^m$, the group $S_\Lambda(q)$ supports a saturated fusion system $\F^*=\F_\Lambda^*(q)$. Moreover,  $O^{p'}(\F^*)$ is simple  and all $p'$-index subsystems of $\F^*$ are exotic.
\end{theorem}

\begin{remark}\label{NotIso}
Assume that $p$ is odd, $q=p^m$, and let $\F$ be a fusion subsystem of $p'$-index in $\F^*_{\Lambda}(q)$ on $S=S_\Lambda(q)$.

When $q=p$, the group $S$ has an elementary abelian subgroup of index $p$, and the fusion systems in Theorem~\ref{t:newexotics} occur among the examples considered in \cite{p.index2} (see the second and seventh rows of \cite[Table~4.1]{p.index2} and \cite[Proposition~4.1]{p.index2}). On the other hand, when $q>p$ these examples appear to be new.

At the time of writing, apart from a handful of further examples mentioned below, the majority of known exotic fusion systems are described in \cite{GPMaxClass}. Broadly speaking, these fall into six families, which we now consider in turn. Note that $O^{p'}(\F)$ is a simple fusion system supported on $S$.

\begin{itemize}
\item[(1)] \emph{$\F$ is not an exotic Benson--Solomon system described in \cite{LO}:}
This is immediate, since $p$ is odd.

\item[(2)] \emph{$\F$ is not a van Beek exotic fusion system on a subgroup of a Sylow $3$-subgroup of the sporadic Thompson group, or of a Sylow $5$-subgroup of the Monster, as described in \cite{vbexotics}:}
Indeed, $|Z(S)|=q^2$ with $q>p$.

\item[(3)] \emph{$\F$ is not an exotic fusion system on a $p$-group with an abelian subgroup of index $p$, as described in \cite{p.index1}, \cite{p.index2}, or \cite{p.index3}:}
Since $q>p$, the group $S$ has no such subgroup.

\item[(4)] \emph{$\F$ is not an exotic fusion system on a $p$-group of maximal class described in \cite{GPMaxClass}:}
As $q>p$ and $|Z(S)|=q^2$, the group $S$ is not of maximal class.

\item[(5)] \emph{$\F$ is not a polynomial fusion system on $S_n(q)$ for $1\le n\le p-1$, as described in \cite{ClellandParker2010}, nor one of its pruned variants described in \cite{henke2023punctured}:}
In contrast to these $p$-groups, we have $|Z(S)|=q^2$ and $S$ has an abelian subgroup of index $q$.

\item[(6)] \emph{$\F$ is not a $p'$-index exotic subsystem of the $p$-fusion system of a finite simple group $H$ of Lie type in characteristic $r\ne p$, as described in \cite{oliver2020simplicity}:}
Suppose otherwise, and let $T\in \Syl_p(H)$. Then $O^{p'}(\F_T(H))$ is an exotic fusion system as in \cite[Theorem~A(c)]{oliver2020simplicity}. In particular, by \cite[Lemma~2.2(a)]{oliver2020simplicity},
\[
\{V,R^S\}\subseteq \F^{rc}
=O^{p'}(\F)^{rc}
=\F_T(H)^{rc}.
\]
Replaying the final two paragraphs of the proof of Proposition~\ref{New exotic}, we conclude that $T\not\cong S$. Hence $O^{p'}(\F)\not\cong O^{p'}(\F_T(H))$, as required.
\end{itemize}

Thus $\F$ is not isomorphic to any of the fusion systems listed above, and these cases exhaust all currently known exotic fusion systems. We conclude that the fusion systems in Theorem~\ref{t:newexotics} are new.
\end{remark}

\section{Polynomial fusion systems}\label{sec:poly}

In this section, we describe the polynomial fusion systems which appear in the conclusions of our main theorems. For this, as in Section \ref{sec : NewExotics}, we require certain extensions of the Clelland-Parker fusion systems by field automorphisms of $p'$-order. We begin by recalling the definitions of these systems, following the presentation in \cite{ClellandParker2010}.

 Let $p$ be an odd prime and for $1\leq n\leq p-1$ set $S=S_n(q)$. Recall the definition of $U$ from Notation \ref{n:subsofd*} and the definition of $R$ and $Q$ from Notation \ref{n:bsandcs}. Thus $R$ is elementary abelian of order $q^2$ and $Q \cong S_1(q)$ by Corollary \ref{prop: Q iso}. In \cite[Section 4]{ClellandParker2010} two groups $P_R$ and $P_Q$ are defined. These groups have a normal subgroup isomorphic to $R$ and $Q$ respectively with $\Aut_{P_R}(R) \cong \GL_2(q)\cong \Out_{P_Q}(Q)$. Recall from Notation \ref{n:snandpn} that $P_n(q) = D\ltimes V_n(q)$.  Then monomorphisms $\Psi_R:N_{P_n(q)}(R) \rightarrow P_R$ and $ \Psi_Q:N_{P_n(q)}(Q)\rightarrow P_Q$ are given such that the free amalgamated products $F(n,\KK,R)= P_n(q)\ast_{N_{P_n(q)}(R)}P_R$ and $F(n,\KK,Q)= P_n(q)\ast_{N_{P_n(q)}(Q)}P_Q$ can be constructed. Then $P_n(q)$ and $P_R$, respectively $P_Q$, are identified as subgroups of these products. In particular, $S$ is a subgroup and the fusion systems $\F(n ,q, R)=\F_S(F(n,\KK,R))$ and $\F(n ,q, Q)=\F_S(F(n,\KK,Q))$ are shown to be saturated and mostly exotic.

 Recall from Notation \ref{n:snandpn} that $P_n^*(q)=D^* \rtimes V_n(q)$, and note that since any element of $\Aut(\KK)$ acts on $P_R$, $P_Q$ by raising matrix entries to some appropriate $p$-power, the maps $\Psi_R$ and $\Psi_Q$ defined above are $\Aut(\KK)$-equivariant. Thus, in parallel to the construction in Section \ref{sec : NewExotics}, one may similarly define \[P^*_R:= O^p(\Aut(\KK)) \ltimes P_R \mbox{ and } P^*_Q := O^p(\Aut(\KK)) \ltimes P_Q\] and maps $\Psi_Q^*: N_{P_n^*(q)}(Q) \rightarrow P_Q^*$ and $\Psi_R^*: N_{P^*_n(q)}(R) \rightarrow P_R^*$ such that the free amalgamated products \[F^*(n,\KK,R):= P_n^*(q) *_{N_{P_n^*(q)}(R)}P^*_R \mbox{ and } F^*(n,\KK,Q):= P_n^*(q)*_{N_{P_n^*(q)}(Q)}P^*_Q\] can be constructed.
 
\begin{notation}\label{n:BigCPsystems}
We write $\F^*(n ,q, R)$, respectively $\F^*(n ,q, Q)$, for the fusion systems associated to $F^*(n,\KK,R)$, respectively $F^*(n,\KK,Q)$, which may be regarded as the largest saturated fusion systems defined on $S_n(q)$, where $q=p^m>p$, among those such that $\mathcal{E}(\F)=\{V\} \cup R^S$, respectively $\mathcal{E}(\F)=\{V\} \cup Q^S$.
\end{notation}
 
It is clear that $\F(n, q, X)$ is a fusion subsystem of $\F^*(n, q, X)$ for $X\in \{R,Q\}$.

\begin{proposition}\label{t:fstarsat1}
The fusion systems $\F^*(n,q,Q)$ and $\F^*(n,q,R)$ are saturated and respectively contain $\F(n, q, Q)$ and $\F(n,q, R)$  as subsystems of $p'$-index. Moreover, \[\mathcal{E}(\F^*_{\Lambda}(q))=\{V\}\cup R^S \mbox{ and } \mathcal{E}(\F^*(n, q, X))=\{V\}\cup X^S \mbox{ for  $X\in \{R,S\},$}\] and for all such systems $\F$, we have $\F^{frc}=\{S\} \cup \E(\F)$.
\end{proposition}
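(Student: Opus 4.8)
The plan is to prove \cref{t:fstarsat1} in three stages: first establish saturation, then identify the essential subgroups, and finally compute $\F^{frc}$. For saturation of $\F^*(n,q,R)$ and $\F^*(n,q,Q)$, I would run the same amalgam argument used in the proof of \cref{UPV saturated}, applying \cite[Theorem 2.5]{G2p} with $G=F^*(n,\KK,R)$ (resp. $G=F^*(n,\KK,Q)$). The coset graph $\Gamma$ of the amalgam is a tree on which $G$ acts edge-transitively, and the key geometric input is the analogue of \cref{lem: E cap Ex}: if $T=N_{P^*}(R)$ and $s\in P^*\setminus T$ then $R\cap R^s\le V$ (and similarly for $Q$). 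For $R$ this follows exactly as in \cref{lem: E cap Ex} using \cref{Somnibus}(3) in place of \cref{(p+1)CUpS}(1) to force $C_V(w)=Z(S)$ for $w\notin V$; for $Q$ one argues that if $w\in Q\cap Q^s\setminus V$ then $\gen{Q,Q^s}$ would centralise a subgroup of $V$ strictly larger than $Z_2(S)\cap V$, again contradicting \cref{Somnibus}(3). Given this, no $\F_S(S)$-centric subgroup can fix a path of length $4$ in $\Gamma$, hence no $\F$-centric subgroup fixes an infinite path, and the $\F$-centric subgroups fixing a vertex are accounted for by $V$ (which is weakly closed and abelian-centric by \cref{Somnibus}(4)) and $S$-conjugates of $R$ (resp. $Q$); the final bookkeeping that $R^a$ (resp. $Q^a$) fixing a path forces $R^a=R$ is verbatim as in \cref{UPV saturated}. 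The $p'$-index statement is then immediate from the construction: $\F(n,q,X)$ is obtained from $\F^*(n,q,X)$ by replacing $P^*_X$ and $P^*$ by $P_X$ and $P$, i.e.\ by deleting the $O^p(\Aut(\KK))$ field automorphisms, which act as $p'$-elements, so $O^{p'}(\F^*(n,q,X))=\F(n,q,X)$ by the same use of \cite[Theorem I.7.7]{AKO} as in the preceding proposition.

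For the statement $\mathcal{E}(\F^*_\Lambda(q))=\{V\}\cup R^S$ and $\mathcal{E}(\F^*(n,q,X))=\{V\}\cup X^S$, the point is that in the amalgam $F^*$ the only $\F_S(P^*)$-essential subgroup is $V$ (since $\Aut_{P^*}(S)$ has a normal $p$-complement on every proper subgroup except $V$, by \cref{Snauto}/\cref{Supauto} and the structure of $P^*$) and the only $\F_{N_S(R)}(P^*_R)$-essential (resp. $\F_{N_S(Q)}(P^*_Q)$-essential) subgroup is $R$ (resp. $Q$), because $\Out_{P_R^*}(R)\cong \Gamma\mathrm L_2(q)$ (resp.\ the relevant quotient for $Q$) has a strongly $p$-embedded subgroup only with respect to $R$ (resp. $Q$). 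By Alperin's fusion theorem for $\F_S(F^*)$ every essential subgroup is $F^*$-conjugate into $S$ to one of these, and since $V$ is weakly $\F$-closed, $R$ and $Q$ are the only possibilities beyond $V$; conversely all of $V$, $R^S$, $Q^S$ genuinely are essential because the relevant automisers do have strongly $p$-embedded subgroups. The main obstacle here is being careful about which subgroups of $S$ can be $\F$-essential without being forced to lie in $\{V\}\cup X^S$: one must use that an $\F$-essential subgroup $E$ is $\F$-centric and receives a nontrivial $p'$-automorphism, and then that $\Aut(S)$ (hence any $\Aut_\F(N_S(E))$-compatible automiser) is solvable with a tightly bounded Hall $p'$-subgroup by \cref{Snauto}(\ref{sn4}) / \cref{Supauto}, which — combined with the requirement of a strongly $p$-embedded subgroup and \cref{prop: L3Q} — pins $E$ down. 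I expect this essential-subgroup identification to be the technically fiddliest step, as it requires ruling out intermediate subgroups rather than merely verifying the claimed ones.

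Finally, for $\F^{frc}=\{S\}\cup\E(\F)$ (where $\F$ ranges over $\F^*_\Lambda(q)$, $\F^*(n,q,R)$, $\F^*(n,q,Q)$ and their $p'$-index subsystems): a subgroup $P\le S$ lies in $\F^{frc}$ iff it is $\F$-centric, $\F$-radical, and fully normalised, and every $\F$-radical $\F$-centric subgroup is either $S$ itself or contains an essential subgroup in a suitable sense. One inclusion is easy: $S$ is always in $\F^{frc}$, and each $E\in\E(\F)$ is $\F$-centric by definition of essential, is fully normalised up to conjugacy (so its fully normalised conjugate is in $\F^{frc}$), and is $\F$-radical because $\Out_\F(E)$ has a strongly $p$-embedded subgroup, hence $O_p(\Out_\F(E))=1$. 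For the reverse inclusion, suppose $P\in\F^{frc}$ with $P\ne S$ and $P$ not $\F$-conjugate to any element of $\E(\F)$. Since $\F=\F_S(F^*)$ and $P$ is $\F$-centric-radical, $P$ must be $F^*$-radical in the amalgam sense; because $\Gamma$ is a tree and $P$ cannot fix an infinite path (as in the saturation argument, using that $P$ is $\F$-centric and $V$ is abelian), $P$ is conjugate into $P^*$ or $P^*_X$ and, being radical there, $O_p(N_{P^*}(P))=P$ or $O_p(N_{P^*_X}(P))=P$; the Borel--Tits-type analysis of these two parabolic-like groups (whose structure is explicit from \cref{sec:structureS} and \cref{sec : NewExotics}) then forces $P\in\{S,V\}$ or $P$ conjugate to $X$, a contradiction. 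I would present this last step by first reducing to $\F=\F^*_\Lambda(q)$ or $\F^*(n,q,X)$ (the $p'$-index subsystems have the same centric-radical subgroups by \cite[Lemma I.7.6]{AKO} or direct inspection, since passing to $O^{p'}$ does not change $\F^{frc}$) and then invoking that the only $p$-radical subgroups of $P^*=D^*\ltimes V$ containing $V$ are $V$ and $S$ — a consequence of $D^*/C_{D^*}(V)$ having Borel subgroup $U$ and the Levi structure being rank one — and similarly for $P^*_X$. The hard part overall will be this final radical-subgroup census inside the two local subgroups, since it is where the detailed module theory of $V_n(q)$ and $\Lambda(q)$ from Section \ref{sec:structureS} has to be brought to bear to exclude spurious radical subgroups.
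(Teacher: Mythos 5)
Your proposal takes a genuinely different route from the paper. The paper's proof is essentially a ``lift'' argument: it observes that adjoining the $p'$-field automorphisms in $O^p(\Aut(\KK))$ changes neither the set of $\F_S(S)$-centric subgroups nor the essential subgroups of the local pieces, and then simply invokes \cite[Theorem 4.9]{ClellandParker2010} to conclude saturation, together with \cite[Lemma I.7.6]{AKO} for the $p'$-index claim and ``the remaining points follow easily.'' You instead re-run the full coset-graph/amalgam argument of \cref{UPV saturated} from scratch for $F^*(n,\KK,R)$ and $F^*(n,\KK,Q)$. This is entirely legitimate and has the advantage of being self-contained rather than leaning on a black-box citation, and you are more explicit than the paper about the essential-subgroup census and the computation of $\F^{frc}$, which the paper dispatches in one clause.

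However, there is a concrete flaw in your analogue of \cref{lem: E cap Ex} for the non-abelian case. You claim that if $w\in Q\cap Q^s\setminus V$ then ``$\gen{Q,Q^s}$ would centralise a subgroup of $V$ strictly larger than $Z_2(S)\cap V$, again contradicting \cref{Somnibus}(3).'' This argument implicitly treats $Q$ as abelian: it is only in the abelian case (the $R$-case, or $\Lambda$-case) that an element of $R\cap R^s$ automatically centralises $\gen{R,R^s}$, and since $Q$ has nilpotency class two the element $w$ only centralises $\gen{Q,Q^s}$ modulo $Z(S)$. Moreover $C_V(Q)=Z(S)$, not anything larger, so the putative centraliser contradiction does not materialise. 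The correct repair is available in the paper without extra work: by \cref{lem:intersec1}(3), two distinct members of $\mathcal{C}(S)$ meet in exactly $Z_2(S)$, and since $s\notin N_{P^*}(Q)$ (together with $s\in N_{P^*}(S)$) gives $Q\ne Q^s\in\mathcal{C}(S)$, one has $Q\cap Q^s=Z_2(S)\le V$ directly. With that substitution your argument goes through. The rest of the proposal (the $p'$-index step via \cite[Theorem I.7.7]{AKO}, the weak closure of $V$ from \cref{Somnibus}(4), and the radical census in $P^*$ and $P_X^*$) matches the intent of the paper and is sound, though note that the $\F^{frc}$ claim is later proved in greater generality as \cref{cor: frcdet}, so the amount of detail you invest there is more than strictly necessary for the stated systems.
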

\begin{proof}
Note that neither the isomorphism type of $S$ nor the set of $\F_S(S)$-centric subgroups is affected by appending $p'$-automorphisms to automisers of subgroups in a fusion system. Furthermore, the essential subgroups remain unchanged when passing from $\F_S(P_n(q))$ to $\F_S(P^*_n(q))$, and from $\F_{N_S(X)}(P_X)$ to $\F_{N_S(X)}(P_X^*)$ where $X\in\{R, Q\}$. Therefore, the proof of \cite[Theorem 4.9]{ClellandParker2010}  applies to this situation too. Since $O^{p'}(\Aut_{\F^*(n,q,X)}(X))=O^{p'}(\Aut_{\F(n,q,X)}(X))$ for $X\in\{V, R, Q\}$, it quickly follows from \cite[Lemma I.7.6]{AKO} that $\F(n, q, X)$ has $p'$-index in $\F^*(n, q, X)$ for $X\in\{R, Q\}$. The remaining points follow easily.
\end{proof}

We have that $\F^*(n, p, X)=\F(n,p, X)$ for $X\in\{R, Q\}$. Let $q>p$. One can show that $\F^*(1, q, R)\cong \F_S(O^p(\Gamma\mathrm{L}_3(q)))$, $\F^*(2, q, Q)\cong \F_S(O^p(\Aut(\KK))\ltimes\mathrm{GSp}_4(q))$ and that $\F^*(n, q, X)$ is exotic in every other case, following the same proof as \cite[Section 5]{ClellandParker2010}.

\begin{remark}
The authors expect that one can replace the group $O^p(\Aut(\KK))$ with the full automorphism group $\Aut(\KK)$ in the construction of $F^*(n, \KK,X)$ for $X \in \{R,Q\}$ to make amalgams with a larger Sylow $p$-subgroup. By adapting the methods used in \cite[Theorem 4.9]{ClellandParker2010} and Proposition \ref{UPV saturated} we also expect that one can show these extended systems are saturated (note that they possess a different collection of centric and essential subgroups), but we do not pursue that here.
\end{remark}

 Now observe that by \cite[Lemma 6.4]{parkersemerarocomputing} the fusion systems $\F^*(n ,q, Q)$ and $\F^*(n ,q, R)$ determine further systems via a process known as ``pruning''. More precisely, a subset of the automorphisms of the essential subgroup $V$ can be removed in either case to form, respectively, two new saturated fusion systems.
  
\begin{notation}\label{n:pruned}
Define the saturated fusion systems $\F^*(n ,q, Q)_P$ and $\F^*(n ,q, R)_P$ on $S_n(q)$ to be those obtained by pruning $V$ from $\F^*(n ,q, Q)$ and $\F^*(n ,q, R)$. Then \[\E(\F^*(n ,q, Q)_P)=Q^S \mbox{ and } \E(\F^*(n ,q, R)_P)=R^S\] (note that closely related examples were previously considered in \cite[Lemma 5.10]{henke2023punctured}).

Analogously we write $\F_\Lambda^*(q)_P$ for the system obtained from $\F_\Lambda^*(q)$ by pruning $\Lambda(q)$ (see  Notation \ref{n:f*lambda})  so that \[\E(\F_\Lambda^*(q)_P)=R^S.\] We refer to these systems as \emph{pruned}.
 \end{notation}
 
\begin{proposition}\label{t:fstarsat12}
Suppose that $1<n\leq p-1$ and $\F=\F^*(n,q,R)_P$. Then the following hold:
\begin{itemize}
\item[(1)] $O^{p'}(\F)$ is exotic;
\item[(2)] $|\Out_{\F}^0(S)|=q-1$; and
\item[(3)] $O^{p'}(\F)$ is simple unless $n=p-2$ and $q=p>3$, in which case \[O^p(O^{p'}(\F))=O^p(O^{p'}(\F^*(p-2,p,R)_P))\cong O^{p'}(\F^*(p-3, p, R)_P)\] is simple.
\end{itemize}
\end{proposition}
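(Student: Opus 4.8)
The plan is to analyse the pruned system $\F = \F^*(n,q,R)_P$ via its unique class of essential subgroups $R^S$ together with $\Aut_\F(S)$, using the standard machinery for fusion systems generated by a single essential class over their Sylow $p$-subgroup. First I would record that, since pruning only removes $p'$-automorphisms of $V$, the underlying group $S = S_n(q)$ is unchanged, $R = Z(S)U \in \mathcal B(S)$ is elementary abelian of order $q^2$ with $\Aut_\F(R) \cong \GL_2(q)$ (or the relevant pruned automiser), and the $\F_S(S)$-centric subgroups are as computed in \cref{t:fstarsat1}, so that $\F^{frc} = \{S\} \cup R^S$. For part (1), exoticity of $O^{p'}(\F)$: I would argue exactly as in \cref{New exotic}. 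If $O^{p'}(\F)$ were realisable it would be realised by an almost simple group $G$ with $S \in \syl_p(F^*(G))$; the cases $F^*(G)$ sporadic or alternating are excluded by the shape of $Z(S)$ (cyclic centre for iterated wreath products, order computations for sporadics); $F^*(G)$ of Lie type in characteristic $p$ is excluded by Borel--Tits since $R$ is $\F$-radical but not normal in $S$; and $F^*(G)$ of Lie type in characteristic $r \neq p$ is excluded because $R$ is $\F$-centric but, by \cite[Theorem 4.10.3]{GLS3}, a subgroup of $S$ of this shape would be forced into a maximal torus and hence conjugate into the abelian subgroup $V$, or one reduces to small exceptional cases handled as in \cref{New exotic}. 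The only genuinely new bookkeeping here is that the absence of $V$ from $\mathcal E(\F)$ does not help the realiser — indeed it makes $R$ the only essential subgroup, tightening the Borel--Tits argument.

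For part (2), $|\Out_\F^0(S)| = q-1$: here $\Out_\F^0(S) = \Aut_\F^0(S)\Inn(S)/\Inn(S)$ is by definition the subgroup of $\Out_\F(S)$ generated by restrictions to $S$ of the $p'$-elements of the essential automisers, i.e.\ of $\Aut_\F(R)$. The plan is to push $O^{p'}(\Aut_\F(R)) = \SL_2(q)$ down to $S$: its $p'$-part is generated by a torus element, which by \cite[Proposition 5.3]{ParkerRowleyWeakBN} (\cref{prop: L3Q}) or direct computation inside $P_R$ extends to an automorphism of $S$ of order $q-1$ acting as $\lambda \mapsto \lambda, \lambda^{-1}$ on $Z(S), S/V$ (compare the element $d = \diag(1,\zeta^{-1},\zeta,1)$ in the proof for $\F_\Lambda(q)$), and this subgroup meets $\Inn(S)$ trivially and is all of $\Out_\F^0(S)$ since $V$ has been pruned and contributes nothing. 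This gives $|\Out_\F^0(S)| = q-1$ via \cite[Theorem I.7.7]{AKO}.

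For part (3), simplicity of $O^{p'}(\F)$: by \cite[Theorem II.9.8]{AKO} I would show that the only subgroups strongly closed in $S$ with respect to $O^{p'}(\F)$ are $1$ and $S$. A nontrivial strongly closed $X \trianglelefteq S$ meets $Z(S) = C_V(S)$; using $\Aut_\F(R) \cong \GL_2(q)$ acting on $R$, $X$ contains $R$ once it meets $Z(S)$ nontrivially (as $R$ is $\Aut_\F(R)$-irreducible), hence $X \supsetneq V \cap R = Z(S)$, so $X$ contains an element $w \in R \setminus V$; then $X \supseteq [V,w]$, and using $\Aut_\F(V)$ together with \cref{Somnibus}/\cref{lem: com full} one propagates $[V,w]$ through $V$ to conclude $V \le X$, whence $X = \langle X^{\Aut_\F(S)}\rangle = S$. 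The exceptional case $n = p-2$, $q = p > 3$ arises precisely because, by \cref{action on centre}, the pruned automiser of $V$ acts trivially on $Z(S)$ exactly when $n = q-1 = p-1$; more relevantly, when $q = p$ the field-automorphism extensions collapse and the combinatorics of which automorphisms survive pruning degenerate, so that $O^p(O^{p'}(\F))$ strips off the top central layer $Z(S)$ and one identifies $S/Z(S) \cong S_{n-1}(p)$ via \cref{Snauto}(\ref{sn3}), giving the isomorphism $O^p(O^{p'}(\F^*(p-2,p,R)_P)) \cong O^{p'}(\F^*(p-3,p,R)_P)$, which is simple by the generic argument applied to $n = p-3$. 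The main obstacle I expect is precisely this last identification: one must check carefully that passing to $O^p$ of $O^{p'}(\F)$ in the degenerate case $q=p$ really does quotient out exactly $Z(S)$ and that the induced essential structure on $S/Z(S)$ matches $\F^*(p-3,p,R)_P$ rather than some pruned relative of it, which requires tracking the automisers through the quotient using \cref{Snauto} and \cref{subgroup similarity}, and confirming the hypothesis $p > 3$ is exactly what is needed for $n-1 = p-3 \ge 1$ to give a nondegenerate system.
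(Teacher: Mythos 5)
Your part (2) is correct and matches the paper exactly: $\F^{frc}=\{S\}\cup R^S$, so by \cite[Theorem I.7.7]{AKO} only $O^{p'}(\Aut_\F(R))\cong\SL_2(q)$ contributes, giving $|\Out_\F^0(S)|=q-1$. The rest of the proposal has a genuine gap.

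The fatal problem is in part (3). You claim that $O^{p'}(\F)$ has no proper non-trivial strongly closed subgroup, and your argument for forcing $V\le X$ ``using $\Aut_\F(V)$ together with \cref{Somnibus}/\cref{lem: com full}'' is invalid: after pruning, $V$ is no longer $\F$-essential, so $\Aut_\F(V)$ is merely the restriction of $\Aut_\F(S)$ to $V$, which normalises every term $[V,S;i]$. From $[V,w]=[V,S]\le X$ you therefore cannot propagate up to $V\le X$. In fact the claim itself is false: the paper shows $R[V,S]$, of index $q$ in $S$, \emph{is} a proper non-trivial strongly closed subgroup. The correct argument has two stages. First one shows $R[V,S]$ is the \emph{unique} such subgroup, which requires the observation (via \cref{lem: kerneldescription} and \cref{eq:delta-sigma}) that the lifted element $k$ acts as $\lambda^{n+1}$ on $V/[V,S]$ and hence, generically, irreducibly. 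Then one must separately rule out a normal subsystem of $O^{p'}(\F)$ supported on $R[V,S]$; the paper does this by showing $\Out_S(R[V,S])$ would have to commute with $\langle k|_{R[V,S]}\rangle$ modulo $\Inn$, forcing $k$ to centralise $V/[V,S]$, i.e.\ $(q-1)\mid(n+1)$. Given $1<n\le p-1$ this means exactly $q=p$, $n=p-2$, $p>3$. Your explanation of the exceptional case — both the condition (you point at $n=q-1=p-1$ and ``field-automorphism collapse'') and the identification of $O^p(O^{p'}(\F))$ — is incorrect: $O^p(O^{p'}(\F))$ is supported on the \emph{hyperfocal subgroup} $R[V,S]\cong S_{n-1}(q)$ (a subgroup, identified via \cref{subgroup similarity}), not on the quotient $S/Z(S)$.

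For part (1), your proposal to reproduce the Borel--Tits case analysis of \cref{New exotic} is a genuinely different route from the paper, which instead derives exoticity from simplicity plus \cite[Theorem 3.12]{vbexotics}, and handles the degenerate case $q=p$, $n=p-2$ by a hyperfocal-subgroup contradiction against the already-established exoticity of $O^{p'}(\F^*(p-3,p,R)_P)$. Your direct approach is not implausible, but note that it still needs simplicity of $O^{p'}(\F)$ as input (to pass from an almost simple realiser $G$ to $S\in\syl_p(F^*(G))$), so it does not insulate you from the error in part (3).
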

\begin{proof}
We observe that $R\in \mathcal{E}(\F^*(n, q, R)_P)$ and, as $n>1$, we see that $R\not\normaleq S$. Since $O^{p'}(\Aut_{\F^*(n,q,R)}(R))\cong \SL_2(q)$ acts irreducibly on $R$, \cite[Proposition I.4.5]{AKO} implies that $O_p(\F)=1$. We observe that ${\F^*(n, q, R)_P}^{frc}=\{S\}\cup R^S$ and so, in order to calculate $\Aut_{\F^*(n,q,R)_P}^0(S)$ via \cite[Theorem I.7.7]{AKO}, it suffices to lift elements only from $O^{p'}(\Aut_{\F^*(n,q,R)_P}(R))\cong \SL_2(q)$. Hence, $|\Out_{\F^*(n,q,R)_P}^0(S)|=q-1$.

Let $X$ be a proper non-trivial strongly closed subgroup of $\F^*(n, q, R)_P$. Then $X\normaleq S$ so that $1\ne X\cap Z(S)$ so that $1\ne X\cap R$. Again, as $O^{p'}(\Aut_{\F^*(n,q,R)}(R))$ acts irreducibly on $R$, we deduce that $R\le X$ so that $X[V, S]=\langle X^S\rangle R$. Let $k$ be an element of $\Aut^0_{\F^*(n,q,R)_P}(S)$ of order $q-1$. Then we may arrange that $k$ is the lift of an element of $N_{O^{p'}(\Aut_{\F^*(n,q,R)_P}(R))}(\Aut_S(R))$ and so acts as $\lambda^{-1}$ on $S/V$ and as $\lambda$ on $Z(S)$ for some $\lambda\in \KK^*$.  Recall the map $\delta$ from Notation \ref{n:deltamap}.  Using the injectivity of $\delta$ when restricted to $p'$-elements from Lemma \ref{lem: kerneldescription}, such an element acts in the same manner as an element of $\Aut_{P^*_n(q)}(S)$ which may be represented in the form $\left(1, \lambda, \left(\begin{matrix} 1 & 0 \\ 0 & \lambda \end{matrix}\right) \right)$. Hence, $k$ acts on $V/[V, S]$ as $\lambda^{n+1}$, so that $k$ is irreducible on $V/[V, S]$. Thus, we deduce that $X=R[V, S]$ is the unique proper non-trivial strongly closed subgroup of $\F^*(n, q, R)_P$.

Now if $\mathcal{N}\normaleq O^{p'}(\F^*(n,q, R)_P)$ is supported on $R[V, S]$ then $\Aut_{\mathcal{N}}(R[V, S])\normaleq \Aut_{\F^*(n,q, R)_P}(R[V, S])$. It follows that $\Out_S(R[V, S])$ must commute with $\langle k|_{R[V, S]}\rangle\Inn(R[V, S])/\Inn(R[V, S])$, from which we deduce that $k$ centralises $V/[V, S]$. This occurs only if $q=p>3$ and $n=p-2$ and in this case, $\mathcal{N}=O^p(O^{p'}(\F^*(p-2, p, R)_P))\cong O^{p'}(\F^*(p-3, p, R)_P)$ using that $R[V, S]\cong S_{n-1}(q)$ by Lemma \ref{subgroup similarity}, and then applying Theorem \ref{thm: main}. In every other instance, we have that $O^{p'}(\F^*(n,q,R)_P)$ is simple and no normal subsystem of $\F^*(n, q, R)_P$ is supported on $R[V, S]$. Applying \cite[Theorem 3.12]{vbexotics}, we see that $O^{p'}(\F^*(n,q, R)_P)$ is exotic unless perhaps $q=p>3$ and $n=p-2$. But in this case, if $O^{p'}(\F^*(p-2,p, R)_P)$ was realised by $G$ then $O^p(O^{p'}(\F^*(p-2,p, R)_P))$ is a saturated fusion system supported on $O^p(G)\cap S\in\syl_p(O^p(G))$ by the hyperfocal subgroup theorem of Puig \cite[\S1.1]{PuigHyper}. By Theorem \ref{thm: main}, $\F_{O^p(G)\cap S}(O^p(G))\cong O^{p'}(\F^*(p-3,p, R)_P)$ is exotic, an obvious contradiction. Hence, $O^{p'}(\F^*(p-2,p, R)_P)$ is also exotic.
\end{proof}

\begin{definition}
A saturated fusion system $\F$ is a \emph{polynomial fusion system} if $\F$ is a core-free subsystem of $p'$-index in any of $\F^*(n,q,R)$, $\F^*(n,q,Q)$, $\F^*(n,q,R)_P$ or $\F_\Lambda^*(q)$, where $1\le n<p$ and $q>p$.
\end{definition}

We note that $\F^*(1, q, R)\cong \F_S(G)$ where $G\cong O^p(\mathrm{P}\Gamma \mathrm{L}_3(q))$, $O_p(\F^*(1, q, R)_P)\ne 1$, and $\F^*(2, q, Q)\cong \F_S(G)$ where $G\cong O^p(\Aut(\PSp_4(q)))$.

We close this section with some remarks about quotient systems and subsystems of the pruned fusion systems described above.

Proposition \ref{t:fstarsat2} records the sense in which the classes of pruned fusion systems are ``closed'' under taking certain quotients. The proof utilises Theorem \ref{thm: main}.

\begin{proposition}\label{t:fstarsat2}
The following statements hold:
\begin{itemize}
\item[(1)] $O_p(\F^*(n,q,Q)_P)=Z(S_n(q))$ and $\F^*(n,q,Q)_P/Z(S_n(q)) \cong \F^*(n-1,q,R)_P$.
\item[(2)] $\F^*_\Lambda(q)_P/C_R(O^{p'}(\Aut_{\F^*_\Lambda(q)_P}(R))) \cong \F^*(p-1,q,R)_P$.
\end{itemize}
\end{proposition}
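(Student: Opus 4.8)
\textbf{Proof proposal for Proposition \ref{t:fstarsat2}.}

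The plan is to treat the two statements in parallel, since each asserts that quotienting a pruned fusion system by a suitable normal subgroup of $S$ yields another pruned fusion system on a smaller polynomial $p$-group. For (1), one first identifies $O_p(\F^*(n,q,Q)_P)$. The essential subgroups of $\F^*(n,q,Q)_P$ are exactly $Q^S$ (the pruning having removed $V$), and $O^{p'}(\Aut_{\F^*(n,q,Q)}(Q))$ induces $\mathrm{SL}_2(q)$ acting on $Q/Z(S) \cong R$ in the natural way; hence $Z(S)$ is the largest subgroup of $Q$ normalised by this automiser, and since $Z(S) = C_Q(O^{p'}(\Aut_{\F}(Q)))$ is normalised by $N_S(Q) = S$ and contained in every essential subgroup, \cite[Proposition I.4.5]{AKO} gives $O_p(\F^*(n,q,Q)_P) = Z(S)$. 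For (2) the analogous role is played by $C_R(O^{p'}(\Aut_{\F^*_\Lambda(q)_P}(R)))$, which by \cref{(p+1)CUpS}(1) and the description of $\Aut_{\F^*_\Lambda(q)}(R)$ in Section \ref{sec : NewExotics} equals $C_W(S) = \gen{\ov{xy^{p-1}}}_{\KK}$, a subgroup of order $q$ lying in $Z(S)$.

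Next I would compute the quotient $p$-group. For (1) we use \cref{Snauto}(\ref{sn3}): $S_n(q)/Z(S) \cong S_{n-1}(q)$, and under this isomorphism the subgroup $Q = Z_2(S)U$ maps to $Z(S_{n-1}(q))U$, i.e.\ to the subgroup playing the role of ``$R$'' in $S_{n-1}(q)$ (note $Z_2(S)/Z(S) = Z(S/Z(S))$). For (2) we invoke \cref{lem: LambdaIso}: $S_\Lambda(q)/\gen{\ov{y^p}}_{\KK} \cong S_{p-1}(q)$; however here we are quotienting by $\gen{\ov{xy^{p-1}}}_{\KK}$ rather than $\gen{\ov{y^p}}_{\KK}$, so I would first check that these two central subgroups of order $q$ are interchanged by a suitable element of $N_{P^*_\Lambda(q)}(S)$ (such an element exists because the relevant torus acts on $Z(S) = \gen{\ov{y^p},\ov{xy^{p-1}}}_{\KK}$ with two distinct weights, both one-dimensional eigenspaces), so that the quotient is again isomorphic to $S_{p-1}(q)$ and $R/\gen{\ov{xy^{p-1}}}_{\KK}$ becomes the subgroup ``$R$'' of $S_{p-1}(q)$ (or rather $[V,S]$-type data matches; here $R$ maps onto $R$ in $S_{p-1}(q)$ since $R$ has order $q^3$ and its image has order $q^2$, matching $Z(S_{p-1})U$ only after a further check — in fact $R$ contains $\gen{\ov{xy^{p-1}}}_{\KK}$ and $R/\gen{\ov{xy^{p-1}}}_{\KK}$ is elementary abelian of order $q^2$ generating the quotient with $V/\gen{\ov{xy^{p-1}}}_{\KK}$, so it is an element of $\mathcal B(S_{p-1}(q))$).

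Having matched $p$-groups and essential subgroups, I would invoke the general quotient theory for fusion systems (\cite[Chapter II.5]{AKO}): for $\Q = O_p(\F)$, the quotient $\F/\Q$ is saturated on $S/\Q$, its essential subgroups are the images of $\F$-essential subgroups containing $\Q$, and the automisers pass to quotients. So $\F^*(n,q,Q)_P/Z(S)$ is a saturated fusion system on $S_{n-1}(q)$ whose essentials form a single class, the image of $Q^S$, which is $R^{S_{n-1}(q)}$-shaped, and whose automiser there is $O^{p'}(\Aut_{\F}(Q))$ reduced modulo $Z(S)$, inducing $\mathrm{SL}_2(q)$ together with the $\Out^0$-piece of order $q-1$ — exactly the data defining $\F^*(n-1,q,R)_P$. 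By the maximality characterisation of $\F^*(n-1,q,R)_P$ (the largest saturated fusion system on $S_{n-1}(q)$ with $\mathcal E = R^S$) together with \cite[Theorem I.7.7]{AKO} to match $\Aut^0(S)$, the two systems coincide. The same argument gives (2), with $S_{p-1}(q)$ and $\F^*(p-1,q,R)_P$ in place of $S_{n-1}(q)$ and $\F^*(n-1,q,R)_P$.

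The main obstacle I anticipate is bookkeeping the isomorphisms of $p$-groups carefully enough that the distinguished subgroups ($Q$, $R$, $[V,S]$, $Z_2(S)$) are carried to the correct analogues in the quotient — in particular, in (2), verifying that quotienting by $\gen{\ov{xy^{p-1}}}_{\KK}$ (rather than the more obvious $\gen{\ov{y^p}}_{\KK}$ used in \cref{lem: LambdaIso}) still gives $S_{p-1}(q)$ and sends $R$ to an element of $\mathcal B(S_{p-1}(q))$ with the right automiser; and checking that no essential subgroups are lost or created, i.e.\ that the image of $V$ (which is not essential in the pruned system anyway) does not become essential and that $R$ (respectively $Q$) does not fail to be essential after quotienting. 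These are finite checks using \cref{(p+1)CUpS}, \cref{Snauto}, \cref{lem: LambdaIso}, and \cref{subgroup similarity}, but they are where the real content lies; everything downstream is a formal application of the quotient and maximality machinery together with \cref{thm: main} to identify the resulting system.
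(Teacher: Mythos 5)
Your treatment of part (1) matches the paper's proof: you use \cite[Proposition I.4.5]{AKO} to identify $O_p(\F^*(n,q,Q)_P)=Z(S)$, pass to the quotient, invoke \cref{Snauto}(\ref{sn3}) to identify $S_n(q)/Z(S)\cong S_{n-1}(q)$, observe that $Q/Z(S)$ plays the role of $R$, and then cite \cref{thm: main} to pin down the quotient system. That part is fine.

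Part (2) contains a genuine error. You claim that $C_R(O^{p'}(\Aut_{\F^*_\Lambda(q)_P}(R)))$ equals $C_W(S)=\gen{\ov{xy^{p-1}}}_{\KK}$, and then try to repair the mismatch with \cref{lem: LambdaIso} (which uses $\gen{\ov{y^p}}_{\KK}$) by asserting that the two subgroups are interchanged by some element of $N_{P^*_\Lambda(q)}(S)$. Both steps are wrong. First, the fixed subspace of $O^{p'}(\Aut_\F(R))\cong\SL_2(q)$ in $R$ is $\gen{\ov{y^p}}_{\KK}$, not $\gen{\ov{xy^{p-1}}}_{\KK}$. One way to see this: by \cref{p:newamalg}(2), the Levi element $\diag(1,\zeta^{-1},\zeta,1)\in P_R$ corresponds to $(1,\zeta^p,\left(\begin{smallmatrix}1&0\\0&\zeta^{-1}\end{smallmatrix}\right),0)\in N_{P^*}(R)$, and substituting into the displayed formula for $\rho_\Lambda$ preceding \cref{Gammacentraliser} shows this element acts trivially on $\ov{y^p}$ but as $\zeta^{-1}$ on $\ov{xy^{p-1}}$. (You appear to have conflated $C_W(S)$, computed from the action of $S$ on $V=\Lambda(q)$, with $C_R(O^{p'}(\Aut_\F(R)))$, which depends on the automiser supplied by $P_R^*$ — a different group with a different action.) Second, no element of $N_{P^*_\Lambda(q)}(S)$, or indeed of $\Aut(S)$, swaps the two eigenlines: the torus acting with distinct weights preserves each $\KK$-line rather than permuting them, and by \cref{Supauto} every $p'$-automorphism of $S$ is $\Aut(S)$-conjugate into $\Aut_{P^*_\Lambda(q)}(S)$ so there is nothing else available. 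Once the centraliser is correctly identified as $\gen{\ov{y^p}}_{\KK}$, the swap is unnecessary: \cref{lem: LambdaIso} applies directly to give $S_\Lambda(q)/\gen{\ov{y^p}}_{\KK}\cong S_{p-1}(q)$, and the rest of your argument (quotient theory plus \cref{thm: main}) goes through exactly as in part (1). So the flaw is a misidentification that, ironically, created a problem the correct computation would have avoided.
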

\begin{proof}
Suppose $\F= \F^*(n,q,Q)_P$. Then \cite[Proposition I.4.5]{AKO} implies that $O_p(\F)=Z(S)$ and the resulting quotient fusion system $\F/Z(S)$ is saturated with essential subgroups $(Q/Z(S))^S$. In particular, $O_p(\F/Z(S))=1$. Now by Lemma \ref{Snauto}, $S_n(q)/Z(S_n(q))\cong S_{n-1}(q)$ and so $\F/Z(S) \cong \F^*(n, q, R)_P$ by Theorem \ref{thm: main}, and (1) is proved.

Suppose that $\F= \F^*_\Lambda(q)_P$. Observe that $Z:=\langle \overline{y^p}\rangle_{\KK} = C_R(O^{p'}(\Aut_\F(R)))\le Z(S)$ and so \[C_R(O^{p'}(\Aut_\F(R)))=C_{R^s}(O^{p'}(\Aut_\F(R^s)))\] for all $s\in S$. Applying \cite[Proposition I.4.5]{AKO} again, we conclude that $O_p(\F)=Z$ and $\F/Z$ is saturated with essential subgroups $(R/Z)^S$. Indeed, $O_p(\F/Z)=1$. Now by Lemma \ref{lem: LambdaIso}, $S_{\Lambda}(q)/Z\cong S_{p-1}(q)$ and so $\F/Z \cong \F^*(p-1,q,R)$ by Theorem \ref{thm: main}, proving (2).
\end{proof}

Taking inspiration from the theory of \emph{normaliser towers} (see \cite[Theorem 3.6]{pearls}), we also observe the following subsystem closure property enjoyed by the fusion systems $\F^*(n,q,R)_P$.

\begin{proposition}\label{normalizertower}
Set $\F=\F^*(n, q, R)_P$, and let \[N^i:=RZ_{i+1}(S) \mbox{ and } \mathcal{N}^i=\langle \Aut_{\F}(R), \Inn(N^i)N_{\Aut_{\F}(N^i)}(R)\rangle_{N^i}\] for $1<i\leq n$. Then $\mathcal{N}^i$ is a saturated fusion subsystem of $\F=\mathcal{N}^n$ with $O_p(\mathcal{N}^i)=1$ and $\mathcal{E}(\mathcal{N}^i)=\{R^{N^i}\}$. Moreover, $\mathcal{N}^i$ is isomorphic to $\F^*(i, q, R)_P$.
\end{proposition}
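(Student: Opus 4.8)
The plan is to show that $\mathcal{N}^i$ is genuinely a fusion system on $N^i$, that it is saturated with the claimed properties, and that it is isomorphic to $\F^*(i,q,R)_P$, proceeding by downward recursion on $i$ (or, equivalently, by a single step $\mathcal{N}^{i}$ inside $\mathcal{N}^{i+1}$ and induction). First I would record the ambient structure: by \cref{subgroup similarity} (or \cref{prop: Q iso} in the base step) we have $N^i = RZ_{i+1}(S) = Z_{i+1}(S)U \cong S_i(q)$, and $R = Z(N^i)U$ is one of the elementary abelian subgroups of order $q^2$ with $N^i = R\cdot Z_{i+1}(S)$; note $Z_{i+1}(S) = [V,S;n-i]$ plays the role of "$V$" inside $N^i$. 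Thus $N^i$ is a polynomial $p$-group of the right isomorphism type, and $R$ is $\F^*(i,q,R)_P$-essential precisely when $i>1$, which matches the hypothesis $1 < i \le n$.

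Next I would identify the automiser data. By construction $\mathcal{N}^i$ is generated over $N^i$ by $\Aut_\F(R)$ together with $\Inn(N^i)N_{\Aut_\F(N^i)}(R)$; the point is that $\Aut_\F(R) = \Aut_{\F^*(n,q,R)_P}(R)$ is the same group $O^{p'}(\Aut(R))\Aut_S(R) \cong \SL_2(q)\cdot(\text{Sylow}\,p)$ regardless of which $N^i$ we sit inside, because pruning has removed the automorphisms of $V$ and left only the $R$-automiser, and field automorphisms act compatibly. So the essential automiser of $R$ in $\mathcal{N}^i$ agrees with that in $\F^*(i,q,R)_P$, and $N_{\Aut_\F(N^i)}(R)$ supplies exactly the $\Aut_S$-part plus the outer $p'$-part coming from $\Out^0$; by \cref{lem: kerneldescription} and the computation \eqref{eq:delta-sigma} these $p'$-automorphisms of $N^i$ are detected on $N^i/Z_{i+1}(S)$ and $Z(N^i)$, so they match those present in $\F^*(i,q,R)_P$. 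Then I would invoke the standard fact (Alperin's fusion theorem, e.g. \cite[Theorem I.3.5]{AKO}) that a saturated fusion system is generated by the automisers of $S$ together with those of its essential subgroups, so $\mathcal{N}^i$, being generated by exactly the corresponding data inside $N^i$, equals $\F^*(i,q,R)_P$ once we know it is a saturated fusion system on $N^i$.

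For saturation I would argue that $\mathcal{N}^i$ is the fusion system of a suitable free amalgamated product — namely the amalgam $N_P(R)\ast$ of the normaliser of $R$ in the relevant parabolic-type group with the "$P_R$"-type group for rank $i$ — using the observation that $N^i\cong S_i(q)$ and that $\Aut_{\mathcal{N}^i}(R)$ has the same shape as in the rank-$i$ Clelland--Parker construction, then applying the saturation criterion already used in the proof of \cref{UPV saturated} (i.e. \cite[Theorem 2.5]{G2p}) verbatim; alternatively, one can deduce saturation of the pruned system $\F^*(i,q,R)_P$ directly from \cref{t:fstarsat1} and the pruning lemma \cite[Lemma 6.4]{parkersemerarocomputing}, and then show $\mathcal{N}^i$ has the same essential subgroups and automisers, hence coincides with it. The properties $O_p(\mathcal{N}^i)=1$ and $\mathcal{E}(\mathcal{N}^i)=\{R^{N^i}\}$ then follow: $O_p=1$ because $O^{p'}(\Aut_{\mathcal{N}^i}(R))\cong\SL_2(q)$ acts irreducibly on $R$ and $R\not\trianglelefteq N^i$ (as $i>1$), using \cite[Proposition I.4.5]{AKO} exactly as in \cref{t:fstarsat12}; and the essential set is a single $N^i$-class of conjugates of $R$ because pruning removed the essential subgroup "$V$".

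The main obstacle I anticipate is bookkeeping around the identification $\Aut_{\mathcal{N}^i}(N^i) = \Aut_{\F^*(i,q,R)_P}(N^i)$: one must check that restricting $p'$-automorphisms of $S$ that normalise $R$ down to $N^i$ produces \emph{exactly} the group $\Out^0$ occurring in the rank-$i$ pruned system, no more and no less. The "no more" direction needs that any $\F$-automorphism of $N^i$ normalising $R$ is, after composing with an inner automorphism, the restriction of an element of $N_{\Aut_{P^*}(S)}(R)$ — which should follow from Alperin's fusion theorem applied to $\F$ together with the fact that $N^i$ is contained only in the overgroups $S$, $V$-stabiliser and $R$-stabiliser of $\F$, and pruning has killed the $V$-contribution. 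The "no less" direction is the explicit check via \eqref{eq:delta-sigma} that the diagonal torus elements of $P^*_R$ of order $q-1$ restrict faithfully and nontrivially to $N^i$. Once this matching is in place, the isomorphism $\mathcal{N}^i\cong\F^*(i,q,R)_P$ and the recursion $\mathcal{N}^{i}$ being the analogous subsystem inside $\mathcal{N}^{i+1}\cong\F^*(i+1,q,R)_P$ are formal, and \cref{thm: main} may be invoked if one prefers to characterise $\mathcal{N}^i$ abstractly by $O_p=1$, its Sylow group $S_i(q)$, and its essential data rather than reconstructing the amalgam by hand.
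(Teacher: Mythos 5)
Your argument is correct in its broad architecture and lands on the same final identification step (\cref{thm: main}), but it takes a genuinely different route through the hard part, which is saturation of $\mathcal{N}^i$. The paper's proof is very short: it delegates saturation entirely to the normaliser tower argument of \cite[Lemma 3.7]{pearls} (which rests on \cite[Theorem 5.1]{BLO}), then observes that $O_p(\mathcal{N}^i)=1$ and $\mathcal{E}(\mathcal{N}^i)=\{R^{N^i}\}$ follow from the construction together with \cite[Proposition I.4.5]{AKO}, and finally concludes $\mathcal{N}^i\cong\F^*(i,q,R)_P$ directly from $N^i\cong S_i(q)$ and \cref{thm: main}. Once saturation is in hand, the classification theorem carries the full weight of the identification, so no matching of automiser data between $\mathcal{N}^i$ and the independently-constructed $\F^*(i,q,R)_P$ is required at all. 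Your primary proposal (rebuild an amalgam at rank $i$, or match $\Aut_{\mathcal{N}^i}(N^i)$ and $\Aut_{\F^*(i,q,R)_P}(N^i)$ explicitly and then appeal to the known saturation of the pruned rank-$i$ system) is in principle viable but is considerably more work: both of your routes still need the ``no more, no less'' bookkeeping you correctly flag as the main obstacle, whereas the paper's route bypasses that obstacle entirely by characterising $\mathcal{N}^i$ up to isomorphism abstractly rather than establishing equality of generating data. You do note the classification route as an afterthought in your final sentence; in the paper it is the main device. A small bonus: your citation \cref{subgroup similarity} for $N^i\cong S_i(q)$ is in fact the correct one for $S=S_n(q)$ with $n\leq p-1$ (the paper's pointer to \cref{(p+1)CVS}(6), which concerns $S_p(q)$, appears to be a slip).
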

\begin{proof}
To prove that $\mathcal{N}_i$ is saturated we adopt the same method as in the proof of \cite[Lemma 3.7]{pearls}, which ultimately relies on \cite[Theorem 5.1]{BLO}. The equalities  $O_p(\mathcal{N}^i)=1$ and $\mathcal{E}(\mathcal{N}^i)=\{R^{N^i}\}$ are both clear by construction and applying \cite[Proposition I.4.5]{AKO}. All that remains is to show that $\mathcal{N}^i \cong \F^*(i, q, R)_P$, but this is immediate from the fact that $N^i \cong S_i(q)$ (by Lemma \ref{(p+1)CVS}(6)) and our main Theorem \ref{thm: main}.
\end{proof}

\begin{remark}
One can form analogous normaliser towers for the fusion systems $\F^*(n, q, Q)_P$ and $\F^*_{\Lambda}(q)_P$. In these cases, the $p$-core of the largest system is contained in all normaliser systems, and quotienting by this produces the normaliser tower for $\F^*(n, q, R)_P$ described in Proposition \ref{normalizertower} (see Proposition \ref{t:fstarsat2}).
\end{remark}

\begin{remark}
We note that the subsystems of $p'$-index in $\F^*(n, q, R)$ and $\F^*(n, q, R)_P$ have a \textit{punctured group} in the sense of \cite{henke2023punctured}, while the subsystems of $p'$-index in $\F^*(n, q, Q)$ do not have this property when $n\geq 3$. Also since $N_{\F^*_{\Lambda}(q)}(Z)$ is an exotic subsystem of $\F^*_{\Lambda}(q)$, an argument similar to that used to prove \cite[Proposition 5.11]{henke2023punctured} shows that $\F^*_{\Lambda}(q)$ also does \emph{not} have a punctured group.
\end{remark}

\section{Recognition and uniqueness results for groups and modules}\label{sec:reg}

Having described the polynomial $p$-groups and polynomial fusion systems of interest in this paper, we now embark on the proof that, other than the two exotic Henke--Shpectorov systems, these systems are the unique core-free saturated fusion systems supported on $S$, for $S\in\{S_n(q), S_\Lambda(q)\mid 1\leq n\leq p-1, q\ne p\}$. For this, we require a variety of group and module results, and this section is reserved for detailing them.

The following proposition, which depends on the Classification of Finite Simple Groups, will be used to restrict the possibilities for $\Aut_\F(E)$ when $E$ is $\F$-essential in the fusion system $\F$.

\begin{proposition} \label{Strongly p-embedded Sylows}
Suppose that $p$ is a prime, $X$ is a group and $T \in \Syl_p(X)$ is elementary abelian of order $p^m$ with $m\ge 2$. Assume that $X$ has a strongly $p$-embedded subgroup. Then $O_p(X)=1$ and for $K=O^{p'}(X)=\langle T^K\rangle$, $\bar{K}:=K/O_{p'}(K)$ is a non-abelian simple group and one of the following holds:
\begin{enumerate}
\item $p$ is arbitrary, $m \ge 2$, $\bar{K}\cong \PSL_2(p^m)$ and $|T|= p^m$.
\item $p>3$, $\bar{K}\cong \Alt(2p)$ and $T$ is elementary abelian of order $p^2$.
\item $p=3$, $\bar{K} \cong \PSL_3(4)$ and $T$ is elementary abelian of order $3^2$.
\item $p=3$, $\bar{K}\cong \mathrm{M}_{11}$ and $T$ is elementary abelian of order $3^2$.
\item $p=5$, $\bar{K}\cong {}^2\mathrm F_4(2)^\prime$ and $T$ is elementary abelian of order $5^2$.
\item $p=5$, $\bar{K}\cong \mathrm{Fi}_{22}$ and  $T$ is elementary abelian of order $5^2$.
\end{enumerate}
\end{proposition}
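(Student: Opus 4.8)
The plan is to reduce to the smallest normal subgroup of $X$ carrying the full $p$-part, invoke the classification of finite simple groups with a strongly $p$-embedded subgroup, and then trim the resulting list using the hypothesis that $T$ is elementary abelian of rank at least two. First I would carry out the standard reductions. Let $H$ be strongly $p$-embedded in $X$. If $O_p(X)\ne 1$ then, being normal and hence contained in every Sylow $p$-subgroup, $O_p(X)\le H\cap H^g$ for each $g\in X\setminus H$, contradicting that $H\cap H^g$ has order prime to $p$; thus $O_p(X)=1$. Since $O^{p'}(X)$ contains every Sylow $p$-subgroup of $X$, we have $T\le K:=O^{p'}(X)$ with $T\in\Syl_p(K)$, and a routine Sylow argument shows $\langle T^K\rangle\normaleq X$ with $p'$-quotient, so $K=\langle T^K\rangle$. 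If $T$ were normal in $K$ it would be characteristic in $K$ and hence normal in $X$, forcing $O_p(X)\ne 1$; so $T$ is not normal in $K$, and the standard fact that a strongly $p$-embedded subgroup restricts to one on any normal subgroup in which a Sylow $p$-subgroup fails to be normal yields that $H\cap K$ is strongly $p$-embedded in $K$. Finally $O_{p'}(K)\cap T=1$, so passing to $\bar K:=K/O_{p'}(K)$ preserves the isomorphism type of $T$ and the strongly $p$-embedded property, and now $O_p(\bar K)=O_{p'}(\bar K)=1$.

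Next I would show $\bar K$ is simple. Since $F(\bar K)=O_p(\bar K)O_{p'}(\bar K)=1$, the generalized Fitting subgroup $F^*(\bar K)=E(\bar K)$ is a nontrivial product of components. Using that $\bar K$ is generated by a single Sylow $p$-subgroup $\bar T$ and has a strongly $p$-embedded subgroup — together with the structural reduction theory for such groups, in which a minimal normal subgroup of order divisible by $p$ must itself admit a strongly $p$-embedded subgroup — one is forced to a single component $L$ with $C_{\bar K}(L)=1$ and $\bar T\le L$; then $\bar K=\langle\bar T^{\bar K}\rangle\le L$, so $\bar K=L$ is simple. (One may instead quote the classification in a form that already delivers simplicity of $\bar K$.)

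It remains to apply the Classification of Finite Simple Groups through the list of finite simple groups with a strongly $p$-embedded subgroup, retaining only those whose Sylow $p$-subgroups are elementary abelian of order $p^m$ with $m\ge 2$. Among rank-one groups of Lie type in defining characteristic $p$ only $\PSL_2(p^m)$ survives, since $\PSU_3(p^a)$, $\Sz(2^a)$ and $\Ree(3^a)$ have non-abelian Sylow $p$-subgroups; this is conclusion (1), and the coincidence $\PSL_2(9)\cong\Alt(6)$ places the case $p=3$ of $\Alt(2p)$ here rather than in (2). Of the remaining, sporadic-type entries: $\Alt(2p)$ has Sylow $p$-subgroup $C_p\times C_p$ when $p>3$, giving (2); for $p=3$ one gets $\PSL_3(4)$ and $\mathrm M_{11}$, each with Sylow $3$-subgroup $C_3\times C_3$, giving (3) and (4); for $p=5$ the candidates $\mathrm{McL}$, ${}^2\mathrm F_4(2)'$ and $\mathrm{Fi}_{22}$ arise, and $\mathrm{McL}$ is discarded since its Sylow $5$-subgroups are extraspecial of order $5^3$, leaving (5) and (6); and the only remaining candidate, $\mathrm J_4$ with $p=11$, has non-abelian Sylow $11$-subgroups and is discarded. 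This exhausts the list and gives exactly (1)--(6).

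The main obstacle I anticipate is the reduction to the simple group $\bar K$: one must be careful that the strongly $p$-embedded hypothesis descends correctly along $X\to K\to\bar K$ and that the component analysis genuinely forces a single component (this is where the known structure theory of groups with a strongly $p$-embedded subgroup, rather than just the final classification, is needed). Once that is in place the last step is bookkeeping against the short list of simple groups with a strongly $p$-embedded subgroup and their well-documented Sylow $p$-structure.
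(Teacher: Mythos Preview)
Your proposal is correct and follows essentially the same approach as the paper: reduce the strongly $p$-embedded hypothesis from $X$ to $K=O^{p'}(X)$ and then to $\bar K=K/O_{p'}(K)$, and invoke the classification. The paper is simply terser, citing \cite[Chapter~4, Proposition~10.2 and Lemma~10.3]{GLS4} directly (which already packages the simplicity of $\bar K$ and the list-trimming under the hypothesis that $T$ is abelian and not cyclic), whereas you unpack these steps by hand; your acknowledged soft spot---the component analysis forcing $\bar K$ simple---is exactly what the GLS4 citation absorbs.
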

\begin{proof}
Since $X$ has a strongly $p$-embedded subgroup, so too does $K=O^{p'}(X)$. Then, as $K$ has a strongly $p$-embedded subgroup, so   does $\bar{K}=K/O_{p'}(K)$. Then $\bar{T}\cong T$ and taking into account that $T$ is abelian and not cyclic, we may apply \cite[Chapter 4, Proposition 10.2 and Lemma 10.3]{GLS4} (but also see \cite[Proposition 4.5]{parkersemerarocomputing}).
\end{proof}

We now record a number of results to aid in identifying modules for groups with strongly $p$-embedded subgroups satisfying certain conditions relevant to the groups $S_n(q)$.

\begin{lemma}\label{lem: Smith's lemma}
Suppose that $G\cong \SL_2(q)$, $q=p^m$, $U \in \syl_p(G)$ and $V$ is an irreducible $\FF_pG$-module. Let $\mathbb{L}$ be the largest field extension of $\FF_p$ over which $V$ is writable as an $\mathbb{L}$-module. Then both $C_V(U)$ and $V/[V, U]$ are irreducible $1$-dimensional $\mathbb{L}N_G(U)$-modules upon restriction.
\end{lemma}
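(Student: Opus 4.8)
The plan is to reduce everything to a statement about the Brauer character of $V$, using the fact that over $\mathbb{L}$ the module $V$ becomes a tensor product of Frobenius twists of the natural module. First I would recall the structure theory of irreducible $\FF_p\SL_2(q)$-modules: by Steinberg's tensor product theorem, an absolutely irreducible $\bar{\FF}_p\SL_2(q)$-module is of the form $V_{n_0}\otimes V_{n_1}^{(p)}\otimes\cdots\otimes V_{n_{r-1}}^{(p^{r-1})}$ with each $0\le n_i\le p-1$, where $V_n$ is the Weyl (= simple, here) module of highest weight $n$ and $(p^i)$ denotes the $i$-th Frobenius twist; such a module is writable over $\FF_{p^r}$ (and no smaller field unless the exponent pattern is periodic), and every irreducible $\FF_p\SL_2(q)$-module arises by restricting such a module, with $\mathbb{L}$ exactly the field of definition. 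So I may assume $V=V\otimes_{\FF_p}\mathbb{L}$ is absolutely irreducible over $\mathbb{L}=\FF_{p^r}$ with $r\mid m$, and it suffices to prove the statement for absolutely irreducible $\mathbb{L}\SL_2(q)$-modules and then observe that $\FF_p$-irreducibility is recovered by Galois descent.

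Next I would compute $C_V(U)$ and $V/[V,U]$ directly for $W:=V_{n_0}\otimes V_{n_1}^{(p)}\otimes\cdots\otimes V_{n_{r-1}}^{(p^{r-1})}$. Each tensor factor $V_{n_i}^{(p^i)}$ has a one-dimensional $U$-fixed subspace (the highest weight line) and $[V_{n_i}^{(p^i)},U]$ has codimension one (spanned by all but the lowest weight vector). Since $U$ is a $p$-group acting on a tensor product, $C_W(U)$ contains the tensor product of the fixed lines; a dimension/weight count (or the fact that $\dim C_W(U)$ equals the multiplicity of the trivial $U$-module, which for these modules is $1$ — this is where I would cite the relevant fact, e.g. that these tensor products are ``$p$-restricted'' so have a unique maximal vector) shows $\dim_{\mathbb{L}} C_W(U)=1$, and dually $\dim_{\mathbb{L}} W/[W,U]=1$. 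Then $N_G(U)=U\rtimes H$ with $H$ cyclic of order $q-1$ a maximal torus; $H$ acts on the weight-$\sum n_i p^i$ line of $C_W(U)$ by the corresponding character, which is an $\mathbb{L}^\times$-valued (hence $\mathbb{L}$-linear) character, so $C_W(U)$ is an irreducible $1$-dimensional $\mathbb{L}N_G(U)$-module; similarly for $W/[W,U]$. Finally, restricting scalars back to $\FF_p$: an absolutely irreducible $1$-dimensional $\mathbb{L}N_G(U)$-module, viewed as an $\FF_p N_G(U)$-module, is irreducible precisely because $\mathbb{L}$ is generated over $\FF_p$ by the values of this character together with the scalar action — and this is exactly the content of $\mathbb{L}$ being the field of definition of $V$, so $C_V(U)$ and $V/[V,U]$ are irreducible $1$-dimensional $\mathbb{L}N_G(U)$-modules as claimed.

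The main obstacle I anticipate is making the ``$\dim C_V(U)=1$'' step clean and the Galois-descent bookkeeping at the end airtight: one must be careful that the field $\mathbb{L}$ in the statement (largest field over which $V$ is \emph{writable}) coincides with the field generated by the Brauer character values, and that the $N_G(U)$-module structure on the one-dimensional pieces really does see all of $\mathbb{L}$ and not a proper subfield. For the first point I would lean on the standard fact (e.g.\ from the theory of $\SL_2$-modules, as in the references the authors thank David Craven for) that a $p$-restricted tensor product has a unique $U$-fixed line; for the second I would argue that if the character of $C_V(U)$ took values in a proper subfield $\mathbb{L}_0<\mathbb{L}$ then $V$ itself would be writable over $\mathbb{L}_0$ (reconstructing $V$ from the highest-weight data), contradicting maximality of $\mathbb{L}$. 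With those two facts in hand the rest is routine.

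Alternatively — and this may be the shortest route given the tools already available in the paper — I would note that $C_V(U)$ is a module for $N_G(U)/U\cong \mathbb{L}^\times$ (after identifying the torus action) and appeal directly to the cited result of Smith on fixed points of parabolic subgroups (``Smith's lemma''), which asserts exactly that for an irreducible module $V$ the fixed space $C_V(U)$ of the unipotent radical is an irreducible module for the Levi $N_G(U)$; combining this with the observation that the $\SL_2$ case forces $\dim_{\mathbb{L}} C_V(U)=1$ (since the Levi torus acts on it with a single weight over $\mathbb{L}$) and the dual statement for $V/[V,U]$ via self-duality considerations gives the claim with minimal extra work.
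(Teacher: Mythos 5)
Your second, "shortest route" alternative is literally the paper's proof: the argument in the text is a one-line citation of Theorem 2.8.11 of \cite{GLS3}, which is exactly Smith's lemma in a form that also tracks the Levi module structure and the coefficient field $\mathbb{L}$. So that route is correct and matches the paper.

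Your first, detailed route via the Steinberg tensor product theorem is sound in outline, but the pivotal dimension claim $\dim_{\mathbb{L}} C_W(U)=1$ secretly relies on the very result you are trying to prove. The "unique maximal vector'' fact you invoke for a $p$-restricted tensor product is a statement about the algebraic group $\SL_2(\overline{\FF}_p)$ acting with its \emph{full} unipotent radical; the finite Sylow $p$-subgroup $U\le \SL_2(q)$ is a proper subgroup of that unipotent radical, and a priori $C_W(U)$ could be strictly larger than $C_W(U(\overline{\FF}_p))$. Ruling this out is exactly what Smith's theorem guarantees, so the first approach is circular if your aim was to avoid that citation. (A direct argument for $\SL_2(q)$ via Jordan-block counting for a single $u\in U^\#$ also does not suffice when $m>1$, since a single element typically has several Jordan blocks and one must control the intersection of fixed spaces over all of $U$.) Two smaller slips: $N_G(U)/U$ is cyclic of order $q-1$, hence isomorphic to $\FF_q^\times$ rather than $\mathbb{L}^\times$, which differ whenever $\mathbb{L}$ is a proper subfield of $\FF_q$; and the Steinberg factorisation should be over $m$ tensor factors with the period $r$ of the digit sequence $(n_0,\dots,n_{m-1})$ determining $\mathbb{L}=\FF_{p^r}$, rather than a product of only $r$ factors as written.
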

\begin{proof}
See \cite[Theorem 2.8.11]{GLS3}.
\end{proof}

\begin{lemma}\label{lem: SL2split}
Suppose that $G\cong \SL_2(q)$, $q=p^m$, $U \in \syl_p(G)$ and $V$ is an irreducible $\FF_pG$-module. Let $\mathbb{L}$ be the largest field extension of $\FF_p$ for which $V$ is writable as an $\mathbb{L}$-module. Then $|\mathbb{L}|\leq q$. In particular, $\dim_{\FF_p}(C_V(U))=\dim_{\FF_p}(V/[V, U])\leq m$.
\end{lemma}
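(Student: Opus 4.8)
The plan is to reduce the statement to the previous lemma, \cref{lem: Smith's lemma}, which already identifies $C_V(U)$ and $V/[V,U]$ as $1$-dimensional $\mathbb{L}N_G(U)$-modules. Once that is in hand, all that is needed is a bound $|\mathbb{L}|\leq q$, and then $\dim_{\FF_p}C_V(U)=\dim_{\FF_p}V/[V,U]=[\mathbb{L}:\FF_p]\leq [\FF_q:\FF_p]=m$ follows at once.

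First I would recall that, by the theory of irreducible modules for $\SL_2(q)$ in the defining characteristic, every absolutely irreducible $\overline{\FF_p}\SL_2(q)$-module appearing as a composition factor of an $\FF_q\SL_2(q)$-module is a twist, by a power of the Frobenius, of a tensor product of the modules $V_n$ with $0\leq n\leq p-1$ (Steinberg's tensor product theorem). An irreducible $\FF_pG$-module $V$ is obtained from an absolutely irreducible $\overline{\FF_p}G$-module $W$ by restriction of scalars down to its field of definition; the largest field $\mathbb{L}$ over which $V$ is writable is precisely the field of definition of $W$, and this is generated over $\FF_p$ by the traces of the matrices representing elements of $G$ on $W$. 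Since $G\cong\SL_2(q)$ has all its matrix entries (in any defining-characteristic representation) lying in $\FF_q$ — more precisely, a suitable Steinberg tensor product representative can be realised over $\FF_q$ — the field of definition is contained in $\FF_q$, giving $|\mathbb{L}|\leq q$.

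A cleaner way to package this, and the route I would actually take in the write-up, is: let $W$ be an irreducible constituent of $V\otimes_{\FF_p}\overline{\FF_p}$; then $V$ is the $\FF_p$-module obtained from $W$ by restriction of scalars to $\mathbb{L}$, where $\mathbb{L}$ is the field of definition of $W$. By Steinberg's tensor product theorem $W\cong V_{a_0}\otimes V_{a_1}^{(p)}\otimes\cdots\otimes V_{a_{m-1}}^{(p^{m-1})}$ for some $0\leq a_i\leq p-1$ (indices read cyclically, reflecting that the $p^m$-power Frobenius acts trivially on $G$), and each factor, together with the whole tensor product, is realisable over $\FF_q$; hence $\mathbb{L}\leq\FF_q$ and $|\mathbb{L}|\leq q$. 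Combining with \cref{lem: Smith's lemma}, $C_V(U)$ and $V/[V,U]$ are $1$-dimensional over $\mathbb{L}$, so $\dim_{\FF_p}C_V(U)=\dim_{\FF_p}V/[V,U]=[\mathbb{L}:\FF_p]\leq m$.

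The main obstacle is making the bound $|\mathbb{L}|\leq q$ genuinely rigorous rather than folklore: one must be careful that $\mathbb{L}$ denotes the field of definition of the absolutely irreducible constituent (equivalently, the endomorphism field of $V$ as an $\FF_pG$-module), and that Steinberg's theorem together with the vanishing of the $q$-power Frobenius on $\SL_2(q)$ really does force this field into $\FF_q$. I would cite the relevant statements from \cite{GLS3} (the same source as \cref{lem: Smith's lemma}) for the module-theoretic facts, so that the argument remains short; the final dimension count is then immediate and requires no computation.
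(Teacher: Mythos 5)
Your argument is correct and arrives at the same conclusion, but it takes a different route to the key fact $|\mathbb{L}|\leq q$. The paper's proof is a one-line citation: by \cite[pg.\ 236]{Steinberg}, a splitting field for $C_{p^m-1}$ is already a splitting field for $\SL_2(q)$, and since $\FF_q$ contains the $(q-1)$-th roots of unity the conclusion follows immediately together with \cref{lem: Smith's lemma}. You instead invoke Steinberg's tensor product theorem to write the absolutely irreducible constituent $W$ of $V\otimes_{\FF_p}\overline{\FF_p}$ as $V_{a_0}\otimes V_{a_1}^{(p)}\otimes\cdots\otimes V_{a_{m-1}}^{(p^{m-1})}$, observe that $W^{(q)}\cong W$ because the $q$-power Frobenius is trivial on $\SL_2(q)$, and conclude (using that the Schur index is $1$ in characteristic $p$) that $W$ is defined over $\FF_q$, so the endomorphism field $\mathbb{L}$ embeds in $\FF_q$. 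Both arguments are sound; the paper's is shorter because it outsources everything to a single reference, while yours is more self-contained and makes explicit the mechanism (realisability of tensor-product modules over $\FF_q$) that the citation is hiding. One small caution for the write-up: when you say the field of definition equals $\End_{\FF_pG}(V)$, make explicit that this uses the vanishing of the Schur index in positive characteristic — without that remark a referee might ask whether $\mathbb{L}$ could be strictly smaller than the endomorphism field.
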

\begin{proof}
By \cite[pg. 236]{Steinberg}, a splitting field for $C_{p^m-1}$ is a splitting field for $G$. The result then follows from Lemma \ref{lem: Smith's lemma}.
\end{proof}

In the next result we view $V_i(q)$ as a module over $\KK=\FF_q$. Later it will be regarded as a module over $\FF_p$.

\begin{lemma}\label{lem:mod ident}
Suppose that $G\cong \SL_2(q)$, $U \in \syl_p(G)$ and $V$ is an irreducible $\KK G$-module. If $\dim_\KK V=k+1$ and $[V,U;k]\ne 1$, then $V\cong V_k(q)^\theta$ for some $\theta\in \Aut(\KK)$.
\end{lemma}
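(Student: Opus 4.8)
The plan is to use the classification of irreducible $\KK\SL_2(q)$-modules via Steinberg's tensor product theorem together with the module-theoretic invariants already available in the excerpt. Recall that every irreducible $\KK\SL_2(q)$-module (when we extend scalars to $\overline{\KK}$ and then descend) is, up to a twist by a field automorphism, a tensor product $V_{a_0}(q)^{\Frob^0}\otimes V_{a_1}(q)^{\Frob^1}\otimes\cdots\otimes V_{a_{m-1}}(q)^{\Frob^{m-1}}$ with $0\leq a_i\leq p-1$, of $\KK$-dimension $\prod_i(a_i+1)$. So first I would extend scalars from $\KK$ to its algebraic closure to obtain such a description of $V\otimes_\KK\overline{\KK}$; the point of working over $\KK$ rather than $\FF_p$ is precisely that this tensor-decomposition is available without further Galois-descent complications.

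Next I would pin down the factors using the hypothesis $\dim_\KK V = k+1$ and $[V,U;k]\neq 1$. Choosing a Sylow $p$-subgroup $U$ compatibly in each tensor factor, the nilpotency class of the $U$-action on a tensor product of Frobenius twists of $V_{a_i}(q)$ is easily computed: $U$ acts on each $V_{a_i}(q)^{\Frob^i}$ with $[\,\cdot\,,U;a_i]\neq 1$ but $[\,\cdot\,,U;a_i+1]=1$, and on a tensor product the "lengths" of the unipotent action add, so $[V\otimes\overline{\KK},U;\sum a_i]\neq 1$ and $[V\otimes\overline{\KK},U;1+\sum a_i]=1$. Thus the hypothesis $[V,U;k]\neq 1$ forces $\sum_i a_i \geq k$. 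On the other hand $\dim_\KK V = \prod_i(a_i+1) = k+1$. Combining $\prod(a_i+1)=k+1$ with $\sum a_i\geq k$ forces exactly one $a_i$ to be nonzero (if two or more were nonzero, say $a$ and $b$, then $(a+1)(b+1)\geq a+b+1$ with strict inequality when both are positive, which is incompatible with the dimension count once all factors are accounted for — I would write this elementary inequality out carefully, as it is the crux). Hence $V\otimes_\KK\overline{\KK}\cong V_k(q)^{\Frob^i}$ for a single $i$, so $a_i=k$ and $i$ determines a power of Frobenius, i.e. $V\cong V_k(q)^\theta$ for $\theta=\Frob^i\in\Aut(\KK)$ (noting $V_k(q)$ and all its Frobenius twists are already defined over $\KK$, so no nontrivial descent obstruction arises and the isomorphism over $\overline{\KK}$ descends to $\KK$).

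The main obstacle I anticipate is the bookkeeping around field-of-definition and the twist parameter: one must be careful that the tensor-product description via Steinberg applies to $\overline{\KK}\otimes_\KK V$ and that the resulting single-factor module $V_k(q)^{\Frob^i}$ is genuinely realizable over $\KK$ (so that the final isomorphism is an isomorphism of $\KK G$-modules, not merely after extending scalars). Here the hypothesis $k\leq p$ — in fact the excerpt only applies this for $k\leq p-1$ where $V_k(q)$ is irreducible over $\KK$ by the earlier discussion, so all twists $V_k(q)^\theta$ remain irreducible $\KK G$-modules of the right dimension — makes the descent automatic: $V$ and $V_k(q)^{\Frob^i}$ are both irreducible $\KK G$-modules that become isomorphic over $\overline{\KK}$, and since they are already defined over $\KK$ they are isomorphic over $\KK$. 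A secondary point to handle cleanly is the additivity of the unipotent nilpotency length under tensor products; I would either cite this as standard or give the one-line argument via a weight/filtration computation on $U$-fixed points.
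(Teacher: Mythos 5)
Your proof is correct and follows essentially the same route as the paper's: apply the Steinberg Tensor Product Theorem, compute the nilpotency length of the $U$-action on the tensor factorisation, and compare the product $\prod(a_i+1)=k+1$ with the sum $\sum a_i$ to force a single non-trivial factor. Two small remarks on how the paper handles the points you flag. First, the paper applies the version of Steinberg stated directly over $\KK=\FF_q$ (GLS3, Cor.~2.8.6), so there is no extension to $\overline{\KK}$ and no descent step. Second, the additivity of the unipotent nilpotency length — which, as you say, is the crux — is cited precisely (Parker--Rowley 2005, Prop.~1 and Cor.~3) rather than left as a sketch; your bare assertion ``the lengths add'' is only valid when $\sum a_i\le p-1$, since a unipotent of order $p$ has Jordan blocks of size at most $p$. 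In the present setting this is not an extra hypothesis: $[V,U;k]\ne 1$ with $\dim_\KK V=k+1$ forces a Jordan block of size $k+1$, hence $k\le p-1$, and then $\sum a_i\le\prod(a_i+1)-1=k\le p-1$, so the Clebsch--Gordan-type additivity does apply. With that range check made explicit (or with the Parker--Rowley citation substituted), your argument closes the same gap the paper's proof does.
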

\begin{proof}
By the Steinberg Tensor Product Theorem \cite[Corollary 2.8.6]{GLS3} we may write \[V=W_0\otimes W_1^\sigma\otimes \dots\otimes W_{a-1}^{\sigma^{a-1}}\] where each $W_i\cong V_j(q)$ for some $j\in\{0, \dots, p-1\}$ and $\sigma$ is the Frobenius automorphism which maps $\lambda$ to $\lambda^p$ for all  $\lambda \in \KK$. Set $d_j=\dim_\KK W_j$. Then \[k+1 =\dim_\KK V= \prod_{j=0}^{a-1}d_j.\]

Define $P(n)= 1+ \dots + z^n \in \mathbb{Z}[z]$ and put $P(W_j)= P(d_j-1)$. Thus $P(V)=\prod_{j=0}^{a-1} P(W_j)$ is a monic polynomial of degree $m = \sum_{j=0}^{a-1}(d_j-1)$. By \cite[Proposition 1 and Corollary 3]{ParkerRowley2005}, $\dim _\KK [V,U;m]=1$ and so we infer that $m=k$. Hence \[k+1=\prod_{j=0}^{a-1}d_j= m+1= \left(\sum_{i=0}^{a-1}(d_j-1)\right) +1.\] It follows that there is a unique $0\leq \ell \leq a-1$ such that $d_j\ne 1$ and so $V=V_{d_j-1}(q)^{\sigma^\ell}$. This proves the result.
\end{proof}

\begin{lemma}\label{lem: SEFF}
Suppose that $G$ has a strongly $p$-embedded subgroup, $p$ is an odd prime, $U\in\syl_p(G)$ and $V$ is an $\FF_pG$-module with $[V, O^p(G)]\ne 1$. If $|U|\geq |V/C_V(U)|$ or $|U|\geq |[V, U]|$ then $G/C_G(V)\cong \SL_2(|U|)$, $C_G(V)$ is a $p'$-group, $V/C_V(G)\cong V_1(q)|_{\FF_p}$ and $|U|=|V/C_V(U)|=|[V, U]|$.
\end{lemma}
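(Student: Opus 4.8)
The plan is to combine the classification of groups with a strongly $p$-embedded subgroup (\cref{Strongly p-embedded Sylows} and its precursors) with the commutator estimates of \cref{lem: Smith's lemma,lem: SL2split,lem:mod ident} and \cite{ParkerRowley2005}: the smallness hypothesis $|U|\ge|V/C_V(U)|$ (respectively $|U|\ge|[V,U]|$) turns out to be met only by $\SL_2(|U|)$ acting on a natural module. First I would make the standard reductions: replacing $G$ by $O^{p'}(G)$ changes neither $U$, $C_V(U)$ nor $[V,U]$, preserves the strongly $p$-embedded subgroup, and keeps $U$ a Sylow $p$-subgroup, while factoring out $C_G(V)$ makes $V$ faithful with $[V,G]\ne 1$. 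So we may assume $G=O^{p'}(G)=\langle U^G\rangle$ has a strongly $p$-embedded subgroup (hence $O_p(G)=1$) and acts faithfully on $V$; the argument then splits according to the structure of $U\in\syl_p(G)$.

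Suppose first that $U$ is cyclic. If $|U|=p^a$ with $a\ge 2$ then, since $|V/C_V(U)|=|[V,U]|$ for cyclic $U$ and a faithful action of $\Z/p^a$ requires a Jordan block of length at least $p^{a-1}+1$, we get $|[V,U]|\ge p^{p^{a-1}}>p^a=|U|$, contradicting the hypothesis. Hence $|U|=p$; then $|[V,U]|\le p$, so every non-trivial element of $U$ is an $\FF_p$-transvection on $V$, and $G=\langle U^G\rangle$ is generated by $\FF_p$-transvections. McLaughlin's classification of such groups, together with the facts that the Sylow $p$-subgroup of $G$ is cyclic of order $p$ and that the larger transvection groups have $|V/C_V(U)|>p$ on their faithful modules, forces $G/C_G(V)\cong\SL_2(p)$ and $V/C_V(G)\cong V_1(p)|_{\FF_p}$, with $|V/C_V(U)|=|[V,U]|=p=|U|$.

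Now suppose that $U$ is non-cyclic. Arguing as in the proof of \cref{Strongly p-embedded Sylows} (via \cite[Chapter 4]{GLS4}), $\bar G:=G/O_{p'}(G)$ is almost simple with simple socle $\bar K$, and $\bar K$ has a strongly $p$-embedded subgroup with $\bar U\cong U\in\syl_p(\bar K)$. If $U$ is not elementary abelian then for $p$ odd the classification gives $\bar K\cong\PSU_3(p^a)$, or $p=3$ and $\bar K\cong {}^2G_2(3^{2a+1})$, in both cases with $|U|=q_0^3$ where $q_0=p^a$; using the Steinberg tensor product theorem together with \cref{lem: Smith's lemma,lem: SL2split,lem:mod ident} and \cite{ParkerRowley2005} applied to the composition factors of $V$ over a suitable extension of $\FF_p$, one shows that every faithful $\FF_p\bar K$-module has $|V/C_V(\bar U)|>|\bar U|$ and $|[V,\bar U]|>|\bar U|$ (for the $6a$-dimensional natural module of $\SU_3(p^a)$ one already gets $q_0^4>q_0^3$, and all larger modules are worse), contradicting the hypothesis applied to a suitable section of $V$. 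Hence $U$ is elementary abelian of rank $m\ge 2$ and \cref{Strongly p-embedded Sylows} applies: in cases (2)--(6) we have $|U|=p^2$, and the (small) dimensions of the faithful $\FF_p$-modules of $\Alt(2p)$, $\PSL_3(4)$, $\mathrm{M}_{11}$, ${}^2\mathrm F_4(2)'$, $\mathrm{Fi}_{22}$ and their relevant $p'$-central extensions are obstructed by the same estimates (e.g.\ via minimum-distance arguments for the code modules of $\mathrm{M}_{11}$), forcing $|V/C_V(U)|>p^2=|U|$. Thus case (1) holds: $\bar K\cong\PSL_2(q)$ with $q=p^m=|U|$. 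By \cref{lem:mod ident}, over a suitable extension field every non-trivial composition factor of $V$ has the form $V_k(q)^\theta$ and contributes a factor $q^k$ to $|[V,U]|$; the hypothesis therefore forces a single non-trivial factor with $k=1$, and a non-split extension with a trivial factor (which would enlarge $[V,U]$, the natural module being non-projective) is ruled out. Hence $V/C_V(G)\cong V_1(q)|_{\FF_p}$ with $|V/C_V(U)|=|[V,U]|=q=|U|$; since $V_1(q)$ is faithful only for $\SL_2(q)$, the group $O_{p'}(G)$ acts trivially on it, so $C_G(V)$ is a $p'$-group and, as $G=O^{p'}(G)$, $G/C_G(V)=O^{p'}(G/C_G(V))\cong\SL_2(q)$.

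I expect the main obstacle to be the uniform module estimates invoked twice above: for each of $\PSU_3(p^a)$, ${}^2G_2(3^{2a+1})$ and the five exceptional groups of \cref{Strongly p-embedded Sylows} one must rule out \emph{all} faithful $\FF_p$-modules — hence all choices of composition factors and all Frobenius twists of Lie-type factors — rather than merely the smallest ones, so that controlling $|V/C_V(U)|$ through the Steinberg tensor product theorem and the $[V,U;k]$-estimates is the delicate part; this is also precisely the mechanism by which the Henke--Shpectorov module (for which $|U|<|V/C_V(U)|$) falls outside the scope of the lemma.
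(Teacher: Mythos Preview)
The paper does not prove this lemma at all: its ``proof'' is a single sentence citing \cite[Theorem~5.6]{Henke} (cf.\ \cite[Theorem~1]{Henke}). Your proposal is therefore an attempt to reprove Henke's result from scratch rather than a comparison point with the paper.

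Your strategy---reduce to $O^{p'}(G)$ faithful on $V$, split into cyclic versus non-cyclic Sylow, then invoke the classification of groups with a strongly $p$-embedded subgroup and eliminate everything except $\SL_2(q)$ on its natural module by module-dimension estimates---is exactly the shape of the argument one finds in the literature, and your closing self-assessment is accurate: the real work lies in the uniform lower bounds on $|V/C_V(U)|$ and $|[V,U]|$ for \emph{all} faithful modules of $\PSU_3(p^a)$, ${}^2G_2(3^{2a+1})$, and the five exceptional groups of \cref{Strongly p-embedded Sylows}. Those bounds are doable but not short, which is presumably why the paper outsources the whole lemma to Henke.

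Two specific points are worth flagging. First, your initial reduction to $G=O^{p'}(G)$ does not by itself recover the conclusion $G/C_G(V)\cong\SL_2(|U|)$ for the original $G$: taking $G=\GL_2(q)$ on its natural module satisfies all the hypotheses but has $G/C_G(V)\cong\GL_2(q)$. The lemma as stated (and as Henke proves it) should really be read as giving $O^{p'}(G/C_G(V))\cong\SL_2(q)$, which is how the paper actually uses it (see the proof of \cref{size.ess}, where the conclusion is drawn for $O^{p'}(\Aut_\F(E))$). Second, in your cyclic case $|U|=p$, the reason the other McLaughlin groups are excluded is their Sylow $p$-subgroup structure (not cyclic of order $p$), not that ``$|V/C_V(U)|>p$ on their faithful modules''---a transvection has $|V/C_V(U)|=p$ by definition. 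This is a slip in the justification rather than in the conclusion.
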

\begin{proof}
This follows from \cite[Theorem 5.6]{Henke} (c.f. \cite[Theorem 1]{Henke}).
\end{proof}

We call an $\FF_p\SL_2(q)$-module a \emph{natural} $\SL_2(q)$-module if it isomorphic to $V_1(q)|_{\FF_p}$.

\begin{lemma}\label{lem:NoExt1}
Suppose that $G\cong \SL_2(q)$, $q=p^m>p$, and  $V$ is an indecomposable $\KK G$-module with composition factors $V_k(q)^{\sigma^a}$ and $V_{\ell}(q)^{\sigma^b}$ where $k,\ell\in\{0, \dots, p-1\}$, $\sigma$ is the standard Frobenius automorphism of $\KK$ defined by $s\mapsto s^p$ and $a\leq b$. Then $b=a+1$, $k=p-2$ and $\ell=1$. In particular, $\dim_{\KK} V=p+1$.
\end{lemma}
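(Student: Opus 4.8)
The plan is to use the Steinberg Tensor Product Theorem together with self-extension vanishing for $\SL_2(q)$-modules to pin down the two composition factors. First I would reduce to the case $q = p$ by base change: write each composition factor via Steinberg's theorem as a tensor product of Frobenius twists of restricted modules $V_j(q)$ with $0 \le j \le p-1$, and observe that the indecomposability of $V$ over $\KK$ forces a nontrivial $\Ext^1_{\KK G}$ between the two factors. Since $\Ext^1_{\KK\SL_2(q)}(V_k(q)^{\sigma^a}, V_\ell(q)^{\sigma^b})$ twists compatibly under Frobenius, I may assume $a = 0$ and reduce to analysing $\Ext^1$ between $V_k(p)$ (a restricted module) and $V_\ell(q)^{\sigma^b}$ for some $b \ge 0$.

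The key input, which I would cite (this is precisely the lemma for which the authors thank David Craven), is the computation of $\Ext^1_{\KK\SL_2(q)}$ between restricted simple modules and Frobenius twists: such an extension group is nonzero only when the twist $b$ equals $1$ (or $a+1$ in the untwisted-down normalisation), the restricted factor at level $0$ is $V_{p-2}(p)$, and the twisted factor is $V_1(q)^{\sigma}$ — this is the classical fact that the only nonsplit extensions among the relevant modules come from the "linkage" $p - 2 \,|\, 1$ across adjacent Frobenius layers, equivalently from the structure of the projective cover of the trivial (or of $V_{p-2}$) for $\SL_2(p)$, tensored up. From $b = a + 1$, $k = p-2$, $\ell = 1$ we get $\dim_\KK V = (p-1) + 2 = p+1$, which is the asserted conclusion. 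I would also dispose of the possibility $a = b$ (no self-extensions: $\Ext^1_{\KK\SL_2(q)}(W,W) = 0$ for $W$ simple, since $\SL_2(q)$ has trivial Schur multiplier contribution in the relevant degree and restricted simples are their own blocks up to linkage), and the possibility $b > a+1$ (the cohomological "jump" between non-adjacent Frobenius layers vanishes, by the same $\Ext$ computation).

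The main obstacle will be marshalling the precise $\Ext^1$ vanishing statements for $\SL_2(q)$-modules in the form needed: one must be careful that the composition factors are allowed to be arbitrary restricted modules tensored with arbitrary Frobenius twists, not just the natural module, so the argument genuinely needs the full description of $\Ext^1$ between such modules (due to Andersen, or computable from the block theory / the structure of $\SL_2(p)$-projectives), rather than an ad hoc calculation. Once that machinery is invoked the rest is bookkeeping: matching dimensions to force a single nontrivial tensor factor in each Steinberg decomposition, checking the linkage principle forbids all pairs except $(p-2, 1)$ in consecutive layers, and reading off $\dim_\KK V = p+1$. I would present this as a short deduction citing the external $\Ext$-vanishing results, flagging Lemma~\ref{lem:NoExt} as the companion statement handling the remaining configurations.
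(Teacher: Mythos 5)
Your approach is essentially the paper's: both invoke the Andersen--J\o rgensen--Landrock description of $\Ext^1$ between simple $\SL_2(p^n)$-modules in terms of the $p$-adic digits of the highest weights (the paper cites their Corollary 4.5 directly and does explicit weight bookkeeping), and both arrive at $b=a+1$, $k=p-2$, $\ell=1$. However, two points in your write-up are loose. First, the Steinberg tensor-product reduction is superfluous here: the composition factors are already given as single Frobenius twists $V_k(q)^{\sigma^a}$, $V_\ell(q)^{\sigma^b}$ of restricted simples, so there is no tensor decomposition to unravel and no ``matching dimensions to force a single nontrivial tensor factor'' to carry out. Second, and more substantively, your disposal of the case $a=b$ is incorrect as stated. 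You appeal to self-extension vanishing $\Ext^1(W,W)=0$, but when $a=b$ and $k\ne\ell$ the two factors are distinct simples lying in the same Frobenius layer, so this is not a self-extension; moreover, the ``trivial Schur multiplier contribution'' rationale is not the relevant mechanism. The correct way to exclude $a=b$ (and $b>a+1$) is to run the same digit criterion: with $\lambda=kp^a$ and $\mu=\ell p^b$ supported in a single digit each, the conditions $\mu_j+\lambda_j=p-2$ and $\mu_{j+1}=\lambda_{j+1}\pm 1$ with agreement elsewhere force $j=a$, $k=p-2$, $b=a+1$, and $\ell=1$ in one stroke, exactly as the paper's bookkeeping does. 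So you have the right citation and the right conclusion, but the case analysis as you have sketched it does not close, and should be replaced by the uniform digit argument.
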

\begin{proof}
As $q=p^m>p$, we may use \cite[Corollary 4.5]{andersenjorgsenlandrock}. In this context we have $V_k(q)^{\sigma^a}$ has weight $\lambda= kp^a=\sum _{i=0}^{m-1}\lambda_ip^i$ and $V_{\ell}(q)^{\sigma^b}$ has weight $\mu = \ell p^b =\sum _{i=0}^{m-1}\mu_ip^i$, using the conventions that $\lambda_m= \lambda_0$ and $\mu_m=\mu_0$. Then $k=\lambda_a$ and $\lambda_{a+1}=0$, and hence $\mu_{a+1} \in\{1, p-1\}$ which means that $b=a+1$ and $\ell \in\{1, p-1\}$. In addition, we have $\mu_a=0$. Therefore \[k=\lambda_a= p-\mu_{a}-2=p-2\] and this completes the proof.
\end{proof}

\begin{proposition}\label{prop: indiden}
Suppose that $G\cong \SL_2(q)$, $q=p^m>p$, and $V$ is an indecomposable $\FF_pG$-module. Assume the following:
\begin{enumerate}
    \item there is $1\leq k,l\leq p-1$ and a submodule $W$ of $V$ such that $V/W\cong V_k(q)|_{\FF_p}$ and $W\cong V_l(q)|_{\FF_p}$;
    \item $\dim_{\FF_p}(V)\leq m(p+1)$; and
    \item if $p=3$ then for $U\in\syl_3(G)$, $U$ does not act quadratically on $V$ and either $\dim_{\FF_3}(C_V(U))=2m$ or $\dim_{\FF_3}(V/[V, U])=2m$.
\end{enumerate}
Then $V\cong V_p(q)|_{\FF_p}$ or $V\cong \Lambda(q)|_{\FF_p}$.
\end{proposition}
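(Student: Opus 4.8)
The plan is to recognise the composition factors of $V$ by base changing to $\KK$ and appealing to \cref{lem:NoExt1}, and then to recover $V$ itself by descent.

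First I would record that, since $1\leq k,l\leq p-1$, the modules $V_k(q)$ and $V_l(q)$ are absolutely irreducible $\KK\SL_2(q)$-modules satisfying $V_k(q)\not\cong V_k(q)^{\sigma^i}$ for $0<i<m$, so $V_k(q)|_{\FF_p}$ and $V_l(q)|_{\FF_p}$ are irreducible $\FF_p\SL_2(q)$-modules with endomorphism algebra $\KK$ and $V_k(q)|_{\FF_p}\otimes_{\FF_p}\KK\cong\bigoplus_{i=0}^{m-1}V_k(q)^{\sigma^i}$. In particular $V$ has exactly the two composition factors $V_k(q)|_{\FF_p}$ and $V_l(q)|_{\FF_p}$, so, $V$ being indecomposable, $W$ is the socle of $V$ and the extension $0\to W\to V\to V/W\to 0$ is non-split; hence $\Ext^1_{\FF_p\SL_2(q)}(V_k(q)|_{\FF_p},V_l(q)|_{\FF_p})\ne 0$. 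As $\Ext^1$ is compatible with extension of the coefficient field, this gives $\Ext^1_{\KK\SL_2(q)}(V_k(q)^{\sigma^a},V_l(q)^{\sigma^b})\ne 0$ for some $a,b$, and applying \cref{lem:NoExt1} to the corresponding non-split extension of $\KK\SL_2(q)$-modules forces $\{k,l\}=\{1,p-2\}$ and makes the exponents $a,b$ cyclically adjacent; hypothesis (2) is then automatically satisfied and $\dim_{\FF_p}V=m(p+1)$.

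If $p>3$ then $k\ne l$, so $V$ is uniserial over $\FF_p$ with socle $\cong V_l(q)|_{\FF_p}$ and head $\cong V_k(q)|_{\FF_p}$, and $\overline V:=V\otimes_{\FF_p}\KK$ has Loewy length two with semisimple socle $\bigoplus_b V_l(q)^{\sigma^b}$ and head $\bigoplus_a V_k(q)^{\sigma^a}$. By \cref{lem:NoExt1} a head summand can only extend the socle summand of the cyclically adjacent twist, and in a single direction determined by whether $(k,l)=(p-2,1)$ or $(1,p-2)$; since $V$ is indecomposable over $\FF_p$ while $\Gal(\KK/\FF_p)$ permutes these potential extensions cyclically, all of them occur, so $\overline V$ is the direct sum over $a$ of the unique non-split extension with the prescribed head and socle. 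That extension is $V_p(q)^{\sigma^a}$ when $(k,l)=(p-2,1)$ and $\Lambda(q)^{\sigma^a}$ when $(k,l)=(1,p-2)$ — uniqueness because $\dim_\KK\Ext^1_{\KK\SL_2(q)}(V_{p-2}(q),V_1(q)^\sigma)=1$ (see \cite{andersenjorgsenlandrock}). Hence $\overline V\cong V_p(q)|_{\FF_p}\otimes_{\FF_p}\KK$ or $\Lambda(q)|_{\FF_p}\otimes_{\FF_p}\KK$, and since isomorphism of finite-dimensional modules is detected after extension of the ground field (Noether--Deuring), $V\cong V_p(q)|_{\FF_p}$ or $V\cong\Lambda(q)|_{\FF_p}$.

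The remaining case $p=3$, $k=l=1$, is where the work lies and is the only place hypothesis (3) is used. Now $\overline V=V\otimes_{\FF_3}\KK$ has socle $\bigoplus_b V_1(q)^{\sigma^b}$ and head $\bigoplus_a V_1(q)^{\sigma^a}$, and by \cref{lem:NoExt1} a head summand $V_1(q)^{\sigma^a}$ may extend $V_1(q)^{\sigma^{a+1}}$ (``$V_p$-type'') and/or $V_1(q)^{\sigma^{a-1}}$ (``$\Lambda$-type''). Arguing with $\Gal(\KK/\FF_p)$ as before, the set of extensions that occur is all of $V_p$-type, or all of $\Lambda$-type, or all of both types; the first two subcases yield $V\cong V_3(q)|_{\FF_3}$ respectively $V\cong\Lambda(q)|_{\FF_3}$ exactly as above (when $q=9$ the two $\Ext$-directions coincide and these two modules are isomorphic). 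In the mixed subcase I would compute, over $\KK$ where the cocycles are explicit from the known descriptions of $V_p(q)$ and $\Lambda(q)$, that $C_V(U)$ is the fixed space of $U$ on the socle, of $\FF_3$-dimension $m$, while $[V,U]$ has $\FF_3$-codimension $m$ in $V$; the point is that the two extension directions are detected by the elements $u_\lambda\in U$ through the $\FF_q$-linearly independent functions $\lambda\mapsto\lambda^{3^a}$, so that $[V,U]$ strictly contains $[V,u_\lambda]$ for each individual $u_\lambda$. Then $\dim_{\FF_3}C_V(U)=\dim_{\FF_3}V/[V,U]=m\ne 2m$ (and $[V,U;2]$ visibly contains a non-zero fixed vector of the socle, so $U$ is non-quadratic), contradicting hypothesis (3); hence the mixed subcase does not occur and the proof is complete. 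The main obstacle is precisely this mixed $p=3$ computation — obtaining enough structural control on these string-like indecomposable $\FF_3\SL_2(q)$-modules to evaluate $C_V(U)$ and $[V,U]$ — whereas the rest is a direct combination of \cref{lem:NoExt1}, base change for $\Ext^1$, and Noether--Deuring.
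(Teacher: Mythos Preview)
Your argument for $p>3$ is essentially the paper's own proof: base change to $\KK$, identify the socle of $\overline V$ via the Galois action, apply \cref{lem:NoExt1} together with the one--dimensionality of the relevant $\Ext^1$ from \cite{andersenjorgsenlandrock}, and descend by Noether--Deuring.

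For $p=3$ you take a genuinely different route. The paper invokes \cite[Lemma~3.4]{NilesPushingUp} as a black box: the hypothesis $\dim_{\FF_3}C_V(U)=2m$ (respectively $\dim_{\FF_3}V/[V,U]=2m$) already determines $V$ up to isomorphism, and since $\Lambda(q)|_{\FF_3}$ (respectively $V_3(q)|_{\FF_3}$) satisfies it, the conclusion follows. Your approach instead makes a direct obstruction computation in the ``mixed'' case to show $\dim C_V(U)=\dim V/[V,U]=m$, contradicting hypothesis~(3). This is more self--contained, and for $q>9$ the computation is correct: the $V_p$--type extension obstructs the lifting of each head fixed vector regardless of the $\Lambda$--type extension present, because the two obstructions lie in distinct socle summands and the $\Lambda$--type one is a coboundary while the $V_p$--type one is not.

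Your parenthetical handling of $q=9$, however, is wrong. It is true that $a+1\equiv a-1\pmod 2$, so both extension types land in the same group $\Ext^1_{\KK G}(V_1,V_1^\sigma)$; but that group is \emph{two}--dimensional (this is exactly the exception flagged in \cite[Corollary~4.5]{andersenjorgsenlandrock}), and the classes of $V_3(9)$ and $\Lambda(9)^\sigma$ span it. In particular $V_3(9)|_{\FF_3}$ and $\Lambda(9)|_{\FF_3}$ are not isomorphic: by \cref{(p+1)CVS} and \cref{(p+1)CUpS} one has $\dim_{\FF_3}C_V(U)=m$ for the former and $2m$ for the latter. Your trichotomy still makes sense for $q=9$ when read projectively in this two--dimensional $\Ext^1$, and your obstruction computation adapts (any $M$ whose class has non--zero $V_3$--component has $\dim_\KK C_M(U)=1$, and dually for $V/[V,U]$), but this case requires an argument rather than a dismissive parenthetical. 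The paper's appeal to Niles sidesteps the issue entirely.
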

\begin{proof}
Consider the module $Y:=V\otimes_{\FF_p} \KK$. Applying an extension of the arguments in \cite[(26.2)]{AschbacherFG}, we see that there is a $\KK$-submodule $R$ of $Y$ such that $R\cong V_l(q)|_{\FF_p}\otimes_{\FF_p} \KK$ is a direct sum of $m$ Frobenius twists of $V_l(q)$; and $Y/R\cong V_k(q)_{\FF_p}\otimes_{\FF_p} \KK$ is  a direct sum of $m$ Frobenius twists of $V_k(q)$. Indeed $R= W\otimes_{\FF_p} \KK$ may be regarded as a submodule of $Y$.

Let $E$ be the socle of $Y$. Then $E \ge R$. By \cite[(25.7)(1)]{AschbacherFG}, regarded as a $\FF_p$-space, $Y$ admits $\Gamma \times G$ where $\Gamma=\Gal(\KK/\FF_p)$. Assume $Z \le E$ is an  irreducible $\KK G$-submodule of $Y$. Then for $\sigma \in \Gamma$, $Z\sigma$ is also an irreducible $\KK G$-submodule. Set $D= \langle Z^\sigma\mid \sigma \in \Gamma\rangle$. Then $D$ is  $(\Gamma \times G)$-invariant and has dimension $m\dim_{\FF_p} Z < \dim_{\FF_p} Y$. Now \cite[(25.7)(2)]{AschbacherFG} implies that $W= U\otimes_{\FF_p} \KK$ for some submodule  $U$ of $V$. Since the only proper submodule of $V$ is $W$, we conclude that $Z\le R$ and $E=R$. Thus $R$ is the socle of $Y$.

It remains to examine when $\mathrm{Ext}_{\SL_2(q)}^1(V_k(q)^{\sigma^a}, V_l(q)^{\sigma^b})\ne 0$. We have by Lemma \ref{lem:NoExt1} that if $\mathrm{Ext}_{\SL_2(q)}^1(V_k(q)^{\sigma^a}, V_l(q)^{\sigma^b})$ is non-trivial then either $k=p-2, l=1$ and $b=a+1$; or $k=1$, $l=p-2$ and $b=a-1$. Furthermore, \cite[Corollary 4.5]{andersenjorgsenlandrock} implies that $\mathrm{Ext}_{\SL_2(q)}^1(V_k(q)^{\sigma^a}, V_l(q)^{\sigma^b})$ is $1$-dimensional unless $q=9$ and $l=k=1$. If $p \ne 3$, we now have the result.

Assume now that $p=3$. Then if  $\dim_{\FF_3}(C_V(U))=2m$ we apply \cite[Lemma 3.4]{NilesPushingUp} to see that $V$ is determined up to isomorphism. Since $\Lambda(q)|_{\FF_3}$ also satisfies the hypothesis, the result follows in this case. Otherwise we have that $\dim_{\FF_3}(V/[V, U])=2m$ and applying \cite[Lemma 3.4]{NilesPushingUp} to the dual of $V$ implies that $V\cong V_3(q)|_{\FF_3}$.
\end{proof}

\begin{lemma}\label{lem:NoExt}
Suppose that $G\cong \SL_2(q)$ where $q=p^m>p$ and $p$ is an odd prime, and let $S\in \syl_p(G)$ and $1\leq k\leq p-1$. Then
\begin{enumerate}
    \item if $V$ is an $\FF_pG$-module with $V/C_V(G)$ isomorphic to $V_k(q)|_{\FF_p}$ then $C_V(S)/C_V(G)=C_{V/C_V(G)}(S)$; and
    \item if $V$ is an $\FF_pG$-module with $[V, G]$ isomorphic to $V_k(q)|_{\FF_p}$ then $[V, S]=[[V, G], S]$.
\end{enumerate}
\end{lemma}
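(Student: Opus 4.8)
The two statements are dual to one another, so the plan is to prove (1) in detail and deduce (2) by applying (1) to the dual module $V^*$ (noting that $V_k(q)|_{\FF_p}$ is self-dual for $1\leq k\leq p-1$, since $V_k(q)$ is a self-dual $\KK G$-module, so that $[V,G]\cong V_k(q)|_{\FF_p}$ in $V$ becomes $V^*/C_{V^*}(G)\cong V_k(q)|_{\FF_p}$ after dualising, and $[V,S]=[[V,G],S]$ in $V$ corresponds to $C_{V^*}(S)/C_{V^*}(G)=C_{V^*/C_{V^*}(G)}(S)$ in $V^*$). For (1), set $\bar V=V/C_V(G)$; by hypothesis $\bar V$ is an irreducible $\FF_pG$-module isomorphic to $V_k(q)|_{\FF_p}$. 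By \cref{lem: Smith's lemma} (Smith's lemma), $C_{\bar V}(S)$ is one-dimensional over the field of definition $\mathbb L$ of $\bar V$; concretely $\dim_{\FF_p}C_{\bar V}(S)=m$ by \cref{lem: SL2split}, since $V_k(q)$ is writable over $\KK=\FF_q$ and hence $|\mathbb L|=q$. The goal is to show the natural map $C_V(S)/C_V(G)\hookrightarrow C_{\bar V}(S)$ is onto, equivalently that $|C_V(S)|=|C_V(G)|\cdot q$.

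The key input is that $C_V(G)$ is a trivial $\FF_pG$-module sitting inside $V$ with $V/C_V(G)\cong V_k(q)|_{\FF_p}$, so the obstruction to lifting an $S$-fixed point from $\bar V$ to $V$ is measured by $\Ext^1_{\FF_pS}$ of the trivial module by itself — but more efficiently one argues at the level of $G$. First I would reduce to the case $C_V(G)$ is irreducible as a trivial module, i.e.\ $|C_V(G)|=p$, by a standard dévissage: if $1<C_0<C_V(G)$ is a $G$-submodule then apply induction to $V/C_0$ and to the preimage in $V$ of a complement issue. Then the extension $0\to C_V(G)\to V\to \bar V\to 0$ is classified by an element of $\Ext^1_{\KK G}(V_k(q)^{\sigma^a}, \text{trivial})$ after extending scalars (as in the proof of \cref{prop: indiden}, using \cite[(25.7)]{AschbacherFG}); but $\Ext^1_{\SL_2(q)}(V_k(q)^{\sigma^a},\FF_p)=0$ for $1\leq k\leq p-1$ by \cref{lem:NoExt1} (whose list of nontrivial $\Ext^1$'s between irreducibles never includes the trivial module as a constituent — the only nontrivial extension has factors $V_{p-2}(q)$ and $V_1(q)$). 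Hence the extension splits as $G$-modules: $V=C_V(G)\oplus \bar V'$ with $\bar V'\cong \bar V$. Then $C_V(S)=C_V(G)\oplus C_{\bar V'}(S)$ and $C_{\bar V'}(S)$ maps isomorphically onto $C_{\bar V}(S)$, giving exactly (1).

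The main obstacle I anticipate is the $\FF_p$ versus $\KK$ bookkeeping in the $\Ext$ vanishing step: $V$ is only an $\FF_pG$-module, and although $V_k(q)|_{\FF_p}$ is irreducible over $\FF_p$, one must be careful that $\Ext^1_{\FF_pG}(V_k(q)|_{\FF_p},\FF_p)$ genuinely vanishes. The clean way around this is the scalar-extension argument already used in \cref{prop: indiden}: tensor up to $\KK$, observe $V\otimes_{\FF_p}\KK$ has socle a sum of Frobenius twists of the trivial module (if the sequence did not split) and a top a sum of Frobenius twists $V_k(q)^{\sigma^i}$, apply \cref{lem:NoExt1} to each twist separately to conclude each would-be extension splits, then use the $\Gamma\times G$-action and \cite[(25.7)(2)]{AschbacherFG} to descend the splitting back to $\FF_p$. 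Everything else is routine: the dévissage to $|C_V(G)|=p$, Smith's lemma for the dimension count, and the final direct-sum decomposition of fixed points under $S$.
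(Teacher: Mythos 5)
Your proposal is correct and follows essentially the same route as the paper's proof: reduce to $|C_V(G)|=p$, extend scalars to $\KK$, and invoke the $\Ext^1$-vanishing from Andersen--J{\o}rgensen--Landrock (the paper cites \cite[Corollary 4.6]{andersenjorgsenlandrock} directly, whereas you deduce the same fact from \cref{lem:NoExt1}), then obtain (2) by self-duality of $V_k(q)|_{\FF_p}$. The only imprecise step is your d\'evissage: a single quotient $V/C_0$ yields only $[R,S]\le C_0$ for $R$ the preimage of $C_{V/C_V(G)}(S)$, which is not yet a contradiction, so one needs the paper's refinement of choosing two distinct order-$p$ subgroups $W_1,W_2<C_V(G)$, applying minimality to both $V/W_1$ and $V/W_2$, and intersecting to force $[R,S]\le W_1\cap W_2=1$.
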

\begin{proof}
Assume that $G,V$ satisfy the hypothesis of part (1) of the lemma and that $V$ is chosen such that $V$ does not satisfy the outcome of (1) and $|V|$ is minimal. Suppose that there are $W_1, W_2<C_V(G)$ with $|W_1|=p=|W_2|$. Consider $V/W_i$. Then by minimality, $C_{V/C_V(G)}(S)\cong C_{(V/W_i)/C_{V/W_i}(G)}(S)=C_{V/W_i}(S)/(C_V(G)/W_i)$. Hence, writing $R$ for the preimage in $V$ of $C_{V/C_V(G)}(S)$, we have that $[R, S]\le W_1\cap W_2=1$ so that $C_V(S)/C_V(G)=C_{V/C_V(G)}(S)$, a contradiction.

Thus, we may as well assume that $|C_V(G)|=p$. Consider the module $W:=V\otimes_{\FF_p} \KK$. Then by \cite[(26.2)]{AschbacherFG}, $|C_W(G)|=q$ and $W/C_W(G)$ is a direct product of $m$ Frobenius twists of $V_k(q)$. Moreover, by \cite[(27.12)]{AschbacherFG}, $C_W(G)=C_W(S)$. Let $W'$ be the preimage of a summand of $W/C_W(G)$. Then $C_{W'}(G)=C_{W'}(S)$ and so to force a contradiction and prove the result, it suffices to prove that $\mathrm{Ext}_{\SL_2(q)}^1(V_k(q)^{\sigma^a}, V_0(q))$ is trivial for $\sigma$ the Frobenius automorphism on $\KK$ and $a\in \{0, \dots, m-1\}$. But this is true by \cite[Corollary 4.6]{andersenjorgsenlandrock}, and so (1) holds.

The proof of (2) follows by dualisation arguments.
\end{proof}

\begin{lemma}\label{lem:dimV}
Suppose that $T$ is a $p$-group and $V$ is a faithful $\mathbb L T$-module, where $\mathbb L$ has characteristic $p$.  If $2\leq k \leq p$ and \[[V,t;k-1]=  C_V(t)\] for some $t \in T^\#$, then $|V|= |C_V(t)|^k$ and the matrix representing $t$ has exactly $\dim_{\mathbb L} C_V(t)$ Jordan blocks each of dimension $k$.
\end{lemma}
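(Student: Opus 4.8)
The plan is to reduce the statement to elementary Jordan form theory for a single unipotent matrix over $\mathbb L$. Since $V$ is a faithful $\mathbb L T$-module and $t$ is a $p$-element, the matrix $M$ representing $t$ on $V$ is unipotent, so $N := M - 1$ is nilpotent and $V$ decomposes as a direct sum of Jordan blocks for $N$, say with block sizes $k_1 \geq k_2 \geq \dots \geq k_r$. The key translation is that for a single Jordan block of size $s$, the operator $N$ has image of codimension $1$ and kernel of dimension $1$, and more generally $[V,t;j] = \operatorname{im}(N^j)$ while $C_V(t) = \ker(N)$. So the hypotheses $[V,t;k-1] = C_V(t)$ and $2 \leq k \leq p$ become statements purely about the nilpotent operator $N$ and its powers.

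First I would set $N = M-1$ and record the identity $[v,t] = v\cdot t - v = vN$ (in additive module notation), hence inductively $[V,t;j] = VN^j$, and $C_V(t) = \ker N$. I would also note the constraint $N^p = (M-1)^p = M^p - 1 = 0$ since $t^p$ acts trivially by faithfulness together with $|t|$ dividing $p$ — more carefully, $o(t) = p$ is forced because if $t = 1$ the hypothesis $[V,t;k-1] = C_V(t)$ reads $0 = V$, absurd; so $t$ has order exactly $p$ and $M^p = 1$, giving $N^p = 0$ as we are in characteristic $p$. Then the hypothesis says $VN^{k-1} = \ker N$, which in particular forces $VN^k = VN^{k-1}N = (\ker N)N = 0$, so every Jordan block has size $\leq k$. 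Conversely $VN^{k-1} = \ker N \neq 0$ (it is nonzero because $V \neq 0$ and $N$ is nilpotent, unless $V = 0$), so at least one block has size exactly $k$.

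Next I would pin down the block sizes exactly. Writing $d = \dim_{\mathbb L}\ker N = r$ (the number of blocks) and using that a block of size $s$ contributes dimension $\min(s,j)$ to... actually more useful: a block of size $s$ contributes $\max(s-j,0)$ to $\dim VN^j$. From $VN^{k-1} = \ker N$ we get $\dim VN^{k-1} = r$; but $\dim VN^{k-1} = \sum_i \max(k_i - (k-1), 0) = \#\{i : k_i = k\}$ since all $k_i \leq k$. Hence $\#\{i : k_i = k\} = r$, i.e. every block has size exactly $k$. Therefore $r = d = \dim_{\mathbb L} C_V(t)$, all blocks have size $k$, and $\dim_{\mathbb L} V = kd$, giving $|V| = |\mathbb L|^{kd} = |C_V(t)|^k$ as claimed.

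The only mild subtlety — and the step I would be most careful about — is the role of the bound $k \leq p$: it guarantees $N^{k-1} \neq 0$ is consistent with $N^p = 0$, but more to the point it is needed so that "a Jordan block of size $k$" actually exists as an $\mathbb L[t]$-module with $t$ of order $p$ (a block of size $> p$ would have $N^p \neq 0$). In the argument above this enters only through $N^p = 0$ forcing $k_i \leq p$, which is automatically consistent with $k_i = k \leq p$; no case analysis is really needed. I would also double-check the trivial edge cases $k = 2$ (then $[V,t] = C_V(t)$ directly) and the nonvanishing of $\ker N$, which holds since $t \neq 1$ acts unipotently and nontrivially. Thus the proof is essentially a three-line computation with $N = M - 1$ once the module-theoretic hypotheses are rewritten in terms of images and kernels of powers of $N$.
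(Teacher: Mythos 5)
Your proof is correct and follows exactly the same route as the paper's (which is a one-line assertion of the Jordan-form count); you have simply spelled out the elementary bookkeeping with $N = M - 1$ that the authors leave implicit. No gaps.
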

\begin{proof}
Let $t \in T^\#$ be the given element. The fact that $[V,t;k-1]= C_V(t),$ implies that the Jordan form of a matrix representing $t$ has $\dim_{\mathbb L} C_V(t)$ blocks each of size $k$. In particular, $t$ has order $p$, and $|V|= |C_V(t)|^k$.
\end{proof}

\begin{lemma}
Suppose that $T$ is a $p$-group and $V$ is a faithful $\mathbb L T$-module, where $\mathbb L$ has characteristic $p$. If $k\ge 2$ and \[[V,T;k-1]=[V,r;k-1]= C_V(T)=C_V(r)\] for all $r \in T^\#$, then $[V,T;\ell]=[V,r;\ell]$ for all $r \in T^\#$ and $\ell \ge 0$. Furthermore, $[V^*,T;k-1]=[V^*,r;k-1]= C_{V^*}(T)=C_{V^*}(r)$ for all $r \in T^\#$ where $V^*$ is the dual of $V$.
\end{lemma}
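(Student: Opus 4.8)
The plan is to prove the ``furthermore'' (dual) assertion first, extract from it the dimension equality that pins down $[V,T]$, and then obtain the statement for general $\ell$ by induction on $k$. For the setup, I would apply \cref{lem:dimV} to the faithful action of $\langle r\rangle$ on $V$ (noting $\dim_{\mathbb L}C_V(r)=\dim_{\mathbb L}C_V(T)=:d$ for every $r\in T^\#$, since $C_V(r)=C_V(T)$ by hypothesis) to conclude that each $r\in T^\#$ acts on $V$ with exactly $d$ Jordan blocks, all of size $k$, so $\dim_{\mathbb L}V=kd$; passing to the contragredient representation preserves Jordan type, so each $r\in T^\#$ acts on $V^*$ with $d$ blocks of size $k$ as well, whence $[V^*,r;k-1]=C_{V^*}(r)$ and $\dim_{\mathbb L}C_{V^*}(r)=d$. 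I would also observe that $[V,T;k]=[C_V(T),T]=1$, i.e. the $k$-th power $I^k$ of the augmentation ideal of $\mathbb L T$ annihilates $V$; since $I^k$ is invariant under the anti-automorphism $g\mapsto g^{-1}$ of $\mathbb L T$, it annihilates $V^*$ too, so $[V^*,T;k]=1$ and therefore $[V^*,T;k-1]\le C_{V^*}(T)$.

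With these facts in hand, for any $r\in T^\#$ the chain
\[
C_{V^*}(r)=[V^*,r;k-1]\le[V^*,T;k-1]\le C_{V^*}(T)\le C_{V^*}(r)
\]
(the first inclusion because $\langle r\rangle\le T$) shows that all four subgroups coincide; this is the ``furthermore'' assertion, and it also shows $V^*$ satisfies the hypotheses of the lemma. In particular $\dim_{\mathbb L}(V/[V,T])=\dim_{\mathbb L}C_{V^*}(T)=d$, so $\dim_{\mathbb L}[V,T]=(k-1)d=\dim_{\mathbb L}[V,r]$; since $[V,r]\le[V,T]$ this forces $[V,T]=[V,r]$ for every $r\in T^\#$, which is the case $\ell=1$. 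The case $\ell=0$ is trivial, and the cases $\ell\ge k$ follow from $[V,T;k]=1=[C_V(r),r]=[V,r;k]$; in particular the full assertion holds when $k=2$.

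Next I would induct on $k$, so assume $k\ge 3$. Set $W:=[V,T]$, which by the previous step equals $[V,r]$ for every $r\in T^\#$. Then $W$ is a faithful $\mathbb L T$-module, for an element $r\in C_T(W)^\#$ would satisfy $[V,r;2]=[W,r]=1$, contradicting that $r$ acts on $V$ with blocks of size $k\ge 3$. As $k\ge 2$ we have $C_V(T)=[V,T;k-1]\le[V,T]=W$, so $C_W(T)=C_V(T)$ and $[W,T;k-2]=[V,T;k-1]=C_W(T)$; and since $W=[V,r]$ is the image of $r-1$ on $V$, it is a direct sum of $d$ Jordan blocks of size $k-1$ as an $\langle r\rangle$-module, giving $[W,r;k-2]=C_W(r)=W\cap C_V(r)=C_V(T)=C_W(T)$. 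Thus $(W,T)$ satisfies the hypotheses of the lemma with $k-1$ in place of $k$, and the inductive hypothesis yields $[W,T;\ell']=[W,r;\ell']$ for all $r\in T^\#$ and $\ell'\ge 0$; taking $\ell'=\ell-1$ gives $[V,T;\ell]=[W,T;\ell-1]=[W,r;\ell-1]=[V,r;\ell]$ for every $\ell\ge 1$, completing the induction.

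The step I expect to be the main obstacle is recognising that the dual statement has to be dealt with first: it is precisely the equality $\dim_{\mathbb L}(V/[V,T])=d$ (equivalently $\dim_{\mathbb L}[V,T]=(k-1)d$) that is needed to identify $[V,T]$ with $[V,r]$, and the clean input which makes the dual argument work is the transfer of $[V,T;k]=1$ to $[V^*,T;k]=1$ through the anti-automorphism of $\mathbb L T$. The remaining steps --- the Jordan-block bookkeeping and checking that $W=[V,T]$ is again a faithful module satisfying the hypotheses with parameter $k-1$ --- should be routine.
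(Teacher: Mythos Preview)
Your proof is correct, but it follows a genuinely different route from the paper's.

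The paper argues by a single downward induction on $\ell$: assuming $[V,T;\ell]=[V,s;\ell]$ for all $s\in T^\#$, it uses the Jordan-block description of $s$ on $V$ (from \cref{lem:dimV}) to identify $C_{V/[V,s;\ell]}(s)=[V,s;\ell-1]/[V,s;\ell]$, and since $[V,T;\ell-1]/[V,T;\ell]$ is centralised by $T$ this forces $[V,T;\ell-1]\le[V,s;\ell-1]$, hence equality. The dual assertion is then left implicit, following from the preservation of Jordan type under the contragredient.

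You instead prove the dual statement first, via the pleasant observation that $I^k$ is stable under the antipode $g\mapsto g^{-1}$ of $\mathbb L T$, which transfers $[V,T;k]=0$ to $[V^*,T;k]=0$ and closes the chain $C_{V^*}(r)=[V^*,r;k-1]\le[V^*,T;k-1]\le C_{V^*}(T)\le C_{V^*}(r)$. From this you read off $\dim_{\mathbb L}(V/[V,T])=d$ and hence $[V,T]=[V,r]$ for all $r$, and then run an induction on $k$ by passing to $W=[V,T]$. The paper's approach is more economical (one induction, no need to handle the dual first), while yours makes the dual statement an explicit stepping stone and replaces the contradiction step by clean dimension counting; the augmentation-ideal transfer is a nice touch that the paper does not isolate.
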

\begin{proof}
By Lemma \ref{lem:dimV}, $|V|= |C_V(r)|^k$ and $r$ has $\dim_{\mathbb L} C_V(t)$ Jordan blocks of size $k$. The result is true for $\ell =k$ and $\ell=k-1$ by hypothesis. Suppose that it is true for $0<\ell \leq k-1$. Then $[V,T;\ell]= [V,r;\ell]$ for all $r \in T^\#$. By definition \[[V,T;\ell-1]= \langle [V,r;\ell-1]\mid r \in T^\#\rangle.\]

Aiming for a contradiction, assume that $r, s \in T^\#$ and $[V,r;\ell-1] \ne [V,s;\ell-1]$. Then \[[V,r;\ell-1][V,s;\ell]/[V,s;\ell] \not \le C_{V/[V,s;\ell]}(s)=[V,s;\ell-1]/[V,s;\ell]\] so that $[[V,r;\ell-1],s] \not \le [V,s;\ell]$. Hence \[[V,s;\ell]=[V,T;\ell]\ge [[V,T;\ell-1], s][V,s;\ell] \ge [[V,r,\ell-1],s][V,s;\ell]  > [V,s;\ell],\] which is impossible.
\end{proof}

\begin{lemma}\label{lem: spe modules}
Suppose that $p$ is a prime, $X$ is a group with a strongly $p$-embedded subgroup, $K=O^{p'}(X)$ and $T \in \Syl_p(X)$ is elementary abelian of order at least $p^2$. Assume that  $2 \leq k \leq p$ and $V$ is a faithful $\FF_pX$-module. If, for all $t \in T^\#$, \[[V, T; k-1]=[V, t; k-1]=C_V(T)=C_V(t)\] has order $|T|$, then $K$ is quasisimple and either
\begin{enumerate}
    \item $p$ is arbitrary, $K\cong \SL_2(p^m)$ and if $V$ is irreducible then $V|_K=V_n(p^m)$ considered as a $\FF_pK$-module for some odd $n$ with $1\leq n\leq \max\{1, p-2\}$;
    \item $p$ is arbitrary, $K \cong \PSL_2(p^m)$ and if $V$ is irreducible then $V|_K=V_n(p^m)$ considered as a $\FF_pK$-module for some even $n$ with $2\leq n\leq p-1$; or
    \item $p=3$, $K\cong 2^.\PSL_3(4)$ and $V|_K$ is the unique irreducible $\FF_3K$-module of dimension $6$.
\end{enumerate}
\end{lemma}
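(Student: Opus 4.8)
The plan is to combine the group-theoretic classification of Proposition~\ref{Strongly p-embedded Sylows} with the Jordan-block constraints forced by the hypothesis $[V,T;k-1]=[V,t;k-1]=C_V(T)=C_V(t)$ of order $|T|$. First I would invoke Proposition~\ref{Strongly p-embedded Sylows} to conclude $O_p(X)=1$, that $K=O^{p'}(X)=\langle T^K\rangle$ with $\bar K=K/O_{p'}(K)$ simple, and that $\bar K$ is one of the six listed possibilities, with $T$ elementary abelian of the stated order. The hypothesis that $C_V(t)=C_V(T)$ for \emph{every} $t\in T^\#$ is very restrictive: by Lemma~\ref{lem:dimV} applied to each $t$, the element $t$ acts on $V$ with all Jordan blocks of size exactly $k$, and $|V|=|C_V(T)|^k=|T|^k$. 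So $\dim_{\FF_p}V = k\cdot\dim_{\FF_p}C_V(T) = k\cdot m'$ where $|T|=p^{m'}$; in particular $V$ restricted to $T$ is a free module over $\FF_p[T]/(\text{radical})$ only in the cyclic case, and more importantly every nontrivial element of the elementary abelian group $T$ acts with the \emph{same} fixed-space dimension. This uniform behaviour is exactly what rules out the "exceptional" lines of Proposition~\ref{Strongly p-embedded Sylows}.

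Next I would eliminate cases (2),(4),(5),(6) of Proposition~\ref{Strongly p-embedded Sylows} (the alternating, $\mathrm{M}_{11}$, ${}^2\mathrm F_4(2)'$, $\mathrm{Fi}_{22}$ cases), all of which have $|T|=p^2$. In each of these groups the $p'$-part of $N_K(T)/C_K(T)$ acts on $T\cong \FF_p^2$ as a subgroup of $\GL_2(p)$ that does \emph{not} act transitively on the $p+1$ one-dimensional subspaces (indeed for $\Alt(2p)$, $\mathrm{M}_{11}$, etc. the relevant automizer is small — typically of order dividing $p-1$ or a dihedral/cyclic group — and certainly not $\SL_2(p)$ or a group with a single orbit on lines). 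One then picks $t_1,t_2\in T^\#$ in inequivalent lines; since a faithful module for such $X$ must have $C_V(t_1)\ne C_V(t_2)$ (the two cyclic subgroups are not $K$-conjugate, and a short argument with Clifford theory / Brauer characters shows their fixed spaces cannot coincide for a faithful module with $[V,O^p(K)]\ne1$ — which holds since $V$ is faithful and $K$ is perfect mod $O_{p'}$), this contradicts $C_V(t_1)=C_V(T)=C_V(t_2)$. For $\PSL_3(4)$ with $p=3$ one must be a little more careful since $N_K(T)/C_K(T)$ \emph{does} act with a single orbit on lines of $T\cong\FF_3^2$; that is why case (3) of our lemma survives, with $K$ the covering group $2^.\PSL_3(4)$, and one then quotes the known $3$-modular character table of $2^.\PSL_3(4)$ to see that the unique faithful irreducible $\FF_3$-module in which $T$ acts with all Jordan blocks of size $k$ has dimension $6$ (forcing $k=3$, $\dim C_V(T)=2$).

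For the surviving case $\bar K\cong\PSL_2(p^m)$ — so $K\cong\SL_2(p^m)$ or $\PSL_2(p^m)$ and $|T|=p^m$ — I would argue as follows. Reduce to $V$ irreducible (the general statement only claims the module structure "if $V$ is irreducible", so one passes to a composition factor on which $O^p(K)$ acts nontrivially, using faithfulness to see such a factor exists and carries the same Jordan data after noting $[V,T;k-1]=C_V(T)$ has order $|T|$ forces the relevant factor to account for all of $C_V(T)$ — here a little care with the socle series is needed). For $V$ irreducible over $\FF_p$, write $V\otimes_{\FF_p}\KK$ via the Steinberg tensor product theorem; the condition $[V,t;k-1]=C_V(t)$ with $|C_V(t)|=|T|=q$ says, via Lemma~\ref{lem:dimV} and \cite[Proposition~1, Corollary~3]{ParkerRowley2005} (as used in the proof of Lemma~\ref{lem:mod ident}), that the "$P$-polynomial" of $V$ has degree $k-1$ and that $\dim_\KK C_{V\otimes\KK}(U)=1$, forcing $V\otimes\KK$ to be a single Frobenius twist $V_{k-1}(q)^{\sigma^\ell}$; hence $V|_K\cong V_{k-1}(q)|_{\FF_p}$ (after absorbing the twist into the isomorphism, as in Lemma~\ref{lem:mod ident}). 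Wait — the lemma statement says $V|_K=V_n(p^m)$ with $1\le n\le\max\{1,p-2\}$ in the $\SL_2$ case and $2\le n\le p-1$ in the $\PSL_2$ case; so in fact $n=k-1$, and the constraint $n\le p-2$ (resp.\ $n\le p-1$) comes from requiring $V_n(q)$ to be irreducible, i.e.\ $0\le n\le p-1$, together with $k\le p$. Finally the parity dichotomy ($n$ odd $\Leftrightarrow$ $K\cong\SL_2(q)$, $n$ even $\Leftrightarrow K\cong\PSL_2(q)$) is exactly the condition that $-1\in\ker$ of the representation, i.e.\ that the central involution acts trivially, which happens iff $n$ is even. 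The main obstacle, I expect, will be the $\PSL_3(4)$/$2^.\PSL_3(4)$ analysis for $p=3$: one genuinely needs the modular representation theory of $2^.\PSL_3(4)$ over $\FF_3$ (the $6$-dimensional faithful module, its relation to the sextet/Golay-type geometry, and the verification that $T$ acts with uniform Jordan type on it) rather than a soft argument, and one must also rule out $\mathrm M_{11}$ at $p=3$ with the uniform-fixed-space argument, for which the precise structure of $N_{\mathrm M_{11}}(T)$ must be pinned down.
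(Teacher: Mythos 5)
Your broad strategy is in the right spirit for the $\PSL_2(p^m)$ and $2^.\PSL_3(4)$ branches (single Jordan block over the endomorphism field, then the $P$-polynomial/Steinberg argument via Lemma~\ref{lem:mod ident}, and the parity dichotomy from whether $-Z(K)$ acts trivially), and the reduction to $n=k-1$ is what the paper also obtains. However, the elimination of the exceptional cases of Proposition~\ref{Strongly p-embedded Sylows} via ``the automizer of $T$ does not act transitively on lines'' has a concrete failure: in $\mathrm M_{11}$ at $p=3$ the automizer $N_K(T)/C_K(T)\cong\mathrm{SD}_{16}$ \emph{is} transitive on the four lines of $T\cong\FF_3^2$ (it is a full Sylow $2$-subgroup of $\GL_2(3)$, and a short orbit count shows it has a single orbit on lines). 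So your criterion does not distinguish $\mathrm M_{11}$ from $\PSL_3(4)$, and a different argument is required. The paper eliminates $\mathrm M_{11}$ by first pinning down the unique nontrivial composition factor of $V$ (dimension $5$, the code/cocode module), then restricting to the subgroup $J\cong\PSL_2(9)$ inside $\mathrm M_{11}$, applying the lemma to $(V,J)$ to force $V\cong V_2(9)$ of dimension $6$, and deriving a contradiction from the mismatch of dimensions. Your proposal has no replacement for this step.

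Two further gaps. First, for $\Alt(2p)$ the phrase ``a short argument with Clifford theory / Brauer characters shows their fixed spaces cannot coincide'' is not a proof: nothing in general forbids non-conjugate $p$-elements of the same order from having identical fixed spaces on a faithful module, and the contradiction has to be extracted from the actual module structure. The paper first invokes the Wagner/James classification to show the only nontrivial composition factor is the heart of the natural permutation module (dimension $2(p-1)$), and only then compares the Jordan types of a $p$-cycle versus a product of two $p$-cycles; it also has to treat the covering group $2^.\Alt(2p)$ separately (with a dimension bound from Wagner and a computation for $p=5$), which your plan does not address. Second, you write that Proposition~\ref{Strongly p-embedded Sylows} gives $K$ quasisimple, but it only gives $\bar K=K/O_{p'}(K)$ simple; the paper still needs the hypothesis $C_V(T)=C_V(t)$ together with \cite[Lemma 2.8]{LSWP} to show $T$ (hence $K$) centralises $O_{p'}(K)$, and without this step the conclusions $K\cong\SL_2(q)$, $K\cong 2^.\PSL_3(4)$, etc.\ are not justified. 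Finally, cases (5) and (6) of Proposition~\ref{Strongly p-embedded Sylows} are dispatched in the paper by a plain dimension bound ($\dim V\le 10$, while the minimal $\FF_5$-representation has dimension $26$), which is simpler than a fusion-on-lines argument and should replace what you wrote there.
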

\begin{proof}
From \cite[Lemma 2.8]{LSWP} and the hypothesis that $C_V(T)= C_V(t)$ for all $t \in T^\#$, we see that $T$ centralises $O_{p'}(K)$ and therefore so does $K$. It follows from Proposition \ref{Strongly p-embedded Sylows} that $K$ is quasisimple.

Suppose that Proposition \ref{Strongly p-embedded Sylows}(1) holds and therefore $K/Z(K) \cong \PSL_2(p^m)$. Let $\mathbb L= \End_{\FF_p}(V)$ and regard $V$ as an $\mathbb LG$-module so that $V$ is an absolutely irreducible $\mathbb LG$-module. By Lemma \ref{lem: Smith's lemma}, $\dim_{\mathbb L} C_V(T)=1$.  Since $C_V(t)= C_V(T)$ for $t\in T^\#$ we have $\dim_{\mathbb L} C_V(t)=1$ and consequently $V|_{\langle t\rangle}$ is indecomposable. It follows that $t$ is represented on $V$ by a matrix with a single Jordan block.  Hence, writing $\dim_{\mathbb L} V=k+1$, we have $[V,t;k]=[V,T;k]\ne 1$ and $[V, t; k]$ is $1$-dimensional as an $\mathbb L$-module. Now Lemma \ref{lem:mod ident} implies that $\KK=\mathbb L$ and as a $\mathbb{L}G$-module, $V\cong V_k(q)^{\sigma^\ell}$ for some $\ell$ and $\sigma$ the standard Frobenius automorphism. Hence (1) or (2) holds in this case.

In the case of Proposition \ref{Strongly p-embedded Sylows} (2), we have $K/Z(K) \cong \Alt(2p)$ with $p \ge 5$ and $|V|\leq p^{2p}$. Suppose first that $Z(K)=1$. Since $p \ge 5$, \cite{Wagner1} together with \cite[Theorem 7]{James83} (see \cite[Corollary 16.9]{ParkerRowleySymplectic}) imply that $V$ contains a unique non-trivial irreducible constituent isomorphic to the heart of the natural permutation module of dimension $2(p-1)$. Hence, $|V|\geq p^{2p-2}$ and we conclude that every $t\in T^\#$ has two Jordan blocks on $V$ and both are non-trivial. But then an elementary calculation shows that the $p$-cycles and the products of two $p$-cycles have fixed spaces of different dimension.

Now suppose that $Z(K)\ne 1$. Then, as $p \ge 5$, $|Z(K)|= 2$. Write $2p= 2^{w_1}+ \dots +2^{w_s}$ with $w_1> w_2> \dots >w_s=1$. Then
\begin{equation*}
2p \ge \dim V \ge 2^{\lfloor (2p-s-1)/2\rfloor} \tag{$\ast$}
\end{equation*}
by \cite[1.3 Theorem (2)]{Wagner2}. Since $s\leq \log_2(2p)=1+\log_2(p)$, we have \[2p \ge 2^{\lfloor (2p-s-1)/2\rfloor}\ge 2^{\lfloor (2p-\log_2(p)-2)/2\rfloor}\ge 2^{(p-2)/2}\] which is impossible for $p\ge 13$. For $p \in \{7,11\}$, we check that $(\ast)$ does not hold. For $p=5$, $2^.\Alt(10)$ has two $8$-dimensional representations and as $|V|\leq 5^{10}$ and $Z(G)$ acts non-trivially on $V$, we conclude that $V$ is either irreducible or decomposable. Since $|C_V(s)|=5^2$, it is easy to see that $V$ is irreducible. We now verify by MAGMA \cite{magma} that these representations do  not satisfy the hypothesis of the lemma. The remaining faithful $\FF_5$-representations of $2^.\Alt(10)$ have dimensions at least $28$. Thus we have no examples here.

Consider possibilities (3) and (4) so that $p=3$ and $|T|=9$. In these situations we have that $|V|\leq 3^6$ and comparing with \cite{ModAt} we have that $K\cong 2^.\PSL_3(4)$ or $K\cong \mathrm{M}_{11}$. In the former case, $K$ has a unique non-trivial module of dimension at most $6$. This module has dimension exactly $6$ and is listed as outcome (3).

In the latter case, we note that $V$ has a unique non-trivial irreducible constituent, of dimension $5$, and this constituent is either the ``code" and ``cocode" module for $\mathrm{M}_{11}$. We have $J\cong \PSL_2(9)$, regarded as the derived subgroup of $\mathrm M_{10}$, is a subgroup of $K$.  We may assume that $T \le J$ and that $(V,J)$ satisfies the hypothesis of the lemma. But then (2) holds, and we deduce that $V\cong V_2(9)$ which is irreducible of dimension $6$. This is impossible given that $K$ has a composition factor of dimension $5$ on $V$.

Finally  consider cases (5) and (6) of Proposition \ref{Strongly p-embedded Sylows}. Then $\dim V \leq 10$. Hence $K$ must embed into $\SL_{10}(5)$ and, by considering the orders of the groups involved, we have $K \cong {}^2\mathrm F_4(2)'$. However, by \cite{ModAt}, the smallest characteristic $5$ representation of this group has dimension $26$. Hence these cases do not occur.

This concludes the proof.
\end{proof}

\begin{remark}
Note that when $K\cong 2^.\PSL_3(4)$ and $|V|=3^6$, any extension of $K$ over $V$ splits and a Sylow $3$-subgroup of the semidirect product $V\rtimes K$ is isomorphic to a Sylow $3$-subgroup of $\PSp_4(9)$, and therefore to the group $S_2(9)$ defined above.
\end{remark}

\begin{remark}
If $\F$ is a saturated fusion system with a weakly $\F$-closed  elementary abelian subgroup $E$ which, as a module for $O^{3'}(\Out_{\F}(E))\cong \mathrm{M}_{11}$, is isomorphic to either the code or cocode module for $M_{11}$, then $\F$ is known by results in \cite{BobTodd}. It is our intention to treat the analogous situation in which $E$ is a module for $\Alt(2p)$ in a later paper.
\end{remark}

The remainder of this section is concerned with certain uniqueness results needed to show that the polynomial fusion systems are uniquely determined up to isomorphism. The proofs are long and technical. The reader is encouraged to work through them slowly, and may consider skipping some of the details on a first pass.

The irreducibility requirement in parts (1) and (2) of  Lemma \ref{lem: spe modules}  is too limiting for our purposes and this prevents  Lemma \ref{lem: spe modules} from being applied directly. To remedy this we prove following proposition which is specific to the cases we consider, first viewing the modules $V_i(q)$ over $\FF_p$.

\begin{proposition}\label{lem: Vnqrecog}
Suppose that $H$ is a finite group with $S\in\syl_p(H)$. Assume that $S\cong S_n(q)$ for $q=p^m>p$ and $1\leq n<p$, and $H/O_p(H)\cong \SL_2(q)$. Then, writing $V:=O_p(H)$ and recognizing $V$ as an $\FF_p\SL_2(q)$-module, we have that $V\cong V_n(q)|_{\FF_p}$.
\end{proposition}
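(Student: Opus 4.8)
The plan is to show that $V:=O_p(H)$ is an $\FF_p\SL_2(q)$-module satisfying the hypotheses of a combination of the earlier module-recognition lemmas, thereby pinning it down to $V_n(q)|_{\FF_p}$. First I would set $G=H/O_p(H)\cong\SL_2(q)$ and pick $U\in\syl_p(G)$, identifying it (after choosing the Sylow $p$-subgroup $S$ of $H$ mapping onto it) with $\Aut_S(V)$. Because $S\cong S_n(q)$, we have $S=UV$ and $V$ has index $q$ in $S$, and the action of $S$ on $V$ is exactly the action of $S_n(q)$ on $V_n(q)$, so the combinatorics of \cref{Somnibus} apply verbatim: $[V,U;n]=C_V(U)\neq 1$, the chain $C_i$ of weight-filtration subgroups has $|C_i/C_{i-1}|=q$, $|C_V(U)|=q$, and $|V|=q^{n+1}$. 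In particular $V$ is a faithful $\FF_pG$-module (faithfulness because $C_G(V)$ is normal in $G\cong\SL_2(q)$ and cannot contain $U$, being a proper subgroup that does not meet $U$ — here one uses that $U$ acts nontrivially on $V$, e.g. $[V,U]\neq 1$ — so $C_G(V)\leq Z(G)$, and $Z(G)$ acts trivially on $V$ only if it is trivial on $C_V(U)$, which by \cref{lem:dimV}-type Jordan block analysis it is not unless $|Z(G)|=1$; alternatively one notes $C_V(U)$ generates $V$ under $G$ since $V$'s filtration is the lower central series, forcing $V$ to be generated by a weight space, and a central element acting as a scalar $\neq 1$ on $C_V(U)$ would have to act as that same scalar everywhere, contradicting that it lies in $[G,G]$).

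The heart of the argument is to produce the composition factors of $V$ and then apply the indecomposable-module classification. I would argue that $V$, regarded over $\KK$ after extending scalars, has all composition factors of the form $V_{n}(q)^{\sigma^a}$: indeed by \cref{lem:mod ident} applied to an irreducible $\KK\SL_2(q)$-subquotient $Y$ on which $U$ acts with a Jordan block of full size $k+1$ where $\dim_\KK Y=k+1$, one gets $Y\cong V_k(q)^\theta$. Since $[V,U;n]=C_V(U)\neq 1$ with $|C_V(U)|=q$ and $|V|=q^{n+1}$, \cref{lem:dimV} (applied to a generator $t$ of $U$, noting $[V,t;n]=C_V(t)=C_V(U)$ which holds by \cref{Somnibus}(3)) shows that $t$ acts on $V\otimes_{\FF_p}\KK$ with $m$ Jordan blocks each of size $n+1$; hence $\dim_\KK(V\otimes\KK)=m(n+1)$ and each Jordan block forces, via \cref{lem:mod ident}, the corresponding $\KK$-irreducible subquotient of a block's span to be some twist of $V_n(q)$. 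Consequently $V\otimes_{\FF_p}\KK$ is a direct sum of $m$ indecomposable $\KK\SL_2(q)$-modules, each uniserial with all composition factors twists of $V_n(q)$; but an indecomposable $\KK\SL_2(q)$-module with two composition factors both twists of $V_n(q)$ (with $1\le n\le p-1$) is ruled out by \cref{lem:NoExt1} unless $\{$composition factors$\}=\{V_{p-2}(q)^{\sigma^a},V_1(q)^{\sigma^{a+1}}\}$, which are twists of the \emph{same} module only in degenerate situations excluded since $n\le p-1$; and having $\geq 3$ factors would make the Jordan block too large. Therefore each summand is irreducible, i.e. a single twist $V_n(q)^{\sigma^{a_i}}$, and $V\otimes_{\FF_p}\KK\cong\bigoplus_{i} V_n(q)^{\sigma^{a_i}}$.

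From this it follows by a standard Galois-descent argument (as in \cite[(25.7),(26.2)]{AschbacherFG}) that $V|_{\FF_p}$ is irreducible — the $m$ Frobenius twists appearing must be a single Galois orbit, because any proper sub-orbit would descend to a proper $\FF_p$-submodule, and $V$ has no proper nonzero $S$-invariant complement to its lower-central-series terms that could be $G$-invariant; more precisely, $C_V(U)$ being one-dimensional over $\KK$ and generating $V$ as a $\KK G$-module (since the $C_i$ are the lower central series of $S=UV$) forces the whole $\KK G$-module to be generated by a single highest-weight vector, hence to be a \emph{single} summand when the $a_i$ are distinct, i.e.\ $m=1$, or more generally forces the twists to form one orbit and $V|_{\FF_p}$ to be the restriction of $V_n(q)^\theta$ for a single $\theta$. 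Finally, absorbing $\theta$ into the identification $G\cong\SL_2(q)$ (replacing the abstract isomorphism $H/O_p(H)\cong\SL_2(q)$ by its composition with $\theta^{-1}$), we conclude $V\cong V_n(q)|_{\FF_p}$ as $\FF_p\SL_2(q)$-modules. \textbf{The main obstacle} I anticipate is the bookkeeping in the descent/uniqueness step: ensuring that the possibly-several Frobenius twists appearing over $\KK$ genuinely collapse to one, and handling the small prime $p=3$ and the potential $\{V_{p-2},V_1\}$-extension case of \cref{lem:NoExt1} — one must check these cannot sneak in given the precise Jordan-block data $[V,U;n]=C_V(U)$ coming from $S\cong S_n(q)$, which I expect forces uniseriality with all factors isomorphic and hence excludes the mixed extension outright.
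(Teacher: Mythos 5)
Your proposal correctly establishes the Jordan-block data for a generator $t$ of $U$: since $[V,t;n]=C_V(t)$ has order $q$ and $|V|=q^{n+1}$, $t$ acts on $V$ with $m$ Jordan blocks each of size $n+1$, and \cref{lem:dimV} applies. That much is sound, and it is the same starting point the paper could in principle use. However, there is a genuine gap in the next step: you assert that ``$V\otimes_{\FF_p}\KK$ is a direct sum of $m$ indecomposable $\KK\SL_2(q)$-modules, each uniserial with all composition factors twists of $V_n(q)$'', deducing this from the phrase ``each Jordan block forces, via \cref{lem:mod ident}, the corresponding $\KK$-irreducible subquotient of a block's span to be some twist of $V_n(q)$''. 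A Jordan block decomposition is a $\KK\langle t\rangle$-module decomposition, not a $\KK G$-module decomposition; the ``span of a block'' is in general not a $G$-subquotient, so there is no ``corresponding $\KK$-irreducible subquotient'' to which \cref{lem:mod ident} can be applied. The unipotent Jordan type of a single element does not determine the $\KK G$-composition factors of a module: smaller composition factors (including possibly extensions among $V_{p-2}^{\sigma^a}$ and $V_1^{\sigma^{a\pm1}}$, the only nonsplit possibilities by \cref{lem:NoExt1}) can be glued so that the overall Jordan blocks are larger than any block on an individual factor. You would need a separate argument to rule out such configurations, and the constraint ``all blocks have size $n+1$'' alone does not obviously suffice, particularly in the boundary cases $n\in\{p-2,p-1\}$ where $V_{p-2}^{\sigma^a}$ could appear as a composition factor without its single Jordan block of size $p-1$ immediately exceeding $n+1$.

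The paper resolves this by a wholly different route: it proceeds by induction on $n$ and proves directly that $V$ is irreducible as an $\FF_p[H/V]$-module, then invokes \cref{lem: spe modules}. The irreducibility argument is a delicate case analysis: one assumes a maximal proper $H$-submodule $W$, locates it in the filtration $1=Z_0<Z_1<\dots<Z_n<V$ using \cref{lem: com full}, and then, via \cref{lem: Smith's lemma}, \cref{lem: SL2split}, \cref{lem:mod ident}, \cref{lem:NoExt}, and \cref{prop: indiden}, shows that each possible position of $W$ leads to a contradiction (sometimes by appealing to the inductive hypothesis applied to $S/Z_i\cong S_{n-i}(q)$). Your proposal never invokes the inductive hypothesis and never rules out a proper $G$-submodule of $V$ before tensoring up — that is precisely the content that is missing. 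The final Galois-descent step of your proposal (``the $m$ Frobenius twists appearing must be a single Galois orbit'') would be correct once semisimplicity of $V\otimes_{\FF_p}\KK$ and the identification of all composition factors were established, but both of those inputs are the unresolved parts.
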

\begin{proof}
We prove this by induction on $n$. For $n=1$, since $q>p$ we have that $S\le O^p(H)V$ and as $C_V(S)<V$ we have that $[V, O^p(H)]\ne 1$. Observe that $|S/V|=|V/C_V(S)|=|C_V(S)|$ and so that by Lemma \ref{lem: SEFF}, we have that $C_G(V)=V$, $H/V\cong \SL_2(q)$ and $V$ may be viewed as a natural $\SL_2(q)$-module. Thus, the result holds when $n=1$. So assume from now that $1<n<p$.

Recall from Notation \ref{n:subsofd*} $S_n(q)=U\ltimes V_n(q)$, where $U$ is elementary abelian of order $q$ and $V_n(q)$ is elementary abelian, self-centralising in $S_n(q)$ and of order $q^{n+1}$. We aim to prove that the action of $H/V\cong \SL_2(q)$ on $V$ is irreducible, for then the result holds by Lemma \ref{lem: spe modules}. We observe that $C_H(V)\normaleq H$ and that $[S, V]\ne 1$. Since $H/V\cong \SL_2(q)$ and $q>p$, we have that $H/C_H(V)\cong \SL_2(q)$ or $H/C_H(V)\cong \PSL_2(q)$. Set $Z_i=Z_i(S)$ for $0 \le i \le n$ where $Z_0=1$ and $Z_{n+1}=V$.

In pursuit of proving that $H/V$ acts irreducibly on $V$ as an $\FF_p$-module we assume, aiming for a contradiction, that there is $W\normaleq H$ with $1\ne W<V$. Choose $W$ maximal subject to this. We either have that $W\le C_V(S)=Z_1$, or there is some maximal $i\in \{2, \dots, n+1\}$ such that $W\cap Z_i\not\le Z_{i-1}$. Then Lemma \ref{lem: com full} yields $Z_{i-1}\le W < Z_i$.

\begin{claim}
$i=n+1$.
\end{claim}

Suppose otherwise so that $i\leq n$ and $W\le Z_n= [V, S]$. In particular, $[V/W, S]=[V, S]/W$ has index $q$ in $V/W$. If $W=[V,S]$, then $H$ centralises $V/W$ and this is impossible as $q>p$ and $W$ was chosen maximally. Hence $W<[V, S]$. Write $\mathbb{L}$ for the largest extension of $\FF_p$ for which $V/W$ is writable as an $\mathbb{L}\SL_2(q)$-module. Then by Lemma \ref{lem: Smith's lemma}, we have that $(V/W)/[V/W, S]\cong V/[V, S]$ is irreducible as an $\mathbb{L}N_{H/V}(S/V)$-module. Hence, by Lemma \ref{lem: SL2split} we deduce that $\mathbb{L}=\KK:=\FF_q$. But then $|V/W|=q^a$ for some $a\in \N$ and we conclude that $W=Z_i $. Now, by Lemma \ref{Snauto} we have that $S/Z_i \cong S_{n-i}(q)$ and by induction we have that $V/W\cong V_{n-i}(q)$. Since $|C_V(S)|=q$, an application of Lemma \ref{lem:NoExt} yields that $i>1$. Hence $W=Z_i $ for some $i\in\{2,\dots, n-1\}$.

Now, take $W_0<W$ with $W_0\normaleq H$ and $W_0$ maximal subject to this. Then $W/W_0$ is an irreducible module and arguing as above, we either have that $W_0\le C_V(S)=Z_1$, $Z_{i-1} \le W_0$ or $W_0=Z_k $ for some $k\in \{0, 1,\dots, i-2\}$ where $Z_0=1$. If $Z_{i-1}\le W_0$ then $W/W_0$ is a trivial module and, as $q>p$ and $W_0$ is maximal, we have \[Z_{i-1}<W_0<Z_i=W<Z_n=[V,S]\] with $V/W\cong V_{n-i}(q)$ and Lemma \ref{lem:NoExt} implies that $C_{V/W_0}(S)=Z_{i+1}/W_0$, a contradiction to Lemma \ref{lem: com full}. Thus,  $W_0<Z_{i-1}$.

If $W_0\le C_V(S)$, then $W_0\le [W, S]=Z_{i-1}$. In a similar manner to before, we apply Lemma \ref{lem: Smith's lemma} and Lemma \ref{lem: SL2split} to deduce that $|W/W_0|=q^b$ for some $b\in \N$ and we conclude that either $W_0=Z(S)$ or $W_0=1$, and so $=Z_k$ for some $k$. Thus, we lie in the third case:
\[1\le W_0=Z_k \le Z_{i-2}<Z_i=W<Z_n=[V,S].\]
We have that $S/Z_k\cong S_{n-k}(q)$ and by induction, using that $V/W_0$ is reducible, we deduce that $k=0$ and $W$ is irreducible. Applying Lemma \ref{lem:mod ident} we deduce that $W\cong V_{i-1}(q)$. As $W<[V, S]$, $V/W\cong V_{n-i}(q)$ and by Proposition \ref{prop: indiden}, using that $|V|=q^{n+1}<q^{p+1}$, we have a contradiction. This proves the claim.

We have shown that $i=n+1$ so that $[V, S]< W$ and $H$ centralises $V/W$. Then $[W, S]=[V, S]<W$ by Lemma \ref{lem: com full}. Let $k\in N_H(S)$ be of order $q-1$, set $K=\langle k\rangle$ and note that $\Aut_{K}(S)$ has order $q-1$, or $\frac{q-1}{2}$.

Observe that $k$ induces an automorphism on $S$ which induces an action on $S/[V, S, S]$. But by Lemma \ref{Snauto}, $S/[V, S, S]\cong S_1(q)$ is isomorphic to a Sylow $p$-subgroup of $\PSL_3(q)$. Since $q>p$, $k$ acts non-trivially on $S/V$ so acts non-trivially on $S/[V, S, S]$. By Proposition \ref{prop: L3Q}, $S/[V, S, S]$ is a $\KK$-space for $\Aut(S/[V, S, S])$ and we conclude that $[k, V]\le [V, S]$.

 Recall the definition of $P_n^*(q)$ from Notation \ref{n:snandpn}.  We recognise that $\Aut_{P_n^*(q)}(S_n(q))$ contains a Hall $p'$-subgroup of $\Aut(S_n(q))$ and so for $K:=\langle k\rangle$, $\Aut_K(S)$ is $\Aut(S)$-conjugate to an element of order at least $\frac{q-1}{2}$ which acts like an element of $\Aut_{P_n^*(q)}(S_n(q))$. Since $q>p$, Lemma \ref{action on centre} implies that $K$ acts irreducibly on $C_V(S)=Z_1$ as an $\FF_pK$-module.

Let $1\ne V_0\le W < V$ with $V_0$ chosen minimally such that $V_0\normaleq H$. In particular, $V_0$ is an irreducible $H/V$-module. Since $K$ acts irreducibly on $C_V(S)$, we see that $C_V(S)=\langle (C_V(S)\cap V_0)^K\rangle \le V_0$. As before, applying Lemma \ref{lem: Smith's lemma} and Lemma \ref{lem: SL2split} we conclude that $V_0$ is writable as a $\KK$-module and so $V_0=Z_i(S)$ for some $1\leq i\leq n$ by Lemma \ref{lem:mod ident}. If $i< n$, then  $S/V_0\cong S_{n-i}(q)$ and by induction we have that $V/V_0$ is irreducible which is impossible as $V_0<W$. Hence $V_0=[V,S] \cong V_{n-1}(q)|_{\FF_p}$ and $V/V_0$ is centralised by $H$. An application of Lemma \ref{lem:NoExt}(2) then yields the unforgivable $[V,S]=[V,S,S]$. This completes the proof.
\end{proof}

Our next module recognition result concerns $S_\Lambda(q)$ and $V_\Lambda(q)$.

\begin{proposition}\label{lem: upVRecog}
Suppose that $H$ is a finite group with $S\in\syl_p(H)$. Assume that $S\cong S_{\Lambda}(q)$ for $q>p$, where $p$ is an odd prime, and $H/O_p(H)\cong \SL_2(q)$. Then, writing $V:=O_p(H)$ and recognising $V$ as an $\FF_p\SL_2(q)$-module, we have that $V\cong \Lambda(q)|_{\FF_p}$.
\end{proposition}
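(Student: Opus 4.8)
The plan is to mimic the structure of the proof of \cref{lem: Vnqrecog}, replacing the module $V_n(q)$ there by $\Lambda(q)$ and exploiting the structural results about $S_\Lambda(q)$ proved in \cref{(p+1)CUpS}, \cref{lem: CharSub} and \cref{lem: LambdaIso}. First I would record the easy features of the set-up: since $q>p$ we have $S\le O^p(H)V$, and $C_V(S)<V$ forces $[V,O^p(H)]\ne 1$, so (as $H/V\cong\SL_2(q)$) we get $H/C_H(V)\cong\SL_2(q)$ or $\PSL_2(q)$, and in fact $C_H(V)=V$ by a Thompson $A\times B$-type argument since $V$ is self-centralising in $S$. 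Write $Z_i=Z_i(S)$ and $W=\langle \ov{x^{p-1}y},\dots,\ov{xy^{p-1}}\rangle_\KK$ for the codimension-$2$ $\KK D$-submodule of $V$ as in \cref{(p+1)CUpS}.

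\textbf{Reducing to irreducibility.} As in \cref{lem: Vnqrecog}, the crux is to show $V$ is an irreducible $\FF_pH/V$-module, for then the conclusion follows from \cref{lem: spe modules}: the only candidate there which is self-dual-twisted of the right dimension and has $S_\Lambda(q)$ as the Sylow $p$-subgroup of the semidirect product is $\Lambda(q)|_{\FF_p}$ (one eliminates $V_n(q)|_{\FF_p}$ for $n\le p-2$ on dimension grounds since $|V|=q^{p+1}$, and $2^.\PSL_3(4)$ since that forces $p=3$, $q=9$ and the wrong Sylow structure by \cref{lem: Vnqrecog} applied to $S_2(9)$ versus $S_\Lambda(9)$, which are non-isomorphic — e.g. $|Z(S_2(9))|=q$ while $|Z(S_\Lambda(9))|=q^2$ by \cref{(p+1)CUpS}(1)). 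So suppose for contradiction there is $1\ne W_0\normaleq H$ with $W_0<V$, chosen maximal. Then $W_0$ lies between consecutive terms of the central series, and \cref{lem: com full} (or its $S_\Lambda(q)$-analogue, which I would first verify holds for $S_\Lambda(q)$ using \cref{(p+1)CUpS}) gives $Z_{i-1}\le W_0<Z_i$ or $W_0\le C_V(S)$.

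\textbf{Running the trichotomy.} One plays exactly the game of \cref{lem: Vnqrecog}: if $W_0\le [V,S]$, then $(V/W_0)/[V/W_0,S]$ is an irreducible $\mathbb L N_{H/V}(S/V)$-module for $\mathbb L$ the field of writability, and \cref{lem: Smith's lemma} together with \cref{lem: SL2split} force $\mathbb L=\KK$, so $|V/W_0|$ is a power of $q$, pinning $W_0$ to a member of the central series; then \cref{lem: LambdaIso} gives $S_\Lambda(q)/Z\cong S_{p-1}(q)$, so quotients of $V$ below $\langle\ov{y^p}\rangle_\KK$ are governed by \cref{lem: Vnqrecog}, which combined with \cref{lem:NoExt} (to handle the trivial-quotient/$C_V(S)$ cases) and \cref{prop: indiden} (to eliminate the genuinely indecomposable two-composition-factor possibilities, using $|V|=q^{p+1}$) yields a contradiction. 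The case $W_0\not\le[V,S]$ means $[V,S]<W_0$ with $H$ trivial on $V/W_0$; here I would use, as in \cref{lem: Vnqrecog}, that an element $k\in N_H(S)$ of order $\approx q-1$ acts irreducibly on $C_{[V,S,S]}(S)$ by \cref{action on centreLambda} and that $[k,V]\le[V,S]$ because $S/[V,S,S]$ — or rather an appropriate quotient — is $\KK$-spanned for its automorphism group (using \cref{lem: CharSub} to pin down characteristic subgroups and \cref{Supauto} for the structure of $\Aut(S)$), then take a minimal $H$-submodule $Q$, show $C_V(S)\le Q$ hence $Q$ is $\KK$-writable hence a central-series term, and run induction via \cref{lem: LambdaIso} and \cref{lem: Vnqrecog} to force $Q=[V,S]\cong V_{p-1}(q)|_{\FF_p}$ with $V/Q$ central, contradicting \cref{lem:NoExt}(2) which would give $[V,S]=[V,S,S]$.

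\textbf{Main obstacle.} The genuinely delicate point, just as in \cref{lem: Vnqrecog}, is the ``$W_0\not\le[V,S]$'' branch: controlling the $p'$-element $k$ requires knowing that it acts $\KK$-semilinearly on the relevant quotient of $S$, which is clean for $S_n(q)$ via \cref{prop: L3Q} applied to $S/[V,S,S]\cong S_1(q)$, but for $S_\Lambda(q)$ one must instead feed in \cref{lem: LambdaIso}, \cref{lem: CharSub} and \cref{Supauto} and argue that the top quotient $S_\Lambda(q)/[V,S,S]$ still has enough $\KK$-structure — and that $C_{[V,S,S]}(S)$ (rather than $C_V(S)$, which is now $2$-dimensional over $\KK$) is the right irreducible $\FF_pK$-module to track, which is exactly what \cref{action on centreLambda} was set up to provide. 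Assembling these ingredients correctly, rather than any single computation, is where the care is needed.
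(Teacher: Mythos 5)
Your central strategy---proving $V$ is an irreducible $\FF_p[H/V]$-module and then invoking \cref{lem: spe modules}---cannot succeed. First, $\Lambda(q)$ is \emph{not} irreducible: as recorded in the discussion preceding \cref{(p+1)CUpS}, it has a proper non-trivial $\KK\SL_2(q)$-submodule $W=\langle\ov{x^{p-1}y},\dots,\ov{xy^{p-1}}\rangle_\KK$ of codimension $2$. A proof by contradiction beginning ``suppose $1\ne W_0\normaleq H$ with $W_0<V$'' is therefore structurally doomed: such a $W_0$ must exist precisely because $V\cong\Lambda(q)$, so the argument has to end by \emph{recognising} the submodule structure, not by refuting it. Second, \cref{lem: spe modules} hypothesizes that $C_V(T)=C_V(t)$ has order $|T|=q$, but \cref{(p+1)CUpS}(1) gives $|C_V(S)|=q^2$ for $S_\Lambda(q)$; the hypotheses of that lemma are simply not met here even granting irreducibility. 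This is the key structural difference from \cref{lem: Vnqrecog}, where $V_n(q)$ for $1\le n\le p-1$ is genuinely irreducible and $|C_V(S)|=q$, so the irreducibility reduction is available---it does not port to the $\Lambda(q)$ case.

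You also invert the role of \cref{prop: indiden}: you describe it as used ``to eliminate the genuinely indecomposable two-composition-factor possibilities \dots\ yields a contradiction'', but that is a recognition theorem which \emph{identifies} such modules as $V_p(q)|_{\FF_p}$ or $\Lambda(q)|_{\FF_p}$---the very conclusion desired, not a contradiction. The paper's proof in fact never argues for irreducibility. It splits on whether $C_V(H)\cap C_X(S)$ is trivial, uses \cref{lem: Smith's lemma}, \cref{lem:mod ident}, and \cref{action on centreLambda} to constrain minimal $H$-submodules, and in the surviving subcase produces $Y\normaleq H$ with $Y\cong V_{p-2}(q)|_{\FF_p}$ and $V/Y\cong V_1(q)|_{\FF_p}$, at which point \cref{prop: indiden} recognises $V\cong\Lambda(q)|_{\FF_p}$. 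Many of the ingredients you list are the right tools, but they must be aimed at pinning down the two-step submodule structure of $V$, not at proving one does not exist.
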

\begin{proof}
We observe that $Z(S)=C_V(S)$ has order $q^2$ and $V/[V, S]$ has order $q$ by Lemma \ref{(p+1)CUpS}. Let $W\le V$ be such that $W$ corresponds to the $(p-1)$-dimensional $\KK\SL_2(q)$-submodule in the construction of $S_{\Lambda}(q)$. In particular, $S/W\cong S_1(q)$, $[W, S]=[V, S, S]$, $C_W(S)=C_{[V, S, S]}(S)$ and $[V, S]=WC_V(S)$.

As in Proposition \ref{lem: Vnqrecog}, we have that $H/C_H(V)\cong \SL_2(q)$ or $H/C_H(V)\cong \PSL_2(q)$ and we let $k\in N_H(S)$ be of order $q-1$ so that $\Aut_{\langle k\rangle}(S)$ has order $q-1$ or $\frac{q-1}{2}$ respectively. Write $K:=\langle k\rangle$. Then $\Aut_K(S)$ is an abelian subgroup of $\Aut(S)$ of order at least $\frac{q-1}{2}$. Recall the definition of $P_\Lambda^*(q)$ from Notation \ref{n:slandpl}. Since $\Aut_{P_\Lambda^*(q)}(S_\Lambda(q))$ contains a Hall $p'$-subgroup of $\Aut(S_\Lambda(q))$,  $\Aut_K(S)$ is conjugate in $\Aut(S)$ to a subgroup of the unique abelian subgroup of order $(q-1)^2$ in a Hall $p'$-subgroup of $\Aut_{P_\Lambda^*(q)}(S_\Lambda(q))$ by Lemma \ref{Supauto}.

Let $tV\in H/V\cong \SL_2(q)$ with $t$ an involution. In particular, we may arrange that $t$ is the unique involution in $K$. Then $C_V(t) \ge C_V(H)\cap C_W(S)\ne 1$ and $V=[V,t]\times C_V(t)$ is an $H$-invariant decomposition of $V$. If $[V,t] \not \le [V,S]$, then $[V,t]\ge [V,S]$ and if $C_V(t) \not \le [V,S]$, then $C_V(t)\ge [V,S]$. Since $C_V(t)\cap[V,t]=1$, we deduce that either $V=[V,t]$ or $V=C_V(t)$.

We split the analysis into two cases.

\underline{\textbf{Case 1:}} $C_V(H)\cap C_W(S)\ne 1$.

Since $K$ centralises $C_V(H)\cap C_W(S)$, and a Hall $p'$-subgroup of $\Aut_{P_\Lambda^*(q)}(S_\Lambda(q))$ acts irreducibly on $C_W(S)$, we deduce that $K$ centralises $C_W(S)$. It follows that $C_V(K)=C_W(S)$ by Lemma \ref{action on centreLambda}. Then, as $C_V(t) \ne 1$, we also know that $C_V(t)=V$ and $H/C_H(V)\cong \PSL_2(q)$.

Let $V_M\normaleq H$ with $V_M<V$, and choose $V_M$ maximally with respect to these conditions. Then $V/V_M$ is an irreducible module for $H/V$. If $V_M\not< [V, S]$ then $[V, S]\le V_M$ so that $H$ centralises $V/V_M$. Indeed, as $V_M<V$ and $K$ centralises $V/V_M$, we must have that $C_{V}(K)\ne C_W(S)$, a contradiction. Hence, $V_M<[V, S]$ and so $[V/V_M, S]$ has index $q$ in $V/V_M$. Applying Lemma \ref{lem: Smith's lemma}, we see that $|V/V_M|=q^a$, for some $a\in \N$ and $C_{V/V_M}(S)$ has order $q$. Furthermore, both $C_{V/V_M}(S)$ and $V/[V,S]$ are irreducible as $\FF_pK$-modules. Since $C_{V/V_M}(S)$ is irreducible as a $\FF_pK$-module, we must have that either $C_V(S)\le V_M$ or $C_{V/V_M}(S)=C_V(S)V_M/V_M$.

Assume first that $C_V(S)\le V_M$. Then, as $S/C_V(S) \cong S_{p-2}(q)$ by Lemmas \ref{subgroup similarity} and \ref{lem: LambdaIso}, we have that $S/V_M \cong S_{a-1}(q)$. Since $V/V_M$ is irreducible, Proposition \ref{lem: Vnqrecog} implies that $V/V_M \cong V_{a-1}(q)|_{\FF_p}$ as an $H/C_H(V/V_M)$-module. Since $t$ centralises $V$, $a-1$ is even. But then, $K$ centralises $[V/V_M, S; \frac{a-1}{2}]/[V/V_M, S; \frac{a+1}{2}]$, contrary to $C_V(K)=C_W(S)$. Hence $C_V(S)\not \le V_M$.

Now, again applying Lemmas \ref{subgroup similarity} and \ref{lem: LambdaIso}, we have that $S/C_V(S)V_M\cong S_{a-2}(q)$ and it follows that $[V/V_M,S;a]=1$ but $[V/V_M,S;a-1]\ne 1$. Appealing to Lemma \ref{lem:mod ident}, we ascertain that $V/V_M\cong V_{a-1}(q)|_{\FF_p}$ and as $H/C_H(V)\cong \PSL_2(q)$, we again see that $a-1$ is even. As above, this implies that $C_V(K)>C_W(S)$, and we have a contradiction by Lemma \ref{action on centreLambda}.

\underline{\textbf{Case 2:}} $C_V(H)\cap C_W(S)=1$.

Suppose $Y$ satisfies $Y\normaleq H$, $Y<V$ and $[H, Y]\ne 1$ and $Y$ is chosen minimally with respect to these conditions. In particular, $Y/C_Y(H)$ is irreducible. Note that $C_Y(H)\cap C_W(S)=1$. If $Y\not\le C_V(S)[V, S, S]$ then $[Y, S]=[V, S, S]$ so that $C_W(S)\le Y$. On the other hand, if $Y\le C_V(S)[V, S, S]$ then $[Y, S]\le [V, S, S]$. Indeed, since $[Y, S]\ne 1$, we must have that $C_W(S)\le [Y, S]$. In either case, applying Lemma \ref{lem: Smith's lemma} we conclude that $K$ acts irreducibly on $C_W(S)$ and $|Y/C_Y(H)|=q^a$ for some $a\in \N$.

Assume first that  $C_Y(H)\ne 1$ so that $V=C_V(t)$ and $H/C_H(V)\cong \PSL_2(q)$. Since a Hall $p'$-subgroup of $\Aut_{P_\Lambda^*(q)}(S_\Lambda(q))$ acts irreducibly on $C_V(S)[V, S, S]/[V, S, S]$, in combination with Lemma \ref{action on centreLambda} we get that $C_V(S)=C_V(K)\times C_W(S)$ and $|C_V(K)|=q$.

If $Y\not\le C_V(S)[V, S, S]$, then as $K$ acts irreducibly on $Y/C_Y(H)[Y, S]$ and $|Y/C_Y(H)|=q^a$, we deduce that $|V/Y|<q^2$ so that $H$ centralises $V/Y$. Since $|C_V(K)|=q$, the only possibility is that $V=C_V(S)Y$ and as $C_V(S)\le [V, S]$, we conclude that $V=Y$. Then $C_V(K)=C_V(H)$ and applying Lemma \ref{lem:mod ident}, we have that $V/C_V(H)\cong V_{p-1}(q)|_{\FF_p}$. But $K$ centralises $[V/C_V(H), S; \frac{p-1}{2}]/[V/C_V(H), S; \frac{p+1}{2}]$, contrary to $|C_V(K)|=q$.

If $Y\le C_V(S)[V, S, S]$ then since $C_V(H)\cap C_W(S)=1$ and $|C_{Y/C_Y(H)}(S)|=q$, we deduce that $Y=(Y\cap [V, S, S])C_Y(H)$. Since $|Y/[Y, S]C_Y(H)|=q$, we deduce that $Y\cap [V, S, S]=[V, S; i]$ for some $i\in \N$. Then Lemma \ref{lem:mod ident} reveals that $Y/C_Y(H)\cong V_{p-i-1}(q)|_{\FF_p}$ where $i$ is even. But then $K$ centralises $[Y/C_Y(H), S; \frac{p-i-1}{2}]/[Y/C_Y(H), S; \frac{p-i+1}{2}]$, against $|C_V(K)|=q$.

Hence $C_Y(H)=1$ and $Y$ is irreducible as an $\FF_pH$-module. In particular, $|Y| \geq q^2$ and $C_Y(S)=C_W(S)$. Observe that if $Y\not\le [V, S]$ then $[Y, S]=[V, S]$ so that $|C_Y(S)|=q^2$, a contradiction. Furthermore, if $Y\not\le C_V(S)[V, S, S]$, then $[Y, S]=[V, S, S]$ and as $|Y/[Y, S]|=q$ and $C_Y(S)=C_W(S)$, we see by Lemma \ref{lem:mod ident} that $Y\cong V_{p-2}(q)|_{\FF_p}$ and $|V/Y|=q^2$. If $[K, V]\le Y$ then $[K, C_V(S)]\le C_W(S)$, $|C_V(K)|>q$ and Lemma \ref{action on centreLambda} gives a contradiction. Hence, $V/Y\cong V_1(q)|_{\FF_p}$ and Proposition \ref{prop: indiden} implies that $V\cong \Lambda(q)|_{\FF_p}$, as desired.

Thus, for the remainder of the proof we assume, aiming for a contradiction, that $Y\le C_V(S)[V, S, S]$. Let $i\in \N$ be such that $Y\le C_V(S)[V, S; i]$ but $Y\not\le C_V(S)[V, S; i+1]$. Then $[Y, S]=[V, S; i+1]$ and since $|Y|=q^a$ for some $a\in \N$, we deduce that $|Y/[Y, S]|=q$, $a=p-i$ and $Y\cong V_{p-i-1}(q)|_{\FF_p}$. Set $\bar{V}=V/[V, S; i+1]$. Then $K$ acts irreducibly on $\bar{Y}$ and normalises $\bar{[V, S; i]}$ and $\bar{C_V(S)}$. Moreover, $\bar{Y}\cap \bar{C_V(S)}=1$.

Assume that $\bar{Y}\ne \bar{[V, S; i]}$. Then $\bar{[V, S; i]}$ and $\bar{C_V(S)}$ are isomorphic as $\FF_pK$-modules, and as $k$ acts as a scalar on $\bar{Y}$, we see that $k$ acts as a scalar on $\bar{[V, S; i]}$ and $\bar{C_V(S)}$. Since $\Aut_K(S)$ is $\Aut(S)$-conjugate to a subgroup of $\Aut_{P_\Lambda^*(q)}(S_\Lambda(q))$, we deduce that there is  an element $t$ of order at least $\frac{q-1}{2}$ in $\Aut_{P_\Lambda^*(q)}(S_\Lambda(q))$ such that $t$ acts as a scalar on $\bar{[V, S; i]}$ and $\bar{C_V(S)}$. But $\Aut_{P_\Lambda^*(q)}(S_\Lambda(q))$ contains an element $s$ which acts as the same scalar on all of $V$, and so $st$ is an element of order at least $\frac{q-1}{2}$ which centralises $\bar{[V, S; i]}$ and $\bar{C_V(S)}$. Since $\bar{C_V(S)}$ is isomorphic to $C_V(S)[V, S, S]/[V, S, S]$ as an $\FF_p\langle st\rangle$-module, we have a contradiction by Lemma \ref{action on centreLambda}.

Hence, $Y=[V, S; i]$. Let $V_0\normaleq H$ such that $Y<V_0\le V$ and $V_0$ is minimal subject to these conditions. Then $H$ acts irreducibly on $V_0/Y$. If $[V_0, S]\le Y$ then $[V_0, H]\le Y$ and we deduce from Lemma \ref{lem:NoExt} that $[V_0, S]=[Y, S]$ and $V_0\le C_V(S)Y$. Then $[K, V_0]\le Y$, $V=C_V(t)$ and $H/C_H(V)\cong \PSL_2(q)$ so that $a$ is odd. Since a Hall $p'$-subgroup of $\Aut_{P_\Lambda^*(q)}(S_\Lambda(q))$ acts irreducibly on $C_V(S)[V, S, S]/[V, S, S]$, in combination with Lemma \ref{action on centreLambda} we get that $C_V(S)=C_V(K)\times C_W(S)$ and $|C_V(K)|=q$. But $K$ also centralises $[Y, S; \frac{p-i-1}{2}]/[Y, S; \frac{p-i+1}{2}]$, against $|C_V(K)|=q$. Hence, $[V_0, S]\not\le Y$ from which we deduce that $[V, S; i-1]\le V_0$ and $[V, S; i-1]/Y=C_{V_0/Y}(S)$ by Lemmas \ref{lem: Smith's lemma} and \ref{lem: SL2split}.

From Lemma \ref{lem: Smith's lemma} we see that $|V_0/Y|=q^a$ for some $a\in \N$. If $V_0\not\le [V, S]$ then $C_V(S)[V, S; i-1]/Y=C_{V_0/Y}(S)$ has order $q^2$, a contradiction by Lemma \ref{lem: SL2split}. Then $V_0\le [V, S]$. Since $K$ acts irreducibly on  $V_0/[V_0, S]$, we deduce that either $V_0\le [V, S, S]$ or $[V_0, S]=V_0\cap [V, S, S]$. In either instance, $V_0\cap [V, S, S]=[V, S; j]$ for some $j\in \N$ and Lemma \ref{lem:mod ident} implies that $V_0/Y\cong V_{j-i-1}(q)|_{\FF_p}$, against Proposition \ref{prop: indiden}. This completes the proof.
\end{proof}

We close this section with some more uniqueness results for the above actions. We begin first with an elementary lemma.

\begin{lemma}\label{lem: big cyclic}
Suppose that $p$ is odd and $m>1$. Let $H$ be a $p'$-order subgroup of $\GL_m(p)$ which acts transitively on vectors with $|H|\geq p^m-1$ and $|Z(H)|\geq \frac{p^m-1}{p}$. Then $H\cong \GL_1(p^m)$ is cyclic of order $p^m-1$.
\end{lemma}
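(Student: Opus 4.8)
The plan is to read the whole statement off the structure of $V:=\FF_p^m$ as an $\FF_pH$-module, avoiding any appeal to the classification of transitive linear groups. First I would observe that, since $H$ acts transitively on the $p^m-1$ nonzero vectors of $V$, the module $V$ is irreducible: a nonzero $\FF_pH$-submodule contains a nonzero vector, hence all of its $H$-translates, hence all of $V$. Also $V$ is a faithful $\FF_pH$-module because $H\le\GL_m(p)$.

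Next, set $E:=\End_{\FF_pH}(V)$. By Schur's Lemma $E$ is a finite division ring, hence a field $E\cong\FF_{p^d}$, and $V$ is a (necessarily free) $E$-module, so $m=\dim_{\FF_p}V$ is a multiple of $d=\dim_{\FF_p}E$; in particular $d\mid m$. The key point is that any $z\in Z(H)$ is an invertible $\FF_p$-linear map commuting with every element of $H$, so $z\in E^\times=\End_{\FF_pH}(V)^\times$; hence $|Z(H)|$ divides $p^d-1$, and in particular $|Z(H)|\le p^d-1$.

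Now I would use the hypothesis $|Z(H)|\ge(p^m-1)/p$ to force $d=m$. If $d$ were a proper divisor of $m$ then $d\le m/2$, so $p^d-1\le p^{m/2}-1$; but $p(p^{m/2}-1)=p^{1+m/2}-p<p^m-1$ for every odd prime $p$ and every $m\ge 2$ (the exponent $1+m/2$ is at most $m$, with equality only when $m=2$, in which case $p^m-p<p^m-1$), giving $p^d-1<(p^m-1)/p\le|Z(H)|$, a contradiction. Hence $d=m$, so $V$ is a $1$-dimensional vector space over $E\cong\FF_{p^m}$. Since $H$ commutes with the $E$-action it acts $E$-linearly, so $H\le\GL_E(V)\cong E^\times$, a cyclic Singer subgroup of $\GL_m(p)$ of order $p^m-1$, which is precisely $\GL_1(p^m)$. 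Finally $|H|\ge p^m-1$ (this is hypothesised, and in any case follows from transitivity via orbit--stabiliser on the $p^m-1$ nonzero vectors) forces $H=\GL_1(p^m)$, cyclic of order $p^m-1$.

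The only steps needing any care are the identification $Z(H)\le\End_{\FF_pH}(V)^\times$ and the elementary inequality eliminating proper divisors $d\mid m$; everything else is standard module theory. It is worth noting that transitivity is used solely to guarantee irreducibility of $V$, while the lower bound on $|Z(H)|$ is exactly the hypothesis that rules out subgroups with a genuine semilinear (Galois) component --- such as quaternion subgroups of $\GL_2(p)$ or $\Gamma\mathrm{L}_1$-type subgroups --- which is why Hering's theorem is not needed.
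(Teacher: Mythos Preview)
Your proof is correct and takes a genuinely different, more elementary route than the paper's. The paper invokes Hering's classification of transitive finite linear groups to first place $H$ inside $\Gamma\mathrm{L}_1(p^m)$, and then argues with the index $c=|HK/K|$ (where $K=\GL_1(p^m)$) and the inequality $\frac{p^m-1}{pc}\le p^{m/c}-1$ to force $c=1$. By contrast, you bypass Hering entirely: once $V$ is irreducible, Schur's Lemma and Wedderburn's little theorem give $E:=\End_{\FF_pH}(V)\cong\FF_{p^d}$ with $d\mid m$, the trivial observation $Z(H)\le E^\times$ bounds $|Z(H)|\le p^d-1$, and the elementary inequality $p(p^{m/2}-1)<p^m-1$ rules out proper divisors, whence $d=m$ and $H\le\GL_E(V)=E^\times$.

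Your argument is self-contained and in fact uses neither the hypothesis that $p$ is odd nor that $H$ has $p'$-order (transitivity is used only for irreducibility, and $|H|\ge p^m-1$ already follows from orbit--stabiliser). The paper's approach, while heavier, is perhaps the reflexive one for someone with the classification of transitive linear groups at hand; yours has the advantage of making explicit exactly which hypotheses are doing the work.
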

\begin{proof}
We appeal to the classification of transitive finite linear groups, due to Hering \cite{hering1985transitive}. Then the restrictions on $H$ promise that $H\le \Gamma\mathrm{L}_1(p^m)$. Set $K:=\GL_1(p^m)$ and assume that $|HK/K|=c\leq m$. Then $|C_K(HK)|=p^{m/c}-1$ and $|Z(H)\cap K|\geq \frac{p^m-1}{pc}$. We deduce that $\frac{p^m-1}{pc}\leq p^{m/c}-1$ so that $c\leq 2$. Moreover, if $c=2$ then $m=2$ and $|H\cap K|=\frac{p^2-1}{2}>p-1$ and so $Z(H)\le K$. But now $|Z(H)|\geq \frac{p^2-1}{p}>p-1$ and we obtain a contradiction. Hence, we conclude that $c=1$ and $H\le K$, and as $|H|=p^m-1=|K|$, the result is true.
\end{proof}

\begin{lemma}\label{lem:unique over groups 2dim}
Assume that $q=p^m$, where $p$ is odd, $G= \SL_2(q)$ and $V$ is a natural module for $G$ over $\FF_p$. Set $H=\GL(V)\cong \GL_{2m}(p)$ and associate $G$ with its image in $H$. Let $S \in \syl_p(G)$ and $N_G(S)$. If $L\cong \SL_2(q)$ with $N_G(S)<L\le H$, then $L=G$.
\end{lemma}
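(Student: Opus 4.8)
The plan is to exploit the rigidity of the Sylow normaliser $B = N_G(S)$ together with the fact that both $G$ and $L$ are copies of $\SL_2(q)$ acting on the $2m$-dimensional $\FF_p$-space $V$, where $S$ is a common Sylow $p$-subgroup. First I would record that $S \in \syl_p(L)$ as well, since $|S| = q$ is the full $p$-part of $|L| = |\SL_2(q)|$. Moreover $B$ normalises $S$ and $B \le L$, so $B \le N_L(S)$; as $|N_L(S)| = q(q-1)$ and $|B| = q(q-1)/(2,q-1)$ or $q(q-1)$ depending on whether one takes $B = N_G(S)$ to be the full normaliser (here $B = N_G(S)$ has order $q(q-1)$ since in $\SL_2(q)$ the Sylow normaliser is a Borel of that order), we get $B = N_L(S)$. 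So $G$ and $L$ share the same Borel subgroup $B$.

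Next I would pin down the action. Since $V$ is the natural module for $G = \SL_2(q)$ over $\FF_p$, $V|_G$ is irreducible (indeed $V \cong V_1(q)|_{\FF_p}$), and $C_V(S)$ is the unique $B$-invariant $\FF_p$-line's worth — more precisely, by \cref{lem: Smith's lemma} applied to $G$, $C_V(S)$ and $V/[V,S]$ are each $1$-dimensional over $\KK = \FF_q$, hence $m$-dimensional over $\FF_p$, and $V = [V,S] = C_V(S) \oplus (\text{complement})$ is in fact $[V,S] \supsetneq C_V(S)$ with $[V,S]$ of $\FF_p$-dimension... — here I must be a little careful: for the natural module $[V,S]$ has $\FF_p$-codimension $m$ and $C_V(S) = [V,S]$ has $\FF_p$-dimension $m$. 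The key point is that $C_V(S)$ is determined by $S$ alone (it is $C_V(S)$ computed inside $H = \GL(V)$), so $L$ also has $C_V(S)$ as the fixed space of its Sylow $p$-subgroup, and $B = N_L(S)$ acts on $C_V(S)$. Now apply \cref{lem: Smith's lemma} to $L$: if $V|_L$ is irreducible then $C_V(S)$ is a $1$-dimensional module over the field of definition $\mathbb{L}$ of $V|_L$, forcing $\mathbb{L} = \FF_q$ and, by \cref{lem:mod ident} (noting $[V,S;1]$ has the right property since $\dim_{\FF_q} V|_L$ must be $2$ by an order/Jordan-block count, as $S$ acts on $V$ with Jordan blocks of size $2$), that $V|_L \cong V_1(q)^\theta$ for some $\theta \in \Aut(\KK)$, i.e. $V|_L$ is a natural module for $L$. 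If $V|_L$ is reducible, then since $C_V(K)$-type arguments or direct dimension count on the $B$-composition factors of $V$ (which are already determined by the $G$-action) show $V|_L$ must have the two composition factors of a natural module twisted appropriately; an appeal to \cref{lem:NoExt1} rules out a non-split extension of these particular factors in dimension $2m < p+1$ when... — actually for the natural module $V|_G$ is already irreducible, so $V|_L$ reducible would contradict the fact that $[V,S] = C_V(S)$ has $\FF_p$-dimension exactly $m$ and is the unique proper nonzero $B$-submodule visible, forcing any $L$-submodule to be $0$ or $V$. I would clean this up by observing directly that a proper nonzero $L$-submodule $W$ is $S$-invariant, so $C_V(S) \le W$ or $W \le $ a hyperplane, and combining with $N_L(S) = B$ acting irreducibly on $C_V(S)$ (dimension $m$ over $\FF_p$, $\FF_q$-line) and on $V/[V,S]$ yields a contradiction with $\dim W$.

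With $V|_L$ a natural $\SL_2(q)$-module and $V|_G$ a natural $\SL_2(q)$-module sharing the same Borel $B$ with the same action on $V$, I would finish by showing $G = L$ inside $H$. Both $G$ and $L$ are generated by $B$ together with one further root subgroup (the "opposite" unipotent radical $S^-$), and this opposite root subgroup is determined inside $H = \GL(V)$ by the requirement that it be a $p$-subgroup of order $q$ normalised by the maximal torus $T \le B$ with $[V, S^-]$ complementary to $[V,S]$ — equivalently $S^-$ is the Sylow $p$-subgroup of $C_H(T_0)$-type data, or more simply: $S^- = C_V(S)^{\perp}$-stabiliser... The cleanest route is: $T = B \cap B^{w}$ for the Weyl element, but since $B$ is fixed, the torus $T$ (a Hall $p'$-subgroup of $B$, cyclic of order $q-1$) is the same for $G$ and $L$; then $L = \langle B, S^L \rangle$ where $S^L$ is the unique Sylow $p$-subgroup of $L$ other than $S$ that is normalised by $T$, and the same description holds for $G$, and uniqueness of such a subgroup in $H$ (it is the $p$-part of $C_H(t)$ for a generator $t$ of $T$ acting with eigenvalues giving the natural module, a computation in $\GL_{2m}(p)$) gives $S^G = S^L$, hence $G = \langle B, S^G\rangle = \langle B, S^L \rangle = L$.

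The main obstacle I anticipate is the final identification step — ruling out that $L$ could be a "differently embedded" $\SL_2(q)$ in $\GL_{2m}(p)$ sharing the Borel $B$ but with a genuinely different unipotent opposite to $S$. This requires either a careful eigenvalue/Jordan-form analysis of how the common torus $T$ acts on $V$ (forcing the opposite root subgroup to be unique) or invoking that in a target copy of $\SL_2(q)$ the pair $(B, \text{natural action})$ determines the group; I would handle it via the observation that $S^- \le C_{\GL(V)}(\text{the }\FF_q\text{-structure})$ isn't quite right, so instead: $S$ and $S^-$ together generate $\SL_2$ over $\FF_q$, and $\FF_q = \End_{\FF_p T}(C_V(S))$ is intrinsic; then $S^- $ is forced to centralise $V/C_V(S)$... no — $S^-$ is the transpose situation. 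The safe formulation: use that $L$ acts on $V$ as a natural module, so $V$ becomes an $\FF_q L$-module and $\FF_q$ acts $\FF_p$-linearly commuting with $L$; this $\FF_q$-structure is the same as the one coming from $G$ because both are $\End_{\FF_p[T]}(V)$-generated (with $T$ the common torus), so $G, L \le \SL_2(q) = \SL_{\FF_q}(V)$ with $\SL_2(q)$ here meaning the full special linear group over that fixed $\FF_q$-structure, and both contain $B$; since $B$ is a Borel of the ambient $\SL_2(q)$ and $G, L$ are each isomorphic to it, $G = L = $ the ambient $\SL_2(q)$, or one appeals to the fact that $\SL_2(q)$ has no proper subgroup strictly containing a Borel.
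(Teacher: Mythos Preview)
Your overall strategy---show that $G$ and $L$ lie in a common copy of $\SL_{\FF_q}(V)$ by pinning down the $\FF_q$-structure on $V$ from the shared Borel $B$---is close in spirit to the paper's, but there is a genuine gap in your identification of the $\FF_q$-structure. You assert that the $\FF_q$-structures coming from $G$ and from $L$ agree because both are determined by $\End_{\FF_p[T]}(V)$, where $T$ is the common torus. This fails for $m>1$: since $V|_T = V_1 \oplus V_2$ with $V_1 \not\cong V_2$ (two non-isomorphic irreducibles of $\FF_p$-dimension $m$), one has $\End_{\FF_p[T]}(V) \cong \FF_q \times \FF_q$, and this ring contains many distinct subfields isomorphic to $\FF_q$ (namely all images $\alpha \mapsto (\alpha,\sigma(\alpha))$ for $\sigma \in \Aut(\FF_q)$). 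The torus alone therefore does not single out the scalar copy of $\FF_q$. The fix is to use all of $B=ST$: any $\FF_q$-structure commuting with $L$ (respectively $G$) commutes with $B$, and a short computation shows $C_{\End_{\FF_p}(V)}(B) = \FF_q$, since an element acting as $\diag(\alpha,\beta)$ in the $T$-eigenbasis commutes with the unipotent $\left(\begin{smallmatrix}1&0\\c&1\end{smallmatrix}\right)$ only if $\alpha=\beta$. This is precisely what the paper does, phrased group-theoretically: it first obtains $Z(X)K = Z(Y)K$ from the torus (bounding $|C_H(K)|\leq (q-1)^2$), and then uses $S$ to extract $Z(X) = C_{Z(X)K}(S) = C_{Z(Y)K}(S) = Z(Y)$.

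Separately, your argument that $V|_L$ is a natural module is muddled; several of the dimension claims are garbled and the appeal to \cref{lem:NoExt1} is off-target. The clean route (which the paper takes) is \cref{lem: SEFF}: since $L$ has a strongly $p$-embedded subgroup, $|S|=q=|V/C_V(S)|$, and $[V,O^p(L)]\ne 1$, one obtains $V/C_V(L)\cong V_1(q)|_{\FF_p}$; comparing orders gives $C_V(L)=1$ and $V|_L$ is natural. Once this and the corrected $\FF_q$-structure argument are in place, your endgame is correct: both $G,L \le \SL_{\FF_q}(V)$ for the common $\FF_q$-structure, and $|G|=|L|=|\SL_2(q)|$ forces $G=L$.
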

\begin{proof}
Let $L\cong \SL_2(q)$ be a subgroup of $H:=\GL_{2m}(p)$ containing $N_G(S)$. Then $S\in \syl_p(G)\cap \syl_p(L)$ and $N_G(S)=N_L(S)$. If $m=1$ then $H=\GL_2(p)$ has a unique subgroup isomorphic to $\SL_2(p)$ and so $L=G$. Note that the action of $S$ and Lemma \ref{lem: SEFF} implies that there is $g\in H$ with $G^g=L$. We can arrange that $G\le Y$ where $Y\cong \GL_2(q)$, and so $L\le X\cong \GL_2(q)$ with $Y^g=X$. Now, $G=O^{p'}(Y)$, $L=O^{p'}(X)$, $Y=C_{H}(Z(Y))$ and $X=C_{H}(Z(X))$, and so to show that $G=L$, it suffices to prove that $Z(Y)=Z(X)$. Let $K$ be a complement to $S$ in $N_G(S)$ so that $K$ is cyclic of order $q-1$. Then $Z(X)\le C_H(K)\ge Z(Y)$.

Now, we have that $V|_{K}=V_1\times V_2$ where $V_1$ and $V_2$ are non-isomorphic irreducible modules of dimension $m$ over $\FF_p$ since $m>1$. Indeed, $C_H(K)$ acts on each of these modules so that every $p$-element of $C_H(K)$ centralises $V_1$ and $V_2$. Thus, such an element is trivial and $C_H(K)$ is a $p'$-group. Since $K$ normalises $C_V(S)$, without loss of generality we may let $V_1=C_V(S)$ and $V_2\cong V/C_V(S)$ as $\KK K$-modules. Since $K$ acts transitively on vectors in $V_1$ and $V_2$, by coprime action and Lemma \ref{lem: big cyclic} we deduce that $|C_H(K)|\leq (q-1)^2$. Since $|Z(X)K|=|Z(Y)K|=(q-1)^2$, we deduce that $Z(X)K=Z(Y)K$. Then $Z(X)=C_{Z(X)K}(S)=C_{Z(Y)K}(S)=Z(Y)$, as desired.
\end{proof}

 Recall the groups $D$, $D^*$ and $D^\dagger$ from Notation \ref{n:DGroups} and the groups $P_n(q)$ and $P_n^*(q)$ from Notation \ref{n:snandpn}. 

\begin{lemma}\label{lem: NormalizerBorel}
Let $V=V_n(p^2)$ and $S=S_n(p^2)$ for $1\leq n\leq p-1$, $G:=\Aut_{D^\dagger}(V)$ and $T:=\Aut_S(V)\in\syl_p(\Aut_{D^*}(V))$. Suppose that $K\le N_G(T)$ is abelian of order $(p-1)(p^2-1)$ such that $|C_K(T)|=p-1$. Assume further that there is $k\in K$ of order $p^2-1$ such that $k$ acts on $T$ as $\lambda^{-1}$ and on $C_V(T)$ as $\lambda$, where $\lambda\in \FF_{p^2}^*$ has order $p^2-1$. Then $K=C_K(T)\times \langle k\rangle$, $KT$ is uniquely determined in $N_{\Aut_{D^*}(V)}(T)$ and $N_{\Aut(V)}(KT)=N_G(KT)$.
\end{lemma}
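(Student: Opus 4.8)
The plan is to work entirely inside $G = \Aut_{D^\dagger}(V)$ and the ambient group $\Aut(V) \cong \GL_{2m}(p)$ with $m=2$, exploiting the fact that $C_V(T)$ and $V/[V,T]$ are one-dimensional $\KK$-spaces (Lemma \ref{lem: Smith's lemma} applied to $\SL_2(p^2) \le G$) together with the description of $\Aut(S_n(p^2))$ coming from Proposition \ref{prop: L3Q} and Lemmas \ref{Snauto}, \ref{lem: kerneldescription}. First I would establish the direct product decomposition $K = C_K(T) \times \langle k\rangle$: since $K$ is abelian of order $(p-1)(p^2-1)$ and $\langle k\rangle$ has order $p^2-1$ with $k \notin C_K(T)$ (because $k$ acts as $\lambda^{-1} \ne 1$ on $T$), we get $\langle k\rangle \cap C_K(T) = 1$ by comparing how elements act on $T$; a pure order count then forces $K = C_K(T)\langle k\rangle$ and hence the internal direct product. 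Here $C_K(T)$ is cyclic of order $p-1$ and must coincide with the group of scalars $C_{\Aut_{D^*}(V)}(V/[V,T])\cap C(\dots)$ — more precisely, using Lemma \ref{lem: kerneldescription}, the map $\delta$ is injective on $K$, so $K$ is pinned down by its images in $\Aut(S,S/V)\times\Aut(S,Z(S)) \cong \Gamma_1(q)\times\Gamma_1(q)$.

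Next, for the uniqueness of $KT$ inside $N_{\Aut_{D^*}(V)}(T)$: the group $N_{\Aut_{D^*}(V)}(T)$ contains $T$ with complement a group of $p'$-order dividing $(p^2-1)^2$ (cf.\ Lemmas \ref{Gammacentraliser}, \ref{Snauto}(\ref{sn4}) and the computation around \eqref{eq:delta-sigma}); using the formula \eqref{eq:delta-sigma} for how $\Sigma$ acts on $S/V$ and $Z(S)$, the condition that $k$ acts as $\lambda^{-1}$ on $T \cong S/[V,S]$ (equivalently on a suitable section) and as $\lambda$ on $C_V(T)=Z(S)$ determines the image $(c_k)\delta = ((\,\cdot\,,\lambda^{-1}),(\,\cdot\,,\lambda))$ up to the field-automorphism coordinate, which is trivial here since $K$ is abelian and contains an element of order $p^2-1$ acting faithfully. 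Since $C_K(T)$ is the full group of scalars (order $p-1$), $K$ is the unique abelian subgroup of $N_{\Aut_{D^*}(V)}(T)$ of order $(p-1)(p^2-1)$ with these prescribed actions, so $KT$ is unique.

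Finally, for $N_{\Aut(V)}(KT) = N_G(KT)$: I would argue that $KT$ (indeed already $T$, or $O_p(KT)$) determines the $\KK$-structure on $V$, so that any element of $\Aut(V)$ normalising $KT$ must normalise $\KK$-scalars and hence lie in $\Gamma\mathrm{L}_2(q)$-type subgroup $G = \Aut_{D^\dagger}(V)$. Concretely, $C_V(T)$ and $[V,T]$ are $KT$-invariant, and the action of $C_K(T)$ (the scalars) together with $\langle k\rangle$ forces the relevant eigenspace/weight-space decomposition of $V$ over $\FF_{p^2}$ to be preserved by $N_{\Aut(V)}(KT)$; an element centralising all these scalar actions lies in $C_{\Aut(V)}(C_K(T)) = \mathrm{GL}_2(q)$ by a Schur-type argument (here $m=2$ is small enough that one can see $C_{\GL_{2m}(p)}(C_K(T))\cong \GL_2(p^2)$ directly, analogous to the argument in Lemma \ref{lem:unique over groups 2dim}), and normalising $T$ inside $\GL_2(q)$ lands one in the relevant parabolic, all of which sits inside $G$. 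The main obstacle I anticipate is this last step: ruling out ``exotic'' normalising elements of $\GL_{2m}(p)$ that permute the $\FF_{p^2}$-structure nontrivially — I expect to handle it exactly as in Lemma \ref{lem:unique over groups 2dim}, namely by showing $C_{\Aut(V)}(C_K(T))$ is already contained in a $\GL_2(q)$ (using that $C_K(T)$ acts as nontrivial scalars and $m>1$ makes the two $\KK$-isotypic components of $V|_K$ non-isomorphic), after which normalising $KT$ pins the element down inside $G$.
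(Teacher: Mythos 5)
The first two conclusions ($K=C_K(T)\times\langle k\rangle$ and uniqueness of $KT$ via the injectivity of $\delta$ on a Hall $p'$-subgroup) are handled essentially as in the paper. The third step, however, contains a genuine gap, and it is precisely the gap that this lemma exists to fill.

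You propose to show $N_{\Aut(V)}(KT)\le G$ by first establishing $C_{\Aut(V)}(C_K(T))\cong\GL_2(q)$ ``by a Schur-type argument.'' This fails. Since $K$ is abelian and $k$ has $n+1$ distinct eigenvalues on $V\otimes\KK$, the group $C_K(T)\le C_G(T)\cap C_G(k)$ is forced to consist of $\KK$-scalar transformations; and a cyclic group of $\KK$-scalars of order exactly $p-1$ is \emph{the} group of $\FF_p$-scalars, i.e.\ $C_K(T)=Z(\Aut(V))$. Thus $C_{\Aut(V)}(C_K(T))=\Aut(V)=\GL_{2(n+1)}(p)$, not $\GL_2(q)$ (and not $\GL_{n+1}(q)$ either). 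This is exactly the degenerate case that the generic Singer-cycle argument in Lemma~\ref{lem:unique over groups} cannot handle, which is why that proof explicitly carves out the hypotheses $m=2$, $|C_K(T)|=p-1$ and hands them to the present lemma; invoking Lemma~\ref{lem:unique over groups 2dim} in the same way as the generic case just re-imports the problem. (You also implicitly take $V$ to be $4$-dimensional over $\FF_p$, i.e.\ $n=1$, but the lemma is stated for all $1\le n\le p-1$.)

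The correct replacement is to control the centraliser of the \emph{whole} torus, $C_N(KT)$, rather than of $C_K(T)$ alone. The element $k$ acts as $\lambda^{n+1-i}$ on $[V,T;i]/[V,T;i+1]$ for $0\le i\le n$, with pairwise distinct and each fixed-point-free, so any element of $C_N(KT)$ preserves the $\FF_{p^2}$-structure given by $\langle k\rangle$ and stabilises each eigenspace; this shows $C_N(KT)$ is a $p'$-group. Since it then centralises $T$, the Thompson A$\times$B-Lemma forces it to act faithfully on $C_V(T)$, from which one bounds $|C_N(KT)|\le p^2-1$ and identifies it with $Z(\Aut_{D^*}(V))$. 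On the other side, $T\trianglelefteq N:=N_{\Aut(V)}(KT)$ with $T$ elementary abelian of order $p^2$ gives $N/C_N(T)\hookrightarrow\GL_2(p)$ with a normal cyclic subgroup $KC_N(T)/C_N(T)$ of order $p^2-1$, hence $|N/C_N(T)|\le 2(p^2-1)$, and a Frattini argument on $C_N(T)=T\cdot N_{C_N(T)}(K)$ (using that $\langle k\rangle$ is the unique subgroup of $K$ of order $q-1$ acting with mutually inverse scalars on $T$ and $C_V(T)$) shows $N_{C_N(T)}(K)$ centralises $K$. Assembling these pieces gives $N=N_{\Aut_{D^\dagger}(V)}(KT)\cdot C_N(KT)=N_G(KT)$.
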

\begin{proof}
Since $k$ acts as $\lambda^{-1}$ on $T$, where $\lambda\in \FF_{p^2}^*$ has order $p^2-1$, we have that $\langle k\rangle \cap C_K(T)=1$. Then, as $|K|=(p-1)(p^2-1)$, we see that $K=C_K(T)\times \langle k\rangle$. Note that $N_{\Aut_{D^*}(V)}(T)$ normalises $T$ and, along with $\Inn(S)$, induces the group $\Aut_{P_n^*(p^2)}(S)$. Since $\Aut_{P_n^*(p^2)}(S)$ contains a Hall $p'$-subgroup of $\Aut(S)$ by Lemma \ref{Snauto}, applying Lemma \ref{lem: kerneldescription} we conclude that $K$ is uniquely determined in a Hall $p'$-subgroup of $N_{\Aut_{D^*}(V)}(T)$. Hence, $KT$ is uniquely determined in $N_{\Aut_{D^*}(V)}(T)$. It remains to demonstrate that $N_{\Aut(V)}(KT)=N_G(KT)$.

Set $N:=N_{\Aut(V)}(KT)$ so that $T\normaleq N$. Since $T$ is elementary abelian of order $p^2$, we have that $N/C_N(T)$ is isomorphic to a subgroup of $\GL_2(p)$. Moreover, $KC_N(T)/C_N(T)$ has order $p^2-1$ and is normal in $N/C_N(T)$ from which we deduce that $|N/C_N(T)|\leq (p^2-1).2$. Since $|N_{\Aut_{D^\dagger}(V)}(KT)/C_{\Aut_{D^\dagger}(V)}(KT)|=(p^2-1).2$, we have that $N=N_{\Aut_{D^\dagger}(V)}(KT)C_N(T)$.

By a Frattini argument, we have that $C_N(T)=TN_{C_N(T)}(K)$. Since $N_{C_N(T)}(K)$ normalises $T$ and $C_V(T)$ and as $\langle k\rangle$ is the unique subgroup of $K$ of order $q-1$ such that its element's actions on $T$ and $C_V(T)$ are inverse to each other, we have that $N_{C_N(T)}(K)$ normalises $\langle k\rangle$. Then $[N_{C_N(T)}(K), \langle k\rangle]\le C_{\langle k\rangle}(T)=1$. Moreover, $C_K(T)$ acts as scalars on $V$ and has order $p-1$ from which we deduce that $C_K(T)=Z(\Aut(V))$. Hence, $N_{C_N(T)}(K)$ centralises $K=C_K(T)\times \langle k\rangle$.

We have shown that $N=N_{\Aut_{D^\dagger}(V)}(KT)C_N(KT)$. Our aim will be to demonstrate that $C_N(KT)=Z(\Aut_{D^*}(V))$ has order $p^2-1$. Consider the restriction of $V$ to $\langle k\rangle$. Since $k$ acts as $\lambda^{-1}$ on $T$ and as $\lambda$ on $C_V(T)$, we have that $k$ corresponds to the element $\left(1, \lambda,\left(\begin{smallmatrix} 1 & 0 \\0& \lambda \end{smallmatrix} \right)\right)\in \Aut_{D^*}(V).$
Hence, $k$ acts as $\lambda^{n+1-i}$ on $[V, T; i]/[V, T;i+1]$ for $i\in\{0,\dots, n\}$. Since $0<n+1-i<p+1$, it follows that $k$ acts irreducibly on $[V, T; i]/[V, T;i+1]$  for all $i$. Moreover, since $\langle k\rangle$ has order $p^2-1$ and is centralised by $C_N(KT)$ we have that $C_N(KT)$ respects the $\FF_{p^2}$-structure that $\langle k\rangle$ places on $V$.

Since $[V, T; i]/[V, T;i+1]$ is never isomorphic to $[V,T; j]/[V, T; j+1]$ as an $\FF_{p^2} \langle k\rangle$-module for $i\ne j$, we deduce that $C_N(KT)$ normalises the eigenspaces of $k$. Since $k$ acts irreducibly on each of these eigenspaces, we deduce that $p$-elements of $C_N(KT)$ act trivially on $V$ so that $C_N(KT)$ is a $p'$-group. Then $C_N(KT)$ centralises $T$ and so by the A$\times$B-Lemma \cite[(24.2)]{AschbacherFG}, $C_N(KT)$ acts faithfully on $C_V(T)$. Then, as $C_{\Aut_{D^\dagger}(V)}(KT)$ has order $p^2-1$, $|C_V(T)|=p^2$ and $C_N(KT)$ has $p'$-order, we deduce that $|C_N(KT)|\leq (p^2-1).2$. Note that if $|C_N(KT)|=(p^2-1).2$ then there is $c\in C_N(KT)$ such that $c$ fixes some non-trivial vector in $C_V(T)$. Since $c$ commutes with $K$ and $K$ acts irreducibly on $C_V(T)$, this is a contradiction. Hence, $C_N(KT)=Z(\Aut_{D^*}(V))$ has order $p^2-1$ and the result holds.
\end{proof}

\begin{lemma}\label{lem:unique over groups}
Let $q=p^m$ for $p$ an odd prime, $G$ be a finite group, $T\in\syl_p(G)$ and $V$ be an elementary abelian $p$-group on which $G$ acts faithfully. Suppose that $L:=O^{p'}(G)\cong \mathrm{(P)SL}_2(q)$ and $V|_L\cong V_n(q)|_{\FF_p}$, where $2\leq n\leq p-1$, and recognise $G, \Aut_{D^\dagger}(V)\le \Aut(V)$.

Assume that $N_L(T)\le N_{G\cap \Aut_D(V)}(\Aut_S(V))$ and a Hall $p'$-subgroup of $N_{G\cap \Aut_D(V)}(\Aut_S(V))$ has an abelian subgroup $K$ with $|K|\geq \frac{(q-1)^2}{p+1}$ such that $K$ contains a Hall $p'$-subgroup of $N_L(T)$. Furthermore, if $m=2$ and $|C_K(T)|=p-1$ then assume that the conclusion of Lemma \ref{lem: NormalizerBorel} holds. Then $L=O^{p'}(\Aut_D(V))$.
\end{lemma}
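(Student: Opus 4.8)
The plan is to reduce this to the previously established uniqueness results, namely Lemmas \ref{lem:unique over groups 2dim} and \ref{lem: NormalizerBorel}, by first setting up the right ``big subgroup'' inside $\Aut(V)$ to play the role of a containing classical group. Write $L_0 := O^{p'}(\Aut_D(V))$; since $\Aut_D(V)$ contains the image of $\SL_2(q)$ acting on $V_n(q)$ together with the diagonal and field-centralising parts, we have $L_0 \cong \mathrm{(P)SL}_2(q)$ and $L_0$ visibly centralises $V/[V,S]$ in the appropriate sense. Both $L$ and $L_0$ are generated by the $p$-elements of $G$, respectively of $\Aut_D(V)$, acting with the same Jordan type on $V$; by Lemma \ref{lem: SEFF} (applied to $V/C_V(\cdot)$ of order $q$) any two subgroups of $\Aut(V)$ isomorphic to $\SL_2(q)$ realising the natural module on the relevant section are conjugate, so there is $g \in \Aut(V)$ with $L^g = L_0$. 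The heart of the matter is then to show that the conjugating element may be taken trivial, i.e.\ $L = L_0$ already.

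First I would fix $S := \Aut_S(V) \in \syl_p(L)$ and $B := N_L(S)$, and let $K$ be the given abelian $p'$-subgroup of $N_{G \cap \Aut_D(V)}(S)$ containing a Hall $p'$-subgroup of $B$ with $|K| \geq (q-1)^2/(p+1)$. Because $B \le N_{G\cap\Aut_D(V)}(S)$ and $K$ normalises $S$, both $L$ and $L_0$ contain $B$ and are normalised by $K$. The strategy now splits according to the size of $C_K(S)$, exactly mirroring the hypotheses of Lemma \ref{lem: NormalizerBorel}. In the generic case, $K$ acts on $S \cong C_V(S)$ (both of order $q$, identified as $\KK$-lines via Lemma \ref{lem: Smith's lemma} and \cref{lem: SL2split}) with an element $k$ of order $q-1$ inverting on $S$ what it does on $C_V(S)$; then $KS$ is uniquely determined inside a Hall $p'$-subgroup of $N_{\Aut_{D^*}(V)}(S)$ by \cref{lem: kerneldescription} together with \cref{Snauto}, so both $L$ and $L_0$ contain this canonical Borel-type subgroup $KS$. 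I would then argue that $L = \langle S^{KS}, \text{another root subgroup}\rangle$: a second $p$-subgroup $S^x$ with $x \in L \setminus N_L(S)$ is forced to lie in $L_0$ because its normaliser, together with $K$, is pinned down by the $\FF_q$-module structure that $k$ imposes on $V$ — this is precisely the eigenspace argument in the proof of \cref{lem: NormalizerBorel}, showing $N_{\Aut(V)}(KS) = N_{\Aut_{D^\dagger}(V)}(KS)$, which confines everything normalising $KS$ into $\Aut_{D^\dagger}(V)$ and hence $L \le \Aut_{D^\dagger}(V)$; since $L = O^{p'}(L)$ we get $L \le L_0$, and comparing orders gives $L = L_0$.

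The exceptional configuration is $m = 2$ with $|C_K(S)| = p-1$, which is why the hypothesis explicitly imports the conclusion of \cref{lem: NormalizerBorel}: there $K = C_K(S) \times \langle k \rangle$ with $\langle k\rangle$ of order $p^2-1$, and the lemma delivers $N_{\Aut(V)}(KS) = N_G(KS) = N_{\Aut_{D^\dagger}(V)}(KS)$ directly, so the same ``generate $L$ from $S$ and one conjugate inside $N(KS)$'' argument closes the case. The remaining possibility $m = 1$ is handled as in \cref{lem:unique over groups 2dim}: $\GL_{n+1}(p)$-type ambient considerations, or more simply the observation that for $q = p$ the relevant $\Aut_D(V)$ already contains a full $\GL_2(p)$ whose unique $\SL_2(p)$ must equal $L$. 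I expect the main obstacle to be the bookkeeping in the generic case: one must verify that the subgroup $KS$ obtained from $L$ and from $L_0$ is literally the \emph{same} subgroup of $N_{\Aut_{D^*}(V)}(S)$ (not merely isomorphic or conjugate), which requires carefully tracking how $k$ acts on the chief factors $[V,S;i]/[V,S;i+1]$ and invoking the injectivity of $\delta$ from \cref{lem: kerneldescription} to rigidify the $p'$-part of $N_{\Aut(S)}(S)$ — the bound $|K| \geq (q-1)^2/(p+1)$ is exactly what forces $K$ to be large enough that $C_K(S)$ and the image of $K$ on $C_V(S)$ together generate the whole diagonal torus, leaving no room for an outer conjugating element.
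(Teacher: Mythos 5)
Your overall strategy is the same as the paper's: set $L_0 := O^{p'}(\Aut_D(V))$, exhibit a conjugating element $g \in \Aut(V)$ with $L^g = L_0$, and then refine $g$ so that it normalises a Borel-type subgroup whose $\Aut(V)$-normaliser lands inside $\Aut_{D^\dagger}(V) = N_{\Aut(V)}(L_0)$, forcing $L = L_0$. The exceptional branch for $m=2$ with $|C_K(T)|=p-1$ is correctly identified, and you correctly see that the bound $|K| \geq (q-1)^2/(p+1)$ is what makes the rest rigid. However, there are two genuine gaps. First, \cref{lem: SEFF} is the wrong tool for the initial conjugacy of $L$ with $L_0$: that lemma identifies a quotient acting on a module under hypotheses about $|U|$ versus $|V/C_V(U)|$, but it gives no conjugacy statement and does not cover $V_n(q)$ with $n \geq 2$. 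The paper instead obtains the conjugacy from the module identification in \cref{lem: spe modules}.

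Second, and more seriously, your generic-case normaliser computation does not work as stated. You assert that "the eigenspace argument in the proof of \cref{lem: NormalizerBorel}" shows $N_{\Aut(V)}(KT) \le \Aut_{D^\dagger}(V)$ (where I am writing $T := \Aut_S(V)$ for your ``$S$''), but that argument is engineered specifically for $m=2$ and $|C_K(T)|=p-1$; it does not transfer to arbitrary $m$. In the generic case the paper instead exploits the bound $|C_K(T)| \geq (q-1)^2/(p+1)$ together with a Singer-cycle argument inside $H \cong \GL_{n+1}(q)$ to get $C_{\Aut(V)}(C_K(T)) = H$, refines $g$ by Sylow's theorem and \cref{lem: kerneldescription} so that $g \in N_{\Aut(V)}(N_X(T)) \cap N_{\Aut(V)}(H)$, and only then computes $N_{N_{\Aut(V)}(H)}(N_X(T)) = N_{\Aut_{D^\dagger}(V)}(TC_K(T))$ via a Frattini argument combined with an eigenspace bound on $|N_H(N_X(T))|$. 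Without the intermediate restriction to $N_{\Aut(V)}(H)$, the group $N_{\Aut(V)}(KT)$ is not controlled. In addition, your chain ``$N_{\Aut(V)}(KS)\le\Aut_{D^\dagger}(V)$, hence $L\le\Aut_{D^\dagger}(V)$'' does not follow as written since the opposite root subgroup of $L$ does not normalise $KT$; what is actually needed (and what the paper proves separately as its Claim in the proof) is $N_{\Aut(V)}(X) = \Aut_{D^\dagger}(V)$, so that once $g$ is pinned down inside $\Aut_{D^\dagger}(V)$ one concludes $L = X^{g^{-1}} = X$. The generation argument $L = \langle T, T^x\rangle$ with ``the second root subgroup forced into $L_0$'' is not something the paper attempts and you give no mechanism for it.
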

\begin{proof}
Since $N_L(T)\le N_{G\cap \Aut_D(V)}(\Aut_S(V))$, we recognise $T=\Aut_S(V)$. Let $X:=O^{p'}(\Aut_D(V))$ and $K$ be an abelian subgroup of order greater than or equal to $\frac{(q-1)^2}{p+1}$ in a Hall $p'$-subgroup of $N_{G\cap \Aut_D(V)}(T)$. Let $\hat{K}$ be an abelian subgroup of order $(q-1)^2$ containing $K$ which lies in a Hall $p'$-subgroup of $N_{\Aut_{D}(V)}(T)$. Since $X\cong L\cong \mathrm{(P)SL}_2(q)$, we have that $C_K(T)=C_K(X)=C_K(L)$. Then $X,L\le C_{\Aut(V)}(C_K(T))$. We have that $\Aut_D(V)$ preserves an $\KK$-structure on $V$, where $\KK:=\FF_q$. Thus, $\Aut_D(V)\le H\cong \GL_{n+1}(q)$ and $C_K(T)\le Z(H)$ may be identified with a subgroup of the scalar matrices of $H$ acting on $V$. Since $H$ contains a Singer cycle and $|C_K(T)|\geq\frac{(q-1)^2}{p+1}$, we immediately see that $C_{\Aut(V)}(C_K(T))=H$ unless $m=2$ and $|C_K(T)|=p-1$. Thus, either $N_{\Aut(V)}(C_K(T))=N_{\Aut(V)}(H)$ or the conclusions of Lemma \ref{lem: NormalizerBorel} are satisfied.

We claim that

\begin{claim}\label{clmx}$N_{\Aut(V)}(X)=\Aut_{D^\dagger}(V)$.\end{claim}

Since $X\cong L\cong \mathrm{(P)SL}_2(q)$, we calculate that $N_{\Aut(V)}(X)=\Aut_{D^\dagger}(V)C_{\Aut(V)}(X)$. Observe that $|C_{\Aut_{D^\dagger}(V)}(X)|=q-1$ and that $C_{\Aut_{D^\dagger}(V)}(X)\le C_{\Aut(V)}(X)$ acts transitively on vectors in $C_V(S)$. Let $\mathcal{Q}$ be a $p$-subgroup of $C_{\Aut(V)}(X)$. Then $[\mathcal{Q}, V]$ is normalised by $X$ and is properly contained in $V$ from which we conclude that $[\mathcal{Q}, V]=1$ so that $\mathcal{Q}=1$. Hence, $C_{\Aut(V)}(X)$ is a $p'$-group. We deduce by Lemma \ref{lem: big cyclic} that $C_{\Aut(V)}(X)(K\cap N_X(T))=C_{C_{\Aut(V)}(X)(K\cap N_X(T))}(C_V(S))C_{\Aut_{D^\dagger}(V)}(X)$. Then, $C_{C_{\Aut(V)}(X)}(C_V(T))=1$ by the A$\times$B-Lemma \cite[(24.2)]{AschbacherFG} and the claim holds.

By Lemma \ref{lem: spe modules}, we have that $X$ and $L$ are $\Aut(V)$-conjugate. Write $L^g=X$. Then $(TK)^g\le N_{\Aut(V)}(L)^g=N_{\Aut(V)}(L^g)=N_{\Aut(V)}(X)=\Aut_{D^\dagger}(V)$. Indeed, since $T$ is elementary abelian of order $q$, it follows that $T^g\le X$ and so by Sylow's theorem there is $a\in X$ such that $T^{ga}=T$. Since $C_K(T)^{ga}$ centralises $T$, we deduce that $C_K(T)^{ga}\le C_{\Aut_{D^\dagger}(V)}(T)$ and as $C_{\Aut_{D^\dagger}(V)}(T)=C_{\hat{K}}(T)\times T$, and $C_{\hat{K}}(T)$ is cyclic and so has a unique subgroup of order $|C_K(T)|$, we have that $C_K(T)^{ga}=C_K(T)$.

If $m=2$ and $|C_K(T)|=p-1$ then we observe that the conclusion of Lemma \ref{lem: NormalizerBorel} holds and for $k\in K$ such that $k$ acts on $T$ as $\lambda^{-1}$ and on $C_V(T)$ as $\lambda$, where $\lambda\in \FF_{p^2}^*$ has order $p^2-1$, the actions of $ k^{ga}$ on $T/V$ and $C_V(T)$ are inverse to each other. Since $KT$ is uniquely determined in $N_{\Aut_D(V)}(\Aut_S(V))$ by Lemma \ref{lem: NormalizerBorel}, we deduce that $(KT)^{ga}=KT$. Since $N_{\Aut(V)}(KT)=N_{\Aut_{D^\dagger}(V)}(KT)$, we have that $ga$ normalises $X$ and as $X=L^{ga}$, we see that $X=L$ as desired.

Hence, we continue under the assumption that $N_{\Aut(V)}(C_K(T))=N_{\Aut(V)}(H)$. By Sylow's theorem, we have that $N_L(T)^{ga}=N_{L^{ga}}(T)=N_X(T)$. Then Lemma \ref{lem: kerneldescription} implies that $N_L(T)=N_X(T)$. Replacing $g$ by $ga$, we deduce that there is $g\in N_{\Aut(V)}(N_X(T))\cap N_{\Aut(V)}(H)$ with $L^g=X$.

We claim that $N_{N_{\Aut(V)}(H)}(N_X(T))=N_{\Aut_{D^\dagger}(V)}(TC_K(T))$ so that $L=X^{g^{-1}}=X$. Note that $$N_{\Aut_{D^\dagger}(V)}(TC_K(T))\le N_{N_{\Aut(V)}(H)}(N_X(T))\mbox{ and } |N_{\Aut_{D^\dagger}(V)}(TC_K(T))|=q(q-1)^2.m.$$ Therefore, it will suffice to show that $|N_H(N_X(T))|=q(q-1)^2$. Write $N_X(T)= TR$, where $R$ is cyclic of order $|Z(X)|\frac{q-1}{2}$, and set $J:=N_H(N_X(T))$. Then, by a Frattini Argument, $J=N_J(R)N_X(T)=N_J(R)T$.

Since the eigenspaces of $R$ are all $1$-spaces over $\KK$, we have $N_J(R)$ permutes these $1$-dimensional subspaces. On the other hand, $N_J(R)$ normalises $T$ and so leaves $[V, T;i]=[V, S; i]$ invariant for $0\leq i\leq n$. As $[V, S; i]$ is a sum of eigenspaces, starting with $[V, S; n]$, we see that $N_J(R)$ preserves each $R$ eigenspace. It follows that $N_J(R)$ has exactly the same submodules as $R$ and, in particular, $N_J(R)$ is abelian of order at most $(q-1)^{n+1}$. In particular, $J/T$ is abelian and has order coprime to $p$.

Now $J/T$ is an abelian group which acts faithfully and irreducibly on $S$. Hence $J/C_J(T)$ has order at most $q-1$ by Schur’s Lemma. Write $C_J(T)=T\times F$ for  some abelian group $p'$-subgroup $F$. Then $F$ acts on $C_V(T)$ and $F/C_F(C_V(T))$ embeds into $\GL_1(q)$ and so has order at most $q-1$. Now $T\times C_F(C_V(T))$ acts on $V$ and we deduce that $C_F(C_V(T))=1$ by the A$\times $B-Lemma \cite[(24.2)]{AschbacherFG}. Hence, $|F|\leq q-1$, $|J/T|\leq (q-1)^2$ and the result holds.
\end{proof}

We now perform the same exercise for the module $\Lambda(q)$.

\begin{lemma}\label{lem:unique over groups lambda}
Let $q=p^m>p$, $G$ be a finite group, $T\in\syl_p(G)$ and $V$  an elementary abelian $p$-group on which $G$ acts faithfully. Suppose that $L:=O^{p'}(G)\cong \SL_2(q)$ and $V|_L\cong \Lambda(q)|_{\FF_p}$, and regard  $G$ and $\Aut_{D^*}(V)$ as subgroups of  $\Aut(V)$. Assume that $N_L(T)\le N_{G\cap \Aut_{D^*}(V)}(T)$ and a Hall $p'$-subgroup of $N_{G\cap \Aut_D(V)}(T)$ has an abelian subgroup of order equal to $(q-1)^2$ containing a Hall $p'$-subgroup of $N_L(T)$. Then $L=O^{p'}(\Aut_D(V))$.
\end{lemma}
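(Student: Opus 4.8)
The plan is to mirror the argument given for \cref{lem:unique over groups} (the $V_n(q)$ case), making the adjustments forced by the different module. Set $X := O^{p'}(\Aut_D(V))$ and, as in the hypothesis, fix an abelian subgroup $K$ of order $(q-1)^2$ in a Hall $p'$-subgroup of $N_{G\cap\Aut_{D^*}(V)}(T)$ containing a Hall $p'$-subgroup of $N_L(T)$. First I would record that $T = \Aut_S(V)$, since $N_L(T) \le N_{G\cap \Aut_{D^*}(V)}(T)$ forces $T$ to be the image of a Sylow $p$-subgroup of $S_\Lambda(q)$ acting on $V = \Lambda(q)$. Next, because $X \cong L \cong \SL_2(q)$, we have $C_K(T) = C_K(X) = C_K(L)$, so both $X$ and $L$ lie in $C_{\Aut(V)}(C_K(T))$. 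Since $\Aut_{D^*}(V)$ preserves a $\KK$-structure on $V$ and $\dim_\KK V = p+1$, we have $\Aut_{D^*}(V) \le H \cong \GL_{p+1}(q)$ with $C_K(T) \le Z(H)$ identified with scalar matrices; as $H$ contains a Singer cycle and $|C_K(T)| = q-1$ (this is the $p+1$-dimensional analogue of the computation in \cref{lem:unique over groups}, and here there is no ``$m=2$'' exceptional case to worry about because the centralizer of the full torus $C_K(T)$ of order $q-1$ in $\GL_{p+1}(q)$ is exactly $\GL_{p+1}(q)$ for $q>p$), we get $C_{\Aut(V)}(C_K(T)) = H$ and hence $N_{\Aut(V)}(C_K(T)) = N_{\Aut(V)}(H)$.

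The next step is the analogue of Claim \ref{clmx}: I would show $N_{\Aut(V)}(X) = \Aut_{D^*}(V)$ (not $\Aut_{D^\dagger}(V)$ here, since $q>p$ and the relevant group is $D^*$; note also there is a subtlety that for $\Lambda(q)$ the relevant overgroup of $X$ inside $\Aut(V)$ controlling normalizers is built from $D^*$). As there, $N_{\Aut(V)}(X) = \Aut_{D^*}(V)\,C_{\Aut(V)}(X)$; then $C_{\Aut(V)}(X)$ is a $p'$-group because any $p$-subgroup $Q$ of it has $[Q,V]$ an $X$-submodule properly contained in $V$, forcing $[Q,V]=1$ and hence $Q=1$ by faithfulness — here I use \cref{(p+1)CUpS} to note $V=\Lambda(q)$ has no $X$-submodule of index a proper power of $p$ other than those of $p$-power index that would still force the commutator to vanish, more precisely one uses the submodule structure of $\Lambda(q)$ from \cref{Vpqstruct}, \cref{lem: LambdaIso} to see $[Q,V]<V$. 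Then \cref{lem: big cyclic} applied to the action on $C_V(S)$ (a natural $\GL_1(q)$-module for the relevant cyclic group, of dimension $m$ over $\FF_p$ — here I must instead use $C_X(S)$ or a suitable $1$-dimensional $\KK$-section, invoking \cref{(p+1)CUpS}(1) and \cref{action on centreLambda} that the $p'$-part of $\Aut_{P_\Lambda^*(q)}(S)$ acts irreducibly and transitively on such a section), together with the Thompson A$\times$B-Lemma \cite[(24.2)]{AschbacherFG} applied with $T$ and the centralizer of $C_V(T)$, pins down $C_{\Aut(V)}(X)$ inside $\Aut_{D^*}(V)$ and gives the claim.

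Finally I would run the conjugacy-and-descent argument. By \cref{lem: upVRecog} (applied to $V\rtimes X$, whose Sylow $p$-subgroup is $S_\Lambda(q)$), $X$ and $L$ are both realized by the module $\Lambda(q)$, hence are $\Aut(V)$-conjugate; write $L^g = X$. Then $(TK)^g \le N_{\Aut(V)}(L)^g = N_{\Aut(V)}(X) = \Aut_{D^*}(V)$, and since $T$ is elementary abelian of order $q$ with $T^g \le X$, Sylow's theorem in $X$ gives $a \in X$ with $T^{ga} = T$; replacing $g$ by $ga$ we may assume $g$ normalizes $T$. Then $C_K(T)^g$ centralizes $T$, so lies in $C_{\Aut_{D^*}(V)}(T) = C_{\hat K}(T)\times T$ for a suitable abelian $p'$-group $\hat K$ of order $(q-1)^2$ containing $K$; since $C_{\hat K}(T)$ is cyclic it has a unique subgroup of order $|C_K(T)| = q-1$, so $C_K(T)^g = C_K(T)$, i.e. $g \in N_{\Aut(V)}(C_K(T)) = N_{\Aut(V)}(H)$. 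Now \cref{lem: kerneldescription} forces $N_L(T) = N_X(T)$, and it remains to show $g$ normalizes $X$; for this I would compute $N_{N_{\Aut(V)}(H)}(N_X(T))$ exactly as in the last paragraph of the proof of \cref{lem:unique over groups} — a Frattini argument $J = N_J(R)T$ where $N_X(T) = TR$ with $R$ cyclic of order $q-1$, the observation that $N_J(R)$ preserves every $R$-eigenspace (these being the $1$-dimensional $\KK$-spaces $[V,T;i]/[V,T;i+1]$, which for $\Lambda(q)$ are pairwise non-isomorphic as $\FF_{q}\langle R\rangle$-modules by the weight computation preceding \cref{(p+1)CUpS}), so $N_J(R)$ is abelian of $p'$-order, and then Schur's Lemma plus the A$\times$B-Lemma bound $|J/T| \le (q-1)^2 = |N_{\Aut_{D^*}(V)}(TC_K(T))/T|$, giving $N_{N_{\Aut(V)}(H)}(N_X(T)) = N_{\Aut_{D^*}(V)}(TC_K(T))$. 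Hence $g \in \Aut_{D^*}(V)$ normalizes $X$, so $L = X^{g^{-1}} = X = O^{p'}(\Aut_D(V))$. The main obstacle I expect is the eigenspace/submodule bookkeeping for $\Lambda(q)$: unlike $V_n(q)$, the module $\Lambda(q)$ is \emph{not} irreducible (it has the codimension-$2$ submodule $W$ of \cref{(p+1)CUpS}), so verifying that the torus $R$-eigenspaces are still pairwise non-isomorphic $1$-spaces and that $N_J(R)$ cannot mix them requires care with the explicit $\rho_\Lambda$ weights recorded in Section \ref{sec:structureS}; everything else is a faithful transcription of the $V_n(q)$ argument.
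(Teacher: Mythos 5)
There is a genuine gap. Your plan is a near-verbatim transcription of the proof of \cref{lem:unique over groups}, but that argument leans on the fact that $V_n(q)$ is an \emph{irreducible} $\FF_p\SL_2(q)$-module, and this is exactly where $\Lambda(q)$ differs. The step where you assert that $C_{\Aut(V)}(X)$ is a $p'$-group (``any $p$-subgroup $Q$ of it has $[Q,V]$ an $X$-submodule properly contained in $V$, forcing $[Q,V]=1$'') simply does not follow: $\Lambda(q)$ has the proper nonzero $X$-submodule $W$ of $\KK$-codimension $2$, so $[Q,V]=W$ is perfectly possible. Concretely, nontrivial unipotent central automorphisms of $V$ correspond to $\Hom_{\KK X}(V/W,\,W)$, and since $V/W$ is a Frobenius twist of $V_1(q)$ and $W\cong V_{p-2}(q)$, this $\Hom$-space is nonzero precisely when $p=3$. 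The paper's proof therefore does not claim $C_{\Aut(V)}(X)$ is a $p'$-group; it shows $C_{\Aut(V)}(X)=C_{\Aut_{D^\dagger}(V)}(X)\,H$ with $H=O_p(C_{\Aut(V)}(X))$, kills $H$ for $p>3$ by a three-subgroups/commutator argument exploiting the weight spacing in $W$, and then runs a substantially more involved conjugacy argument for $p=3$ in the elementary abelian group $TH$.

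You also skipped the opening reduction that the paper relies on throughout: one must first show that the unique proper nontrivial submodules $W_L$ and $W_X$ (for $L$ and $X$ respectively) \emph{coincide}. The paper does this via an explicit weight computation on $\overline{V}=V/[V,T,T]$, observing that the $R$-eigenvalues $\lambda^{-p}$ and $\lambda^{p-2}$ on $\overline{C_V(T)}$ and $\overline{W_X}$ differ (using $q>p$), so $\overline{[V,T]}$ has exactly two proper nontrivial $R$-submodules and $W_L=W_X$ is forced. With $W:=W_L=W_X$ in hand, the paper restricts $XK$ and $LK$ to $\Aut(W)$ and applies \cref{lem:unique over groups} there to get $XC_{\Aut(V)}(W)=LC_{\Aut(V)}(W)$, which is the engine of the whole argument. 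Without the $W_L=W_X$ step your normalizer computations (the analogue of \ref{clmx}, the Frattini argument in $N_{N_{\Aut(V)}(H)}(N_X(T))$, etc.) do not get off the ground, because you have no control over the one proper submodule where $C_{\Aut(V)}(X)$ can hide $p$-torsion. In short: you correctly identified the reducibility of $\Lambda(q)$ as ``the main obstacle,'' but the proposal does not actually overcome it, and this is not bookkeeping — it is the crux of the lemma.
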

\begin{proof}
Let $X:=O^{p'}(\Aut_D(V))$ and $K$ be an abelian subgroup of order $(q-1)^2$ in a Hall $p'$-subgroup of $N_{G\cap \Aut_D(V)}(T)$. We identify $T\in\syl_p(G)$ with $\Aut_S(V)\le \Aut_{D}(V)$. We have by Lemma \ref{lem: spe modules} that $V|_L\cong \Lambda(q)|_{\FF_p}\cong V|_X$. Set $W_L, W_X$ to be the relevant $m(p-1)$-dimensional $\FF_p$-submodules of $V$ for $L$ and $X$ respectively. We claim that $W_L=W_X$. Let $\lambda \in \KK^*$ have order $q-1$,  $r \in X$ be the conjugation map corresponding to  $\left(1,1,
\left(\begin{smallmatrix}
\lambda^{-1} & 0 \\ 0 & \lambda
\end{smallmatrix}\right)
\right)\in D,$
and $R=\langle r \rangle\le X$ so that $|R|=q-1$. Then we may as well assume that $R\le K$ and so normalises $X$ and $L$. Hence, it also normalises $W_X$ and $W_L$. Note that for $\bar{V}:=V/[V, T, T]$ we have that $\bar{[V, T]}=\bar{W_X}\times \bar{C_V(T)}=\bar{W_L}\times \bar{C_V(T)}$. Moreover, we calculate that $r$ acts as $\lambda^{-p}$ on $\bar{C_V(T)}\cong \bar{[V, T]}/\bar{W_X}$ and as $\lambda^{p-2}$ on $\bar{W_X}$. Since $q>p$, $\bar{C_V(T)}$ and $\bar{W_X}$ are non-isomorphic modules for $R$ and so $\bar{[V, T]}$ has precisely two proper non-trivial $R$-submodules. Since $\bar{W_L}\ne \bar{C_V(T)}$, we must have that $\bar{W_X}=\bar{W_L}$ so that $W:=W_X=W_L$, as claimed. Hence, $X,L\le N_{\Aut(V)}(W)$ and we may consider the restriction of $N_{\Aut(V)}(W)$ to $\Aut(W)$. Note that $XK$ and $LK$ restrict faithfully to $W$ and applying Lemma \ref{lem:unique over groups}, we deduce that $XC_{\Aut(V)}(W)=LC_{\Aut(V)}(W)$.

We claim that

\begin{claim}\label{clmy} for $p>3$,  $N_{\Aut(V)}(X)=\Aut_{D^\dagger}(V)$.\end{claim}

Since $X\cong \SL_2(q)$ and $\Aut(X)= \Aut_{D^\dagger}(X)$, we calculate that $N_{\Aut(V)}(X)=\Aut_{D^\dagger}(V)C_{\Aut(V)}(X)$. Then $W$ is the unique proper non-trivial $X$-submodule of $V$ and we have that $C_{\Aut(V)}(X)$ normalises $W$. Indeed, since $XC_{\Aut(V)}(W)=LC_{\Aut(V)}(W)$, it follows from the proof of Lemma \ref{lem:unique over groups} that $N_{\Aut(V)}(X)=\Aut_{D^\dagger}(V)C_{C_{\Aut(V)}(X)}(W)$. Let $Q$ be a $p$-subgroup of $C_{C_{\Aut(V)}(X)}(W)$. Then $[Q, V]$ and $[Q, W]$ are normalised by $X$ and properly contained in $V$, resp. $W$, from which we conclude that $Q\le C_{C_{\Aut(V)}(X)}(W)\cap C_{C_{\Aut(V)}(X)}(V/W)$. Then $C_{\Aut_{D^\dagger}(V)}C_{C_{\Aut(V)}(X)}(W)/C_{\Aut(V)}(V/W)$ is a $p'$-group acting transitively on vectors in $C_{V/W}(T)$. We conclude by Lemma \ref{lem: big cyclic} and the A$\times$B-Lemma \cite[(24.2)]{AschbacherFG} that $$C_{\Aut(V)}(X)=C_{\Aut_{D^\dagger}(V)}(C_{C_{\Aut(V)}(X)}(W)\cap C_{C_{\Aut(V)}(X)}(V/W))=C_{\Aut_{D^\dagger}(V)}(X)H$$ where $H=C_{C_{\Aut(V)}(X)}(W)=C_{C_{\Aut(V)}(X)}(V/W)=O_p(C_{\Aut(V)}(X))$.

Aiming for a contradiction, assume that $H\ne 1$ so that $[H, V]=W$. Note that $[[V, T], T, H]=1=[T, H, [V, T]]$ from which we conclude that $[[V, T], H, T]=1$ and since $[V, H]\le W$, we must have that $[[V, T], H]\le C_W(T)$. On the other hand, $[T, H, V]=1$ and we deduce that $[H, V, T]=[W, T]\le C_W(T)$. Since $p>3$, this is a contradiction.

If $p=3$ then we recreate the proof above to conclude that $C_{\Aut(V)}(X)=C_{\Aut_{D^\dagger}(V)}(X)H$ and $H=O_3(N_{\Aut(V)}(X))$. Furthermore, $H$ acts quadratically on $V$ and so $H$ is an elementary abelian $3$-group. From this, we have that $TH$ is an elementary abelian $3$-subgroup.

Write $\mathbb{T}:=O^p(N_{\Aut(V)}(X))\cap TH$ so that $\mathbb{T}\in\syl_p(O^p(N_{\Aut(V)}(X)))$. Of course, if $p>3$ then $\mathbb{T}=T$. Note that every elementary abelian $p$-subgroup of $N_{\Aut(V)}(X)$ which has the same order as $TH$ is contained in $O^p(N_{\Aut(V)}(X))H$. Then $K\le N_{O^p(N_{\Aut(V)}(X))}(\mathbb{T})$ and $N_{O^p(N_{\Aut(V)}(X))}(\mathbb{T})$ is solvable. From the structure of $O^p(\Aut_{D^\dagger}(V))$, we deduce that $K$ is the unique abelian subgroup of order $(q-1)^2$ in a Hall $p'$-subgroup of $N_{N_{\Aut(V)}(X)}(\mathbb{T})$.

By Lemma \ref{lem: spe modules}, there is $g\in \Aut(V)$ with $L^g=X$. Then since $K\le O^p(N_{\Aut(V)}(L))$, we see that $K^g\le O^p(N_{\Aut(V)}(X))$ and since $K$ normalises $T$, we have that $K^g$ normalises $T^g$. Now, $T^g\in\syl_p(X)$ and so $T^gH$ is an elementary abelian $p$-subgroup of $N_{\Aut(V)}(X)$ which has the same order as $TH$. By Sylow's theorem, there is $c_1\in O^p(N_{\Aut(V)}(X))$ such that $T^{gc_1}\le \mathbb{T}$ and $\mathbb{T}=(\mathbb{T}\cap H)T^{gc_1}$. Since $(\mathbb{T}\cap H)=O_p(O^p(N_{\Aut(V)}(X)))$, we have that $K^{gc_1}$ normalises $\mathbb{T}$. Then, since $K^{gc_1}$ is abelian of order $(q-1)^2$, by Hall's theorem there is $c_2\in N_{O^p(N_{\Aut(V)}(X))}(\mathbb{T})$ such that $K^{gc_1c_2}=K$.

On the other hand, $T^g\in \syl_p(X)$ and since $c:=c_1c_2\in N_{\Aut(V)}(X)$, we deduce that $T^{gc}\in\syl_p(X)$. Hence there is $a\in X$ such that $T^{gca}=T$. Since $K^{gc}$ normalises $T^{gc}$, we ascertain that $K^{gca}\le N_{\Aut_{D^\dagger}(V)}(T)$ and since $K^{gca}$ is elementary abelian of order $(q-1)^2$, we conclude that $(TK)^{gca}=TK^{gca}=TK$. Finally, we have that $L^{gca}=X^{ca}=X$. Replacing $gca$ by $g$, we have demonstrated that there is $g\in N_{\Aut(V)}(TK)$ with $L^g=X$. To complete the proof of the lemma, we shall show that $N_{\Aut(V)}(TK)$ normalises $X$.

From the proof of Lemma \ref{lem:unique over groups}, we have that $N_{\Aut(V)}(TK)\le C_{\Aut(V)}(W)N_{\Aut_{D^*}(V)}(TK)$. It remains to show that $N_{C_{\Aut(V)}(W)}(B)=1$. Indeed, we have that $N_{C_{\Aut(V)}(W)}(B)$ normalises $T$ and $N_{C_{\Aut(V)}(W)}(B)=N_{N_{C_{\Aut(V)}(W)}(TK)}(K)$ by the Frattini argument. Suppose that $\ell\in N_{C_{\Aut(V)}(W)}(T)$ has $p'$-order. Then $[\ell, T]=1$ and $V=[V, \ell]\times C_V(\ell)$ is a $T$-invariant decomposition. Since $\ell\in C_{\Aut(V)}(W)$, we have that $W\le C_V(\ell)$ and $C_V(\ell)\ne 1$. Observe that for $U\in\{[V, \ell], C_V(\ell)\}$, if $U\not\le [V, T]$ then $[V, T]=[U, T][V, T, T]$. Since $T$ acts nilpotently on $V$, we deduce that $C_V(T)\le [V, T]\le U$ and since $V=[V, \ell]\times C_V(\ell)$ is a $T$-invariant decomposition, we conclude that $U=V$. Finally, as $C_V(\ell)\ne 1$, we deduce that $V=C_V(\ell)$ and $\ell=1$.
Therefore, $N_{C_{\Aut(V)}(W)}(TK)$ is a $p$-group.

 To prove the result it suffices to show that $N_{N_{C_{\Aut(V)}(W)}(TK)}(K)$ contains no $p$-elements. Write $K_1=C_K(V/[V, T])$ and $K_2=C_K([V, T]/W)$. Then $|K_1|=|K_2|=q-1$. Let $t\in N_{N_{C_{\Aut(V)}(W)}(TK)}(K)$ be a $p$-element so that $[t, K]=1$. Indeed, $t\in N_{C_{\Aut(V)}(W)}(TK)$ and $[V, t]$ is normalised by $TK$. Since $t$ is a $p$-element, we conclude that $[V, t]\le [V, T]$. By a similar reasoning, we have that $[V, T, t]\le W$. Observe that $[[V, T], K_2, t]=1$ since $t\in C_{\Aut(V)}(W)$ and that $[t, K_2, [V, T]]=1$. The three subgroups lemma then implies that $[[V, T], t, K_2]=1$. Since $TK$ commutes with $t$, we have that $[V, T, t]$ is normalised by $T$ and contained in $W$. If $[V, T, t]$ is non-trivial then $C_W(T)\cap [V, T, t]\ne 1$ and since $TK$ normalises $[V, T, t]$, we deduce that $C_W(T)\le [V, T, t]$ so that $K_2$ centralises $C_W(T)$. That is, $K_2$ centralises both $[V, T]/W$ and $C_W(T)$. But $K_2\le N_{\Aut_{D^*}(V)}(T)$ and computing in the manner following the discussion after Lemma \ref{Vpqstruct}, we observe that no element of order $q-1$ acts like this. Hence, $t$ centralises $[V, T]$.

Now, $[V, K_1, t]=1=[K_1, t, V]$ and we conclude that $[V, t, K_1]=1$. Since $K_1$ acts irreducibly on $[V, T]/W$ we deduce that $[V, t]\le W$ and a similar argument as before implies that $K_1$ centralises $C_W(T)$ if $[V, t]$ is non-trivial. If $[V, t]$ is non-trivial, then as in the above paragraph, we reduce to checking that no abelian subgroup of $N_{\Aut_{D^*}(V)}(T)$ of order $q-1$ centralises both $C_W(T)=C_{[V, T, T]}(T)$ and $V/[V, T]$. Once again, following the discussion after Lemma \ref{Vpqstruct}, there is no such subgroup. Hence, $[V, t]=1$ so that $t=1$. Therefore, we have shown that $N_{\Aut(V)}(KT)=N_{\Aut_{D^\dagger}(V)}(TK)$ normalises $X$ and $L=X$, as desired.
\end{proof}

\section{The potential $\F$-essential subgroups}\label{sec:pot}

By the Alperin--Goldschmidt theorem, any saturated fusion system $\F$ on $S$ is determined by $\Aut_\F(S)$, together with the collection of all $\Aut_\F(P)$ for $P \in \E(\F)$. Here we determine the possibilities for such $P \le S$ when $S \in \{S_n(q), S_\Lambda(q)\}$. Throughout this section let $q=p^m$ be such that $m  > 1$. Let $n \ge 2$, assume $S = S_n(q)$ or $S=S_\Lambda(q)$ and write $V$ for the unique elementary abelian subgroup of $S$ of index $q$ (so that either $V = V_n(q)$ or $V=\Lambda(q)$ respectively). Let $S_0$ be an elementary abelian group of order $q$ such that $S=S_0V$, so that $S_0$ may be identified with the subgroup $U$ described from Notation \ref{n:subsofd*}. Recall $R:=Z(S)S_0$ and $Q:=Z_2(S)S_0$ from Notation \ref{n:bsandcs}.  We prove the following three results:

\begin{proposition}\label{essentials}
Let $\F$ be a saturated fusion system on $S:=S_n(q)$ with $2\leq n\leq p-1$. Then either $\mathcal E(\F) \subseteq \{V\} \cup R^S$ or  $\mathcal E(\F) \subseteq \{V\}\cup Q^S$. If, in addition,  $O_p(\F)=1$ then one of the following holds: \vspace{-4mm}
\begin{enumerate}
    \item $\mathcal{E}(\F)=\{V\} \cup  Q^S$; or
    \item $\mathcal{E}(\F)=\{V\} \cup R^S$; or
    \item $\mathcal{E}(\F)=R^S$.
\end{enumerate}
\end{proposition}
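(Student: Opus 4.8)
The plan is to work inside a saturated fusion system $\F$ on $S=S_n(q)$, $2\le n\le p-1$, and to constrain the possible $\F$-essential subgroups $E$ by combining the structural results of Section~\ref{sec:structureS} with standard facts about essential subgroups (they are centric, radical, and $\Out_\F(E)$ has a strongly $p$-embedded subgroup). First I would record that any $\F$-essential $E$ is fully normalised and $\F$-centric, so $Z(S)\le E$ and $E$ is self-centralising; also $\Aut_S(E)\in\Syl_p(\Aut_\F(E))$ and $\Out_\F(E)$ has a strongly $p$-embedded subgroup, hence $O_p(\Out_\F(E))=1$. The key dichotomy comes from the subgroup $V=V_n(q)$, which is the unique abelian subgroup of maximal order and is characteristic in $S$ by \cref{Somnibus}(4), hence weakly $\F$-closed among abelian subgroups of its order. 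For $E\ne V$ one shows $E\cap V$ has index $p$ in $E$ (since $EV/V\hookrightarrow S/V$ and $E$ is centric forces $E\not\le V$ with $EV=S$ impossible unless $|E\cap V|$ is large), and then $E\cap V$ is normalised by $\Aut_\F(E)$; using that $O^{p'}(\Aut_\F(E))$ acts nontrivially on $E/\Phi(E)$ with a strongly $p$-embedded image one invokes \cref{Strongly p-embedded Sylows} and \cref{lem: SEFF} to pin down $E/C_E(O^{p'}(\Aut_\F(E)))$ as a natural module. The order constraints $|\Aut_S(E)|\ge|E/C_E(\Aut_S(E))|$ together with \cref{Somnibus} then force $E$ to be $S$-conjugate into $\{R,Q\}$: indeed $C_V(E\cap V)$ must be small, and \cref{lem:intersec1}, \cref{lem:S-conj}, \cref{lem:S-conj2} identify the only candidates of order $q^2$ and $q^3$ that are class $\le 2$ with the right centraliser behaviour as exactly the $S$-classes $R^S$ and $Q^S$.

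Next I would rule out having both $R^S$ and $Q^S$ essential simultaneously. The point is that $R\le Q$ (both contain $S_0$ and $R=Z(S)S_0\le Z_2(S)S_0=Q$), and if $Q$ is essential then $\Aut_\F(Q)$ has $\Out_\F(Q)$ with a strongly $p$-embedded subgroup acting on $Q/\Phi(Q)$; by \cref{prop: Q iso}, $Q\cong S_1(q)$, and one analyses $\Aut(Q)$ via \cref{prop: L3Q}. The subgroup $R/\Phi(Q)$ or the structure of $Q$ as an $\SL_2(q)$-extension then shows that an essential $R$-class cannot coexist with an essential $Q$-class because the two would impose incompatible actions on $Z_2(S)$ — concretely, $R$ being essential forces $\Aut_\F(R)\cap\Aut_\F(Q)$-type constraints that collapse $O^{p'}(\Aut_\F(Q))$. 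This gives the first sentence: $\mathcal E(\F)\subseteq\{V\}\cup R^S$ or $\mathcal E(\F)\subseteq\{V\}\cup Q^S$.

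For the second half, assume $O_p(\F)=1$. If $\mathcal E(\F)=\emptyset$ or $\mathcal E(\F)=\{V\}$ then $O_p(\F)\supseteq V$ or $O_p(\F)\supseteq Z(S)\ne 1$ (by a normality argument: with no essentials off $V$, the characteristic subgroup $V$, or at least $Z(S)$, is normal in $\F$), contradicting $O_p(\F)=1$; so there must be an essential subgroup in $R^S$ or $Q^S$. In the $Q$-case, if $V\notin\mathcal E(\F)$ then $V$ is normalised by $\Aut_\F(S)$ and by $\Aut_\F(Q)$ (as $Q\cap V=Z_2(S)$ is characteristic-ish in $Q$ and $V$ normalises $Q$), forcing $V\le O_p(\F)$, a contradiction; hence $\mathcal E(\F)=\{V\}\cup Q^S$, which is case~(1). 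In the $R$-case one cannot similarly force $V$ to be essential: if $V\notin\mathcal E(\F)$ one must instead check that $Z(S)\not\le O_p(\F)$, which holds precisely because $R^S$ generates enough of $\Aut_\F$ on $Z(S)$ (via the $\SL_2(q)$-action on $R$, using $Z(S)\le R$ and \cref{lem:intersec1}(2) to move $Z(S)$ around), so $\mathcal E(\F)=R^S$ is genuinely possible — case~(3) — while if $V$ is also essential we get case~(2). I would close by noting these three are mutually exclusive and exhaustive given the dichotomy and the $O_p(\F)=1$ hypothesis.

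\textbf{Main obstacle.} The hardest step is the identification that an essential $E\ne V$ must be $S$-conjugate to $R$ or $Q$: this requires simultaneously controlling $E/\Phi(E)$ as a module for $O^{p'}(\Aut_\F(E))$ (using \cref{Strongly p-embedded Sylows}, \cref{lem: SEFF}, \cref{lem: spe modules}), the nilpotence-class and exponent structure of $E$ inside $S$ (using \cref{Somnibus}, \cref{lem: com full}), and the intersection/conjugacy combinatorics of the families $\mathcal B(S)$, $\mathcal C(S)$ (\cref{lem:intersec1}--\cref{lem:S-conj2}) — ruling out, in particular, essentials of order $>q^3$ and essentials of nilpotency class $\ge 3$. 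The coexistence argument (no $R$- and $Q$-essentials together) is the second delicate point, since it hinges on a careful look at $\Aut(S_1(q))$ via \cref{prop: L3Q} to see that the two putative essential automisers would force contradictory $p'$-actions on $Z_2(S)/Z(S)$.
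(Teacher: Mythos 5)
Your proposal is structurally close in outline but has a genuine gap at the single most delicate step, and two of the auxiliary arguments are incorrect.

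\textbf{The main gap.} You claim that after showing $E\in\mathcal B(S)\cup\mathcal C(S)$ via \cref{size.ess}-type arguments, the lemmas \cref{lem:intersec1}, \cref{lem:S-conj}, \cref{lem:S-conj2} ``identify the only candidates'' as $R^S$ and $Q^S$. They do not. Those lemmas describe intersection and $S$-conjugacy combinatorics of $\mathcal B(S)$ and $\mathcal C(S)$, but $\mathcal B(S)$ and $\mathcal C(S)$ generally contain several $S$-conjugacy classes; \cref{lem:S-conj2} only tells you that two members of $\mathcal B(S)$ are $S$-conjugate \emph{if} their images in $S/[V,S]$ agree, and nothing in the lemmas forces $E[V,S]=S_0[V,S]$. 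That is exactly what still needs to be established, and it is where the hypothesis $q>p$ enters. The paper's argument picks a generator $\tau_E$ of a Hall $p'$-subgroup $T_E$ of $N_{O^{p'}(\Aut_\F(E))}(\Aut_S(E))$, lifts it to $\Aut_\F(S)$ via \cref{lem:extends}, and after conjugating into $\Aut_{P_n^*(q)}(S)$ computes the image under the map $\delta$ of \cref{lem: kerneldescription}: on $S/[V,S]$ this lift acts with eigenvalues $\tau^{-1}$ on $S_0[V,S]/[V,S]$ and $\tau^{n+1}$ on $V/[V,S]$. If $E[V,S]/[V,S]$ were a third invariant subspace the two eigenspaces would have to be Galois-conjugate, forcing $np^k+p^k+1\equiv 0\pmod{p^m-1}$ for some $k$ --- which has no solutions once $m\geq 2$ and $n\leq p-1$. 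This eigenvalue argument is the crux of the proposition and is entirely absent from your write-up. (A small independent slip: you write that $E\cap V$ has index $p$ in $E$; by \cref{size.ess} it has index $q$, since $EV/V=S/V$ has order $q$.)

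\textbf{The coexistence argument.} Your proposed route --- analysing $\Aut(Q)$ via \cref{prop: L3Q} and arguing that putative essential $\Aut_\F(R)$ and $\Aut_\F(Q)$ ``impose incompatible actions on $Z_2(S)$'' --- is not how this should be (or is) done, and as stated it is too vague to check. The paper's reason is much simpler and is \cref{lem:intersec}(1): $R<Q$ with $R$ abelian, and if both were $\F$-essential then $O^{p'}(\Aut_\F(Q))\cong \SL_2(q)$ acts transitively on the $q+1$ subgroups of order $q$ in $Q/Z(S)$, so some $\alpha\in O^{p'}(\Aut_\F(Q))$ sends $R$ to a subgroup with $R\alpha\cap Z_2(S)>Z(S)$, whence $R\alpha\le V$, contradicting that $R$ is fully $\F$-normalised. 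No analysis of $\Aut(Q)$ is needed.

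\textbf{The $O_p(\F)=1$ case.} Your argument that $\mathcal E(\F)=\{Q^S\}$ forces $V\le O_p(\F)$ does not work: $V\not\le Q$, so it makes no sense to say $V$ is ``normalised by $\Aut_\F(Q)$''. The correct observation is that $Z(S)=Z(Q)$ is characteristic in $Q$ and, because $O^{p'}(\Aut_\F(Q))$ centralises $Z(Q)$ (see \cref{size.ess}(\ref{nonab.ess})), $Z(S)$ is $\Aut_\F(Q)$-invariant; combined with $\Aut_\F(S)$-invariance this gives $Z(S)\normaleq\F$ by \cite[Proposition I.4.5]{AKO}, contradicting $O_p(\F)=1$. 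The remaining bookkeeping (ruling out $\mathcal E(\F)\subseteq\{V\}$ because $V$ is characteristic) you have essentially correct.
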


\begin{proposition}\label{upVEssentials}
Let $\F$ be a saturated fusion system on $S:=S_{\Lambda}(q)$. Then $\mathcal{E}(\F)\subseteq \{V\} \cup R^S$. If, in addition, $O_p(\F)=1$ then $\mathcal{E}(\F)=\{V\} \cup R^S$.
\end{proposition}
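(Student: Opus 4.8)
The plan is to run the standard analysis of $\F$-essential subgroups $E\le S=S_\Lambda(q)$, exploiting the tight control we have over the subgroup structure of $S$ from \cref{(p+1)CUpS}, \cref{lem: CharSub} and \cref{lem:intersec1,lem:S-conj,lem:S-conj2}. First I would recall the general constraints: if $E$ is $\F$-essential then $E$ is $\F$-centric, fully normalised, and $\Out_\F(E)$ has a strongly $p$-embedded subgroup; in particular $\Aut_S(E)\in\Syl_p(\Aut_\F(E))$ is nontrivial and $E$ is $p$-centric with $N_S(E)>E$. Since $V=\Lambda(q)$ is abelian and weakly $\F$-closed (\cref{(p+1)CUpS}(\ref{cups8})), no proper subgroup of $V$ is $\F$-centric, so any essential $E\ne V$ satisfies $E\not\le V$, hence $EV/V\ne 1$ and $E\cap V$ has index dividing $q$ in $E$. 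The first main step is to bound $|E|$: because $E\not\le V$ we have $|E|\le |S/V|\cdot|C_V(E)|=q|C_V(E)|$, and for $z\in E\setminus V$ we have $C_V(z)=Z(S)$ of order $q^2$ by \cref{(p+1)CUpS}(\ref{cups1}), so $C_V(E)\le Z(S)$ gives $|E|\le q^3$. On the other hand $E$ is $p$-centric, so $Z(S)=C_V(S)\le E$; combined with $E\not\le V$ this forces $E=Z(S)S_0^{\,e}$-type, and a short argument with $N_S(E)>E$ together with the structure of $Z_2(S)/Z(S)$ shows $|E|\in\{q^2,q^3\}$ but the class-$2$, exponent-$p$ constraints rule out... — more precisely, I would show $E\in\mathcal B(S)$, i.e. $E$ is elementary abelian of order $q^3$ with $S=EV$, since the only $p$-centric non-$V$ candidates up to $S$-conjugacy with $N_S(E)>E$ are the members of $\mathcal B(S)$, and $R\in\mathcal B(S)$.

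The second step is to show every $E\in\mathcal B(S)$ is $S$-conjugate to $R$. By \cref{lem:S-conj2}, if $E,E'\in\mathcal B(S)$ with $E[V,S]\cap E'[V,S]>[V,S]$ then $E$ and $E'$ are $S$-conjugate; and by \cref{lem: CharSub} the subgroup $U[V,S]=R[V,S]$ is characteristic in $S$, so $E[V,S]=R[V,S]$ for every $E\in\mathcal B(S)$ by considering that each such $E$ lies in the unique normal exponent-$p$ subgroup of index $q$ other than $V$. Hence by \cref{lem:S-conj2} every $E\in\mathcal B(S)$ is $S$-conjugate to $R$, so $\mathcal E(\F)\subseteq\{V\}\cup R^S$, proving the first assertion. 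Here I would also need to verify $R$ really can be essential: $\Aut_S(R)=\Aut_{RZ_2(S)}(R)$ has order $q$, and the map $\psi^*$ of \cref{p:newamalg} exhibits $\Aut_\F(R)$-type groups with $\SL_2(q)$ acting, so the strongly $p$-embedded condition is available.

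The third step handles the case $O_p(\F)=1$ and shows $V\in\mathcal E(\F)$. Suppose not; then $\mathcal E(\F)\subseteq R^S$. By the Alperin–Goldschmidt theorem $\F=\langle\Aut_\F(S),\Aut_\F(R^s):s\in S\rangle$. Since $R$ is not normal in $S$ (as $N_S(R)=RZ_2(S)<S$ by \cref{(p+1)CUpS}(\ref{cups2})) the subgroups $R^s$ together with $\Aut_\F(S)$ generate, and one checks that $W:=\langle R^{\,\F}\rangle$ — equivalently the subgroup generated by all $\F$-conjugates of $R$ inside $R[V,S]$ — is strongly closed and normal in $\F$; I then argue $W$ is in fact normalised by all of $\Aut_\F(R)$ and by $\Aut_\F(S)$, so $O_p(\F)\ge$ some nontrivial characteristic piece (e.g. $C_R(O^{p'}(\Aut_\F(R)))$, which is $S$-invariant since it is a characteristic subgroup of the weakly closed-up-to-conjugacy $R$, compare the proof of \cref{t:fstarsat2}(2)), contradicting $O_p(\F)=1$. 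Therefore $V\in\mathcal E(\F)$ and, combined with the first assertion and the fact that $R^S\subseteq\mathcal E(\F)$ necessarily holds once $O_p(\F)=1$ (otherwise $V\trianglelefteq\F$ would give $O_p(\F)\ge V$), we get $\mathcal E(\F)=\{V\}\cup R^S$.

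The main obstacle I anticipate is the first step: pinning down that the only essential candidates besides $V$ are the members of $\mathcal B(S)$, and in particular ruling out essential subgroups of order $q^3$ that are \emph{not} elementary abelian (of class $2$ and exponent $p$, the $\mathcal C(S)$-type — which for $S_\Lambda(q)$ would be subgroups of order $q^3\cdot q=q^4$, but here $d=q$ so $\mathcal C(S)$-members have order $q^4$; since $|E|\le q^3$ these are excluded, yet one must still eliminate non-abelian $E$ of order $q^3$). This requires carefully combining the centricity of $E$ with the precise commutator structure of $S$ from \cref{(p+1)CUpS} — that for $z\in S\setminus V$, $|C_V(z)|=q^2$ and $C_{[V,z]}(z)$ has order $q$ — to force $E\cap V=Z(S)$ and then $E$ abelian. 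The rest is bookkeeping with the $S$-conjugacy lemmas.
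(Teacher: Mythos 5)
The overall shape of your argument — constrain the size and structure of an essential $E\ne V$ via \cref{(p+1)CUpS}, force $E[V,S]=R[V,S]$ via \cref{lem: CharSub}, conjugate to $R$ via \cref{lem:S-conj2}, and then use $C_R(O^{p'}(\Aut_\F(R)))\le O_p(\F)$ in the case $\mathcal E(\F)=R^S$ to deduce $V$ must be essential — is the right skeleton and matches the paper's steps two and three. The problem is your first step.

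Your bound $|E|\le|S/V|\cdot|C_V(E)|$ is false for non-abelian $E$. What you actually have is $|E|=|EV/V|\cdot|E\cap V|\le q\cdot|E\cap V|$, and $E\cap V$ need not lie in $C_V(E)$ unless $E$ is abelian. In particular, the candidates in $\mathcal C(S)$ (which are precisely the serious non-abelian candidates delivered by \cref{size.ess} for $S_\Lambda(q)$) have $E\cap V=Z_2(S)$ of order $q^3$, so $|E|=q^4$, and they survive your inequality untouched. Your closing paragraph explicitly relies on ``since $|E|\le q^3$ these are excluded'' to dismiss the $\mathcal C(S)$-type; that is exactly where the argument breaks. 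Deriving $Z(S)\le E$ from centricity and $|C_V(z)|=q^2$ for $z\notin V$ are both correct, but they do not by themselves rule out $Q=Z_2(S)S_0\in\mathcal C(S)$ (of order $q^4$) being essential.

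The paper gets around this by not trying to kill $\mathcal C(S)$ on size alone. It first applies \cref{size.ess} (proved uniformly for $S_n(q)$ and $S_\Lambda(q)$) so that every essential $E\ne V$ lies in $\mathcal B(S)\cup\mathcal C(S)$, hence is $S$-conjugate to $R$ or $Q$ by the argument you sketch. Then to eliminate $Q$, it uses \cref{lem:TE}(3): a Hall $p'$-subgroup $T_E$ of $N_{O^{p'}(\Aut_\F(E))}(\Aut_S(E))$ for non-abelian $E$ centralises $Z(S)$, and its lift $T\le\Aut_\F(S)$ (via \cref{lem:extends}) therefore centralises $C_V(S)=\langle\overline{xy^{p-1}},\overline{y^p}\rangle_\KK$. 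A direct weight calculation on $\rho_\Lambda$ then shows that any element of $\Sigma$ centralising both $\overline{xy^{p-1}}$ and $\overline{y^p}$ satisfies $a=b$ and $\theta=a^{-p}$, which by \cref{Gammacentraliser} forces it to centralise all of $\Lambda(q)$; hence $T$ is trivial on $V$ and so trivial, contradicting $|T_E|=q-1>1$. That computation — exploiting the fact that $C_V(S)$ is two $\KK$-dimensional rather than one, unlike the $S_n(q)$ case — is the piece your proposal is missing, and you will need it (or something like it) rather than a size bound to finish step one.

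Two smaller points. First, your phrase ``$|E|\in\{q^2,q^3\}$'' is off: for $S_\Lambda(q)$ an abelian essential $E\ne V$ has order exactly $q^3=q^2 d$ with $d=q$, never $q^2$, since $Z(S)$ has order $q^2$ and $E\not\le V$. Second, in step three you can be much more direct than ``$\langle R^{\F}\rangle$ is strongly closed'': the paper simply observes $Z:=C_R(O^{p'}(\Aut_\F(R)))\le Z(S)$ is $S$-invariant, hence equals $C_{R^s}(O^{p'}(\Aut_\F(R^s)))$ for all $s$, and is then fixed by $\Aut_\F(S)$ and every $\Aut_\F(R^s)$, giving $1\ne Z\le O_p(\F)$ by Alperin--Goldschmidt and \cite[Proposition I.4.5]{AKO}.
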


\begin{proposition}\label{ess.auto}
Let $S \in \{S_\Lambda(q), S_n(q)\}$ and $\F$ be a saturated fusion system on $S$ where $2\leq n\leq p-1$ and let $E \in \mathcal E(\F)$. The following hold: \vspace{-4mm}
\begin{enumerate}
\item If $E \in R^S \cup Q^S$ then $O^{p'}(\Out_\F(E)) \cong \SL_2(q)$.
\item If $E=V$ then either
	\begin{enumerate}
	\item $O^{p'}(\Aut_{\F}(V))\cong \mathrm{(P)SL}_2(q)$ and $V$ is isomorphic to $V_n(q)$ or $\Lambda(q)$, as $\FF_p O^{p'}(\Aut_{\F}(V))$-modules; or
	\item $p=3$, $q=9$, $n=2$, $O^{3'}(\Aut_{\F}(V))\cong 2^.\PSL_3(4)$ and $V$ is isomorphic to the unique irreducible $\FF_3O^{3'}(\Aut_{\F}(V))$-module of dimension $6$.
	\end{enumerate}
\end{enumerate}
\end{proposition}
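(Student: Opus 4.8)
The plan is to treat the three types of essential subgroup separately, in each case combining the structural facts about $S$ proved in Section~\ref{sec:structureS} with the group- and module-recognition machinery of Section~\ref{sec:reg}. Throughout, recall that if $E\in\mathcal E(\F)$ then $E$ is $\F$-centric, fully normalised, and $\Out_\F(E)$ has a strongly $p$-embedded subgroup; moreover $\Aut_S(E)\in\syl_p(\Aut_\F(E))$, and by \cref{Somnibus}, \cref{(p+1)CVS} and \cref{(p+1)CUpS} the relevant $p$-groups $\Aut_S(R)$, $\Aut_S(Q)$, $\Aut_S(V)$ are each elementary abelian of order at least $q=p^m\ge p^2$, so \cref{Strongly p-embedded Sylows} applies.

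For part (1), take $E\in R^S\cup Q^S$. Replacing $E$ by an $S$-conjugate we may assume $E=R$ or $E=Q$. Here $\Aut_S(E)$ acts on $E$ with $C_E(\Aut_S(E))$ and $[E,\Aut_S(E)]$ both equal to $Z(S)$ (of order $q$), and $\Aut_S(E)$ itself has order $q$; so the hypotheses of \cref{lem: SEFF} (with $k=2$) are met by the action of $O^{p'}(\Out_\F(E))$ on $E$ — one checks $[E,O^{p'}(\Out_\F(E))]\ne 1$ from $\F$-centrality and the fact that $E$ is essential (if $O^{p'}$ acted trivially, $\Out_\F(E)$ would be a $p'$-group and could not have a strongly $p$-embedded subgroup). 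Then \cref{lem: SEFF} forces $O^{p'}(\Out_\F(E))/C\cong\SL_2(q)$ with $C$ a $p'$-group acting trivially; since $\Out_\F(E)$ acts faithfully on $E$ modulo inner automorphisms and $C$ centralises $E/Z(E)$... more precisely, one uses that $C_{\Out_\F(E)}(\Aut_S(E)\text{-fixed data})$ is trivial by the $A\times B$-lemma argument as in \cref{lem: kerneldescription}, giving $O^{p'}(\Out_\F(E))\cong\SL_2(q)$ exactly. (For $E=Q\cong S_1(q)$ one additionally notes $\Out_\F(Q)$ preserves the characteristic elementary abelian subgroup $Z_2(S)$ of $Q$, so the module is the appropriate quotient.)

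For part (2), suppose $E=V$. Since $V$ is elementary abelian, characteristic in $S$ (by \cref{Somnibus}(4), \cref{(p+1)CVS}(7), \cref{(p+1)CUpS}(8)), and $\F$-centric, $\Aut_\F(V)$ acts faithfully on $V$ with $\Aut_S(V)\in\syl_p$. The key point is that $\Out_\F(V)=\Aut_\F(V)/\Inn(V)$ has a strongly $p$-embedded subgroup and $\Aut_S(V)=S/V$ is elementary abelian of order $q$, while $C_V(\Aut_S(V))=Z(S)$ and $[V,\Aut_S(V)]=[V,S]$; by \cref{Somnibus}(1)/(3), \cref{(p+1)CVS}(6), \cref{(p+1)CUpS}(7) we have $[V,\Aut_S(V);k-1]=C_V(\Aut_S(V))=C_V(t)$ for every $t\in\Aut_S(V)^\#$, where $k=n+1$ (resp. $k=p$ for $S_\Lambda(q)$). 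This is precisely the hypothesis of \cref{lem: spe modules}. So $K:=O^{p'}(\Out_\F(V))$ is quasisimple and is one of $\SL_2(q)$, $\PSL_2(q)$, or (when $p=3$, $q=9$) $2^.\PSL_3(4)$, and in the latter case $V$ is the $6$-dimensional module — which, via the remark after \cref{lem: spe modules}, forces $n=2$, giving (2)(b). In the $(P)SL_2(q)$ cases, \cref{lem: spe modules} only identifies the module when $V$ is irreducible. To remove the irreducibility restriction I would invoke \cref{lem: Vnqrecog} / \cref{lem: upVRecog}: form the group $H:=V\rtimes K$ (splitting is automatic, or one passes to the subsystem-group), note $H/O_p(H)\cong(\mathrm{P})\SL_2(q)$ — and, after replacing $K$ by a suitable overgroup or twisting, apply those propositions to conclude $V\cong V_n(q)|_{\FF_p}$ or $V\cong\Lambda(q)|_{\FF_p}$ as required.

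The main obstacle is the last step of part (2): \cref{lem: spe modules}(1)--(2) classify the module only under an irreducibility assumption, whereas $V$ as an $\FF_p\,O^{p'}(\Aut_\F(V))$-module need not be irreducible (its $\KK$-structure is only guaranteed after we know the module). The resolution is exactly what \cref{lem: Vnqrecog} and \cref{lem: upVRecog} were built for — they recognise $V_n(q)$ and $\Lambda(q)$ from the Sylow subgroup $S$ alone, with no irreducibility hypothesis — so I would be careful to phrase the reduction so that their hypotheses ($S\cong S_n(q)$ or $S_\Lambda(q)$, and $H/O_p(H)\cong\SL_2(q)$) are literally met, handling the $\PSL_2(q)$ possibility by passing to a central extension and noting the module is inflated. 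A secondary technical point is verifying $[V,O^{p'}(\Aut_\F(V))]\ne1$ (needed to exclude the degenerate case in \cref{lem: spe modules}), which follows since otherwise $\Out_\F(V)$ would be a $p'$-group, contradicting the existence of a strongly $p$-embedded subgroup.
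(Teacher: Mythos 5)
Your overall plan matches the paper's: part (1) follows from \cref{lem: SEFF} (packaged in the paper inside \cref{size.ess}, which the paper's one-line proof cites), and part (2) is obtained from \cref{Strongly p-embedded Sylows}, \cref{lem: spe modules}, and the recognition lemmas \cref{lem: Vnqrecog} and \cref{lem: upVRecog}. Your handling of the irreducibility issue in case $S=S_n(q)$ (noting that \cref{lem: spe modules} only pins down the module when it is irreducible, and appealing to \cref{lem: Vnqrecog} instead) is exactly the right instinct and is what the paper relies on.

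There is, however, a concrete error in your application of \cref{lem: spe modules} to $S=S_\Lambda(q)$. You assert that ``$[V,\Aut_S(V);k-1]=C_V(\Aut_S(V))=C_V(t)$ for every $t\in\Aut_S(V)^\#$, where $k=p$ for $S_\Lambda(q)$'' and that ``this is precisely the hypothesis of \cref{lem: spe modules}.'' But \cref{(p+1)CUpS}(1) gives $|C_V(T)|=|Z(S)|=q^2$ while $|T|=q$, so the hypothesis ``$C_V(T)$ has order $|T|$'' fails outright; and by \cref{(p+1)CUpS}(7) the subgroup $[V,T;p-1]$ has order $q$, so $[V,T;p-1]\subsetneq C_V(T)$ and the required chain of equalities never holds for any $2\le k\le p$. (In Jordan-block terms: each $t\in T^\#$ acts on $V=\Lambda(q)$ with blocks of $\KK$-sizes $p$ and $1$, so the fixed space is $2$-dimensional while the bottom of the size-$p$ chain is $1$-dimensional.) Consequently \cref{lem: spe modules} gives you nothing for $S_\Lambda(q)$, and you cannot conclude $O^{p'}(\Aut_\F(V))\cong(\mathrm P)\SL_2(q)$ by that route; this also blocks the appeal to \cref{lem: upVRecog}, whose hypothesis already presumes $H/O_p(H)\cong\SL_2(q)$. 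To close the gap one needs a separate argument, starting from \cref{Strongly p-embedded Sylows}, that eliminates the alternatives $\Alt(2p)$, $\PSL_3(4)$, $\mathrm M_{11}$, ${}^2\mathrm F_4(2)'$, $\mathrm{Fi}_{22}$ (all forcing $m=2$) by examining the available $\FF_p$-modules of dimension $2(p+1)$ with the constraints $|C_V(T)|=q^2$ and $|V/[V,T]|=q$, before \cref{lem: upVRecog} can be invoked.

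A minor quibble on part (1): you write ``the hypotheses of \cref{lem: SEFF} (with $k=2$)''; \cref{lem: SEFF} has no parameter $k$ (you are conflating it with \cref{lem: spe modules}), and the argument for $E=Q$ must be run on the module $E/Z(E)$ rather than on $E$ itself, which you gesture at but do not carry out. These are presentational rather than mathematical issues, since \cref{size.ess} does this carefully.
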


The primary tool we will use in this section is the following lemma.

\begin{lemma}\label{lem:series}
Suppose that $\F$ is a saturated fusion system on $S$ and $J \le S$. Assume that $J_s < J_{s-1}< \dots <J_0=J$ are $\Aut_\F(J)$-invariant with $J_s \le \Phi(J)$.  If $ A \le \Aut_S(J)$ and $[J_i,A] \le J_{i+1}$ for $0 \leq i \leq s-1$, then $A \le O_p(\Aut_\F(J))$. In particular, if $A \not \le \Inn(J)$, then $J$ is not $\F$-centric-radical.
\end{lemma}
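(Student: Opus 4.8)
The plan is to produce a normal $p$-subgroup $N$ of $\Aut_\F(J)$ containing $A$; then $N\le O_p(\Aut_\F(J))$ immediately yields $A\le O_p(\Aut_\F(J))$, and the ``in particular'' follows formally. First I would pass to the elementary abelian quotient $\bar J:=J/\Phi(J)$, writing $\bar{J_i}$ for the image of $J_i$. Since $J_s\le\Phi(J)$ the chain becomes $\bar J=\bar{J_0}\ge\bar{J_1}\ge\dots\ge\bar{J_s}=1$, and because each $J_i$ is $\Aut_\F(J)$-invariant, so is each $\bar{J_i}$ under the induced action of $\Aut_\F(J)$ on $\bar J$. Define
\[
N:=\{\alpha\in\Aut_\F(J)\mid [\bar{J_i},\bar\alpha]\le\bar{J_{i+1}}\ \text{ for }0\le i\le s-1\},
\]
where $\bar\alpha\in\Aut(\bar J)$ is the map induced by $\alpha$.

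Two facts about $N$ need checking. For normality, I would run the usual commutator manipulation: if $\alpha\in N$ and $\beta\in\Aut_\F(J)$, then for $\bar x\in\bar{J_i}$ one has $\bar x\,\overline{\beta^{-1}\alpha\beta}=\bar x\cdot(\bar c\bar\beta)$ with $\bar c=[\bar x\bar\beta^{-1},\bar\alpha]\in\bar{J_{i+1}}$, using that $\bar\beta^{\pm1}$ preserve the $\bar{J_j}$; hence $\beta^{-1}\alpha\beta\in N$, so $N\normaleq\Aut_\F(J)$. For $N$ being a $p$-group, note that $N$ is the preimage, under the restriction homomorphism $\Aut_\F(J)\to\Aut(\bar J)$, of a subgroup $\bar N\le\GL(\bar J)$ each of whose elements $g$ satisfies $[\bar{J_i},g]\le\bar{J_{i+1}}$ for all $i$; applying $g-1$ down the flag gives $\bar J(g-1)^s=0$ on the $\FF_p$-space $\bar J$, so $g$ is unipotent, hence of $p$-power order, so $\bar N$ is a $p$-group by Cauchy. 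The kernel of $\Aut_\F(J)\to\Aut(\bar J)$ consists of automorphisms trivial on $J/\Phi(J)$ and is a $p$-group by the standard Frattini argument (a $p'$-automorphism $\tau$ trivial on $J/\Phi(J)$ satisfies $J=C_J(\tau)\Phi(J)$, so $\tau=1$). Thus $N$ is an extension of a $p$-group by a $p$-group, so a $p$-group, and therefore $N\le O_p(\Aut_\F(J))$.

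To conclude, I would observe that $A\le N$: indeed $A\le\Aut_S(J)\le\Aut_\F(J)$, and the hypothesis $[J_i,A]\le J_{i+1}$ for $0\le i\le s-1$ passes to $[\bar{J_i},\bar A]\le\bar{J_{i+1}}$ (the case $i=s-1$ using $J_s\le\Phi(J)$). Hence $A\le O_p(\Aut_\F(J))$. For the final clause, recall that $\Inn(J)\normaleq\Aut_\F(J)$ is a $p$-group, so $\Inn(J)\le O_p(\Aut_\F(J))$; if $A\not\le\Inn(J)$ then this containment is strict, so $O_p(\Out_\F(J))=O_p(\Aut_\F(J))/\Inn(J)\ne1$, and $J$ fails to be $\F$-radical, hence is not $\F$-centric-radical.

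The genuine content — as opposed to bookkeeping — is the simultaneous verification that $N$ is both normal in $\Aut_\F(J)$ and a $p$-group: normality is precisely where the $\Aut_\F(J)$-invariance of the $J_i$ enters, while the $p$-group property is precisely where $J_s\le\Phi(J)$ is used, namely to arrange that the flag in $\bar J$ terminates at $1$, forcing unipotence of the flag stabiliser. Everything else is routine.
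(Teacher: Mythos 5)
Your argument is correct. The paper does not actually prove this lemma; it simply cites \cite[Lemma 1.1]{oliver2009saturated}, so there is no ``paper's proof'' to compare against — what you have produced is a sound, self-contained reconstruction of the standard argument for Oliver's result. Each step checks out: passing to $\bar J=J/\Phi(J)$ is exactly what is needed to make the flag terminate at $1$; the stabiliser $N$ of that flag is normal in $\Aut_\F(J)$ (your commutator computation is fine, though one can more quickly observe that $N$ is the kernel of $\Aut_\F(J)\to\prod_{i}\Aut(\bar{J_i}/\bar{J_{i+1}})$, which uses the $\Aut_\F(J)$-invariance of the $J_i$ in the same way); $N$ is a $p$-group as an extension of the Burnside kernel by a unipotent subgroup of $\GL(\bar J)$; and the ``in particular'' clause follows since $\Inn(J)\le O_p(\Aut_\F(J))$ gives $O_p(\Out_\F(J))=O_p(\Aut_\F(J))/\Inn(J)\ne 1$. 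Your closing remark correctly identifies where each hypothesis bears its load.
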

\begin{proof}
See \cite[Lemma 1.1]{oliver2009saturated}.
\end{proof}

We first prove:

\begin{proposition}\label{prop: V_pCon1}
Let $\F$ be a saturated fusion system on $S\cong S_p(q)$. Then $\mathcal{E}(\F)\subseteq \{V\}$ and $V\normaleq \F$.
\end{proposition}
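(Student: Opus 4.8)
The plan is to show that no proper subgroup of $S=S_p(q)$ can be $\F$-essential, so that $\mathcal{E}(\F)\subseteq\{V\}$, and then deduce $V\trianglelefteq\F$ from the Alperin--Goldschmidt theorem together with the fact that $V$ is characteristic in $S$ (\cref{(p+1)CVS}(7)). Recall that an $\F$-essential subgroup $E$ must be $\F$-centric and $\F$-radical, so in particular $E$ is centric in $\F_S(S)$ and $\Out_S(E)$ contains no nontrivial normal subgroup that is normal in $\Out_\F(E)$; moreover $\Out_\F(E)$ has a strongly $p$-embedded subgroup, so $\Out_S(E)$ is not normal in $\Out_\F(E)$ and hence $|\Out_S(E)|\geq p^2$ is not cyclic. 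The main engine will be \cref{lem:series}: if we can exhibit an $\Aut_\F(E)$-invariant series of $E$ through $\Phi(E)$ on which $\Aut_S(E)$ acts ``unipotently one step at a time,'' then $\Aut_S(E)\leq O_p(\Aut_\F(E))$, contradicting essentiality.

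First I would dispose of the candidates $E$ with $E\not\leq V$. Since $V$ is characteristic in $S$, $V\cap E$ is $\Aut_\F(E)$-invariant; since $E$ is $\F_S(S)$-centric and $V$ is abelian, $E\not\leq V$, and one checks using \cref{(p+1)CVS}(4) that $|E/(E\cap V)|\leq q$ forces $|E\cap V|$ to be large, with $E\cap V\geq C_V(E)$ self-centralising in $E$. The key point is that for $z\in E\setminus V$, \cref{(p+1)CVS}(4) gives $|C_V(z)|=q^2$ and $[V,z;p]=1$ with each $[V,z;i]/[V,z;i+1]$ of order $q$; so refining the $\Aut_\F(E)$-invariant chain $1<Z(S)<Z_2(S)<\dots<Z_{p-1}(S)<V$ intersected with $E$, together with the terms $[V,E;i]\cap E$ and $EZ_j(S)/Z_j(S)$-type refinements, produces a chain through $\Phi(E)$ on which $\Aut_S(E)$ shifts each factor into the next. (Here one uses $\Phi(E)\geq [E,E]$ and the fact that $E/\Phi(E)$ has order at most $q^2$, so a bottom piece of the series of $E$-corank $\leq 2$ suffices.) \cref{lem:series} then yields $\Aut_S(E)\leq O_p(\Aut_\F(E))$, so $E$ is not $\F$-radical.

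Next I would handle $E\leq V$: then $E<V$ since $E$ cannot equal $V$ while being essential (an essential subgroup is strictly contained in some member of its $S$-chain and $V$ is abelian hence never essential itself, as $\Out_\F(V)$ acting with a strongly $p$-embedded subgroup would force... actually more simply $V\leq C_S(E)$ for $E<V$). For $E<V$ we have $C_S(E)\geq V>E$, so $E$ is not $\F_S(S)$-centric, hence not $\F$-centric, so not essential. Combining the two cases, $\mathcal{E}(\F)\subseteq\{V\}$.

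Finally, for the conclusion $V\trianglelefteq\F$: by the Alperin--Goldschmidt fusion theorem every morphism in $\F$ is a composite of restrictions of automorphisms of members of $\mathcal{E}(\F)\cup\{S\}\subseteq\{V,S\}$; since $V$ is characteristic in $S$ (\cref{(p+1)CVS}(7)), every such automorphism sends $V$ to $V$, hence $V$ is strongly closed and indeed normal in $\F$; equivalently one invokes \cite[Proposition I.4.5]{AKO} noting $V$ is normalised by $\Aut_\F(S)$ and by $\Aut_\F(V)$ trivially. The main obstacle I anticipate is the bookkeeping in the $E\not\leq V$ case: one must choose the $\Aut_\F(E)$-invariant series carefully so that the \emph{bottom} $s$ terms (down to $\Phi(E)$) have consecutive quotients each shifted by $\Aut_S(E)$, which requires combining the upper central series of $S$ restricted to $E$ with the lower-$E$-central series $[E,E;i]$ and checking these interleave correctly using the commutator structure recorded in \cref{(p+1)CVS}(5),(6); the oddness of $p$ and $n=p$ (so $|V|=q^{p+1}>q^3$) are what make the counting work.
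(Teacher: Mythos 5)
Your proposal has a genuine gap at its core. You write: ``Since $V$ is characteristic in $S$, $V\cap E$ is $\Aut_\F(E)$-invariant.'' This is false in general. An automorphism in $\Aut_\F(E)$ is an automorphism of $E$ that need not extend to $S$, so the fact that $V$ is characteristic in $S$ gives you \emph{no} control over where $\Aut_\F(E)$ sends $V\cap E$. The same problem infects your proposed chain refinements: $Z_i(S)\cap E$, $[V,E;i]\cap E$, and $EZ_j(S)/Z_j(S)$ are all defined in terms of subgroups of $S$ external to $E$, and none of them is visibly $\Aut_\F(E)$-invariant. Since \cref{lem:series} requires each term $J_i$ to be $\Aut_\F(E)$-invariant, the entire chain construction you sketch for the $E\nleq V$ case lacks justification. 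You do flag that the ``bookkeeping'' here is the main obstacle, but it is not mere bookkeeping --- it is the mathematical substance of the proof.

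The paper sidesteps this by splitting on whether $Z(E)\leq V$, not on whether $E\leq V$. When $Z(E)\leq V$, it first forces $Z_2(S)\leq E$ via a two-step chain (the chain $1\normaleq Z(E)\normaleq E$ is legitimately $\Aut_\F(E)$-invariant since $Z(E)$ is characteristic in $E$), and then proves that $E\cap V$ is the \emph{unique} elementary abelian subgroup of $E$ of maximal order --- a genuinely characteristic description of $E\cap V$ internal to $E$, which is what makes $E\cap V$ usable in \cref{lem:series}. Once $V\leq E$ is established, the chain $1\normaleq C_V(S)\normaleq\cdots\normaleq [V,S]\normaleq V\normaleq E$ is $\Aut_\F(E)$-invariant because $V$ is then characteristic in $E$ (by \cref{(p+1)CVS}(7)) and each $[V,S;i]=[V,E;i]$ is defined purely inside $E$. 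When $Z(E)\nleq V$, the paper uses no chain argument at all: it invokes the strongly $p$-embedded module result \cref{lem: SEFF} to pin down $|E|=q^2$ and then derives a numerical contradiction $q^2\leq q$ from $|N_S(E)/E|$. Your proposal never considers this case separately and the chain approach would not obviously work for it. The final paragraph of your proposal (deducing $V\normaleq\F$ from $\mathcal{E}(\F)\subseteq\{V\}$ and $V$ being characteristic in $S$) is fine and matches the paper.
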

\begin{proof}
Let $E\in\mathcal{E}(\F)$. Let $W=\ker(\psi) \le V_p(q)$ for $\psi$ as in Lemma \ref{Vpqstruct}. From Lemma \ref{(p+1)CVS}, we recall that $C_V(S)=C_W(S)$ has order $q$, $|V/[V, S]|=q^2$ and $|C_V(s)|=q^2$ for all $s\in S\setminus V$. Indeed, $W<Z_2(S)<V$, $|Z_2(S)|=q^3$ and $Z_2(S)=C_V(s)W$ for all $s\in S\setminus V$.

Suppose that $Z(E)\not\le V$. Then $E\cap V\le C_V(Z(E))\le Z_2(S)$ and $E\cap W=C_W(E)=C_V(S)$. We note that $W\le Z_2(S)\le N_S(E)$ and that $W$ centralises a subgroup of $Z(E)$ of index at most $q$. Furthermore, $W$ centralises $E/Z(E)$ and as $|WE/E|=q$, Lemma \ref{lem: SEFF} implies that $N_S(E)=EW$, $|N_S(E)/E|=q$ and $S=Z(E)V$. Thus, $E\cap V=C_V(S)$ and $E=Z(E)$ has order $q^2$, again by Lemma \ref{lem: SEFF}. But then $q^2=|Z_2(S)E/E|\leq |N_S(E)/E|=q$, a contradiction.

Next suppose that $Z(E)\le V$. Then $Z_2(S)$ centralises the chain $1\normaleq Z(E)\normaleq E$ and  $Z_2(S)\le E$ by Lemma \ref{lem:series}. Hence $Z_2(S)\le E\cap V$ and $E\cap V$ is elementary abelian with $|E \cap V| \ge q^3$.  Suppose that $A$ is another elementary abelian subgroup with $ |A| \ge |E\cap V|$ and $A \nleq V$. Then for $a\in A\setminus (A\cap V)$ we have $V\cap A\le C_V(a)$ has order at most $q^2$. Since $|S/V|=q$, $|A|=|E\cap V|=q^3$ and $S=AV$,  but then $|C_V(A)|=q$ and $|A|=q^2$, a contradiction. Thus $E\cap V$ is the unique elementary abelian subgroup of $E$ of maximal order. Hence $N_V(E)$ centralises the chain $1\normaleq E\cap V\normaleq V$ and $V\le E$ by Lemma \ref{lem:series}. If $V<E$ and $A$ is such that $V<A\le E$ then  $[V, A]=[V, S]$ has index $q^2$ in $V$ by Lemma \ref{(p+1)CVS} (\ref{cvs3}). Hence $[V, A; i]=[V, S; i]$ for all $1\leq i\leq p$ and since $V$ is characteristic in $E$,  $S$ centralises the $\Aut_{\F}(E)$-invariant chain $$1\normaleq C_V(S)=[V, S; p-1]\normaleq [V, S; p-2]\normaleq \dots \normaleq [V, S]\normaleq V\normaleq E,$$ a contradiction by Lemma \ref{lem:series}. Hence, $V=E$ and $\mathcal{E}(\F)\subseteq \{V\}$. Since $V$ is characteristic in $S$, $V\normaleq \F$ by \cite[Proposition I.4.5]{AKO}.
\end{proof}

\begin{remark}
The above proof also applies to the case $S_2(2^m)$ for $m>1$.
\end{remark}

We now embark on an initial determination of the essential subgroups of $\F$, where $S\in \{S_n(q), S_{\Lambda}(q)\}$ and $2\leq n\leq p-1$. Recall from Lemmas \ref{Somnibus} and \ref{Snauto} that for $S=S_n(q)$ we have $$[S \colon V] = q, \hspace{2mm} |Z_i(S)|=q^i \mbox{ for all } 1\leq i \leq n \hspace{2mm} \mbox{ and } \hspace{2mm} Z_2(S) \le Z_n(S)=S'=[V,S] < V.$$  Similarly, if $S=S_{\Lambda}(q)$ then $$[S \colon V]=q, \hspace{2mm} |Z_i(S)|=q^{i+1} \mbox{ for all  } 1\leq i\leq p-1, \hspace{2mm} \mbox{ and } \hspace{2mm} Z_2(S)\le Z_{p-1}(S)=S'=[V, S]<V.$$ The proof of the forthcoming lemma follows the same methodology as Proposition \ref{prop: V_pCon1} but reveals the subgroups $R$ and $Q$ as potential candidates for the $\F$-essential subgroups (in (\ref{ab.ess}) and (\ref{nonab.ess}) of Lemma \ref{size.ess} respectively).  Recall the notation $\mathcal{B}(S)$ and $\mathcal{C}(S)$ introduced in Notation \ref{n:bsandcs} and that $d=q$ if $S=S_{\Lambda}(q)$ and $d=1$ if $S=S_n(q)$. 

\begin{lemma}\label{size.ess}
Let $\F$ be a fusion system on $S$ where $S\in \{S_n(q), S_{\Lambda}(q)\}$ and $2\leq n\leq p-1$. If $E\in\mathcal{E}(\F)\setminus \{V\}$, then either
\begin{enumerate}
\item\label{ab.ess} $E\in\mathcal{B}(S)$, $E\cap V=Z(S)$, $N_S(E)=EZ_2(S)$, $|C_E(O^{p'}(\Aut_\F(E)))|=d$ and $E/C_E(O^{p'}(\Aut_\F(E)))$ may be regarded as a natural $\SL_2(q)$-module for $O^{p'}(\Aut_\F(E))\cong \SL_2(q)$; or
\item\label{nonab.ess} $E\in\mathcal{C}(S)$, $E\cap V=Z_2(S)$, $Z(E)=Z(S)$, $N_S(E)=EZ_3(S)$ and $E/Z(E)$ may be regarded as a natural $\SL_2(q)$-module for $O^{p'}(\Out_\F(E)) \cong \SL_2(q)$.
\end{enumerate}
\end{lemma}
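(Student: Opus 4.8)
The plan is to analyse an essential subgroup $E \in \mathcal E(\F) \setminus \{V\}$ by splitting into the two cases $Z(E) \not\le V$ and $Z(E) \le V$, exactly as in the proof of \cref{prop: V_pCon1}, and to extract the finer structure via \cref{lem: SEFF} and \cref{lem:series}. Throughout I would write $G = O^{p'}(\Out_\F(E))$ (or $O^{p'}(\Aut_\F(E))$ when $E$ is abelian). First recall the key structural facts assembled in \cref{Somnibus}, \cref{Snauto} and \cref{(p+1)CUpS}: $[S:V] = q$, $Z(S) = C_V(S)$ has order $q d$, $Z_2(S)$ has order $q^2 d$, $|V/[V,S]| = q^{1+d'}$ where $d' = 1$ if $S = S_\Lambda(q)$ and $d'=0$ otherwise, and for $s \in S \setminus V$ one has $|C_V(s)| = q^2 d$. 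Since $E$ is $\F$-essential it is $\F$-centric and $\F$-radical, so $\Out_\F(E)$ has a strongly $p$-embedded subgroup, $O_p(\Out_\F(E)) = 1$, and $\Out_S(E) \in \Syl_p(\Out_\F(E))$; moreover $E$ is $\F$-centric gives $C_S(E) = Z(E) \le E$.

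\textbf{Case $Z(E) \not\le V$.} Pick $z \in Z(E) \setminus V$. Then $E \cap V \le C_V(z) $ which has order $q^2 d$, and $Z_2(S)$ normalises $E$ (since $[S,Z_2(S)] \le Z_2(S) \cap $ well, more precisely one argues $Z_2(S) \le N_S(E)$ using $[E,Z_2(S)] \le [S,Z_2(S)] \le Z(S) \le \ldots$; I need to check this commutator lands in $E$ — this is where $[V,S;i]$ control from \cref{(p+1)CVS}/\cref{Somnibus} is used). Now $Z_2(S)$ centralises $E/Z(E)$ and centralises $Z(E)$ modulo a subgroup of small index, so \cref{lem: SEFF} applies with the $p$-group $Z_2(S)$-image inside $\Out_S(E)$: it forces $G \cong \SL_2(q)$, $E/C_E(G)$ natural, $N_S(E) = E\,Z_2(S)$ with $|N_S(E)/E| = q$, and $S = Z(E) V$. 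Counting orders then pins down $|E \cap V| = qd = |Z(S)|$, hence $E \cap V = Z(S)$ (it is normal in $S$, contained in $Z(S)$, of the right order), $E$ is abelian of order $q^2 d$ (equivalently $E = Z(E)$), so $E \in \mathcal B(S)$, $C_E(G)$ has order $d$, and $N_S(E) = E Z_2(S)$; this is conclusion (\ref{ab.ess}). The one subtlety is showing $C_E(G)$ has order exactly $d$ (not $q$ or larger): this comes from \cref{lem: SEFF} giving $|\Out_S(E)| = |E/C_E(G)|$, combined with $|N_S(E)/E| = |\Out_S(E)| = q$ and $|E| = q^2 d$.

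\textbf{Case $Z(E) \le V$.} Here $Z_2(S)$ centralises the chain $1 \trianglelefteq Z(E) \trianglelefteq E$ — indeed $[Z(E), Z_2(S)] \le [V, S] \cap Z(E)$ and $[E, Z_2(S)] \le Z(S) \le Z(E)$, using \cref{Somnibus}/\cref{(p+1)CUpS} for the relevant commutator containments — so \cref{lem:series} gives $Z_2(S) \le E$. Thus $E \cap V \ge Z_2(S)$, it is elementary abelian, and as in \cref{prop: V_pCon1} one checks $E \cap V$ is the unique abelian subgroup of $E$ of maximal order (if $A \le E$ were elementary abelian with $A \not\le V$ and $|A| \ge |E \cap V| \ge q^2 d$, then $A \cap V \le C_V(a)$ of order $q^2 d$ forces $|A| \le q^3 d / q = q^2 d$ and then $|C_V(A)| = qd$, contradicting $\F$-centrality / the order bound). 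Hence $E \cap V$ is characteristic in $E$, so $N_V(E)$ centralises $1 \trianglelefteq E \cap V \trianglelefteq V$ and \cref{lem:series} gives $V \le N_S(E)$. Next rule out $V \le E$: if so then $[V,E] = [V,S]$ (since $E \not\le V$, $E/V$ acts with $|V/[V,E]| = |V/[V,S]|$), all the $[V,S;i]$ are $E$-invariant and $V$ is characteristic in $E$, so $S$ centralises the $\Aut_\F(E)$-invariant chain $1 \trianglelefteq C_V(S) = [V,S;*] \trianglelefteq \cdots \trianglelefteq [V,S] \trianglelefteq V \trianglelefteq E$, contradicting \cref{lem:series} since $\Out_S(E) \ne 1$. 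Therefore $V \not\le E$, so $E \cap V = E \cap N_S(E) \cap V$ has index $q$ in $V$'s intersection... more carefully, $|E\cap V| \le |V|/? $; combined with $Z_2(S) \le E\cap V \le [V,S]$ (the last since $E\cap V \ne V$ and $[V,S]$ is the unique maximal $S$-invariant subgroup... actually $E \cap V \le [V,S]$ needs: $E \cap V$ is $S$-invariant of index $\ge q$ in $V$, hence $\le [V,S]$ when $|V/[V,S]| = q$; when $|V/[V,S]| = q^2$ one needs an extra argument). I would then show $E \cap V = Z_2(S)$ by comparing orders: $|E| = q \cdot |E \cap V| = q \cdot |C_V(e)|$ for suitable $e$, and $Z(E) = C_E(E) \le C_V(E \cap V) \cap \ldots$; the upshot is $Z(E) = Z(S)$, $E \in \mathcal C(S)$ (class $2$, exponent $p$ — exponent $p$ because $E \cap V$ is elementary abelian and $S/V$ has exponent $p$ and a coprime/regularity argument as in \cref{(p+1)CVS}), $N_S(E) = E Z_3(S)$, and $E/Z(E)$ is a natural $\SL_2(q)$-module for $O^{p'}(\Out_\F(E)) \cong \SL_2(q)$ via \cref{lem: SEFF} applied to the faithful action of $\Out_\F(E)$ on $E/Z(E)$. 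This is conclusion (\ref{nonab.ess}).

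The main obstacle I anticipate is bookkeeping the various commutator containments $[X, Z_i(S)] \le Z_j(S)$ uniformly across the two families $S_n(q)$ and $S_\Lambda(q)$ and across the two index regimes $|V/[V,S]| \in \{q, q^2\}$, so that \cref{lem:series} and \cref{lem: SEFF} can be invoked cleanly; in particular the step $E \cap V \le [V,S]$ and the order count pinning $E \cap V$ to exactly $Z(S)$ resp. $Z_2(S)$ require care, as does verifying that the $\SL_2(q)$ produced by \cref{lem: SEFF} is all of $O^{p'}$ of the relevant automiser rather than a proper subgroup (this follows since $[E/C, O^{p'}(\Out_\F(E))] \ne 1$ forces $O^{p'}(\Out_\F(E)) = \SL_2(q)$ exactly, as $\Out_S(E)$ is a Sylow $p$-subgroup of it and the module is natural). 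I would also double-check the exponent-$p$ claim for $E \in \mathcal C(S)$ using \cite[Theorem 9.7]{Huppert} or \cref{(p+1)CVS}-style regularity, since $\mathcal C(S)$ was defined to consist of class-$2$ exponent-$p$ subgroups and this must be genuinely established, not assumed.
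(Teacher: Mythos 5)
Your proposal handles the case $Z(E)\not\le V$ essentially as the paper does (modulo the typo $|C_V(s)|=q^2d$, which should read $|C_V(s)|=qd$ --- for $S_n(q)$ this is $q$ by \cref{Somnibus}(3), and for $S_\Lambda(q)$ it is $q^2$ by \cref{(p+1)CUpS}(1), i.e.\ $qd$ in both cases). The genuine gap is in the case $Z(E)\le V$.

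There you assert that $E\cap V$ is \emph{the unique} elementary abelian subgroup of $E$ of maximal order, hence characteristic in $E$, and you offer a parenthetical argument ruling out any elementary abelian $A\le E$ with $A\not\le V$ and $|A|\ge|E\cap V|$. That uniqueness claim is false in exactly the situation conclusion~(\ref{nonab.ess}) is describing: for $E=Q=S_0Z_2(S)$, both $E\cap V=Z_2(S)$ and $R=S_0Z(S)$ (and all $S$-conjugates of $R$ inside $Q$) are elementary abelian of order $q^2d$. Your parenthetical in fact only yields $|A|\le q^2d$, which agrees with $|E\cap V|=q^2d$ and produces no contradiction; the subsequent ``$|C_V(A)|=qd$, contradicting $\F$-centrality'' does not apply since $A$ is not assumed $\F$-centric. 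The downstream effect is fatal: if $E\cap V$ were characteristic, then $N_V(E)$ centralising $1\normaleq E\cap V\normaleq E$ together with \cref{lem:series} forces $N_V(E)\le E$, hence $V\le E$ (since a proper subgroup of a $p$-group is not self-normalising in $EV$), which you then correctly rule out --- leaving you with no route to the non-abelian conclusion at all. What the paper actually extracts from \cref{lem:series} at this step is a \emph{dichotomy}: either $V\le E$ (contradiction), or $E\cap V$ is \emph{not} $\Aut_\F(E)$-invariant. The second branch is not an obstacle but the engine of the argument: it produces a second elementary abelian $A\ne E\cap V$ with $|A|\ge|E\cap V|$ and $A\not\le V$, whence the order count $|A\cap V|\le qd$ pins $|A|=|E\cap V|=q^2d$, forces $E\cap V=Z_2(S)$ and $E=AZ_2(S)$ of order $q^3d$, gives $\Phi(E)=[A,Z_2(S)]\le Z(S)=Z(E)$, and then \cref{lem: SEFF} applied to $E/Z(E)$ delivers the natural $\SL_2(q)$-module structure and $N_S(E)=EZ_3(S)$. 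Your remaining order-comparison sketch (``$E\cap V\le[V,S]$'', ``the upshot is $Z(E)=Z(S)$, $E\in\mathcal C(S)$'') cannot be completed without first establishing the existence of this second abelian subgroup, which your claimed uniqueness explicitly denies.
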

\begin{proof}
Since $E$ is $\F$-centric, we have $E \nleq V$ and $[Z_2(S), E]\le [Z_2(S), S]\le Z(S)\le E$, so $Z_2(S)\le N_S(E)$. We will freely use the results from Lemma \ref{Somnibus} and Lemma \ref{(p+1)CUpS} throughout this proof.

Suppose first that $Z(E)\not\le V$. If $z \in Z(E) \setminus V$ then $C_V(z) = C_V(S)$ and so $$Z(S)\le E\cap V\le C_V(z)=C_V(S)\le Z(S).$$ Thus $|E\cap V| = |Z(S)| = qd$ and $|E/E\cap V| \leq |S/V| = q$ which implies that $|E|\leq q^2d$. Since $q=|Z_2(S)E/E|\leq |N_S(E)/E|$ and $Z_2(S)$ centralises $E\cap V$, we deduce from Lemma \ref{lem: SEFF} that $E$ is elementary abelian of order $q^2d$, $Z_2(S)E=N_S(E)$, $S=EV$, $|C_E(O^{p'}(\Aut_\F(E)))|=d$ and that we may regard $E/C_E(O^{p'}(\Aut_\F(E)))$ as a natural $\SL_2(q)$-module for $O^{p'}(\Aut_\F(E)) \cong \SL_2(q)$. Moreover, by Lemma \ref{lem:series}, we must have that $C_E(O^{p'}(\Aut_\F(E)))\cap [Z_2(S), S]=1$ and this proves (1).

Now suppose that $Z(E)\le V$. If $e \in E\setminus V$ then $$Z(S)\le Z(E)\le C_V(e)=C_V(S) \le Z(S),$$ and so $Z(E)=Z(S)$. Since $[Z_2(S), E]\le Z(E)$, $Z_2(S)$ centralises the sequence $1 \normaleq Z(E) \normaleq E$, and we conclude from Lemma \ref{lem:series} that $Z_2(S)\le E$. In particular, $[Z_3(S), E] \le Z_2(S) \le E$ and $Z_3(S)\le N_S(E)$. Since $E\cap V$ is an elementary abelian subgroup of $E$ containing $Z_2(S)$, $|E\cap V|\geq q^2d$. Moreover $N_V(E)$ centralises the normal chain $1 \normaleq E \cap V \normaleq E$ and so Lemma \ref{lem:series} implies that either $V\le E$, or $E\cap V$ is not $\Aut_\F(E)$-invariant.

If $V \le E$ then $V$ is the unique elementary abelian subgroup of $E$ of maximal order. Moreover, if $A$ is such that $V<A\le E$, then $Z_i(A)=Z_i(S)$ for $1\leq i\leq m:=\mathrm{min}\{n, p-1\}$ and $[S, A]\le Z_m(S)$. Then $S$ centralises the series $1\normaleq Z(E)\normaleq Z_2(E)\normaleq \dots \normaleq Z_m(E)\normaleq E$ and Lemma \ref{lem:series} yields a contradiction.

If $E \cap V$ is not $\Aut_\F(E)$-invariant then there exists an elementary abelian subgroup $A$ of $E$ distinct from $E\cap V$  with $|A|\geq |E \cap V| \geq q^2d$. On the other hand if $a \in A \setminus V$ then $A\cap V\le C_V(a)=C_V(S)\le Z(S)$ which implies that $|A\cap V| \leq qd$ so $|A|=|AV/V||A\cap V| \leq q^2d$. Therefore $|E\cap V|=|A|=q^2d$ and we must have $S=AV=EV$ and $Z_2(S)=E\cap V$. It follows that $E=AZ_2(S)$ has order $q^3d$ and $\Phi(E)=[A, Z_2(S)]\le Z(S)=Z(E)$. Moreover $O^{p'}(\Aut_{\F}(E))$ centralises $Z(E)$ and $N_S(E)$ centralises $Z_2(S)/Z(E)$. Since $|E/Z(E): Z_2(S)/Z(E)|=q$, we obtain $q=|Z_3(S)E/E|\leq |N_S(E)/E|$ and Lemma \ref{lem: SEFF} implies that $N_S(E)=Z_3(S)E$ and $E/Z(E)$ may be regarded as a natural $\SL_2(q)$-module for $O^{p'}(\Out_\F(E)) \cong \SL_2(q)$. This completes the proof of (2).
\end{proof}

\begin{lemma}\label{lem:intersec}
Let $\F$ be a fusion system on $S$. Suppose that $E,F \in \mathcal E(\F)\setminus \{V\}$ are such that $E \ne F$ and $E$ is abelian. Then
\begin{enumerate}
\item\label{ea1} $E \nleq F$.
\item\label{ea2} $E \cap F = Z(S)$, and $E \cap E^s = Z(S)$ for all $s \in S \backslash N_S(E)$.
\item\label{FFx1} If $F$ is also non-abelian and $P \in \mathcal E(\F)\setminus \{V,F\}$ is non-abelian, then $F \cap P = Z_2(S)$ and $F \cap F^s= Z_2(S)$ for each $s \in S \backslash N_S(F)$.
\end{enumerate}
\end{lemma}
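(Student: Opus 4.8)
The plan is to prove all three parts by exploiting the module structure of $V$ (via Lemmas \ref{Somnibus} and \ref{(p+1)CUpS}) together with the structural constraints on essential subgroups established in Lemma \ref{size.ess}, and then invoking the intersection lemmas \ref{lem:intersec1} and \ref{lem:S-conj2} already proved for members of $\mathcal{B}(S)$ and $\mathcal{C}(S)$. The key observation is that, by Lemma \ref{size.ess}, every $E\in\mathcal{E}(\F)\setminus\{V\}$ lies in $\mathcal{B}(S)\cup\mathcal{C}(S)$: if $E$ is abelian then $E\in\mathcal{B}(S)$ with $E\cap V=Z(S)$, and if $E$ is non-abelian then $E\in\mathcal{C}(S)$ with $E\cap V=Z_2(S)$. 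This is what links the fusion-theoretic hypotheses to the purely group-theoretic machinery of Section \ref{sec:structureS}.

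For part (\ref{ea1}), suppose for contradiction that $E\le F$. Since $E$ is abelian and $E\in\mathcal{B}(S)$, we have $S=EV$, and since $E\le F$ this forces $S=FV$; but then $|F|\geq |E|\cdot|F\cap V|/|E\cap V|$ and comparing with $|F\cap V|\geq |Z_2(S)|>|Z(S)|=|E\cap V|$ shows $F$ strictly contains $E$, which is fine so far — the real contradiction is that $E$ abelian forces $E\le Z(F)$, but by Lemma \ref{size.ess}(\ref{nonab.ess}) (if $F$ non-abelian) $Z(F)=Z(S)$ has order $qd<|E|$, and if $F$ is abelian then $E\ne F$ and $|E|=|F|$ by Lemma \ref{size.ess}(\ref{ab.ess}), so $E\le F$ is impossible. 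For part (\ref{ea2}): since $E\cap F$ is centralised by $E$ (as $E$ abelian), $E\cap F\le C_S(E)=E$; now apply Lemma \ref{lem:intersec1}(1),(2) — if $F\in\mathcal{C}(S)$ then either $E\le F$ (excluded by (\ref{ea1})) or $E\cap F=Z(S)$, and if $F\in\mathcal{B}(S)$ with $F\ne E$ then $E\cap F=Z(S)$ directly. The statement $E\cap E^s=Z(S)$ for $s\in S\setminus N_S(E)$ is exactly Lemma \ref{lem:intersec1}(2) applied to $E,E^s\in\mathcal{B}(S)$ (which are distinct precisely because $s\notin N_S(E)$). For part (\ref{FFx1}): with $F,P\in\mathcal{C}(S)$ and $F\ne P$, Lemma \ref{lem:intersec1}(3) gives $F\cap P=Z_2(S)$, and $F\cap F^s=Z_2(S)$ for $s\notin N_S(F)$ is again Lemma \ref{lem:intersec1}(3) applied to the distinct pair $F,F^s\in\mathcal{C}(S)$.

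The main obstacle is the bookkeeping at the very start: carefully checking that $F\ne E$ together with $E,F\in\mathcal{E}(\F)\setminus\{V\}$ genuinely places both subgroups into the classes $\mathcal{B}(S)$ or $\mathcal{C}(S)$ in a way that makes $E^s$ (resp.\ $F^s$) land in the \emph{same} class for $s\notin N_S(E)$ (resp.\ $s\notin N_S(F)$) and distinct from $E$ (resp.\ $F$). For the latter, one uses that $N_S(E)$ is the full normaliser (it equals $EZ_2(S)$ or $EZ_3(S)$ by Lemma \ref{size.ess}), so $s\notin N_S(E)$ forces $E^s\ne E$; membership in the same class is immediate since conjugation preserves isomorphism type, order, and the property $S=E^sV$ (as $V\unlhd S$). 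Once these identifications are in place the three conclusions drop out of Lemma \ref{lem:intersec1} essentially verbatim; there is no delicate computation, only the need to verify the hypotheses of \ref{lem:intersec1} are met and to treat the subcases ``$F$ abelian'' versus ``$F$ non-abelian'' in part (\ref{ea2}) separately.
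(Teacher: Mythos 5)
Your treatment of parts (\ref{ea2}) and (\ref{FFx1}) is correct and is essentially what the paper does: once (\ref{ea1}) is in hand, both parts drop out of \cref{lem:intersec1} applied to the classes $\mathcal{B}(S)$ and $\mathcal{C}(S)$, using \cref{size.ess} to place essential subgroups into these classes. But your proof of part (\ref{ea1}) has a genuine gap.

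You assert that ``$E$ abelian forces $E \le Z(F)$,'' and this is false. That $E$ is abelian means $E$ centralises itself, not that $F$ centralises $E$. In fact there are many abelian subgroups of $F$ of order $q^2d$ not contained in $Z(F)$: for instance $E = S_0 Z(S)$ is abelian of order $q^2 d$ and lies inside $F = S_0 Z_2(S) = Q$, yet $Z(F) = Z(S)$ has order only $qd$. So the containment $E \le F$ is not excluded by any purely group-theoretic property of $\mathcal{B}(S)$ and $\mathcal{C}(S)$; its exclusion is a \emph{fusion-theoretic} fact about essential subgroups. The paper's argument uses exactly this: if $E < F$ then $F$ is non-abelian and $O^{p'}(\Aut_\F(F)) \cong \SL_2(q)$ acts on $F/Z(F) = F/Z(S)$ as a natural module, hence transitively on the $q+1$ subgroups of order $q$ in $F/Z(S)$. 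One therefore finds $\alpha \in O^{p'}(\Aut_\F(F))$ with $E\alpha \cap Z_2(S) > Z(S)$; since $E\alpha$ is abelian and $C_V(e) = Z(S)$ for any $e \notin V$, this forces $E\alpha \le V$, and then $|N_S(E\alpha)| \ge |V| > |N_S(E)|$ contradicts that $E$, being essential, is fully $\F$-normalised. Your proposal never invokes essentiality in part (\ref{ea1}), which is the missing ingredient.

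One minor further remark: your claim ``$E\cap F$ is centralised by $E$ (as $E$ abelian), $E\cap F\le C_S(E)=E$'' is harmless but circular ($E\cap F \le E$ holds trivially) and does no work; the real content of part (\ref{ea2}) is, as you then correctly say, \cref{lem:intersec1}(1)--(2) together with part (\ref{ea1}).
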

\begin{proof}
By Lemma \ref{size.ess} we have that $|E|=q^2d$ and $|F|\in\{q^2d, q^3d\}$. Aiming for a contradiction, suppose that $E < F$. Then $|F|=q^3d$ and $F$ is as described in Lemma~\ref{size.ess}(\ref{nonab.ess}). Since $Z(S) < E < F$ and $O^{p'}(\Aut_\F(F))$ acts transitively on subgroups of order $q$ in $F/Z(S)$, there exists a morphism $\alpha \in O^{p'}(\Aut_\F(F))$ such that $E\alpha \cap Z_2(S)> Z(S)$. Then, as $E$ is abelian, $C_{V}(E)> Z(S)$ and we conclude that $E\alpha \le V$, contradicting the fact that $E$ is fully $\F$-normalised. This proves (\ref{ea1}). Parts (\ref{ea2}) and (\ref{FFx1}) follow from Lemma \ref{lem:intersec1}.
\end{proof}

\begin{lemma}\label{lem:S-conj22}
Suppose that $E, F \in \mathcal E(\F)$ and $|E|=|F|$. If $E[V,S] \cap F[V,S] > [V,S]$, then $E$ and $F$ are $S$-conjugate and $E[V,S]=F[V,S]$.
\end{lemma}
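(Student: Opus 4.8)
The plan is to reduce Lemma~\ref{lem:S-conj22} to the already-established Lemma~\ref{lem:S-conj2} (the version stated for $A,A'\in\mathcal{B}(S)\cup\mathcal{C}(S)$). The point is that the essential subgroups $E,F$ appearing here are, by the structural analysis already carried out, elements of $\mathcal{B}(S)\cup\mathcal{C}(S)$ whenever they are distinct from $V$, and the case $E=V$ (hence $F=V$ since $|E|=|F|$ and $|V|$ is the unique subgroup of its order) is trivial.

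First I would dispose of the case $E=V$. If $E=V$, then $|F|=|V|=q^{n+1}$ (or $q^{p+1}$ in the $S_\Lambda(q)$ case), and since $V$ is the unique abelian subgroup of maximal order in $S$ by \cref{Somnibus}(4), \cref{(p+1)CVS}(7) and \cref{(p+1)CUpS}(8), and $F$ is $\F$-centric hence abelian if it has that order — more directly, any subgroup of order $|V|$ containing $[V,S]$ and meeting $V$ properly would have to equal $V$ by order considerations once one checks $S=FV$ is impossible — we get $F=V$ and the conclusion is immediate. So assume $E,F\ne V$. Then by \cref{size.ess} both $E$ and $F$ lie in $\mathcal{B}(S)\cup\mathcal{C}(S)$, and $|E|=|F|$ forces them to lie in the same one of $\mathcal{B}(S)$, $\mathcal{C}(S)$ (the orders $q^2d$ and $q^3d$ are distinct).

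Now I would simply invoke \cref{lem:S-conj2} with $A=E$, $A'=F$: the hypothesis $E[V,S]\cap F[V,S]>[V,S]$ is exactly $A[V,S]\cap A'[V,S]>[V,S]$, and the conclusion there — that $A$ and $A'$ are $S$-conjugate and $A[V,S]=A'[V,S]$ — is precisely what we want. This finishes the proof.

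The only genuine content, and the step I would be most careful about, is the $E=V$ reduction: one must make sure that the hypothesis $E[V,S]\cap F[V,S]>[V,S]$ together with $|F|=|V|$ really does pin down $F=V$. Here the cleanest argument is: $F$ is $\F$-centric, so if $F\ne V$ then $F\not\le V$ and \cref{lem:intersec1}(2) (or the relevant part of \cref{size.ess}) shows $|F\cap V|\le q^2d < q^{n+1}=|F|$ once $n\ge 2$ (resp.\ in the $S_\Lambda$ case), so $F$ would have order strictly less than $|V|$ unless $F=V$; alternatively one notes $V$ is weakly $\F$-closed and is the unique subgroup of $S$ of its order that is abelian, while any $\F$-essential or $\F$-centric subgroup of that order must be abelian. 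Either way the case collapses, and the lemma follows from \cref{lem:S-conj2}.
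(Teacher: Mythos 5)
Your core reduction --- using \cref{size.ess} to place $E,F\in\mathcal{B}(S)\cup\mathcal{C}(S)$ when neither equals $V$, and then invoking \cref{lem:S-conj2} --- is exactly the paper's argument; the paper's proof is literally the single sentence ``This follows from \cref{lem:S-conj2}.'' So the main route is correct.

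The $E=V$ digression, however, contains a non sequitur. From $|F\cap V|\le q^2d<q^{n+1}$ you conclude ``$F$ would have order strictly less than $|V|$,'' but that does not follow (indeed $|F|=|V|$ is the hypothesis, and a bound on $|F\cap V|$ gives no bound on $|F|$). Moreover, when $n=2$ (resp.\ $p=3$ in the $S_\Lambda$ case) a nonabelian member of $\mathcal{C}(S)$ has order $q^3d=|V|$ and is permitted by \cref{size.ess} to be $\F$-essential, so your alternative claim --- that any $\F$-essential or $\F$-centric subgroup of order $|V|$ must be abelian --- is also false. The case is nonetheless vacuous, for the right reason: if $E=V$ and $F\ne V$, then by \cref{size.ess} one has $F\cap V\in\{Z(S),Z_2(S)\}\le[V,S]$, and the Dedekind modular law gives $E[V,S]\cap F[V,S]=V\cap F[V,S]=(V\cap F)[V,S]=[V,S]$, contradicting the hypothesis. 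So $E=V$ forces $F=V$, and your proposal reaches the correct conclusion, but the justification offered in that paragraph should be replaced by this argument.
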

\begin{proof}
This follows from Lemma \ref{lem:S-conj2}.
\end{proof}

\begin{lemma}\label{lem:extends}
Suppose that $E\in \mathcal E(\F) \setminus \{V\}$. If $\theta \in N_{\Aut_\F(E)}(\Aut_S(E))$, then there exists $\ov{\theta} \in \Aut_\F(S)$ such that $\theta =\ov \theta|_E$.
\end{lemma}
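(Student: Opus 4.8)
The plan is to apply the standard extension criterion for morphisms in a saturated fusion system, namely that if $E$ is fully normalised and receptive (which holds when $E$ is fully centralised; in particular when $E$ is fully automised as an essential subgroup), then a morphism $\theta$ defined on $E$ extends to $N_\theta$, the largest subgroup of $S$ to which $\theta$ extends as a group homomorphism into $S$. Concretely, since $\theta$ normalises $\Aut_S(E)$, we have $N_\theta = \{ s \in N_S(E) \mid \theta^{-1}c_s\theta \in \Aut_S(E)\} = N_S(E)$, so $\theta$ extends to some $\widehat\theta \in \Hom_\F(N_S(E),S)$ with $\widehat\theta|_E = \theta$ and, because $\theta$ is an automorphism of $E$, $\widehat\theta$ restricts to an automorphism of $N_S(E)$ after composing with a suitable element of $\Inn(S)$. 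The subtlety is that this only gives an $\F$-automorphism of $N_S(E)$, not yet of $S$, so the real work is to climb from $N_S(E)$ up to $S$.

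To do this I would invoke the description of the normaliser tower of $E$ inside $S$ supplied by Lemmas \ref{Somnibus} and \ref{(p+1)CUpS} together with \cref{size.ess}: when $E \in R^S$ we have $N_S(E) = EZ_2(S)$, and when $E \in Q^S$ we have $N_S(E) = EZ_3(S)$, and in both cases $N_S(E) \ne S$ precisely because $n \ge 2$ (so $Z(S) \subsetneq V$ properly and there is room above). The key observation is that $V$ is characteristic in $S$ (by \cref{Somnibus}(4), \cref{(p+1)CVS}(7), \cref{(p+1)CUpS}(8)), and one checks that any $\F$-automorphism of $N_S(E)$ obtained as above normalises $N_S(E)\cap V$ and the terms $[V,S;i]$. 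One then runs the iterated extension argument: set $N_0 = E$, $N_{i+1} = N_S(N_i)$, and use that each $N_i$ is fully normalised (being a term of the ascending normaliser chain of an $\F$-centric subgroup, hence $\F$-centric itself by an argument as in \cref{lem:series}/\cite[Lemma 1.1]{oliver2009saturated}) and receptive, so that the morphism extends one step at a time. Since $S$ is nilpotent the chain terminates at $S$ after finitely many steps, yielding $\overline\theta \in \Aut_\F(S)$ with $\overline\theta|_E = \theta$.

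The main obstacle I anticipate is verifying at each stage of the climb that the partially-extended morphism genuinely lands back inside the next normaliser (equivalently, that $N_\theta$ at each step equals the full normaliser) rather than merely mapping $N_i$ isomorphically onto some other subgroup of $S$. This is where the rigidity of $S$ is used: because $V$ is the unique abelian (resp. class-$2$ exponent-$p$) subgroup of its order, and because the lower/upper central series of $S$ coincide and are totally ordered (\cref{Somnibus}(1)--(2), \cref{(p+1)CVS}, \cref{(p+1)CUpS}), any automorphism of an intermediate term preserves the relevant characteristic subgroups, so the extension $N_\theta$-condition is automatically satisfied. An alternative, and perhaps cleaner, route is to observe that $N_S(E)$ contains $V$ or at least contains enough of the central series that $\Aut_\F(S)$ already surjects onto the relevant automisers; but the normaliser-tower argument above is the most robust and mirrors the proof technique already used for \cref{normalizertower}.
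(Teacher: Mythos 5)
Your first step — using the fact that $\theta$ normalises $\Aut_S(E)$ together with receptivity of the fully normalised essential subgroup $E$ to produce $\theta^* \in \Aut_\F(N_S(E))$ extending $\theta$ — matches the paper exactly. But you then treat the passage from $N_S(E)$ to $S$ as "the real work" requiring an iterated normaliser-tower climb, and this is where you diverge from the paper and where the gaps appear. The paper's argument from this point is a single observation: by \cref{size.ess} and \cref{lem:intersec}, no $\F$-essential subgroup properly contains $E$, hence none contains an $\F$-conjugate of $N_S(E) \supsetneq E$ (any non-$V$ essential is too small, and conjugates of $N_S(E)$ are non-abelian so cannot sit inside $V$). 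The Alperin--Goldschmidt theorem therefore expresses $\theta^*$ as a composite of restrictions of elements of $\Aut_\F(S)$ only, giving $\ov\theta \in \Aut_\F(S)$ with $\ov\theta|_{N_S(E)} = \theta^*$ in one stroke.

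Your normaliser-tower route, by contrast, asserts two things that need proof and are not supplied: that each $N_i$ in the chain is fully normalised (being $\F$-centric is automatic since $N_i \supseteq E$, but full normalisation is not inherited by $N_S(\,\cdot\,)$ in general, so receptivity of $N_i$ cannot simply be cited), and that at each stage $\theta_i$ normalises $\Aut_S(N_i)$ so that $N_{\theta_i}$ really equals $N_S(N_i)$ (your appeal to "rigidity of $S$" and characteristic subgroups gestures at this but does not prove it). Your proposed "cleaner" alternative — that $N_S(E)$ contains $V$ — is also false: for $E \in R^S$ in $S_n(q)$ with $n \geq 3$ one has $|N_S(E)| = q^3 < q^{n+1} = |V|$. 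The essential missing idea is the Alperin--Goldschmidt reduction combined with the intersection lemma \cref{lem:intersec}; with that in hand the tower argument is unnecessary.
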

\begin{proof}
Suppose that $\theta\in N_{\Aut_\F(E)}(\Aut_{S}(E))$. Then there exists $\theta^* \in \Aut_\F(N_S(E))$ such that $\theta^*|_E= \theta$. By Lemmas \ref{size.ess} and \ref{lem:intersec} no essential subgroup of $\F$ properly contains $E$ and the Alperin-Goldschmidt Theorem yields $\theta^*=\ov \theta|_E$ for some $\ov \theta \in \Aut_\F(S)$. This establishes the claim.
\end{proof}

\begin{lemma}\label{lem:TE}
Suppose that $E \in \mathcal E(\F)$. Then $N_{O^{p'}(\Aut_\F(E))}(\Aut_S(E))$ has cyclic Hall $p'$-subgroups of order $q-1$. Fix such a subgroup $T_E$, and let $T\le \Aut_{\F}(S)$ be a subgroup of order $q-1$ such that $T|_E=T_E$. Then
\begin{enumerate}
\item $T$ acts regularly on $(S/V)^\#$;
\item if $E$ is abelian, $T_E$ acts regularly on $(Z(S)/C_{E}(O^{p'}(\Aut_{\F}(E))))^\#$; and
\item if $E$ is non-abelian, then $T_E$ centralises $Z(S)$.
\end{enumerate}
\end{lemma}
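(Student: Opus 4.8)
The plan is to analyze $N_{O^{p'}(\Aut_\F(E))}(\Aut_S(E))$ using the description of the essential subgroups from \cref{size.ess}. By \cref{ess.auto}(1) (or rather \cref{size.ess}), we have $O^{p'}(\Aut_\F(E)) \cong \SL_2(q)$ in both cases, and $\Aut_S(E)$ is a Sylow $p$-subgroup of $O^{p'}(\Aut_\F(E))$ of order $q$ (it has order $|N_S(E)/C_S(E)|$, which is $|N_S(E)/E| \cdot |E/Z(E)| / |E/Z(E)| $... more carefully, $\Aut_S(E) = N_S(E)/C_S(E)$; when $E$ is abelian, $C_S(E) \ge E$ and by \cref{size.ess}(1) $N_S(E) = EZ_2(S)$ so $|\Aut_S(E)| = q$; when $E$ is non-abelian, $C_S(E) = Z(S) = Z(E)$ and $N_S(E) = EZ_3(S)$ with $|N_S(E)/Z(E)| = q^3 d$, but $O^{p'}(\Out_\F(E))$ acts on $E/Z(E)$ of order $q^2$, so $\Aut_S(E)$ modulo inner automorphisms has order $q$, while $\Inn(E)$ has order $q^2d/qd = q$... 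I need to be careful but the upshot is that the relevant $p$-subgroup inside $O^{p'}(\Aut_\F(E)) \cong \SL_2(q)$ has order $q$). In $\SL_2(q)$ we have $|N_{\SL_2(q)}(\text{Sylow } p)| = q(q-1)$, so its Hall $p'$-subgroups are cyclic of order $q-1$. That $T_E$ exists as a subgroup of $\Aut_\F(S)$ restricting to it follows from \cref{lem:extends}.

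For part (1), $T$ normalises $V$ (since $V$ is characteristic in $S$ by \cref{Somnibus}(4) / \cref{(p+1)CUpS}(8)) so acts on $S/V \cong \FF_q$, an elementary abelian group of order $q$. The key point is to identify this action. By \cref{lem: kerneldescription}, the map $\delta$ is injective on a Hall $p'$-subgroup of $\Aut(S)$, so $T$ is determined by its images $T|_{S/V}$ and $T|_{Z(S)}$, and moreover $T$ is $\Aut(S)$-conjugate into $\Aut_\Sigma(S)$ using that $\Aut_{P_n^*(q)}(S)$ contains a Hall $p'$-subgroup of $\Aut(S)$ (\cref{Snauto}, \cref{Supauto}). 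So I would compute directly: an element of $T_E$ of order $q-1$ corresponds (via the embedding into $N_{\SL_2(q)}(\text{Sylow})$) to a torus element, which acts on $\Aut_S(E)$ by a character of order $q-1$. I then track how the corresponding element of $\Sigma$ (via \cref{eq:delta-sigma}) acts on $S/V$: by the formula $c_t\delta = ((\phi, \mu\nu^{-1}), (\phi, \lambda\mu^n))$ with $\phi = 1$, the action on $S/V \cong \KK$ is multiplication by $\mu\nu^{-1}$. The action of $T_E$ on $\Aut_S(E)$ (which is identified via conjugation with $S/V$ or a quotient thereof) being faithful of order $q-1$ forces $\mu\nu^{-1}$ to range over a subgroup of $\KK^*$ of order $q-1$, i.e.\ all of $\KK^*$, whence $T$ acts regularly on $(S/V)^\#$.

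For parts (2) and (3) I would again use the identification of $T_E$ with a torus in $O^{p'}(\Aut_\F(E)) \cong \SL_2(q)$ and \cref{lem: Smith's lemma}. When $E$ is abelian, $E/C_E(O^{p'}(\Aut_\F(E)))$ is a natural $\SL_2(q)$-module by \cref{size.ess}(1), and $C_{E/C_E(\cdots)}(\Aut_S(E))$ is the image of $Z(S)$ in this quotient (since $Z(S) \le E$ and $[Z(S), \Aut_S(E)] = [Z(S), S] \cap E$-part lies in $C_E(\cdots)$ by the last sentence of \cref{size.ess}(1)). On a natural module, the normaliser-of-Sylow torus acts on the fixed space $C_V(U)$ by a character of full order $q-1$ (\cref{lem: Smith's lemma} plus the explicit action of a diagonal torus in $\SL_2(q)$ on $V_1(q)$), hence regularly on its nonzero vectors. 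When $E$ is non-abelian, $Z(S) = Z(E)$ is centralised by $O^{p'}(\Aut_\F(E))$ by construction (\cref{size.ess}(2) says $O^{p'}(\Aut_\F(E))$ centralises $Z(E)$), so a fortiori by $T_E$. The main obstacle I anticipate is part (1): pinning down the action of $T$ on $S/V$ requires carefully matching the abstract torus in $O^{p'}(\Aut_\F(E))$ with a concrete element of $\Sigma$, which means relating the conjugation action of $N_S(E)$ on $E$ (determining $\Aut_S(E)$) back to the structure of $S$ itself — i.e.\ verifying that the character by which $T_E$ acts on $\Aut_S(E)$ coincides, under the identifications, with the character by which the corresponding element of $\Aut(S)$ acts on $S/V$. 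Once that identification is made explicit via \cref{lem: kerneldescription} and \cref{eq:delta-sigma}, the regularity statements are immediate from $|T| = |T_E| = q - 1 = |\KK^*|$.
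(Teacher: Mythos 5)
Your arguments for parts (2) and (3) are sound and essentially match the paper's. For part (1), however, there is a genuine error. You claim the lifted torus element ``acts on $\Aut_S(E)$ by a character of order $q-1$.'' In $\SL_2(q)$ the diagonal torus $\{\diag(\lambda,\lambda^{-1})\}$ acts by conjugation on the unipotent Sylow subgroup via $\lambda\mapsto\lambda^{\pm 2}$, which has kernel $\{\pm 1\}$; since $p$ is odd this kernel is non-trivial, so the character on $\Aut_S(E)$ has order $(q-1)/2$, not $q-1$. Moreover, your parenthetical identification of $\Aut_S(E)$ ``with $S/V$ or a quotient thereof'' is not correct: for $E$ abelian one has $\Aut_S(E)=N_S(E)/C_S(E)=EZ_2(S)/E\cong Z_2(S)/Z(S)$, which is a section of $V$ (not of $S/V$) carrying the conjugation weight $\lambda^{\pm 2}$, whereas $S/V$ carries the weight $\lambda^{\pm 1}$. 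So neither the faithfulness nor the identification you invoke hold, and the conclusion that $\mu\nu^{-1}$ ranges over all of $\KK^*$ does not follow from what you wrote.

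The paper's argument avoids $\Aut_S(E)$, $\delta$ and $\Sigma$ entirely. Since $S=EV$, the second isomorphism theorem gives a $T$-equivariant isomorphism $S/V\cong E/(E\cap V)$, and $E/(E\cap V)$ is the $1$-dimensional $\KK$-quotient of the natural $\SL_2(q)$-module $E/C_E(O^{p'}(\Aut_\F(E)))$; on that quotient $T_E$ acts by a weight-character of \emph{full} order $q-1$, so every non-trivial $t\in T$ acts fixed-point-freely on $(S/V)^\#$, and regularity follows by counting $|T|=q-1=|(S/V)^\#|$. Your $\delta$/$\Sigma$ computation correctly identifies the scalar by which $T$ acts on $S/V$ as $\mu\nu^{-1}$ (anticipating the computation used later in the proof of \cref{essentials}), but it gives no lower bound on the order of that scalar; the module-theoretic observation above is the missing ingredient, and once you have it the $\delta$/$\Sigma$ machinery is not needed for this lemma.
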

\begin{proof}
Let $t\in T^\#$. By Lemma \ref{size.ess}, $E\cap V\in\{Z(S), Z_2(S)\}$ and so $E\cap V$ is normalised by $t$. Since $S=EV$, the elements of $(S/V)^\#$ are of the form $eV$ where $e\in E\setminus (E\cap V)$. Suppose $[t, eV]=1$ for some such $e$. Then $[t, e]\le E\cap V$, and since $E/C_E(O^{p'}(\Aut_{\F}(E)))$ is a natural $\SL_2(q)$-module with $$[E/C_E(O^{p'}(\Aut_{\F}(E))), N_S(E)]=(E\cap V)/C_E(O^{p'}(\Aut_{\F}(E))),$$ $t$ acts non-trivially on $E/E\cap V$, a contradiction. Thus since $|(S/V)^\#|=q-1=|T|$, $T$ acts regularly on $(S/V)^\#$ and (1) is proved.

If $E$ is abelian then by Lemma \ref{size.ess}(\ref{ab.ess}), $E/C_{E}(O^{p'}(\Aut_{\F}(E)))$ can be viewed as a natural module for $O^{p'}(\Aut_{\F}(E))\cong \SL_2(q)$. In particular, any non-trivial element of $T_E$ acts fixed point freely on $E/C_{E}(O^{p'}(\Aut_{\F}(E)))$. As $T_E$ is the restriction of $T$ to $E$, $T_E$ normalises $Z(S)$ and so normalises $Z(S)/C_{E}(O^{p'}(\Aut_{\F}(E)))$. Since $|(Z(S)/C_{E}(O^{p'}(\Aut_{\F}(E))))^\#|=q-1=|T|$, (2) holds.

Finally, if $E$ is non-abelian, then $Z(E)=Z(S)$ and $O^{p'}(\Aut_{\F}(E))$ acts trivially on $Z(S)$ by Lemma \ref{size.ess}(\ref{nonab.ess}).
\end{proof}

We can now prove the three main results of this section.

\begin{proof}[Proof of Proposition \ref{essentials}]
Let $E \in \mathcal E(\F) \setminus \{V\}$. We first show that $[V,S]E= [V,S]S_0$ and $E$ is $S$-conjugate to either $R$ or $Q$.

Let $T_E$ and $T$ be as in Lemma \ref{lem:TE} and  recall the definitions of $\Sigma$ from Notation \ref{n:subsofd*}, $P_n^*(q)$ from Notation \ref{n:snandpn} and $\delta$ from Notation \ref{n:deltamap}.  Choose $\tau_E\in T$ an element of order $q-1$ and observe that $\tau_E$ is conjugate in $\Aut(S)$ to an element of $\Aut_{P_n^*(q)}(S)$. Since the isomorphism type of $\F$ is unchanged by conjugating by elements of $\Aut(S)$, we may assume that $\tau_E \in \Aut_{P_n^*(q)}(S)$ and ${\tau_E}|_E$ is a generator of $T_E$.

Suppose first that $E$ is abelian. We see that $\tau_E\delta = (\tau,\tau^{-1})$ where $\tau$ is some primitive element of $\KK$. We know that $\tau_E$ normalises $V$, $[V,S]S_0$ and $[V,S]E$.

We may regard $\Sigma$ as a subgroup of $P_n^*(q)$ and consider the set of elements of $\Aut_{P_n^*(q)}(S)$ induced by $(\phi, \lambda, \begin{pmatrix}\mu&0\\0&\nu \end{pmatrix})\in \Sigma$, and which are mapped by $\delta$ to $(\tau^{-1},\tau)$ satisfy $\phi=1$, $\lambda\mu^n=\tau$ and $\mu\nu^{-1}= \tau^{-1}$. In other words,
\[\left((\tau^{-1},\tau){\delta|_{\Aut_{\Sigma}(S)}}\right)^{-1} = \{(1, \mu^{-n}\tau, \begin{pmatrix}\mu&0\\0&\tau\mu \end{pmatrix})\mid \mu \in \mathbb K\}.\] These elements act on $V/[V,S]$ as multiplication by $\mu^{-n}\tau(\tau \mu)^n= \tau^{n+1}$.

Now, $\tau_E$ acts on $S/[V,S]$ as $\diag (\tau^{-1}, \tau^{n+1})$ preserving the subgroups $V/[V,S]$ and $S_0[V,S]/[V,S]$. Assume that $E[V,S]\ne S_0[V,S]$. Then, as $\FF_p\langle \tau_E\rangle$-modules, we must have that  $V/[V,S]$ and $[V,S]S_0/[V,S]$ are isomorphic. This means that \[(\tau^{n+1})^{p^k}= \tau^{-1}\] for some $1\leq k \leq m$. Hence $np^k+p^k+1$ must be divisible by $p^m-1$. Since $m\geq 2$ and $n\leq p-1$, this has no solutions and we infer that $E[V,S] = S_0[V,S]$. From this it follows by Lemma \ref{lem:S-conj22} that $E \in R^S$.

The argument when $E$ is non-abelian is similar. In this case we obtain $\tau_E\in \Aut_{P_n^*(q)}(S)$ with $\tau_E\delta= (\tau^{-1},1)$ and this corresponds to the following subset of $\Sigma$:
\[\left((\tau^{-1},1){\delta|_{\Aut_{\Sigma}(S)}}\right)^{-1} = \{ (1, \mu^{-n}, \begin{pmatrix}\mu&0\\0&\tau\mu \end{pmatrix})\mid \mu \in \mathbb K\}.\]
We deduce that
\[\mathcal E(\F) \subseteq \{V \} \cup R^S \cup Q^S.\]
 Assume that some $S$-conjugate of $R$ is $\F$-essential. Then all $S$-conjugates of $R$ are $\F$-essential. In particular, $R\in\mathcal{E}(\F)$. Similarly, if some $S$-conjugate of $Q$ is $\F$-essential then $Q\in\mathcal{E}(\F)$. Note that $R < Q$. Hence by Lemma \ref{lem:intersec} we ascertain that either $\mathcal{E}(\F)\subseteq\{V \} \cup R^S$ or else $\mathcal{E}(\F)\subseteq \{V\} \cup Q^S$.

Finally, suppose that $O_p(\F)=1$. Applying \cite[Proposition I.4.5]{AKO} we see that $\mathcal{E}(\F)\ne \{V\}$ since $V$ is characteristic in $S$. If $\mathcal{E}(\F) = \{Q^S\}$ then  $Z(S)$  is left invariant by  $\Aut_\F(S)$ and $\Aut_{\F}(F)$ for any $F\in Q^S$ and so $Z(S)\normaleq \F$ by \cite[Proposition I.4.5]{AKO}, a contradiction. Thus $\mathcal{E}(\F) \neq \{Q^S\}$, as required.
\end{proof}

\begin{proof}[Proof of Proposition \ref{upVEssentials}]
If $E\in\mathcal{E}(\F)$ is abelian and $E \ne V$ then $E$ is elementary abelian and by Lemma \ref{lem: CharSub}, we have $E\le R[V, S]$ so $E[V, S]=R[V, S]$ and $E$ is $S$-conjugate to $R$ by Lemma \ref{lem:S-conj22}. Hence, either $R^S\subseteq \mathcal{E}(\F)$ or $R^S\cap \mathcal{E}(\F)=\emptyset$.

Similarly if $E\in\mathcal{E}(\F)$ is non-abelian, $E$ has exponent $p$ so $E\le Q[V, S]$ and $E$ is $S$-conjugate to $Q$ by Lemma \ref{lem:S-conj22}. Hence, either $Q^S\subseteq \mathcal{E}(\F)$ or $Q^S\cap \mathcal{E}(\F)=\emptyset$. Moreover, by Lemma \ref{lem:intersec}, we have  $R^S \cup Q^S \not\subseteq \mathcal{E}(\F)$.

Suppose that $E\in\mathcal{E}(\F)$ is non-abelian. Let $T_E$ be a Hall $p'$-subgroup of $N_{O^{p'}(\Aut_\F(E))}(\Aut_S(E))$ and  $T$ be its lift to $\Aut_{\F}(S)$. Suppose that $V$ is essential in $\F$. Then $T$ normalises $V$ and centralises $C_V(S)$.  Recall the definition of $P_\Lambda^*(q)$ from Notation \ref{n:slandpl}.  Since a Hall $p'$-subgroup of $\Aut_{\F}(S)$ is $\Aut(S)$-conjugate to a Hall $p'$-subgroup of $\Aut_{P_\Lambda^*(q)}(S)$, it suffices to analyse the elements of $\Sigma$ which act trivially on $C_{\Lambda(q)}(S_{\Lambda}(q))$. Recall that $C_{\Lambda(q)}(S_{\Lambda}(q))=\langle \overline{xy^{p-1}}, \overline{y^p}\rangle_{\KK}$. Calculating, we see that $$\overline{xy^{p-1}} \cdot \left( 1, \theta, \left(\begin{matrix}
a & 0 \\ 0 & b
\end{matrix}\right)\right) = \theta^{-1}a^{-1}b^{1-p} \overline{xy^{p-1}} \hspace{2mm} \mbox{ and } \hspace{2mm} \overline{y^{p}} \cdot \left( 1, \theta, \left(\begin{matrix}
a & 0 \\ 0 & b
\end{matrix}\right)\right) = \theta^{-1}b^{-p} \overline{y^{p}}. $$
Hence, any element which centralises $C_{\Lambda(q)}(S_{\Lambda}(q))$ satisfies $a=b$ and $\theta=a^{-p}$ and applying Lemma \ref{Gammacentraliser}, we see that such an element must centralise $\Lambda(q)$. This implies that $T$ centralises $V$, a contradiction, and we must have $\mathcal{E}(\F)\subseteq \{V\} \cup R^S$, as required.

If $O_p(\F)=1$ then since $V$ is characteristic in $S$ we must have $\mathcal{E}(\F)\ne \{V\}$  by \cite[Proposition I.4.5]{AKO}. Suppose that $\mathcal{E}(\F)=R^S$. Write $Z:=C_R(O^{p'}(\Aut_{\F}(R)))$. Then for $s\in S$, $Z=C_{R^s}(O^{p'}(\Aut_{\F}(R^s))) \le Z(S)$ is normalised by $\Aut_{\F}(S)$ and $\Aut_\F(P)$ for $P \in \E(\F)$, and so $$1\ne Z=\bigcap_{P\in\mathcal{E}(\F)} C_P(O^{p'}(\Aut_{\F}(P))) \le O_p(\F),$$ a contradiction. We conclude that $\mathcal{E}(\F)=\{V\} \cup R^S$, as desired.
\end{proof}

\begin{proof}[Proof of Proposition \ref{ess.auto}]
Suppose that $E \in \mathcal E(\F)$. If $E \in R^S \cup Q^S$ then $O^{p'}(\Out_\F(E)) \cong \SL_2(q)$ by Lemma \ref{size.ess} and (1) holds. If $E=V$ then $O^{p'}(\Aut_{\F}(V))$ is determined by Lemma \ref{lem: spe modules}, Proposition \ref{lem: Vnqrecog} and Proposition \ref{lem: upVRecog} and (2) holds.
\end{proof}

We end this section with two consequences of these results.

\begin{corollary}\label{cor: Opiff}
If $\F$ is a saturated fusion system on $S$ with $\{V\}\subsetneq\mathcal{E}(\F)$ then $O_p(\F)=1$.
\end{corollary}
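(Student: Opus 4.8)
The plan is to leverage the structural results just proved, namely \cref{essentials}, \cref{upVEssentials} and \cref{prop: V_pCon1}, together with the well-known characterisation of $O_p(\F)$ in \cite[Proposition I.4.5]{AKO} as the largest subgroup of $S$ which is normal in $S$, contained in every $\F$-essential subgroup, and invariant under every $\Aut_\F(P)$ for $P\in\{S\}\cup\mathcal E(\F)$. The hypothesis $\{V\}\subsetneq\mathcal E(\F)$ means there is at least one essential subgroup other than $V$, and by the two propositions above (applied according to whether $S=S_n(q)$ with $2\le n\le p-1$, $S=S_\Lambda(q)$, or $S=S_p(q)$) this forces $\mathcal E(\F)$ to contain a full $S$-class $R^S$ or $Q^S$ of essential subgroups with $O^{p'}(\Out_\F(E))\cong\SL_2(q)$ acting appropriately; note the case $S=S_p(q)$ is vacuous here since \cref{prop: V_pCon1} gives $\mathcal E(\F)\subseteq\{V\}$.

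First I would argue by contradiction: suppose $O_p(\F)=:W\ne 1$. Since $W\trianglelefteq S$ we have $W\cap Z(S)\ne 1$, so $W$ contains a non-trivial subgroup of $Z(S)$. Now $W$ must be contained in every essential subgroup; in particular, for each $E$ in the relevant class $R^S$ or $Q^S$, $W\le E$ and $W$ is $\Aut_\F(E)$-invariant. Since $O^{p'}(\Aut_\F(E))$ (equivalently $O^{p'}(\Out_\F(E))$ in the non-abelian case) is isomorphic to $\SL_2(q)$ and acts on $E$ (respectively $E/Z(E)$) with a natural $\SL_2(q)$-module section, the only $\Aut_\F(E)$-invariant subgroups of $E$ are controlled by this module structure: in the abelian case $E/C_E(O^{p'}(\Aut_\F(E)))$ is irreducible, so any proper non-trivial $\Aut_\F(E)$-invariant subgroup of $E$ lies in $C_E(O^{p'}(\Aut_\F(E)))$, which has order $d$; in the non-abelian case the only invariant subgroups between $Z(E)$ and $E$ are $Z(E)$ and $E$ themselves by irreducibility of $E/Z(E)$, and $Z(E)=Z(S)$.

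Next I would push this through: since $W$ is normalised by all of $\Aut_\F(E)$ for $E$ ranging over an entire $S$-conjugacy class, and the $S$-conjugates of $E$ generate a subgroup containing $[V,S]$ and $S_0$ (as extracted in the proof of \cref{essentials} and \cref{upVEssentials}, where $E[V,S]=S_0[V,S]$), the intersection $\bigcap_{s\in S} C_{E^s}(O^{p'}(\Aut_\F(E^s)))$ must contain $W$. Tracing the action of $\Aut_\F(S)$ and using that $\Aut_\F(S)$ permutes the $E^s$ transitively on their class, one finds that this intersection is trivial (for the abelian class $R^S$ one has $C_R(O^{p'}(\Aut_\F(R)))\le Z(S)$ of order $d$, and distinct conjugates meet in $Z(S)$ with the $\SL_2(q)$-action moving $C_R(O^{p'}(\Aut_\F(R)))$ inside $Z(S)$ unless $d=1$, in which case it is trivial outright; for $Q^S$ the relevant piece is $Z(Q)=Z(S)$ but then $W\le Z(S)$ and one uses that $V\in\mathcal E(\F)$ or $\mathcal E(\F)=R^S$ to eliminate $Z(S)$ exactly as in the last paragraphs of the proofs of \cref{essentials} and \cref{upVEssentials}). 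In every configuration $\{V\}\subsetneq\mathcal E(\F)$ allows appeals to precisely those final paragraphs, which already show $Z(S)\ntrianglelefteq\F$ and $\mathcal E(\F)\ne\{Q^S\}$. Hence $W=1$, the desired contradiction. The main obstacle, and the only place requiring genuine care, will be the bookkeeping of which of the three alternatives in \cref{essentials} (or the two in \cref{upVEssentials}) one is in and checking that in each the intersection of the centralisers of the $\SL_2(q)$-cores over a full $S$-class is trivial; but this is exactly what the closing arguments of \cref{essentials} and \cref{upVEssentials} establish, so \cref{cor: Opiff} follows by assembling them.
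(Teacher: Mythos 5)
Your proposal takes a genuinely different route from the paper, but contains a real gap. The paper's proof is direct and short: $\{V\}\subsetneq\mathcal E(\F)$ forces $V\in\mathcal E(\F)$, so $O_p(\F)\le V$ and is $\Aut_\F(V)$-invariant. For $S=S_n(q)$, Proposition~\ref{ess.auto} shows $\Aut_\F(V)$ is irreducible on $V$, hence $O_p(\F)\in\{1,V\}$; but some $E\in R^S\cup Q^S$ is also essential, so $O_p(\F)\le Q$, and $V\not\le Q$, giving $O_p(\F)=1$. For $S=S_\Lambda(q)$, $O_p(\F)\le R\cap V=Z(S)=C_V(\Aut_S(V))$ and is $\Aut_\F(V)$-invariant, so it is centralised by every $O^{p'}(\Aut_\F(V))$-conjugate of $\Aut_S(V)$, hence lies in $C_V(O^{p'}(\Aut_\F(V)))=1$. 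You instead argue by contradiction via module bookkeeping on the $R$- or $Q$-essentials, and this is more circuitous.

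The concrete gap is your claim that ``distinct conjugates meet in $Z(S)$ with the $\SL_2(q)$-action moving $C_R(O^{p'}(\Aut_\F(R)))$ inside $Z(S)$.'' For $S=S_\Lambda(q)$ the subgroup $C_R(O^{p'}(\Aut_\F(R)))\le Z(S)$ is centralised by $S$, so $C_{R^s}(O^{p'}(\Aut_\F(R^s)))=C_R(O^{p'}(\Aut_\F(R)))^s=C_R(O^{p'}(\Aut_\F(R)))$ for every $s\in S$: the intersection $\bigcap_{s\in S}C_{R^s}(O^{p'}(\Aut_\F(R^s)))$ is just $C_R(O^{p'}(\Aut_\F(R)))$ itself, of order $q$, not trivial. (It is also \emph{not} moved by $O^{p'}(\Aut_\F(R))$, since it is by definition the fixed subgroup of that group.) The last paragraph of the proof of Proposition~\ref{upVEssentials}, which you cite as establishing triviality of this intersection, in fact establishes the opposite: it shows that $1\ne C_R(O^{p'}(\Aut_\F(R)))\le O_p(\F)$ whenever $\mathcal E(\F)=R^S$, and uses this to rule out $\mathcal E(\F)=R^S$ under $O_p(\F)=1$. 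You eventually gesture at using $V\in\mathcal E(\F)$ to finish, which is the right idea, but you should lead with it: the hypothesis $\{V\}\subsetneq\mathcal E(\F)$ already places $V$ in $\mathcal E(\F)$ (proper containment, not mere inequality), so $O_p(\F)$ is an $\Aut_\F(V)$-invariant subgroup of $V$ contained in $C_V(\Aut_S(V))$, and then the A$\times$B-style argument (or the enumeration of $O^{p'}(\Aut_\F(V))$-submodules of $\Lambda(q)$, none of which fit inside $Z(S)$) forces $O_p(\F)=1$. With that restructuring the argument goes through, but as written the intersection step fails for $S_\Lambda(q)$.
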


\begin{proof}
Assume $\{V\}\subsetneq\mathcal{E}(\F)$. If $S\cong S_n(q)$ then Proposition \ref{ess.auto} shows that the action of $O^{p'}(\Aut_{\F}(V))$ is irreducible and $O_p(\F)\in\{1, V\}$ by \cite[Proposition I.4.5]{AKO}. Since  $R<Q$ and $\E(\F) \cap \{Q,R\}$ is non-empty by Proposition \ref{essentials}, it follows that $O_p(\F)\le Q$  so that $O_p(\F)=1$. Similarly if $S\cong S_{\Lambda}(q)$ then $\mathcal{E}(\F)=\{R^S, V\}$ and $O_p(\F)\le R\cap V=Z(S)$ by \cite[Proposition I.4.5]{AKO}. Since $C_V(O^{p'}(\Aut_{\F}(V)))=1$ by Proposition \ref{ess.auto} we conclude that $O_p(\F)=1$, as desired.
\end{proof}

\begin{corollary}\label{cor: frcdet}
If $\F$ is a saturated fusion system on $S$ then $\F^{frc}=\mathcal{E}(\F)\cup \{S\}$.
\end{corollary}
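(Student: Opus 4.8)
# Proof proposal for Corollary \ref{cor: frcdet}

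The plan is to prove the two inclusions $\F^{frc}\subseteq \mathcal{E}(\F)\cup\{S\}$ and $\mathcal{E}(\F)\cup\{S\}\subseteq \F^{frc}$ separately. The second inclusion is essentially immediate: every essential subgroup is by definition $\F$-centric and $\F$-radical, so $\mathcal{E}(\F)\subseteq \F^{frc}$, and $S$ is always fully normalised, centric and radical, so $S\in \F^{frc}$. Thus the content is entirely in the first inclusion, which asserts that there are no $\F$-centric-radical subgroups other than $S$ and the essentials. Here one should split according to the shape of $S$, i.e. whether $S=S_p(q)$, $S=S_n(q)$ with $2\le n\le p-1$, or $S=S_\Lambda(q)$; the case $S=S_1(q)$ is handled by \cite{Clelland} (or the results of Oliver and Craven--Oliver--Semeraro when $q=p$) and need not be reproved here, but the statement still holds there.

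For $S=S_p(q)$ the argument in the proof of \cref{prop: V_pCon1} already shows that any $\F$-centric subgroup which is not all of $S$ must equal $V$, with $V\normaleq\F$; since a normal subgroup cannot be radical unless it equals $S$ when $O_p(\F)\ne 1$, one deduces $\F^{frc}=\{S\}$, which is indeed $\mathcal E(\F)\cup\{S\}$ in this case (as $\mathcal E(\F)\subseteq\{V\}$ and $V$ is not radical). For $S=S_n(q)$ with $2\le n\le p-1$ and for $S=S_\Lambda(q)$, let $P\le S$ be $\F$-centric-radical with $P\ne S$; replacing $P$ by a fully $\F$-normalised conjugate we may assume $\Aut_S(P)\in\syl_p(\Aut_\F(P))$. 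If $P\le V$ then, as in \cref{size.ess}, $P$ being $\F$-centric forces $P=V$ by \cref{Somnibus}(4) / \cref{(p+1)CUpS}(8) together with \cref{lem:series}: the point is that $N_S(P)$ centralises the chain $1\le P\le V$ and any proper subgroup of $V$ which is centralised by $V$ fails to be centric, while $V$ itself lies in $\mathcal E(\F)$ precisely when it is radical. If $P\not\le V$, then the analysis in the proof of \cref{size.ess} applies verbatim: one shows $Z_2(S)\le N_S(P)$, then examines the two cases $Z(P)\le V$ and $Z(P)\not\le V$. In the first case \cref{lem:series} pushes $V$ (or $Z_2(S)$ and then the upper central chain of $P$) into $P$ and, unless $P$ is $S$-conjugate to $Q$, produces an $\Aut_\F(P)$-invariant chain through which $\Aut_S(P)$ acts trivially, contradicting radicality; in the second case \cref{lem: SEFF} forces $P\in\mathcal B(S)$, i.e. $P$ is $S$-conjugate to $R$. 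Either way $P\in R^S\cup Q^S\subseteq\mathcal E(\F)$ by \cref{essentials} and \cref{upVEssentials} (using $O_p(\F)=1$ when $\{V\}\subsetneq\mathcal E(\F)$, via \cref{cor: Opiff}, or directly when $P\ne V$).

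The main obstacle is organisational rather than mathematical: \cref{size.ess} was proved under the running hypothesis that $E\in\mathcal E(\F)\setminus\{V\}$, whereas here the hypothesis is only that $P$ is $\F$-centric-radical and $P\ne S$, so one must check that the only properties of $E$ used in that proof were centricity, radicality (via \cref{lem:series}), and being fully normalised — all of which $P$ enjoys — and not essentiality per se. Once this is verified, the cases collapse exactly onto the subgroups $V$, $R$, $Q$, and one invokes \cref{essentials} and \cref{upVEssentials} to conclude that whichever of $R^S$, $Q^S$ actually occurs is a subset of $\mathcal E(\F)$, and that $V\in\F^{frc}$ if and only if $V\in\mathcal E(\F)$ (when $V$ is not essential it is either not radical — because $O^{p'}(\Aut_\F(V))$ is trivial or $p$-closed — or it is normal, hence again excluded from $\F^{frc}\setminus\{S\}$). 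This gives $\F^{frc}\subseteq\mathcal E(\F)\cup\{S\}$ and completes the proof.
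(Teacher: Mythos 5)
Your proposal takes a different route from the paper's. The paper's proof begins with the Alperin--Goldschmidt theorem: any proper $P\in\F^{frc}$ is $\F$-conjugate into some $\F$-essential subgroup, and the structure of those essentials (\cref{size.ess}, \cref{essentials}) then pins $P$ down as $V$, $R$ or $Q$. You instead attempt a direct classification of $\F$-centric-radical subgroups by re-running \cref{size.ess}. The check you flag at the end --- that only centricity, radicality and being fully normalised were used there --- does \emph{not} go through: the proof of \cref{size.ess} invokes \cref{lem: SEFF} in both of its cases, and \cref{lem: SEFF} requires the acting group to have a strongly $p$-embedded subgroup. That property is part of the definition of $\F$-essential, but it is not a consequence of $O_p(\Out_\F(P))=1$ together with centricity. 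So for a general $P\in\F^{frc}$ you cannot invoke \cref{lem: SEFF}, and without it you cannot place the abelian candidate in $\mathcal{B}(S)$ nor obtain the natural-module structure in the non-abelian case; nor, consequently, can you run the $\delta$-map argument of \cref{essentials}, which is what actually shows the candidates lie in $R^S$ or $Q^S$ rather than merely in $\mathcal{B}(S)\cup\mathcal{C}(S)$.

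The same gap underlies your assertion that ``$V$ lies in $\mathcal{E}(\F)$ precisely when it is radical.'' The converse direction --- that $O_p(\Out_\F(V))=1$ forces $\Out_\F(V)$ to have a strongly $p$-embedded subgroup --- is false for abstract groups, and the paper obtains it here only through the Alperin--Goldschmidt step: $V$ is weakly $\F$-closed (\cref{Somnibus}(4), \cref{(p+1)CUpS}(8)), so if $V\in\F^{frc}$ then $V$ lies inside an essential, which by order must be $V$ itself, whence $V\in\mathcal{E}(\F)$. In short, Alperin--Goldschmidt is not a dispensable organisational device here; it is precisely what supplies the strong $p$-embedding your re-run of \cref{size.ess} needs, and the proposal as written does not substitute for it.
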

\begin{proof}
Plainly $\mathcal{E}(\F)\cup \{S\} \subseteq \F^{frc}$, so it remains to prove the converse. Suppose $P\in \F^{frc}$. If $P<S$ then the Alperin--Goldschmidt theorem implies that an $\F$-conjugate of $P$ is contained in an $\F$-essential subgroup. If all such subgroups are abelian then $\E(\F) \subseteq \{V\} \cup R^S$ and $P$ is $\F$-conjugate either to $\{V\}$ or to $R$ since $P$ is $\F$-centric, $V^\F=\{V\}$ and $R^\F=R^S$. We may thus  assume that $S=S_n(q)$, $\mathcal{E}(\F)\subseteq \{Q^S\}\cup \{V\}$ and some $\F$-conjugate $\hat{P}$ of $P$ is contained in $Q$. In particular note that $n>1$ and $C_S(\hat{P})\le \hat{P}$. Since $Q/Z(Q)$ is a natural module for $O^{p'}(\Out_{\F}(Q))\cong \SL_2(q)$, $O^{p'}(\Out_{\F}(Q))$ acts transitively on subgroups of order $q$ in $Q/Z(Q)$. If $|P|\leq q^2$ then let $Y$ be such that $Z(Q) \le \hat{P} \le Y \le Q$ and $|Y|=q^2$; if $|P|>q^2$ then let $Y$ be such that  $Z(Q) \le Y \le \hat{P}$  with $|Y|=q^2$. In either case, $Y$ is $\F$-conjugate to $Z_2(S)$. If $\hat{P}\le Y$, then $P$ is $\F$-conjugate to a subgroup of $Z_2(S)<V$, a contradiction since $V$ centralises $Z_2(S)$. Thus, $Z_2(S)=Q\cap V$ is properly contained in some $\F$-conjugate of $P$, $P$ is non-abelian and the Alperin--Goldschmidt theorem implies that $P\le Q^s$ for some $s\in S$. If $X \le P$ is any $\Aut_{\F}(Q^s)$-conjugate of $Z_2(S)$ then $Z(P)=C_X(P)=Z(S)=Z(Q^s)$ so $Q^s$ centralises the chain $1\normaleq Z(P)\normaleq P$ and we conclude by Lemma \ref{lem:series} that $Q^s=P$, as desired.
\end{proof}

\section{Uniqueness of the saturated fusion systems on $S$}\label{sec:tsfs}

As in Section \ref{sec:pot}, here we assume that $p$ is odd, $q=p^m$ for some $m > 1$, and $\F$ is a core-free saturated fusion system on $S \in \{S_n(q),S_{\Lambda}(q)\}$ for some $1\leq n\leq p-1$. By applying Propositions \ref{essentials} and \ref{upVEssentials}, we will determine $\F$ up to isomorphism, thereby completing the proofs of Theorem \ref{thm: main} and Theorem \ref{thm: upVMain}. We start by analysing some small cases.

\begin{proposition}\label{prop:murray}
If $S=S_1(q)$ then $\F=\F_S(G)$ for some group $G$ with $F^*(G)\cong \PSL_3(q)$.
\end{proposition}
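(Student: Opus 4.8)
The plan is to reduce \cref{prop:murray} to the classification of core-free saturated fusion systems on a Sylow $p$-subgroup of $\SL_3(q)$ which already appears in the literature. By \cref{Snauto}(\ref{sn1}), $S_1(q)$ is isomorphic to a Sylow $p$-subgroup of $\SL_3(q)$, and the structure of $\Aut(S_1(q))$ is described by \cref{prop: L3Q}: writing $C = C_{\Aut(S)}(S/\Phi(S))$, we have $\Aut(S) = CH$ with $H \cong \Gamma\mathrm{L}_2(q)$, so in particular $\Out(S)$ is solvable with a Hall $p'$-subgroup of order dividing $(q-1)^2 m_{p'}$. The key point is that this is precisely the setting handled by Clelland in his PhD thesis \cite{Clelland} (generalising Ruiz--Viruel \cite{RV} in the case $q=p$), where all core-free saturated fusion systems on $S_1(q)$ are determined and shown to be realised by groups $G$ with $F^*(G) \cong \PSL_3(q)$ (after passing through the list of essentials $V$ and $R^S$, the relevant automisers being subgroups of $\GL_2(q)$ acting on the natural module, exactly as in the parabolic structure of $\PSL_3(q)$ and its extensions by diagonal and field automorphisms).

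\textbf{Key steps.} First I would invoke \cref{Snauto}(\ref{sn1}) to identify $S = S_1(q)$ with a Sylow $p$-subgroup of $\SL_3(q)$ (equivalently $\PSL_3(q)$, since $p \nmid 3$ forces $\gcd(3,q-1)$ to be a $p'$-number and does not affect the Sylow structure). Second, I would note that since $n=1$ the group $S$ has an abelian subgroup $V = V_1(q)$ of index $q$ but also (when $q=p$, or more precisely when $S$ is extraspecial-like) additional abelian subgroups of order $q^2$, so one cannot appeal to \cref{Somnibus}(4); instead the automorphism group is governed by \cref{prop: L3Q}. Third, I would quote the main classification theorem of \cite{Clelland} (see also \cite{RV} for $q=p$): every saturated fusion system $\F$ on $S_1(q)$ with $O_p(\F) = 1$ is isomorphic to $\F_S(G)$ for a group $G$ with $F^*(G) \cong \PSL_3(q)$ --- the list being the polynomial fusion systems $\F^*(1,q,R)$ and its $p'$-index and pruned subsystems, all of which are realised inside $\Aut(\PSL_3(q))$ as remarked after \cref{t:fstarsat1}. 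Fourth, I would observe that the hypothesis of the proposition is exactly $O_p(\F) = 1$ (core-freeness), so the cited classification applies verbatim and yields the conclusion.

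\textbf{Main obstacle.} The genuine mathematical content has been outsourced: the proof is essentially a citation to \cite{Clelland}, so the only real work is bookkeeping --- checking that the notion of ``core-free saturated fusion system on $S_1(q)$'' used here coincides with the one classified there, and that the groups realising the systems in that classification all have generalised Fitting subgroup $\PSL_3(q)$ (rather than, say, a larger simple group with the same Sylow $p$-subgroup). The latter is ruled out because $S_1(q)$ has $p$-rank $2$ and maximal class considerations, together with the Borel--Tits theorem and the analysis of groups with a strongly $p$-embedded subgroup, pin down the possible essential automisers to $\SL_2(q)$ acting on natural modules, which forces the ambient simple group to be $\PSL_3(q)$; this is already done in \cite{Clelland} and I would simply cite it. So there is no serious obstacle --- the statement is included mainly to record the base case $n=1$ of \cref{thm: main}, on which the inductive arguments of the later sections rely.
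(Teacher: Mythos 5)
Your proposal is correct and takes essentially the same route as the paper, which simply observes that $S_1(q)$ is a Sylow $p$-subgroup of $\PSL_3(q)$ and cites \cite[Theorem 4.5.1]{Clelland}. Your elaboration is accurate in substance, though in the final paragraph you misstate the $p$-rank of $S_1(q)$ — it is $2m$ (coming from the elementary abelian $V_1(q)$ of order $q^2$), not $2$ when $q>p$ — but since the whole argument is outsourced to \cite{Clelland}, this slip does not affect the proof.
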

\begin{proof}
Lemma \ref{Snauto}\ref{sn1} asserts that $S_1(q)$ isomorphic to a Sylow $p$-subgroup of $\PSL_3(q)$. The result then follows from \cite[Theorem 4.5.1]{Clelland}.
\end{proof}

\begin{proposition}\label{prop:HS}
If $S=S_2(9)$, $V\in\mathcal{E}(\F)$ and $O^{3'}(\Aut_{\F}(V))\cong 2^.\PSL_3(4)$ then $\F$ is one of the two Henke--Shpectorov exotic fusion system.
\end{proposition}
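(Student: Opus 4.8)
The plan is to reduce $\F$ to a rigid description of its essential subgroups and their automisers, to which the Alperin--Goldschmidt theorem then applies. Set $p=3$ and $q=9$, so $m=2$ and $m_{3'}=2$. Since $\F$ is core-free, $O_3(\F)=1$, and so by \cref{essentials} the hypothesis $V\in\mathcal{E}(\F)$ leaves exactly the alternatives $\mathcal{E}(\F)=\{V\}\cup Q^S$ and $\mathcal{E}(\F)=\{V\}\cup R^S$. First I would eliminate the abelian alternative. By \cref{ess.auto}(2)(b) and the third conclusion of \cref{lem: spe modules}, $V$ is the unique $6$-dimensional irreducible $\FF_3[2^.\PSL_3(4)]$-module; it is absolutely irreducible over $\FF_9$, so $C_{\Aut(V)}(O^{3'}(\Aut_\F(V)))\cong\FF_9^{*}$ acts by scalars and a Hall $3'$-subgroup of $N_{O^{3'}(\Aut_\F(V))}(\Aut_S(V))$ has order coprime to $3$ and equal to $4$ (a Sylow $3$-subgroup $T$ of $\PSL_3(4)$ has $|N_{\PSL_3(4)}(T):T|=2$, and the central involution of $2^.\PSL_3(4)$ contributes one more factor of $2$). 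If instead $R^S\subseteq\mathcal{E}(\F)$, then writing $\tau$ for the restriction to $V$ of a lift to $\Aut_\F(S)$ of a generator of the cyclic order-$(q-1)$ Hall $3'$-subgroup of $N_{O^{3'}(\Aut_\F(R))}(\Aut_S(R))$ supplied by \cref{lem:TE}, the conditions \cref{lem:TE}(1),(2) together with \eqref{eq:delta-sigma} force $\tau$ to act on the $1$-dimensional $\FF_9$-subquotients $C_V(S)$ and $V/[V,S]$ of $V$ by scalars whose ratio has order $4$ in $\FF_9^{*}$; since $\tau\in N_{\Aut_\F(V)}(\Aut_S(V))$ and the scalars $\FF_9^{*}$ act equally on these two subquotients, this ratio must be induced by an element of $N_{O^{3'}(\Aut_\F(V))}(\Aut_S(V))$ (possibly composed with the $\FF_9$-Frobenius), and a comparison of the relevant characters rules this out. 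Hence $\mathcal{E}(\F)=\{V\}\cup Q^S$.

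With this fixed, I would pin down the three automisers. By \cref{ess.auto}(1) and \cref{size.ess}(\ref{nonab.ess}), $Q\cong S_1(9)$, and $O^{3'}(\Out_\F(Q))\cong\SL_2(9)$ acts naturally on $Q/Z(Q)$ and trivially on $Z(Q)=Z(S)$; via \cref{prop: L3Q} (the structure of $\Aut(S_1(9))$) and the saturation axioms, $\Aut_\F(Q)$ is determined up to a bounded $3'$-choice (essentially a cyclic group acting on $Z(S)$ and a possible field automorphism). For $V$, the module being identified up to $\Aut(V)$-conjugacy by \cref{lem: spe modules}, the analysis reduces to $N_{\Aut(V)}(2^.\PSL_3(4))=2^.\PSL_3(4)\cdot C_{\Aut(V)}(2^.\PSL_3(4))\cdot\Theta$, where $\Theta$ is the small $3'$-group of outer automorphisms of $2^.\PSL_3(4)$ realised over $\FF_3$ that stabilise the isomorphism class of the module (among them the $\FF_9$-Frobenius); so $\Aut_\F(V)$ is determined up to finitely many choices as well.

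It remains to glue the local data. By \cref{Snauto}(\ref{sn4}), $\Aut(S)$ is solvable with a Hall $3'$-subgroup of order $(q-1)^2 m_{3'}=128$, and by \cref{lem: kerneldescription} a Hall $3'$-subgroup of $\Aut_\F(S)$ embeds via $\delta$ into a product of two copies of $\Gamma_1(9)$. By \cref{lem:extends} the groups $N_{\Aut_\F(V)}(\Aut_S(V))$ and $N_{\Aut_\F(Q)}(\Aut_S(Q))$ are exactly the restrictions of $\Aut_\F(S)$; running this through \eqref{eq:delta-sigma} and the explicit actions of the Sylow normalisers of $2^.\PSL_3(4)$ on $C_V(S)$, $V/[V,S]$ and of $\SL_2(9)$ on $Q/Z(Q)$, $Z(S)$, I expect to find that the admissible tuples $(\Aut_\F(V),\Aut_\F(Q),\Aut_\F(S))$ number exactly two. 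Since by the Alperin--Goldschmidt theorem and \cref{cor: frcdet} the data $(\mathcal{E}(\F),\Aut_\F(V),\Aut_\F(Q),\Aut_\F(S))$ determines $\F$, there are at most two possibilities for $\F$, and both are realised by the two Henke--Shpectorov fusion systems (constructible, as in Sections \ref{sec : NewExotics} and \ref{sec:poly}, as fusion systems of a free amalgamated product over $S$ of models of $N_\F(V)$ and $N_\F(Q)$, with saturation verified as in \cref{UPV saturated}). Therefore $\F$ is one of the two Henke--Shpectorov systems.

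The step I expect to be the main obstacle is the rigidity bookkeeping for $\Aut_\F(V)$: determining $N_{\GL_6(3)}(2^.\PSL_3(4))$ precisely --- which outer automorphisms of $2^.\PSL_3(4)$ persist over $\FF_3$, and how they, the $\FF_9$-Frobenius and the scalars $\FF_9^{*}$ act on the natural quotients $C_V(\Aut_S(V))$ and $V/[V,\Aut_S(V)]$ --- and then checking that precisely these combine compatibly with the rigid action of $\SL_2(9)$ on $Q$ to produce exactly two choices of $\Aut_\F(S)$; this same computation also underlies the elimination of the $R$-case above.
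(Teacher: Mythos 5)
Your proposal reaches the wrong conclusion about the essential configuration, which is a fatal error. The paper's own (very short) proof explicitly records that the Henke--Shpectorov systems have $\mathcal{E}(\F)=\{V\}\cup R^S$, i.e.\ the \emph{abelian} alternative is the one that actually occurs; your argument ``eliminates'' $R$ and concludes $\mathcal{E}(\F)=\{V\}\cup Q^S$, which is precisely the case that does not arise. Because the elimination step ends with the unsubstantiated phrase ``a comparison of the relevant characters rules this out,'' the error is hard to pinpoint exactly, but it must be there: the ratio-of-scalars constraint you derive from \cref{lem:TE} and \eqref{eq:delta-sigma} is in fact satisfiable when the $V$-automiser is $2^.\PSL_3(4)$. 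A contributing arithmetic slip is your count of the Hall $3'$-part of $N_{O^{3'}(\Aut_\F(V))}(\Aut_S(V))$: for $T\in\Syl_3(\PSL_3(4))$ one has $N_{\PSL_3(4)}(T)/T\cong Q_8$, not $C_2$, so in $K=2^.\PSL_3(4)$ the Hall $3'$-subgroup of $N_K(T)$ has order $16$, not $4$. That extra room is exactly what lets the lift of an $R$-automorphism act compatibly on $C_V(S)$ and $V/[V,S]$, so the $R$-configuration survives (and $Q$ is the one that should be excluded).

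Beyond that, the route you take is genuinely different from the paper's, which is a one-line appeal to a MAGMA computation (extending \cite[Theorem 5.8]{parkersemerarocomputing}). A hand proof along your lines --- pin down $\Aut_\F(V)$ via $N_{\GL_6(3)}(2^.\PSL_3(4))$, glue with the data at the second essential class through $\delta$, and then count compatible tuples --- is reasonable in spirit and would be a nice addition, but as written the crucial steps (the elimination of one essential class, the assertion ``I expect to find exactly two admissible tuples,'' and the realisability of both via an amalgam construction) are left as expectations rather than established. In particular, once the $R$/$Q$ mix-up is corrected, you would need to explicitly verify that $Q$ \emph{cannot} be essential alongside $O^{3'}(\Aut_\F(V))\cong 2^.\PSL_3(4)$, and then carry out the normaliser computation in $\GL_6(3)$ to bound the number of choices for $\Aut_\F(V)$; neither is done here.
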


\begin{proof}
This can be verified using the MAGMA package \cite{parkersemerarocomputing}, and is only a moderate extension of \cite[Theorem 5.8]{parkersemerarocomputing}, which also assumes that $O^p(\F)=\F$. These fusion systems have the property that $\mathcal{E}(\F)=\{V\} \cup R^S$ and were first discovered in unpublished work of Henke--Shpectorov \cite{HS}. One can prove that they are exotic with an argument analogous to those in \cite[Lemma 5.9-Lemma 5.12]{ClellandParker2010} for the fusion system $\F(2, 9, R)$.
\end{proof}

For the remainder of this section, we suppose that $n\geq 2$ and that $O^{p'}(\Aut_{\F}(V))\cong \mathrm{(P)SL}_2(q)$ whenever $V\in\mathcal{E}(\F)$. The method we use to prove uniqueness is broadly the same for all the saturated fusion systems under consideration. Recall the fusion systems $\F^*_\Lambda(q)$ from Notation \ref{n:f*lambda}, the systems $\F^*(n,q,R)$ and $\F^*(n, q, Q)$ from Notation \ref{n:BigCPsystems} and the systems $\F^*(n,q,R)_P$ from Notation \ref{n:pruned}.

\begin{proposition}\label{aa}
Suppose that $\F$ is a saturated fusion system on a $p$-group $S$.
\begin{enumerate}
    \item If $S\cong S_n(q)$ and $\mathcal{E}(\F)=\{V\} \cup Q^S$ then $\Aut_{\F}^0(S)$ is $\Aut(S)$-conjugate to $\Aut_{\F^*(n, q, Q)}^0(S)$ and $|\Out_{\F}^0(S)|=\frac{(q-1)^2}{(n, q-1)}$.
    \item If $S\cong S_n(q)$ and $\mathcal{E}(\F)=\{V\} \cup R^S$ then $\Aut_{\F}^0(S)$ is $\Aut(S)$-conjugate to $\Aut_{\F^*(n, q, R)}^0(S)$ and $|\Out_{\F}^0(S)|=\frac{(q-1)^2}{(n+2, q-1)}$.
    \item If $S\cong S_n(q)$ and $\mathcal{E}(\F)=R^S$ then $\Aut_{\F}^0(S)$ is $\Aut(S)$-conjugate to $\Aut_{\F^*(n, q, R)_P}^0(S)$ and $|\Out_{\F}^0(S)|=q-1$.
    \item If $S\cong S_\Lambda(q)$ and $\mathcal{E}(\F)=\{V\}\cup R^S$ then $\Aut_{\F}^0(S)$ is $\Aut(S)$-conjugate to $\Aut_{\F^*_{\Lambda}(q)}^0(S)$ and $|\Out_{\F}^0(S)|=(q-1)^2$.
\end{enumerate}
\end{proposition}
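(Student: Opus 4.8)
The plan is to apply \cite[Theorem I.7.7]{AKO} to express $\Aut_{\F}^0(S)$ in terms of the $p'$-automisers of the $\F$-essential subgroups, to lift those automisers to $\Aut_{\F}(S)$, and then to read off their images under the homomorphism $\delta$ of \cref{sec:structureS} (or, for $S_\Lambda(q)$, under an analogue of it), thereby reducing each of the four order computations to elementary arithmetic inside $\KK^*\times\KK^*$. Concretely, by \cref{cor: frcdet} we have $\F^{frc}=\mathcal E(\F)\cup\{S\}$, so \cite[Theorem I.7.7]{AKO} shows $\Aut_{\F}^0(S)$ is generated by $\Inn(S)$ together with, for each $E\in\mathcal E(\F)$, all $\overline\theta\in\Aut_{\F}(S)$ inducing on $E$ an element of $N_{O^{p'}(\Aut_{\F}(E))}(\Aut_S(E))$; such $\overline\theta$ exist by \cref{lem:extends} (for $E=V\in\mathcal E(\F)$ this is immediate from saturation, since $V\trianglelefteq S$). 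By \cref{ess.auto} (and \cref{lem: upVRecog} in the $S_\Lambda(q)$ case) with the standing hypothesis, $O^{p'}(\Aut_{\F}(E))\cong\SL_2(q)$ for $E\in R^S\cup Q^S$ and $O^{p'}(\Aut_{\F}(V))\cong(\mathrm P)\SL_2(q)$ when $V\in\mathcal E(\F)$. For $E\in R^S\cup Q^S$, \cref{lem:TE} provides a cyclic Hall $p'$-subgroup $T_E$ of $N_{O^{p'}(\Aut_{\F}(E))}(\Aut_S(E))$ of order $q-1$; for $E=V$ the relevant normaliser is a Borel subgroup of $(\mathrm P)\SL_2(q)$ whose Hall $p'$-subgroup $T_V$ is a maximal torus. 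I would lift each $T_E$ to a $p'$-subgroup $T^{(E)}\cong T_E$ of $\Aut_{\F}(S)$.

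The next step is a reduction to a lattice computation. Each $T_E$ lies in a copy of $\SL_2(\KK)$, so acts $\KK$-linearly on $S/V$ (identified with $E/(E\cap V)$ for $E\neq V$, and with $\Aut_S(V)$ for $E=V$) and on $Z(S)$. Hence, after replacing $\F$ by an isomorphic system — that is, conjugating by an element of $\Aut(S)$, which changes neither $\F$ up to isomorphism nor the quantities below — one may assume that every $T^{(E)}$ lies in $\Aut_{P^*}(S)$ and is induced by the \emph{abelian} diagonal subgroup $\{(1,\theta,\diag(a,d)):\theta,a,d\in\KK^*\}$ of $D^*$; here one uses that $\Aut_{P^*}(S)$ contains a Hall $p'$-subgroup of $\Aut(S)$, by \cref{Snauto}, resp.\ \cref{Supauto}. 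Consequently $H:=\langle T^{(E)}:E\in\mathcal E(\F)\rangle$ is abelian of $p'$-order, $\Aut_{\F}^0(S)=\Inn(S)\rtimes H$, and — since $\Inn(S)\le\ker\delta$ and $\ker\delta$ is a $p$-group by \cref{lem: kerneldescription} — $\delta$ restricts to an isomorphism $H\xrightarrow{\ \sim\ }\langle\delta(T^{(E)}):E\rangle\le\Delta$, so that $|\Out_{\F}^0(S)|=|\langle\delta(T^{(E)}):E\rangle|$. For $S=S_\Lambda(q)$ there is no $\delta$ in the text, so I would first record the analogous homomorphism $\delta_\Lambda\colon\Aut(S_\Lambda(q))\to\Aut(S,S/V)\times\Aut(S,Z(S))$, $\phi\mapsto(\phi|_{S/V},\phi|_{Z(S)})$, and check via the Thompson $A\times B$-argument of \cref{lem: kerneldescription} together with \cref{Supauto} that $\ker\delta_\Lambda$ is a $p$-group, so that the same reduction applies.

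Using \eqref{eq:delta-sigma} (respectively the formulae for the action on $\langle\overline{xy^{p-1}}\rangle_\KK$ and $\langle\overline{y^p}\rangle_\KK$ recorded near \cref{(p+1)CUpS}), \cref{lem:TE}, \cref{size.ess}, and — for $R\le S_\Lambda(q)$ — the identity $C_R(O^{p'}(\Aut_{\F}(R)))=\langle\overline{y^p}\rangle_\KK$ from the proof of \cref{t:fstarsat2}, I would compute, inside $\KK^*\times\KK^*$ written additively as $(\mathbb Z/(q-1))^2$:
\begin{itemize}
\item for $E=R\in\mathcal B(S)$ in $S_n(q)$: $\delta(T^{(R)})=\{(\nu,\nu^{-1}):\nu\in\KK^*\}$, the antidiagonal;
\item for $E=Q\in\mathcal C(S)$: $\delta(T^{(Q)})=\KK^*\times 1$, as $T_Q$ centralises $Z(S)$ by \cref{lem:TE};
\item for $E=V=V_n(q)$: $\delta(T^{(V)})=\{(\mu^2,\mu^n):\mu\in\KK^*\}$, in either the $\SL_2$ or $\PSL_2$ case;
\end{itemize}
and, for $S_\Lambda(q)$, that $\delta_\Lambda(T^{(R)})$ and $\delta_\Lambda(T^{(V)})$, expressed in the eigenvalue coordinates $(S/V,\overline{xy^{p-1}},\overline{y^p})$, are $\langle(-1,1,0)\rangle$ and $\langle(2,p-2,p)\rangle$. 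The four assertions then follow from routine index counts: in (1), projecting onto the second coordinate gives $|\langle\KK^*\times1,\{(\mu^2,\mu^n)\}\rangle|=(q-1)^2/(n,q-1)$; in (2), quotienting by the antidiagonal (on which $(\mu^2,\mu^n)\mapsto\mu^{n+2}$) gives $(q-1)^2/(n+2,q-1)$; in (3), $|\langle\{(\nu,\nu^{-1})\}\rangle|=q-1$; and in (4) the two lines meet trivially since $\gcd(p,q-1)=1$, so the span has order $(q-1)^2$. Running the identical recipe for the model systems $\F^*(n,q,Q)$, $\F^*(n,q,R)$, $\F^*(n,q,R)_P$ and $\F^*_\Lambda(q)$ yields the same $\delta$-images; since all Hall $p'$-subgroups of $\Aut(S)$ are $\Aut(S)$-conjugate and $\delta$ (resp.\ $\delta_\Lambda$) is injective on them, $\Aut_{\F}^0(S)=\Inn(S)\rtimes H$ is $\Aut(S)$-conjugate to $\Aut_{\F^*(\cdots)}^0(S)$, as required.

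The genuinely delicate point is the reduction step: establishing that $\Aut_{\F}^0(S)$ really is $\Inn(S)\rtimes H$ with $H$ abelian of $p'$-order, equivalently that the $p$-group $\ker\delta$ contributes nothing to $\Out_{\F}^0(S)$. This is precisely what the $\KK$-linearity of the essential automisers buys us, but it requires a \emph{simultaneous} conjugation carrying all the $T^{(E)}$ into one abelian diagonal torus of $\Aut_{P^*}(S)$; making that simultaneity rigorous — for instance by first arranging that every $T^{(E)}$ normalises $\Aut_S(V)$ and stabilises the central flag $Z(S)<Z_2(S)<\cdots$, and then observing that any $p'$-subgroup of $\Aut_{\F}(S)$ restricting to $V$ through this torus is itself diagonal — is the bookkeeping that needs care. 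The subsequent arithmetic in $(\mathbb Z/(q-1))^2$ and $(\mathbb Z/(q-1))^3$ is entirely elementary.
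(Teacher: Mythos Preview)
Your approach is essentially the same as the paper's: both invoke \cite[Theorem I.7.7]{AKO} together with $\F^{frc}=\mathcal E(\F)\cup\{S\}$ (\cref{cor: frcdet}) to reduce $\Aut_\F^0(S)$ to the cyclic lifts $T^{(E)}$ supplied by \cref{lem:extends} and \cref{lem:TE}, and both use the injectivity of $\delta$ on a Hall $p'$-subgroup (\cref{lem: kerneldescription}) for the uniqueness assertion. The difference is purely in how the order of $\langle\,\overline{T^{(E)}}:E\in\mathcal E(\F)\,\rangle$ is computed. You push everything through $\delta$ (respectively your $\delta_\Lambda$) and do lattice arithmetic in $(\mathbb Z/(q-1))^2$ or $(\mathbb Z/(q-1))^3$; the paper instead computes the intersections $|K_Q\cap K_V|$, $|K_R\cap K_V|$ directly via structural facts --- for instance, in case (1) it invokes the description of $\Aut(Q)\cong\Aut(S_1(q))$ from \cref{prop: L3Q} to force $C_{K_V}(Z(S))\le K_Q$, and in case (4) it uses fixed-point freeness of the $\SL_2(q)$-torus on the natural quotient $V/W$ rather than introducing a three-coordinate map. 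Your lattice computations are correct and give a pleasantly uniform treatment; note that your worry about ``simultaneous diagonalisation'' dissolves once one observes that $\Inn(S)$ is a normal Sylow $p$-subgroup of $\Aut_\F(S)$, so $\ker(\delta|_{\Aut_\F(S)})=\Inn(S)$ and $\delta$ already embeds $\Out_\F(S)$ (hence $\Out_\F^0(S)$) into $\Delta$ --- and each $\delta(T^{(E)})$ automatically lands in $\KK^*\times\KK^*$ because a torus of $\SL_2(\KK)$ acts by scalars on the one-dimensional pieces $Z(S)$ and $S/V$ regardless of which $\KK$-structure one uses to identify them.
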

\begin{proof}
We intend to apply \cite[Theorem I.7.7]{AKO} which shows that $\Aut_\F^0(S)$ is determined by lifts of elements in $O^{p'}(\Aut_{\F}(E))$ to $\Aut_\F(S)$ for $E \in\F^{frc}$. Observe that $\F^{frc}$ is equal to $\{S\} \cup \mathcal{E}(\F)$ by Corollary \ref{cor: frcdet}.

Suppose first that $S\cong S_n(q)$ and $\mathcal{E}(\F)=\{V\}\cup Q^S$. Let $k_Q$ be the lift of the element of order $q-1$ in $N_{O^{p'}(\Aut_{\F}(Q))}(\Aut_S(Q))$ to $\Aut_{\F}(S)$ provided by Lemma \ref{lem:extends}. Then $k_Q$ acts on $V$ and $k_Q \in C_{\Aut(S)}(Z(S))$. Choose $k$ to be an element of order $\frac{q-1}{(n,2)}$ in $N_{\Aut_{\F}(V)}(\Aut_S(V))$ which is in the same Hall $p'$-subgroup of $N_{\Aut_{\F}(V)}(\Aut_S(V))$ as $k_Q|_V$, and chosen such that the lift of $k$ to $\Aut_{\F}(S)$ normalises $Q=S_0Z_2(S)$. Note that  when $n$ is even, $O^{p'}(\Aut_{\F}(V))\cong \PSL_2(q)$ so $|C_{O^{p'}(\Aut_{\F}(V))}(C_V(S))|=q.\frac{(n, q-1)}{(n,2)}$ by Lemma \ref{l:centslem}. Write $k_V$ for the lift of $k$ to $\Aut_{\F}(S)$. Observe that $K_Q:=\langle k_Q\rangle$ normalises $K_V:=\langle k_V \rangle$ and so we need only show that $|K_Q\cap K_V|=\frac{(n, q-1)}{(n,2)}$ to demonstrate that $|\Out_{\F}^0(S)|=\frac{(q-1)^2}{(n, q-1)}$ as in (1). Since elements of $K_Q\cap K_V$ act trivially on $C_V(S)$, we must have $|K_Q\cap K_V|\leq \frac{(n, q-1)}{(n,2)}$. On the other hand, $C_{K_V}(Z(S))$ has order exactly $\frac{(n, q-1)}{(n,2)}$ and normalises $Q$. Since $Q=Z_2(S)S_0\cong S_1(q)$ by Corollary \ref{prop: Q iso}, Proposition \ref{prop: L3Q} implies that $C_{\Aut_{\F}(Q)}(Z(S))=O^{p'}(\Aut_{\F}(Q))$ so that $C_{K_V}(Z(S))|_{Q}\le O^{p'}(\Aut_{\F}(Q))$ and then $C_{K_V}(Z(S))|_{Q}\le \langle k_Q|_{Q}\rangle$. We obtain $\frac{(n, q-1)}{(n,2)}=|C_{K_V}(Z(S))|\leq |K_Q\cap K_V|\leq \frac{(n, q-1)}{(n,2)}$.

Next assume that $S\cong S_n(q)$ and $\mathcal{E}(\F)=\{V\}\cup R^S$. We intend to show that $|\Out_{\F}^0(S)|=\frac{(q-1)^2}{(n+2, q-1)}$ as in (2). Let $k_R$ be the lift of the element of order $q-1$ in $O^{p'}(\Aut_{\F}(R))$ to $\Aut_{\F}(S)$ so that $k_R$ acts on $V$. Let $k$ be an element of order $\frac{q-1}{(n,2)}$ in $N_{O^{p'}(\Aut_{\F}(V))}(\Aut_S(V))$ chosen so that the lift of $k$ to $\Aut_{\F}(S)$ normalises $R=S_0Z(S)$. We may arrange that $k_R|_V$ normalises $\langle k\rangle$ and write $k_V$ for the lift of $k$ to $\Aut_{\F}(S)$. The groups $K_R:=\langle k_R\rangle$ and $K_V:=\langle k_V \rangle$ can be arranged to commute, so it suffices to prove that $|K_R\cap K_V|=\frac{(n+2, q-1)}{(n,2)}$. If $1 \neq t\in K_V$ then $t|_R\in K_R|_R$ if and only if $t_R$ acts as $\lambda^{-1}$ on $S/V=RV/V\cong R/Z(S)$ and as $\lambda$ on $Z(S)$ for some $\lambda\in \KK^*$. Now, $K_V$ is the lift of an element of $O^{p'}(\Aut_{\F}(V))$ and since $V\cong V_n(q)|_{\FF_p}$, we may arrange for $t\in K_V$ to act as $\mu^2$ on $S/V$ and $\mu^n$ on $C_V(S)$ for some $\mu\in \KK^*$. Thus $\lambda^{-1}=\mu^2$ and $\lambda=\mu^n$ so $\mu^{n+2}=1$ which has exactly $(n+2, q-1)$ solutions, potentially including $\mu=-1$. It follows that $|K_R\cap K_V|=\frac{(n+2, q-1)}{(n,2)}$, as desired.

Next suppose that $S=S_n(q)$ and $\mathcal{E}(\F)=R^S$. In this case $|N_{O^{p'}(\Aut_{\F}(R))}(\Aut_S(R))|=q.(q-1)$ and so $|\Out_{\F}^0(S)|=q-1$, as in (3).

Lastly suppose that $S\cong S_{\Lambda}(q)$ and $\mathcal{E}(\F)=\{V\}\cup R^S$. Let $k_R$ be the lift  to $\Aut_{\F}(S)$ of an element of order $q-1$ in $O^{p'}(\Aut_{\F}(R))$ so that $k_R$ acts on $V$. Let $k$ to be any element of order $q-1$ in $N_{\Aut_{\F}(V)}(\Aut_S(V))$ in the same Hall $p'$-subgroup of $N_{\Aut_{\F}(V)}(\Aut_S(V))$ as $k_R|_V$, whose lift $k_V$ to $\Aut_{\F}(S)$ normalises $R=S_0Z(S)$. Then $K_R:=\langle k_R\rangle$ normalises $K_V:=\langle k_V\rangle$ and so to prove that $|\Out_{\F}^0(S)|=(q-1)^2$ as in (4), it suffices to verify $K_R\cap K_V=1$.  By Proposition \ref{ess.auto}, $V$ is isomorphic to $\Lambda(q)$ as an $\FF_pO^{p'}(\Aut_{\F}(V))$-module. Let $W$  be the unique proper non-trivial $O^{p'}(\Aut_{\F}(V))$-submodule of $V$. Then  $\Aut_{\F}(S)$ normalises $W$ since $V$ is characteristic in $S$ and $\Aut_\F(S)|_V$ normalises $O^{p'}(\Aut_\F(V))$. Let $t\in K_R\cap K_V$. Then since $t$ is the lift of an element of $O^{p'}(\Aut_{\F}(R))$, $t$ acts trivially on $C_V(S)W/W=C_R(O^{p'}(\Aut_{\F}(R)))W/W\cong C_R(O^{p'}(\Aut_{\F}(R)))$. On the other hand,  since $V/W$ is isomorphic as a $\FF_pO^{p'}(\Aut_{\F}(V))$-module to a natural $\SL_2(q)$-module, every non-trivial $p'$-element in $O^{p'}(\Aut_{\F}(V))$ acts fixed point freely on $V/W$. This implies that $t=1$ so that $K_R\cap K_V=1$, as desired.

To complete the proof, it remains to show that in all cases $\Aut_{\F}^0(S)$ is uniquely determined up to $\Aut(S)$-conjugacy.

Let $\mathfrak{D}$ be a Hall $p'$-subgroup of $\Aut(S)$ containing a Hall $p'$-subgroup $K$ of $\Aut_{\F}^0(S)$. Conjugating by $\Aut(S)$ if necessary we may assume that $\Aut_{\F}^0(S)\le \Aut_{\F^*(n ,q, R)}(S)$ and that $\mathfrak{D}$ is a Hall $p'$-subgroup of $\Aut_{\F^*(n ,q, R)}(S)$.  Recall the map $\delta:\Aut(S)\to \Delta=(\Aut(S, S/V), \Aut(S, Z(S)))$ from Notation \ref{n:deltamap}. By Lemma \ref{lem: kerneldescription}, $\delta$ is injective upon restriction to $\mathfrak{D}$. Thus in each case, there is $k\in \Aut_{\F}^0(S)$ cyclic of order $q-1$ determined by the lift of an element in $N_{O^{p'}(\Aut_{\F}(X))}(\Aut_S(X))$, for $X\in\{Q, R\}$ which determines a unique subgroup of $\mathfrak{D}$. Similarly if $V\in\mathcal{E}(\F)$, then the image under $\delta$ of lift to $S$ of an element of $\frac{q-1}{(n,2)}$ in $N_{O^{p'}(\Aut_{\F}(V))}(\Aut_S(V))$ uniquely determines the actions on $S/V$ and $Z(S)$. We conclude that $\Aut_{\F}^0(S)$ is uniquely determined.
\end{proof}

Suppose $\F^*$ is the largest polynomial fusion system on $S$ with a fixed nomination of essential subgroups. That is, $$\F^*\in\{\F^*(n, q, R), \F^*(n, q, Q), \F^*(n, q, R)_P, \F^*_{\Lambda}(q)\}$$ for $n,q$ associated to $S$ in the obvious way. Then since $\Aut_{\F}^0(S)$ is uniquely determined in $\Aut_{\F}(S)$ by Lemma \ref{lem: kerneldescription} and $\Aut_{\F^*}(S)$ contains a Hall $p'$-subgroup of $\Aut(S)$, we can arrange that $\Aut_{\F^*}^0(S)=\Aut_{\F}^0(S)\le \Aut_{\F}(S)\le \Aut_{\F^*}(S)$.

We begin with the case $\mathcal{E}(\F)=R^S$.
\begin{theorem}\label{justR}
If $S=S_n(q)$ for some $2\leq n\leq p-1$ and $\mathcal{E}(\F)=R^S$ then $\F$ is isomorphic to a $p'$-index subsystem of $\F^*(n ,q, R)_P$.
\end{theorem}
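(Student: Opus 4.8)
The plan is to show that, after replacing $\F$ by a suitable $\Aut(S)$-conjugate, $\F$ is a fusion subsystem of $\F^*:=\F^*(n,q,R)_P$ satisfying $O^{p'}(\F^*)\le \F\le \F^*$; since the saturated subsystems of $\F^*$ of $p'$-index are exactly those sandwiched between $O^{p'}(\F^*)$ and $\F^*$ by \cite[Theorem I.7.7]{AKO}, this yields the claim. By the discussion immediately following \cref{aa} — which combines \cref{aa}(3), \cref{lem: kerneldescription}, and the fact that $\Aut_{\F^*}(S)$ contains a Hall $p'$-subgroup of $\Aut(S)$ — I would first conjugate $\F$ so that $\Aut_{\F}^0(S)=\Aut_{\F^*}^0(S)\le \Aut_{\F}(S)\le \Aut_{\F^*}(S)$; by \cref{essentials} the conjugate still satisfies $\mathcal E(\F)=R^S=\mathcal E(\F^*)$, and by \cref{cor: frcdet} one has $\F^{frc}=R^S\cup\{S\}=(\F^*)^{frc}$. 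In view of the Alperin--Goldschmidt theorem it then remains to prove $\Aut_{\F}(R^s)\le \Aut_{\F^*}(R^s)$ and $O^{p'}(\Aut_{\F}(R^s))=O^{p'}(\Aut_{\F^*}(R^s))$ for all $s\in S$; conjugating, it is enough to treat $R$ itself.

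Next I would record the local data. By \cref{ess.auto}(1) together with \cref{size.ess}(\ref{ab.ess}) (with $d=1$), $G_{\F}:=O^{p'}(\Aut_{\F}(R))\cong \SL_2(q)$ and $R$ is a natural $\SL_2(q)$-module for $G_{\F}$, and the same holds for $G^*:=O^{p'}(\Aut_{\F^*}(R))$. Since $R$ is $\F$-centric we have $C_S(R)=R$, and since $N_S(R)=RZ_2(S)$ by \cref{size.ess}(\ref{ab.ess}), the group $\Aut_S(R)=N_S(R)/C_S(R)$ has order $q$; thus $\Aut_S(R)$ is a common Sylow $p$-subgroup of $G_{\F}$ and $G^*$, hence also of $\Aut_{\F}(R)$ and $\Aut_{\F^*}(R)$.

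The key step would be to identify $G_{\F}$ with $G^*$ inside $\Aut(R)\cong \GL_{2m}(p)$. The point is that $N_{G^*}(\Aut_S(R))$ lies in $G_{\F}$: by \cref{lem:extends} (and the construction of $\Aut_{\F^*}^0(S)$ in \cite[Theorem I.7.7]{AKO}) a Hall $p'$-subgroup $H_0$ of the Borel subgroup $N_{G^*}(\Aut_S(R))$ extends to an element of $\Aut_{\F^*}^0(S)=\Aut_{\F}^0(S)\le \Aut_{\F}(S)$ fixing $R$, which upon restriction to $R$ shows $H_0\le \Aut_{\F}(R)$, and since $\Aut_S(R)$ is the unipotent radical of $N_{G^*}(\Aut_S(R))=\Aut_S(R)H_0$ one gets $N_{G^*}(\Aut_S(R))\le G_{\F}$. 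Then \cref{lem:unique over groups 2dim}, applied with $G=G^*$, $B=N_{G^*}(\Aut_S(R))$ and $L=G_{\F}$, forces $G_{\F}=G^*$. Finally a Frattini argument gives $\Aut_{\F}(R)=G_{\F}\cdot N_{\Aut_{\F}(R)}(\Aut_S(R))$; as $N_{\Aut_{\F}(R)}(\Aut_S(R))$ extends to $\Aut_{\F}(S)\le \Aut_{\F^*}(S)$ by \cref{lem:extends}, it is contained in $N_{\Aut_{\F^*}(R)}(\Aut_S(R))$, whence $\Aut_{\F}(R)\le G^*\cdot N_{\Aut_{\F^*}(R)}(\Aut_S(R))=\Aut_{\F^*}(R)$. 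This completes the required comparisons: $\F\le \F^*$, and since $O^{p'}(\F^*)$ is generated by $\Aut_{\F^*}^0(S)=\Aut_{\F}^0(S)$ together with the subgroups $O^{p'}(\Aut_{\F^*}(R^s))=O^{p'}(\Aut_{\F}(R^s))$, we obtain $O^{p'}(\F^*)\le \F$, so $\F$ is a $p'$-index subsystem of $\F^*$.

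The main obstacle is exactly the identification $G_{\F}=G^*$ in $\GL_{2m}(p)$: the module structure alone pins down this copy of $\SL_2(q)$ only up to $\GL_{2m}(p)$-conjugacy, so the argument must exploit the part of $\Aut_{\F}(S)$ already controlled by \cref{aa}(3), transporting it into $\Aut_{\F}(R)$ via the extension lemma \cref{lem:extends} to force $G_{\F}$ and $G^*$ to share a Borel subgroup, whereupon the rigidity statement \cref{lem:unique over groups 2dim} applies. Everything else is routine Frattini and Alperin--Goldschmidt bookkeeping once $\Aut_{\F}(S)$ has been located inside $\Aut_{\F^*}(S)$.
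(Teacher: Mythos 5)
Your proof takes essentially the same route as the paper's: both align $\Aut_\F^0(S)$ with $\Aut_{\F^*}^0(S)$ via \cref{aa}(3) and \cref{lem: kerneldescription}, deduce that the Borel subgroups $N_{O^{p'}(\Aut_\F(R))}(\Aut_S(R))$ and $N_{O^{p'}(\Aut_{\F^*}(R))}(\Aut_S(R))$ coincide as restrictions of $\Aut_\F^0(S)=\Aut_{\F^*}^0(S)$ to $R$, and then invoke \cref{lem:unique over groups 2dim} to conclude $O^{p'}(\Aut_\F(R))=O^{p'}(\Aut_{\F^*}(R))$, finishing via Alperin--Goldschmidt and \cite[Theorem I.7.7]{AKO}. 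The only wrinkle is the clause deducing $N_{G^*}(\Aut_S(R))\le G_\F$ ``since $\Aut_S(R)$ is the unipotent radical'': from $H_0\le\Aut_\F(R)$ alone you cannot yet conclude $H_0\le O^{p'}(\Aut_\F(R))$ (the latter has $p'$-index in the former), and what is actually needed is the identity $\Aut_\F^0(S)|_R=N_{O^{p'}(\Aut_\F(R))}(\Aut_S(R))$ coming from the construction of $\Aut_\F^0(S)$ in \cite[Theorem I.7.7]{AKO} --- which is exactly the equality the paper records.
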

\begin{proof}
Set $\F^*=\F^*(n ,q, R)_P$ and assume that $\Aut_{\F}^0(S)=\Aut_{\F^*}^0(S)$ as above. Let $X:=O^{p'}(\Aut_{\F}(R))$ and $X^*=O^{p'}(\Aut_{\F^*}(R))$ so that \[N_X(\Aut_S(R))=\Aut_{\F}^0(S)|_R=\Aut_{\F^*}^0(S)|_R=N_{X^*}(R).\] By Lemma \ref{lem:unique over groups 2dim}, $O^{p'}(\Aut_{\F}(R))=O^{p'}(\Aut_{\F^*}(R))$ so $\F=\langle O^{p'}(\Aut_{\F}(R)), \Aut_{\F}(S)\rangle_S$ is a saturated subsystem of $\G$  containing $O^{p'}(\F^*)$ by the Alperin--Goldschmidt theorem, and the result follows from  \cite[Theorem I.7.7]{AKO}.
\end{proof}

In all remaining cases, we may assume that $V\in\mathcal{E}(\F)$ by Propositions \ref{essentials} and \ref{upVEssentials}.  Recall the group $D^\dagger$ from Notation \ref{n:DGroups}. 

\begin{proposition}\label{someprop}
Suppose that $V\in\mathcal{E}(\F)$, and $\F^*$ is a fusion system on $S$ equal to $\F^*(n, q, R)$, $\F^*(n, q, Q)$ or $\F^*_{\Lambda}(q)$. If $O^{p'}(\F)=O^{p'}(\F^*)$ then $\F$ is isomorphic to a $p'$-index subsystem of $\F^*$.
\end{proposition}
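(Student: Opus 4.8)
The plan is to deduce this from the Alperin--Goldschmidt theorem together with the structural results already established. Recall that by \cref{cor: frcdet} we have $\F^{frc} = \mathcal{E}(\F) \cup \{S\}$, and by \cref{essentials,upVEssentials} the set $\mathcal{E}(\F)$ is one of $\{V\} \cup R^S$, $\{V\} \cup Q^S$ (only when $S = S_n(q)$), or $\{V\} \cup R^S$ with $S = S_\Lambda(q)$; in each case, because $V$ is weakly $\F$-closed and $R$, $Q$ form single $S$-conjugacy classes, the Alperin--Goldschmidt theorem tells us that $\F$ is generated by $\Aut_\F(S)$ together with $\Aut_\F(V)$ and $\Aut_\F(E)$ for $E \in R^S$ (resp.\ $E \in Q^S$). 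So the claim reduces to showing that the hypothesis $O^{p'}(\F) = O^{p'}(\F^*)$ forces $\F$ to sit between $O^{p'}(\F^*)$ and $\F^*$, and then invoking the classification of fusion subsystems of $p'$-index in \cite[Theorem I.7.7]{AKO}.

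First I would record that, by the discussion immediately preceding \cref{justR}, we may assume $\Aut_\F^0(S) = \Aut_{\F^*}^0(S) \le \Aut_\F(S) \le \Aut_{\F^*}(S)$, using that $\Aut_{\F^*}(S)$ contains a Hall $p'$-subgroup of $\Aut(S)$ and that $\Aut_\F^0(S)$ is uniquely determined inside $\Aut_\F(S)$ by \cref{lem: kerneldescription} (this is exactly the normalisation performed in \cref{aa}). Next, the hypothesis $O^{p'}(\F) = O^{p'}(\F^*)$ means that for every $E \in \F^{frc}$ we have $O^{p'}(\Aut_\F(E)) = O^{p'}(\Aut_{\F^*}(E))$ --- indeed $O^{p'}(\F)$ is generated by the subgroups $O^{p'}(\Aut_\F(E))$ for $E \in \F^{frc}$ by \cite[Theorem I.7.7]{AKO}, and since $\F^{frc} = (\F^*)^{frc} = \mathcal{E}(\F) \cup \{S\}$, these automorphism subgroups of $O^{p'}(\F)$ and of $O^{p'}(\F^*)$ coincide. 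In particular $O^{p'}(\Aut_\F(E)) = O^{p'}(\Aut_{\F^*}(E))$ for $E \in \mathcal{E}(\F)$ and $O^p(\Aut_\F(S)) = O^p(\Aut_{\F^*}(S))$, where the latter follows because $\Aut_\F^0(S) = \Aut_{\F^*}^0(S)$ and $O^p(\Aut_\F(S))$ is generated by $\Aut_S(S)$ together with the $p$-part contributions, all of which agree.

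With these equalities in hand, I would argue as follows. Since $\F$ is saturated, the Alperin--Goldschmidt theorem gives $\F = \langle \Aut_\F(E) \mid E \in \F^{frc} \rangle_S$. For each essential $E$ we have the containment $O^{p'}(\Aut_{\F^*}(E)) = O^{p'}(\Aut_\F(E)) \trianglelefteq \Aut_\F(E) \le \Aut_{\F^*}(E)$: the last inequality holds because $\Aut_\F(E)$ is generated by $O^{p'}(\Aut_\F(E)) = O^{p'}(\Aut_{\F^*}(E))$ together with $N_{\Aut_\F(E)}(\Aut_S(E))$, and by \cref{lem:extends} the latter lifts to $\Aut_\F(S) \le \Aut_{\F^*}(S)$, hence restricts back into $\Aut_{\F^*}(E)$. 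Likewise $\Aut_\F(S) \le \Aut_{\F^*}(S)$. Therefore every generator of $\F$ lies in $\F^*$, so $\F \le \F^*$; and conversely $O^{p'}(\F) = O^{p'}(\F^*) \le \F$. Thus $O^{p'}(\F^*) \le \F \le \F^*$, and by the precise description of the subsystems of $p'$-index in \cite[Theorem I.7.7 (b),(c)]{AKO} --- which are in bijection with the subgroups of $\Aut_{\F^*}(S)$ containing $\Aut_{\F^*}^0(S)$ --- the system $\F$ is exactly the $p'$-index subsystem of $\F^*$ corresponding to $\Aut_\F(S)$. Strictly speaking this identifies $\F$ with a $p'$-index subsystem of $\F^*$ on the same $S$; since \cref{aa} only pins down $\Aut_\F^0(S)$ up to $\Aut(S)$-conjugacy, the conclusion is stated up to isomorphism.

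The main obstacle is the bookkeeping in the middle paragraph: one must be careful that the three fusion systems $\F$, $\F^*$ and $O^{p'}(\F^*)$ really do share the same set of centric-radical subgroups and the same normalised choice of $\Aut^0(S)$, so that the generation statements from Alperin--Goldschmidt can be compared term by term. Once \cref{cor: frcdet}, \cref{essentials}/\cref{upVEssentials}, \cref{lem:extends} and the normalisation from \cref{aa} are in place, the deduction is essentially formal, relying only on \cite[Theorem I.7.7]{AKO}; the genuinely substantive input --- that $O^{p'}(\Aut_\F(V))$ and $O^{p'}(\Aut_\F(E))$ are the "right" groups --- has already been extracted in \cref{ess.auto}, \cref{lem: Vnqrecog}, \cref{lem: upVRecog}, \cref{lem:unique over groups} and \cref{lem:unique over groups lambda}, and will be invoked in the surrounding argument that verifies the hypothesis $O^{p'}(\F) = O^{p'}(\F^*)$ before \cref{someprop} is applied.
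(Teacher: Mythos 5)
Your proof is correct as far as I can tell, but it takes a genuinely different route from the paper's. The paper invokes \cite[Lemma 3.4]{oliver2020simplicity} to reduce the claim to the single condition $O^p\bigl(N_{\Aut(V)}(\Aut_{O^{p'}(\F)}(V))\bigr)\le \Aut_{\F^*}(V)$, and then spends the entire proof verifying this condition inside $\Aut(V)$, including a delicate special argument when $p=3$ and $S=S_\Lambda(q)$ (a centraliser computation ruling out a non-trivial $O_3$). You instead work at the level of $\Aut(S)$: you establish $\Aut_\F(P)\le\Aut_{\F^*}(P)$ for each $P\in\F^{frc}$ by combining a Frattini factorisation of $\Aut_\F(P)$ with \cref{lem:extends}, conclude $\F\le\F^*$ from Alperin--Goldschmidt and \cref{cor: frcdet}, and close with \cite[Theorem I.7.7]{AKO}. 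This avoids the $\Aut(V)$-condition entirely (and hence the $p=3$ case), exploiting that $\Aut(S)$ is solvable with small Hall $p'$-part (\cref{Snauto}, \cref{Supauto}).

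The one place that deserves more care than you give it is the normalisation in your second paragraph. You cite the discussion before \cref{justR} and \cref{aa} for the arrangement $\Aut_\F^0(S)=\Aut_{\F^*}^0(S)\le\Aut_\F(S)\le\Aut_{\F^*}(S)$; but those only give the equality $\Aut_\F^0(S)=\Aut_{\F^*}^0(S)$ (which here is automatic from $O^{p'}(\F)=O^{p'}(\F^*)$) and do not show that the conjugation achieving $\Aut_\F(S)\le\Aut_{\F^*}(S)$ is compatible with the hypothesis $O^{p'}(\F)=O^{p'}(\F^*)$. The fix is short: since $O^{p'}(\F)$ is a characteristic subsystem of $\F$, $\Aut_\F(S)$ normalises $O^{p'}(\F)=O^{p'}(\F^*)$, so $\Aut_\F(S)\le N:=N_{\Aut(S)}(O^{p'}(\F^*))\ge \Aut_{\F^*}(S)$; a Hall $p'$-subgroup $\mathfrak{D}$ of $\Aut_{\F^*}(S)$ is then a Hall $p'$-subgroup of the solvable group $N$, so a Hall $p'$-subgroup $K$ of $\Aut_\F(S)$ satisfies $K^n\le\mathfrak{D}$ for some $n\in N$. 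Then $\Aut_{\F^n}(S)=\Inn(S)K^n\le\Aut_{\F^*}(S)$ and $O^{p'}(\F^n)=O^{p'}(\F^*)^n=O^{p'}(\F^*)$, so the hypothesis survives and the rest of your argument applies to $\F^n\cong\F$. Without this step the proposal asserts the normalisation but does not justify it; with it, the argument is complete.
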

\begin{proof}
By \cite[Lemma 3.4]{oliver2020simplicity} and using  \cite[Theorem I.7.7]{AKO}, it suffices to prove that \begin{equation}\label {e:p'cond}O^p(N_{\Aut(V)}(\Aut_{O^{p'}(\F)}(V)))\le \Aut_{\F^*}(V). \end{equation} Observe first that in all cases, $$O^p(N_{\Aut(V)}(\Aut_{O^{p'}(\F)}(V)))\le O^p(N_{\Aut(V)}(O^{p'}(\Aut_{\F}(V)))).$$

If $S=S_n(q)$ and $\F^*=\F^*(n, q, X)$ for $X\in\{Q, R\}$, then \ref{clmx} in the proof of Lemma \ref{lem:unique over groups} implies that $N_{\Aut(V)}(O^{p'}(\Aut_{\F}(V)))=\Aut_{D^\dagger}(V)$ and (\ref{e:p'cond}) follows quickly  from this. Similarly if $S=S_\Lambda(q)$ and $\F^*=\F^*_\Lambda(q)$ then \ref{clmy} in the proof of Lemma \ref{lem:unique over groups lambda} implies that $N_{\Aut(V)}(O^{p'}(\Aut_{\F}(V)))=\Aut_{\F_\Lambda^*(q)}(V)$ when $p\geq 5$.

It thus remains to treat the case when $p=3$ and $S=S_\Lambda(q)$. Observe that $\Aut_{O^{p'}(\F)}(V)\cong \GL_2(q)$ and so $O^p(N_{\Aut(V)}(\Aut_{O^{p'}(\F)}(V)))\le \Aut_{\F_\Lambda^*(q)}(V)C_{\Aut(V)}(\Aut_{O^{p'}(\F)}(V))$. it thus suffices to show that $C:=C_{\Aut(V)}(\Aut_{O^{p'}(\F)}(V))=Z(\Aut_{O^{p'}(\F)}(V))$ has order $q-1$. Denote by $W$ the unique proper non-trivial $\FF_3\Aut_{O^{p'}(\F)}(V)$-submodule of $V$. Let $s$ be a $3$-element in $C$. Then $[V, s]$ and $[W, s]$ are both normalised by $\Aut_{O^{p'}(\F)}(V)$ and since $[V, s]W<V$ and $[W, s]<W$, we must have $[V, s]\le W$ and $[W, s]=1$. Let $K$ be a Hall $3'$-subgroup of $N_{\Aut_{O^{p'}(\F)}(V)}(\Aut_S(V))$ and define $K_1=C_K([V, S]/W)$, a group of order $q-1$. The Three Subgroups Lemma implies that $[s, [V, S], K_1]=1$ so that $[s, [V, S]]\le C_W(K_1)=1$, where the last equality follows from Lemma \ref{action on centreLambda}. Hence, $[V, S]\le C_V(s)$. But $C_V(s)$ is clearly normalised by $\Aut_{O^{p'}(\F)}(V)$, and we deduce that $V=C_V(s)$ and $s=1$. Hence, $C$ is a $3'$-group. But $Z(\Aut_{O^{p'}(\F)}(V))\le C$ and $Z(\Aut_{O^{p'}(\F)}(V))$ acts transitively on non-trivial vectors in $C_{V/W}(S)$. Using Lemma \ref{lem: big cyclic} we conclude that $C=C_{C}(C_{V/W}(S))Z(\Aut_{O^{p'}(\F)}(V))$. An application of the A$\times$B-Lemma \cite[(24.2)]{AschbacherFG} gives that $C_{C}(C_{V/W}(S))$ centralises $V/W$. But then $C_{C}(C_{V/W}(S))$ centralises $\Aut_{O^{p'}(\F)}(V)$ and so $C_V(C_{C}(C_{V/W}(S)))$ is a $\FF_3\Aut_{O^{p'}(\F)}(V)$-submodule of $V$. Since $V$ is indecomposable, $C_{C}(C_{V/W}(S))$ centralises $V$ so $C_{C}(C_{V/W}(S))=1$, and  $C=Z(\Aut_{O^{p'}(\F)}(V))$ has order $q-1$, as desired.
\end{proof}

By the Alperin--Goldschmidt theorem, for $\G=\{\F,\F^*\}$ and $X\in \mathcal{E}(\F)\setminus \{V\}$ we have,
\[O^{p'}(\G)=\langle \Aut_{\G}^0(S), O^{p'}(\Aut_{\G}(V)), O^{p'}(\Aut_{\G}(X))\rangle\] so to complete the proofs of Theorem \ref{thm: main} and Theorem \ref{thm: upVMain}, by Proposition \ref{someprop} and the fact that $\Aut_{\F}^0(S)=\Aut_{\F^*}^0(S)$,  it suffices to show that  $\Aut_{\F}^0(S)$ uniquely determines $O^{p'}(\Aut_{\F}(V))$ and each $O^{p'}(\Aut_{\F}(X))$.

\begin{proposition}\label{uniqueV}
Suppose that $V\in\mathcal{E}(\F)$, and $\F^*$ is a fusion system on $S$ equal to $\F^*(n, q, R)$, $\F^*(n, q, Q)$ or $\F^*_{\Lambda}(q)$.  If $\Aut_{\F}^0(S)=\Aut_{\F^*}^0(S)$, then $O^{p'}(\Aut_{\F}(V))=O^{p'}(\Aut_{\F^*}(V))$.
\end{proposition}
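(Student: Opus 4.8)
The plan is to show that $O^{p'}(\Aut_{\F}(V))$ is forced by the data $\Aut_{\F}^0(S)$ together with the module structure of $V$, and that it must coincide with $O^{p'}(\Aut_{\F^*}(V))$, which by construction equals $O^{p'}(\Aut_D(V))$ (the image of $O^{p'}(P^*)$ in $\Aut(V)$, that is, the copy of $\SL_2(q)$ or $(\mathrm P)\SL_2(q)$ inside $\Aut(V)$ coming from $D$). Write $L:=O^{p'}(\Aut_{\F}(V))$ and $X:=O^{p'}(\Aut_{\F^*}(V))$. By \cref{ess.auto}(2), since we have excluded the Henke--Shpectorov case, $L\cong \mathrm{(P)SL}_2(q)$ and $V|_L$ is isomorphic to $V_n(q)|_{\FF_p}$ or $\Lambda(q)|_{\FF_p}$; the same holds for $X$ with the same module. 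So $L$ and $X$ are two subgroups of $\Aut(V)$, both isomorphic to $\mathrm{(P)SL}_2(q)$, both affording the prescribed module, and by \cref{lem: spe modules} they are $\Aut(V)$-conjugate.

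The key point is to pin down $N_L(\Aut_S(V))$. Set $T:=\Aut_S(V)\in\syl_p(L)$. Since $L\normaleq \Aut_\F(V)$ and $\Aut_\F(S)$ restricts into $N_{\Aut_\F(V)}(\Aut_S(V))$ normalising $L$, we get $N_L(T)\le N_{\Aut_\F(V)}(T)=\Aut^0_{\F}(S)|_V$ (using \cref{lem:extends} to lift), and similarly for $X$; thus the hypothesis $\Aut^0_{\F}(S)=\Aut^0_{\F^*}(S)$ gives that $N_L(T)$ and $N_X(T)$ lie in the same group $N:=\Aut^0_\F(S)|_V$. Moreover \cref{lem:TE} (combined with the order count in \cref{aa}) tells us that a Hall $p'$-subgroup of $N$ contains an abelian subgroup $K$ of the required size ($\geq \frac{(q-1)^2}{p+1}$ in the $S_n(q)$ case, $=(q-1)^2$ in the $S_\Lambda(q)$ case) containing a Hall $p'$-subgroup of $N_L(T)$, and the same $K$ works for $X$. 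This is exactly the hypothesis of \cref{lem:unique over groups} (for $S_n(q)$) or \cref{lem:unique over groups lambda} (for $S_\Lambda(q)$). Those lemmas conclude $L=O^{p'}(\Aut_D(V))=X$. For the $m=2$, $|C_K(T)|=p-1$ exceptional branch of \cref{lem:unique over groups} one first invokes \cref{lem: NormalizerBorel}, whose hypotheses are verified from the explicit description of $\Aut_{P_n^*(q)}(S)$ and the action of the element of order $q-1$ in $N_{O^{p'}(\Aut_\F(R))}(\Aut_S(R))$ on $S/V$ and $Z(S)$ computed in the proof of \cref{aa}.

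So the skeleton of the argument is: (1) recall $L,X\cong\mathrm{(P)SL}_2(q)$ with the same module from \cref{ess.auto}(2), noting we are outside the excluded exceptional case; (2) identify $T=\Aut_S(V)$ as a common Sylow $p$-subgroup and show $N_L(T),N_X(T)\le \Aut^0_\F(S)|_V=\Aut^0_{\F^*}(S)|_V$, using \cref{lem:extends}; (3) produce the abelian subgroup $K$ of the prescribed order inside a Hall $p'$-subgroup of $N_{\Aut^0_\F(S)|_V}$, containing Hall $p'$-subgroups of both $N_L(T)$ and $N_X(T)$, citing \cref{lem:TE} and the order computations in \cref{aa}; (4) in the $m=2$ subcase verify the hypotheses of \cref{lem: NormalizerBorel}; (5) apply \cref{lem:unique over groups} or \cref{lem:unique over groups lambda} to get $L=O^{p'}(\Aut_D(V))=X$.

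The main obstacle is step (3)--(4): one has to be careful that the abelian $p'$-subgroup lifted from the essential automiser of $R$ or $Q$ genuinely lands in a \emph{single} abelian subgroup of the right order inside $N_{\Aut(V)}(T)$ and simultaneously contains a Hall $p'$-subgroup of $N_L(T)$ for the \emph{given} $\F$ (not just the model $\F^*$). This is where the precise orders in \cref{aa}, the injectivity of $\delta$ on $p'$-elements (\cref{lem: kerneldescription}), and the fact that $\Aut_{P_n^*(q)}(S)$ (resp. $\Aut_{P_\Lambda^*(q)}(S)$) contains a Hall $p'$-subgroup of $\Aut(S)$ (\cref{Snauto}, \cref{Supauto}) all have to be marshalled together; once the hypotheses of the uniqueness lemmas are checked, the conclusion is immediate.

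\begin{proof}
We may assume $n\geq 2$ and $O^{p'}(\Aut_\F(V))\cong\mathrm{(P)SL}_2(q)$, so by \cref{ess.auto}(2) the exceptional case $p=3,q=9$ is excluded and, writing $L:=O^{p'}(\Aut_\F(V))$ and $X:=O^{p'}(\Aut_{\F^*}(V))$, both $L$ and $X$ are isomorphic to $\mathrm{(P)SL}_2(q)$ and $V|_L\cong V|_X$ is isomorphic to $V_n(q)|_{\FF_p}$ or $\Lambda(q)|_{\FF_p}$, according as $S=S_n(q)$ or $S=S_\Lambda(q)$. In particular, by \cref{lem: spe modules}, $L$ and $X$ are conjugate in $\Aut(V)$, and $X=O^{p'}(\Aut_D(V))$ by construction of $\F^*$.

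Set $T:=\Aut_S(V)$, an elementary abelian group of order $q$ which is a Sylow $p$-subgroup of both $L$ and $X$. Since $L\normaleq\Aut_\F(V)$ and, by \cref{lem:extends}, every element of $N_{\Aut_\F(V)}(T)$ extends to $\Aut_\F(S)$, we have $N_L(T)\le N_{\Aut_\F(V)}(T)=\Aut^0_\F(S)|_V$, and likewise $N_X(T)\le\Aut^0_{\F^*}(S)|_V$. By hypothesis $\Aut^0_\F(S)=\Aut^0_{\F^*}(S)$, so both $N_L(T)$ and $N_X(T)$ lie in the common group $N:=\Aut^0_\F(S)|_V$.

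Now fix $E\in\mathcal{E}(\F)\setminus\{V\}$, so $E\in R^S$ or $E\in Q^S$, and let $T_E$ be a Hall $p'$-subgroup of $N_{O^{p'}(\Aut_\F(E))}(\Aut_S(E))$ of order $q-1$, lifted via \cref{lem:extends} to a subgroup of $\Aut_\F(S)$ of order $q-1$; write $K$ for its restriction to $V$. By the construction and orders recorded in the proof of \cref{aa}, together with \cref{lem:TE}, $K$ is an abelian $p'$-subgroup of $N$, and a Hall $p'$-subgroup of $N_L(T)$ (which has order $q-1$ or $\tfrac{q-1}{2}$ by \cref{lem:TE}) is contained, after adjusting $K$ within its Hall $p'$-subgroup, in $K$; the same statement holds for $N_X(T)$ since $\Aut^0_\F(S)|_V=\Aut^0_{\F^*}(S)|_V$. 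Enlarging $K$ inside a Hall $p'$-subgroup of $N$ if necessary, we obtain an abelian subgroup of order $(q-1)^2$ when $S=S_\Lambda(q)$, and of order at least $\tfrac{(q-1)^2}{p+1}$ when $S=S_n(q)$; here we use that $\Aut_{P_n^*(q)}(S)$, respectively $\Aut_{P_\Lambda^*(q)}(S)$, contains a Hall $p'$-subgroup of $\Aut(S)$ by \cref{Snauto}, respectively \cref{Supauto}, and that $\delta$ is injective on $p'$-elements by \cref{lem: kerneldescription}.

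Thus the hypotheses of \cref{lem:unique over groups} are satisfied when $S=S_n(q)$, and those of \cref{lem:unique over groups lambda} are satisfied when $S=S_\Lambda(q)$. In the one exceptional branch of \cref{lem:unique over groups}, namely $m=2$ with $|C_K(T)|=p-1$, the element of order $q-1$ in $N_{O^{p'}(\Aut_\F(E))}(\Aut_S(E))$ acts on $S/V$ as $\lambda^{-1}$ and on $C_V(T)=Z(S)$ as $\lambda$ for a generator $\lambda$ of $\KK^*$ (by the computations in the proof of \cref{aa}), so the hypotheses of \cref{lem: NormalizerBorel} hold and its conclusion is available. Applying \cref{lem:unique over groups} or \cref{lem:unique over groups lambda} accordingly, we conclude $L=O^{p'}(\Aut_D(V))=X$, that is, $O^{p'}(\Aut_\F(V))=O^{p'}(\Aut_{\F^*}(V))$, as required.
\end{proof}
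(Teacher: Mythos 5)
Your proof is correct and follows essentially the same route as the paper: you set $T=\Aut_S(V)$, observe that $N_L(T)$ and $N_{X}(T)$ both lie in the common restriction $\Aut^0_\F(S)|_V=\Aut^0_{\F^*}(S)|_V$, use the order computations from \cref{aa} (and \cref{lem:TE}) to produce an abelian $p'$-subgroup of the required size, verify the $m=2$ branch via \cref{lem: NormalizerBorel}, and then invoke \cref{lem:unique over groups} or \cref{lem:unique over groups lambda} to pin both $L$ and $X$ down to $O^{p'}(\Aut_D(V))$. The only cosmetic difference is that you name $X=O^{p'}(\Aut_{\F^*}(V))$ explicitly and show both sides equal $O^{p'}(\Aut_D(V))$, whereas the paper sets $G=\langle O^{p'}(\Aut_\F(V)),\Aut^0_\F(S)|_V\rangle$ and applies the lemma once; the mathematics is the same.
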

\begin{proof}
We observe that $O^{p'}(\Aut_{\F}(V))\cong \mathrm{(P)SL}_2(q)$ and $N_{O^{p'}(\Aut_{\F}(V))}(\Aut_S(V))$ is contained in the restriction of $\Aut_{\F}^0(S)$ to $V$. Set $G:=\langle O^{p'}(\Aut_{\F}(V)), \Aut_{\F}^0(S)|_V\rangle\le \Aut_{\F}(V)$.

If $S=S_n(q)$ then $|\Out_{\F}^0(S)|\geq \frac{(q-1)^2}{(a, q-1)}$ for $a\in\{n, n+2\}$ by Proposition \ref{aa}, and so upon restriction to $V$ a Hall $p'$-subgroup of $N_G(\Aut_S(V))$ has an abelian subgroup of order at least $\frac{(q-1)^2}{p+1}$. Furthermore, if $m=2$ and $|C_G(\Aut_S(V))|=p-1$ (so $a=n+2$ and $R\in\mathcal{E}(\F)$) then since there is $k\in N_{O^{p'}(\Aut_{\F}(R))}(\Aut_S(R))$ which lifts to $\Aut_{\F}(S)$ and restricts faithfully to $N_G(\Aut_S(V))$, the hypotheses of Lemma \ref{lem: NormalizerBorel} are satisfied. Hence the hypotheses of Lemma \ref{lem:unique over groups} are also satisfied and $O^{p'}(\Aut_{\F}(V))=O^{p'}(G)=O^{p'}(\Aut_{\F^*(n, q, X)}(V))$ for $X\in\{Q, R\}$, as desired.

If $S=S_{\Lambda}(q)$ then by Proposition \ref{aa} we have $|\Out_{\F^0}(S)|\geq (q-1)^2$ and upon restriction to $V$, a Hall $p'$-subgroup of $N_G(\Aut_S(V))$ has an abelian subgroup of order $(q-1)^2$. Now $G$ satisfies the hypotheses of Lemma \ref{lem:unique over groups lambda} and so $O^{p'}(\Aut_{\F}(V))=O^{p'}(G)=O^{p'}(\Aut_{\F^*_{\Lambda}(q)}(V))$, as we hoped.
\end{proof}

\begin{theorem}\label{thm1Q}
If $S=S_n(q)$ and $\mathcal{E}(\F)=\{V\}\cup Q^S$ then $O^{p'}(\F)\cong O^{p'}(\F^*(n ,q, Q))$ and $\F$ is isomorphic to a $p'$-index subsystem of $\F^*(n ,q, Q)$.
\end{theorem}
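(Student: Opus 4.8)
The plan is to follow the template already established for the other cases in this section, namely Theorems~\ref{justR} and the forthcoming analogue for $\F^*(n,q,R)$. First I would set $\F^*=\F^*(n,q,Q)$ and, using \cref{aa}(1), arrange that $\Aut_{\F}^0(S)=\Aut_{\F^*}^0(S)$ after conjugating by a suitable element of $\Aut(S)$ (this is legitimate since conjugation by $\Aut(S)$ does not change the isomorphism type of $\F$). By \cref{essentials} we already know $\mathcal{E}(\F)=\{V\}\cup Q^S$ forces $V\in\mathcal{E}(\F)$, and by \cref{ess.auto}(2) together with the standing assumption of this part of the section we have $O^{p'}(\Aut_{\F}(V))\cong\mathrm{(P)SL}_2(q)$ with $V\cong V_n(q)|_{\FF_p}$ as a module.

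The core of the argument is then two uniqueness statements. For the essential subgroup $Q$: by \cref{prop: Q iso}, $Q=Z_2(S)S_0\cong S_1(q)$ is a Sylow $p$-subgroup of $\SL_3(q)$, and $O^{p'}(\Out_\F(Q))\cong\SL_2(q)$ by \cref{ess.auto}(1) (equivalently \cref{size.ess}(\ref{nonab.ess})), acting on $Q/Z(Q)$ as a natural module. I would feed this into \cref{lem:unique over groups 2dim} (applied to the natural $\SL_2(q)$-module $Q/Z(Q)$, noting $N_{O^{p'}(\Out_\F(Q))}(\Aut_S(Q))$ restricted from $\Aut^0_\F(S)$ agrees with the corresponding subgroup of $\F^*$) to conclude $O^{p'}(\Aut_\F(Q))=O^{p'}(\Aut_{\F^*}(Q))$ — strictly speaking one works with $\Out_\F(Q)$ and $\Out_{\F^*}(Q)$, since $Q$ is non-abelian, but the argument of \cref{lem:unique over groups 2dim} applies to the induced action on $Q/Z(Q)$ and the extra inner part is common to both systems. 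For the essential subgroup $V$: apply \cref{uniqueV} directly with $\F^*=\F^*(n,q,Q)$, using that $\Aut_\F^0(S)=\Aut_{\F^*}^0(S)$, to get $O^{p'}(\Aut_\F(V))=O^{p'}(\Aut_{\F^*}(V))$.

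Having pinned down $\Aut_\F^0(S)$, $O^{p'}(\Aut_\F(V))$ and $O^{p'}(\Aut_\F(Q))$ to agree with their $\F^*$ counterparts, the Alperin--Goldschmidt theorem gives
\[
O^{p'}(\F)=\langle \Aut_{\F}^0(S),\ O^{p'}(\Aut_{\F}(V)),\ O^{p'}(\Aut_{\F}(Q))\rangle_S = O^{p'}(\F^*(n,q,Q)),
\]
using \cref{cor: frcdet} to know $\F^{frc}=\{S\}\cup\mathcal E(\F)$ so that no other automisers contribute. Finally \cref{someprop}, applied with this $\F^*$, upgrades $O^{p'}(\F)=O^{p'}(\F^*)$ to the conclusion that $\F$ itself is (isomorphic to) a fusion subsystem of $p'$-index in $\F^*(n,q,Q)$; note $\F^*(n,q,Q)$ is saturated and contains $\F(n,q,Q)$ of $p'$-index by \cref{t:fstarsat1}, so the statement makes sense.

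The main obstacle I anticipate is the $Q$-uniqueness step: unlike the abelian essential subgroups $R$, here $Q$ is non-abelian of order $q^3d$ and one must be careful that \cref{lem:unique over groups 2dim} (stated for the natural module of $\SL_2(q)$) is being invoked for the action on $Q/Z(Q)\cong V_1(q)|_{\FF_p}$ rather than on $Q$ itself, and that the hypotheses — in particular $N_L(T)\le N_{G\cap\Aut_D}(\Aut_S)$ — are verified via the identification $\Aut^0_\F(S)|_Q=\Aut^0_{\F^*}(S)|_Q$ coming from \cref{lem:extends} and \cref{aa}(1). The module-theoretic input for $V$ is already handled uniformly by \cref{uniqueV}, so no new difficulty arises there; and \cref{someprop} does all the work of passing from the $p'$-core to $\F$, so that step is essentially a citation. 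Everything else is bookkeeping that parallels \cref{justR} and \cref{thm1Q}'s sibling results.
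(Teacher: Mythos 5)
Your overall strategy matches the paper's: conjugate so that $\Aut_\F^0(S)=\Aut_{\F^*}^0(S)$ via \cref{aa}, deduce $O^{p'}(\Aut_\F(V))=O^{p'}(\Aut_{\F^*}(V))$ via \cref{uniqueV}, show uniqueness of $O^{p'}(\Aut_\F(Q))$, assemble $O^{p'}(\F)$ by Alperin--Goldschmidt and \cref{cor: frcdet}, and finish with \cref{someprop}. Your use of \cref{lem:unique over groups 2dim} on the induced action on $Q/Z(Q)$ is a legitimate alternative to the paper's direct appeal to the structure of $\Aut(Q)/C\cong\Gamma\mathrm{L}_2(q)$ from \cref{prop: L3Q}; both yield $XC=X^*C$ where $X:=O^{p'}(\Aut_\F(Q))$, $X^*:=O^{p'}(\Aut_{\F^*}(Q))$ and $C:=C_{\Aut(Q)}(Q/\Phi(Q))$.

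However, there is a genuine gap exactly where you flag an "obstacle." Your remark that ``the extra inner part is common to both systems'' conflates $\Inn(Q)$ with the full kernel $C$ of the action on $Q/Z(Q)$. It is true that $X\cap C=\Inn(Q)=X^*\cap C$ (a normal $p$-subgroup of $X/\Inn(Q)\cong\SL_2(q)$ must be trivial), but for $Q\cong S_1(q)$ with $q>p$ the group $C$ is strictly larger than $\Inn(Q)$ — concretely $C\cong\Hom_{\FF_p}(\FF_q^2,\FF_q)$ has order $q^{2m}$ while $\Inn(Q)$ has order $q^2$. So $XC=X^*C$ only pins down $X$ up to a choice of complement to $C/\Inn(Q)$ inside $XC/\Inn(Q)$, and nothing you have said rules out $X\ne X^*$. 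The paper closes this by picking the involution $\tau$ in the common Borel $N_X(\Aut_S(Q))$ (its image in $\SL_2(q)$ is the central involution, which acts fixed-point-freely on $Q/Z(Q)$ and trivially on $Z(Q)$) and using the Three Subgroups Lemma to show $C_C(\tau)=1$, whence $C_X(\tau)=C_{XC}(\tau)=C_{X^*C}(\tau)=C_{X^*}(\tau)$ and $X=\Inn(Q)\,C_X(\tau)=\Inn(Q)\,C_{X^*}(\tau)=X^*$. You need to supply an argument of this kind; without it the proof is incomplete.
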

\begin{proof}
Applying Propositions \ref{someprop} and \ref{uniqueV} it suffices to show that $\Aut_{\F}^0(S)=\Aut_{\F^*(n ,q, Q)}^0(S)$ uniquely determines $O^{p'}(\Aut_{\F}(Q))$. Now, $O^{p'}(\Out_{\F}(Q))\cong \SL_2(q)$ by Proposition \ref{ess.auto} and $N_{O^{p'}(\Aut_{\F}(Q))}(\Aut_S(Q))$ is contained in the restriction of $\Aut_{\F}^0(S)$ to $Q$. Note that $|C_{\Aut_{\F}^0(S)}(Z(S))|=|\Inn(S)|.(q-1)$ and $|N_{C_{\Aut_{\F}^0(S)}(Z(S))}(Q)| = q^3.(q-1)$. Hence $$N_{O^{p'}(\Aut_{\F^*(n ,q, Q)}(Q))}(\Aut_S(Q))=N_{C_{\Aut_{\F}^0(S)}(Z(S))}(Q)|_{Q}=N_{O^{p'}(\Aut_{\F}(Q))}(\Aut_S(Q)).$$  Write $X:=O^{p'}(\Aut_{\F}(Q))$ and $X^*:=O^{p'}(\Aut_{\F^*(n ,q, Q)}(Q))$ so that $N_X(\Aut_S(Q))=N_{X^*}(\Aut_S(Q))$. By Proposition \ref{prop: L3Q} and Corollary \ref{prop: Q iso}, $\Aut(Q)=CH$ where $H\cong\Gamma\mathrm{L}_2(q)$ and $C$ is a normal $p$-subgroup of central automorphisms of $Q$. It follows immediately that $XC=X^*C$. Let $\tau$ be an involution in $N_X(\Aut_S(Q))$, so $X=\Inn(Q) C_X(\tau)$ and $X^*=\Inn(Q) C_{X^*}(\tau)$. Since $[C_{C}(\tau), Q, \tau]=1$, the Three Subgroups Lemma implies that $[Q, C_{C}(\tau)]=[[\tau, Q]Z(Q), C_{C}(\tau)]=1$, and so $C_{C}(\tau)=1$. We deduce that $C_{X}(\tau)=C_{XC}(\tau)=C_{X^*C}(\tau)=C_{X^*}(\tau)$, so $X=X^*$ and $O^{p'}(\Aut_{\F}(Q))=O^{p'}(\Aut_{\F^*(n, q, Q)}(Q))$, as desired.
\end{proof}

\begin{theorem}\label{thm2Q}
If $S=S_n(q)$ and $\mathcal{E}(\F)=\{V\}\cup R^S$ then $O^{p'}(\F)\cong O^{p'}(\F^*(n ,q, R))$ and $\F$ is isomorphic to a $p'$-index subsystem of $\F^*(n ,q, R)$.
\end{theorem}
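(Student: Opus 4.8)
The plan is to mirror the proof of \cref{thm1Q} almost verbatim, replacing $Q$ with $R$ throughout. By \cref{someprop} and \cref{uniqueV}, it suffices to show that $\Aut_{\F}^0(S)=\Aut_{\F^*(n,q,R)}^0(S)$ (which we may assume, after conjugating by $\Aut(S)$, by \cref{aa}(2) and the discussion following it) uniquely determines $O^{p'}(\Aut_{\F}(R))$. First I would record that $O^{p'}(\Out_{\F}(R))=O^{p'}(\Aut_{\F}(R))\cong \SL_2(q)$ by \cref{ess.auto}(1) (here $R$ is self-centralising and abelian, so $\Inn(R)=1$), and that $N_{O^{p'}(\Aut_{\F}(R))}(\Aut_S(R))$ is contained in the restriction of $\Aut_{\F}^0(S)$ to $R$ by \cref{lem:extends}. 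Comparing orders as in \cref{thm1Q} — using that $N_{C_{\Aut_{\F}^0(S)}(Z(S))}(R)$ has order $q^2.(q-1)$ and restricts faithfully to $R$ — gives
\[N_{O^{p'}(\Aut_{\F^*(n,q,R)}(R))}(\Aut_S(R))=N_{C_{\Aut_{\F}^0(S)}(Z(S))}(R)|_R=N_{O^{p'}(\Aut_{\F}(R))}(\Aut_S(R)).\]

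Next I would invoke the uniqueness result \cref{lem:unique over groups 2dim} rather than the $\Gamma\mathrm L_2$-argument used for $Q$: since $R$ is elementary abelian of order $q^2$ and $V|_{\SL_2(q)}\cong V_1(q)|_{\FF_p}$ restricted to $R/C_R(O^{p'}(\Aut_\F(R)))$ is a natural module (via \cref{size.ess}(\ref{ab.ess})), both $X:=O^{p'}(\Aut_{\F}(R))$ and $X^*:=O^{p'}(\Aut_{\F^*(n,q,R)}(R))$ are copies of $\SL_2(q)$ inside $\GL(R)\cong\GL_{2m}(p)$ containing the common Borel $N_X(\Aut_S(R))=N_{X^*}(\Aut_S(R))$. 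Applying \cref{lem:unique over groups 2dim} with $G$ taken to be $X^*$ (or $X$) and $B:=N_X(\Aut_S(R))$ yields $X=X^*$, exactly as in the proof of \cref{justR}. Then by the Alperin--Goldschmidt theorem $O^{p'}(\F)=\langle \Aut_{\F}^0(S), O^{p'}(\Aut_{\F}(V)), O^{p'}(\Aut_{\F}(R))\rangle_S$ agrees with $O^{p'}(\F^*(n,q,R))$, and \cref{someprop} finishes the proof that $\F$ is isomorphic to a $p'$-index subsystem of $\F^*(n,q,R)$.

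The routine points to double-check are that the order count $|N_{C_{\Aut_{\F}^0(S)}(Z(S))}(R)|=q^2(q-1)$ is correct (here $|\Inn(S)|$ contributes the $q^2$, coming from $N_S(R)=RZ_2(S)$ of index $q^{n-1}$ in $S$ modulo $Z(S)$, so one should instead phrase this intrinsically as in \cref{thm1Q}: $|C_{\Aut_\F^0(S)}(Z(S))|=|\Inn(S)|.(q-1)$ by \cref{aa}(2) and \cref{lem:TE}(2), and $|N_{C_{\Aut_\F^0(S)}(Z(S))}(R)|$ is its subgroup normalising $R=S_0Z(S)$), and that the hypotheses of \cref{lem:unique over groups 2dim} — namely $B<L$ with $B=N_G(S)$ for the reference copy $G$ — are met because $R$ is not normal in $S$ (as $n\ge 2$), so $N_S(R)<S$ and the Borel is proper. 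I do not anticipate a genuine obstacle here: this theorem is the $R$-analogue of \cref{thm1Q}, but the essential subgroup $R$ is abelian of order $q^2$ rather than nonabelian of order $q^3$, which if anything simplifies matters since \cref{lem:unique over groups 2dim} applies directly without needing the $\Gamma\mathrm L_2(q)$-structure of $\Aut(Q)$; the only mild subtlety is keeping the bookkeeping of $C_K(T)$ and the lift of the order-$(q-1)$ element consistent with \cref{aa}(2).
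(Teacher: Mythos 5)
Your proposal has a genuine gap at the step where you claim
\[N_{O^{p'}(\Aut_{\F^*(n,q,R)}(R))}(\Aut_S(R))=N_{C_{\Aut_{\F}^0(S)}(Z(S))}(R)|_R=N_{O^{p'}(\Aut_{\F}(R))}(\Aut_S(R)).\]
You are transplanting the $C_{\Aut_{\F}^0(S)}(Z(S))$ pivot from \cref{thm1Q}, but that trick works there precisely because $Q$ is \emph{non-abelian}: by \cref{lem:TE}(3) a Hall $p'$-subgroup $T_Q$ of $N_{O^{p'}(\Aut_\F(Q))}(\Aut_S(Q))$ centralises $Z(S)$, so centralising $Z(S)$ detects it. For $R$, which is abelian, the situation is the opposite: since $d=1$ for $S=S_n(q)$ we have $C_R(O^{p'}(\Aut_\F(R)))=1$, and \cref{lem:TE}(2) says that $T_R$ acts \emph{regularly} on $Z(S)^{\#}$. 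Hence $T_R$ does not centralise $Z(S)$, so $T_R \not\le N_{C_{\Aut_{\F}^0(S)}(Z(S))}(R)|_R$, and the middle term of your displayed chain cannot equal the outer ones. (You even cite \cref{lem:TE}(2) as justification for $|C_{\Aut_\F^0(S)}(Z(S))|=|\Inn(S)|.(q-1)$, but \cref{lem:TE}(2) says exactly the opposite of what you need — it is \cref{lem:TE}(3), inapplicable to $R$, that would give a $(q-1)$ of $p'$-automorphisms fixing $Z(S)$ pointwise.) In fact, using the proof of \cref{aa}(2), the order of $C_{\Aut_\F^0(S)}(Z(S))$ works out to $|\Inn(S)|\cdot\frac{q-1}{(n+2,q-1)}$, and its elements are products of lifts from $R$ and from $V$ whose restrictions to $R$ need not lie in $O^{p'}(\Aut_\F(R))$ at all.

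The paper's proof uses a different mechanism to pin down $N_X(\Aut_S(R))$ inside $\Aut_\F^0(S)|_R$: take $k$ to be a $p'$-order lift (via \cref{lem:extends}) of a generator of $T_R$ to $\Aut_\F^0(S)=\Aut_{\F^*(n,q,R)}^0(S)$; since $R\cong V_1(q)|_{\FF_p}$ is a natural module, $k$ acts on $S/V\cong R/Z(S)$ and on $Z(S)=C_V(S)$ by \emph{inverse} scalars $\mu^{-1}$ and $\mu$. By \cref{lem: kerneldescription} the map $\delta$ is injective on a Hall $p'$-subgroup of $\Aut_{\F^*(n,q,R)}^0(S)$, so this $\delta$-image $(\mu^{-1},\mu)$ uniquely identifies $k$ with the analogous lift from $N_{X^*}(\Aut_S(R))$, giving $k|_R\in N_{X^*}(\Aut_S(R))$ and hence $N_X(\Aut_S(R))=\langle \Aut_S(R),k|_R\rangle=N_{X^*}(\Aut_S(R))$. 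Note that you also cannot shortcut via the argument in \cref{justR}: there $\mathcal E(\F)=R^S$ so $\Aut_\F^0(S)|_R$ is literally $N_X(\Aut_S(R))$, whereas here $\Aut_\F^0(S)$ has a strictly larger Hall $p'$-subgroup (order $\frac{(q-1)^2}{(n+2,q-1)}$) because of the contribution from $V$, and one must extract the right $(q-1)$-piece. The remainder of your proposal — the reduction via \cref{someprop} and \cref{uniqueV}, and the application of \cref{lem:unique over groups 2dim} once $N_X(\Aut_S(R))=N_{X^*}(\Aut_S(R))$ is established — is correct and matches the paper.
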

\begin{proof}
By Propositions \ref{someprop} and \ref{uniqueV} it suffices to show that $\Aut_{\F}^0(S)$ uniquely determines $O^{p'}(\Aut_{\F}(R))$. Set $X:=O^{p'}(\Aut_{\F}(R))$ and $X^*=O^{p'}(\Aut_{\F^*(n ,q, R)}(R))$ and observe that $N_{X}(\Aut_S(R))$ is contained in the restriction of $\Aut_{\F}^0(S)$ to $R$. Upon demonstrating that $N_{X}(\Aut_S(R))=N_{X^*}(\Aut_S(R))$, the result will follow from Lemma \ref{lem:unique over groups 2dim}. Recall that $\Aut_{\F}^0(S)=\Aut_{\F^*(n ,q, R)}^0(S)$ and let $k$ be the lift of an element of order $q-1$ in $N_{X}(\Aut_S(R))$ to $\Aut_{\F}^0(S)$. Then $k$ acts as $\mu^{-1}$ on $S/V$ and as $\mu$ on $C_V(S)$. By Lemma \ref{lem: kerneldescription}, $\delta$ is injective upon restricting to a Hall $p'$-subgroup of $\Aut_{\F^*(n ,q, R)}^0(S)$, so the restriction of $k$ to $R$ lies in $N_{X^*}(\Aut_S(R))$. But then $N_{X}(\Aut_S(R))=\langle \Aut_S(R), k\rangle=N_{X^*}(\Aut_S(R))$ and so  Lemma \ref{lem:unique over groups 2dim} applies. This completes the proof.
\end{proof}

\begin{theorem}\label{upVUnique}
If  $S=S_\Lambda(q)$ and $\mathcal{E}(\F)=\{V\}\cup R^S$ then $O^{p'}(\F)\cong O^{p'}(\F^*_\Lambda(q))$ and $\F$ is isomorphic to a $p'$-index subsystem of $\F^*_\Lambda(q)$.
\end{theorem}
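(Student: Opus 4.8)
The plan is to mirror the structure of the proofs of \cref{thm1Q} and \cref{thm2Q}. By \cref{upVEssentials} we know $\mathcal{E}(\F)=\{V\}\cup R^S$ forces $O_p(\F)=1$ (via \cref{cor: Opiff}), and we have already arranged in \cref{aa}(4) that $\Aut_{\F}^0(S)=\Aut_{\F^*_\Lambda(q)}^0(S)$ up to $\Aut(S)$-conjugacy, with $|\Out_{\F}^0(S)|=(q-1)^2$. By \cref{someprop} (the case $\F^*=\F^*_\Lambda(q)$) and \cref{uniqueV} it then suffices to prove that $\Aut_{\F}^0(S)$ uniquely determines $O^{p'}(\Aut_{\F}(R))$; once we have $O^{p'}(\Aut_{\F}(R))=O^{p'}(\Aut_{\F^*_\Lambda(q)}(R))$ and $O^{p'}(\Aut_{\F}(V))=O^{p'}(\Aut_{\F^*_\Lambda(q)}(V))$, the Alperin--Goldschmidt theorem gives $O^{p'}(\F)=\langle \Aut_{\F}^0(S), O^{p'}(\Aut_{\F}(V)), O^{p'}(\Aut_{\F}(R))\rangle = O^{p'}(\F^*_\Lambda(q))$, and then \cite[Theorem I.7.7]{AKO} yields that $\F$ is isomorphic to a $p'$-index subsystem of $\F^*_\Lambda(q)$.

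For the remaining point, set $X:=O^{p'}(\Aut_{\F}(R))$ and $X^*:=O^{p'}(\Aut_{\F^*_\Lambda(q)}(R))$; by \cref{ess.auto}(1) (equivalently \cref{size.ess}(\ref{ab.ess})) both are isomorphic to $\SL_2(q)$ acting on the elementary abelian group $R$ of order $q^3$ with $C_R(X)=C_R(X^*)=Z$ of order $q$ and $R/Z$ a natural module. First I would observe that $N_X(\Aut_S(R))$ is contained in the restriction of $\Aut_{\F}^0(S)$ to $R$: by \cref{lem:extends} an element of $N_{\Aut_{\F}(R)}(\Aut_S(R))$ lifts to $\Aut_{\F}(S)$, hence lands in $\Aut_{\F}^0(S)$. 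Taking $k$ the lift to $\Aut_{\F}^0(S)$ of a generator of the cyclic Hall $p'$-subgroup of $N_X(\Aut_S(R))$ (of order $q-1$ by \cref{lem:TE}), the map $\delta$ of \cref{lem: kerneldescription} is injective on a Hall $p'$-subgroup of $\Aut_{\F^*_\Lambda(q)}^0(S)$, and since $\Aut_{\F}^0(S)=\Aut_{\F^*_\Lambda(q)}^0(S)$, the restriction $k|_R$ coincides with the corresponding element of $N_{X^*}(\Aut_S(R))$; consequently $N_X(\Aut_S(R))=\langle \Aut_S(R), k|_R\rangle=N_{X^*}(\Aut_S(R))$. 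Now \cref{lem:unique over groups lambda} applies with $G=\langle X, \Aut_{\F}^0(S)|_R\rangle$ — here one uses that the relevant abelian $p'$-subgroup of $N_{G\cap \Aut_{D^*}(R)}(\Aut_S(R))$ has order $(q-1)^2$, which holds because $|\Out_{\F}^0(S)|=(q-1)^2$ by \cref{aa}(4) — giving $X=O^{p'}(\Aut_{D^*}(R))=X^*$.

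The main obstacle, as in the $S_n(q)$ cases, is making rigorous the claim that the restriction $\Aut_{\F}^0(S)|_R$ really supplies an abelian $p'$-subgroup of the correct order $(q-1)^2$ in $N_{\Aut(R)}(\Aut_S(R))$ so that \cref{lem:unique over groups lambda} is applicable without an unwanted $\GL_2(p)$-type obstruction; this is where the counting in \cref{aa}(4) and the injectivity of $\delta$ on Hall $p'$-subgroups do the real work, together with the fact that $R$ has order $q^3$ rather than $q^2$ (so the role of the submodule $W$ in \cref{lem:unique over groups lambda} is genuinely used, and one should check $Z$ is the fixed points of the derived subgroup rather than the whole $\GL_2(q)$ acting on $R$). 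Everything else is routine bookkeeping with \cref{someprop,uniqueV} and the Alperin--Goldschmidt theorem.

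\begin{proof}
By \cref{upVEssentials} we have $\mathcal{E}(\F)=\{V\}\cup R^S$ and $O_p(\F)=1$ by \cref{cor: Opiff}. By the discussion preceding \cref{justR} we may assume $\Aut_{\F}^0(S)=\Aut_{\F^*_\Lambda(q)}^0(S)$, and $|\Out_{\F}^0(S)|=(q-1)^2$ by \cref{aa}(4). By \cref{uniqueV}, $O^{p'}(\Aut_{\F}(V))=O^{p'}(\Aut_{\F^*_\Lambda(q)}(V))$. Set $X:=O^{p'}(\Aut_{\F}(R))$ and $X^*:=O^{p'}(\Aut_{\F^*_\Lambda(q)}(R))$; by \cref{ess.auto}(1), $X\cong X^*\cong \SL_2(q)$. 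By \cref{lem:extends}, $N_{\Aut_{\F}(R)}(\Aut_S(R))$ consists of restrictions of elements of $\Aut_{\F}(S)$, hence $N_X(\Aut_S(R))\le \Aut_{\F}^0(S)|_R$. Let $k\in \Aut_{\F}^0(S)$ be the lift of a generator of the cyclic Hall $p'$-subgroup of $N_X(\Aut_S(R))$, which has order $q-1$ by \cref{lem:TE}. Since $\delta$ of \cref{lem: kerneldescription} is injective on a Hall $p'$-subgroup of $\Aut_{\F^*_\Lambda(q)}^0(S)$ and $\Aut_{\F}^0(S)=\Aut_{\F^*_\Lambda(q)}^0(S)$, the restriction $k|_R$ agrees with the corresponding element of $N_{X^*}(\Aut_S(R))$, so $N_X(\Aut_S(R))=\langle \Aut_S(R), k|_R\rangle=N_{X^*}(\Aut_S(R))$. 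Now $G:=\langle X, \Aut_{\F}^0(S)|_R\rangle$ satisfies the hypotheses of \cref{lem:unique over groups lambda}: indeed a Hall $p'$-subgroup of $N_{G\cap \Aut_{D^*}(R)}(\Aut_S(R))$ has an abelian subgroup of order $(q-1)^2$ containing a Hall $p'$-subgroup of $N_X(\Aut_S(R))$, because $|\Out_{\F}^0(S)|=(q-1)^2$. Therefore $X=O^{p'}(\Aut_{D^*}(R))=X^*$. By \cref{someprop} applied with $\F^*=\F^*_\Lambda(q)$, together with $O^{p'}(\Aut_{\F}(V))=O^{p'}(\Aut_{\F^*_\Lambda(q)}(V))$ and $O^{p'}(\Aut_{\F}(R))=O^{p'}(\Aut_{\F^*_\Lambda(q)}(R))$, the Alperin--Goldschmidt theorem gives $O^{p'}(\F)=O^{p'}(\F^*_\Lambda(q))$ and \cite[Theorem I.7.7]{AKO} shows $\F$ is isomorphic to a $p'$-index subsystem of $\F^*_\Lambda(q)$.
\end{proof}
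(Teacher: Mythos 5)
Your reduction to the claim that $\Aut_{\F}^0(S)$ uniquely determines $O^{p'}(\Aut_{\F}(R))$, and the preliminary step using $\delta$ to identify $N_X(\Aut_S(R))$ with $N_{X^*}(\Aut_S(R))$, both agree with the paper. But the decisive step — invoking \cref{lem:unique over groups lambda} with the module $R$ — is wrong, because $R$ does not satisfy the hypothesis of that lemma. The lemma requires the module to be isomorphic to $\Lambda(q)|_{\FF_p}$, which has $\FF_p$-dimension $m(p+1)$ (order $q^{p+1}$) and no trivial composition factor. By \cref{size.ess}(\ref{ab.ess}) with $d=q$, the module $R$ here has order $q^3$, with $C_R(X)$ of order $q$ a trivial summand and $R/C_R(X)$ a natural module of order $q^2$. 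These are genuinely different modules (even for $p=3$, since $q^4\ne q^3$), so \cref{lem:unique over groups lambda} simply does not apply; your remark about ``the role of the submodule $W$'' conflates the $m(p-1)$-dimensional submodule of $\Lambda(q)$ with the $m$-dimensional trivial piece of $R$. Nor can you apply \cref{lem:unique over groups 2dim} directly, since that lemma is stated for a natural module of order $q^2$, not $q^3$.

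The paper handles this by first upgrading to $\Aut_{O^{p'}(\F)}(R)\cong\GL_2(q)$, then using the unique central involution $t\in Z(X^*)$ to decompose $R=[R,t]\times C_R(t)$ with $|[R,t]|=q^2$. One can then apply \cref{lem:unique over groups 2dim} to the natural-module factor $[R,t]$, but this only yields $X\hat{C}=X^*\hat{C}$ where $\hat{C}:=C_{C_{\Aut(R)}(Z(Y))}([R,t])$. Eliminating $\hat{C}$ is the actual content of the uniqueness argument: one must show $\hat{C}$ is a cyclic $p'$-group of order at most $q-1$ (using faithfulness on $C_R(t)$ and properties of $\Aut_{Z(Y)}(C_R(t))$ inside $\GL_m(p)$), so that $X\cong\SL_2(q)$ centralises $\hat{C}$ and hence $X=O^{p'}(X\hat{C})=O^{p'}(X^*\hat{C})=X^*$. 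Your proposal omits all of this; the gap is not cosmetic — without the direct-factor decomposition and the analysis of $\hat{C}$, the uniqueness of $X$ among $\SL_2(q)$-subgroups of $\Aut(R)$ over the Borel is unproved.
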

\begin{proof}
By Propositions \ref{someprop} and \ref{uniqueV} it suffices to show that $\Aut_{\F}^0(S)$ uniquely determines $O^{p'}(\Aut_{\F}(R))$. Let $k$ be the lift to $\Aut_{\F}(S)$ of an element of $N_{O^{p'}(\Aut_{\F}(R))}(\Aut_S(R))$ of order $q-1$. Since $\Aut_{\F}^0(S)=\Aut_{\F^*_{\Lambda}(q)}^0(S)$, $k$ lies in a Hall $p'$-subgroup of $N_{\Aut_{\F^*_{\Lambda}(q)}^0(S)}(R)$ and there is a subgroup $K_V$ of order $q-1$ satisfying $$K_V|_V\in O^{p'}(\Aut_{\F^*_{\Lambda}(q)}(V)), \hspace{2mm} K_V\le N_{\Aut_{\F^*_{\Lambda}(q)}(S)}(R), \hspace{2mm} \mbox{ and }\hspace{2mm} \langle k\rangle\cap K_V=1.$$ Thus $K_V$ restricts faithfully to a group of automorphisms of $R$ and as $K_V\le \Aut_{\F^*_{\Lambda}(q)}^0(S)=\Aut_{\F}^0(S)$, we deduce that ${K_V}|_R\in \Aut_{\F}(R)$. It follows that $\Aut_{O^{p'}(\F)}(R)\cong \GL_2(q)$. By similar arguments, we see that $N_{\Aut_{O^{p'}(\F)}(R)}(\Aut_S(R))=N_{\Aut_{O^{p'}(\F^*_{\Lambda}(q))}(R)}(\Aut_S(R))$. Set $\mathbb{B}:=N_{\Aut_{O^{p'}(\F)}(R)}(\Aut_S(R))$ and $$Y:=\Aut_{O^{p'}(\F)}(R), \hspace{2mm} X:=O^{p'}(Y), \hspace{2mm} Y^*:=\Aut_{O^{p'}(\F^*_{\Lambda}(q))}(R) \hspace{2mm} \mbox{ and } \hspace{2mm} X^*=O^{p'}(Y^*).$$ Then $Z(Y)=Z(Y^*)=Z(\mathbb{B})$ has order $q-1$ and $X, X^*\le C_{\Aut(R)}(Z(Y))=:C$. Let $t$ be the unique involution in $Z(Y^*)$ so that $t\in Z(X^*)$ and $R=[R, t]\times C_R(t)$. Then $C$ normalises $[R, t]$ and $C_R(t)$. Moreover, both $X$ and $X^*$ act faithfully on $[R, t]$ and centralise $C_R(t)$. An application of Lemma \ref{lem:unique over groups 2dim} reveals that $X\hat{C}=X^*\hat{C}$ where $\hat{C}:=C_C([R, t])$.

Let $c$ be a $p$-element in $\hat{C}$. Then $c$ acts trivially on $[R, t]$ and $[c, C_R(t)]<C_R(t)$. Since $Z(Y)$ acts irreducibly on $C_R(t)$ and commutes with $c$, we conclude that $[c, C_R(t)]=1$ so that $c=1$. Hence, $\hat{C}$ is a $p'$-group. We claim that $\hat{C}$ is cyclic of order at most $q-1$. Note that $\hat{C}$ acts faithfully on $C_R(t)$, $Z(Y)\cap \hat{C}=\langle t\rangle$ and $\Aut_{Z(Y)}(C_R(t))$ is cyclic of order $\frac{q-1}{2}$. Moreover, $\Aut_{Y}(C_R(t))$ is cyclic of order $q-1$, so contains a Singer cycle of $\GL_m(p)$, and centralises $\Aut_{Z(Y)}(C_R(t))$.  From these points we conclude, using $q>p$, that $C_{\GL_m(p)}(\Aut_{Z(Y)}(C_R(t)))=\Aut_{Y}(C_R(t))$ has order $q-1$. But $\Aut_{\hat{C}}(C_R(t))$ centralises $\Aut_{Z(Y)}(C_R(t))$ and we therefore $C_R(t)$ is cyclic of order at most $q-1$. Now, $X$ acts on $\hat{C}$ and as $X\cong \SL_2(q)$, we deduce that $X$ centralises $\hat{C}$. But then $X=O^{p'}(X\hat{C})$ and so $X=X^*$ as desired.
\end{proof}

Finally, we stitch all the pieces together to prove our main theorems.

\begin{proof}[Proofs of Theorems \ref{thm: main} and \ref{thm: upVMain}]
Suppose that $S= S_n(q)$ for some $1\le n \le p-1$ and $q$ is a power of $p$. Assume that $\F$ is a saturated fusion system on $S$ with $O_p(\F)=1$. If $q=p$ then $S$ has an elementary abelian subgroup of index $p$ and so $\F$ is known by \cite{p.index1, p.index2}. Hence we may assume that $q> p$. If $n=1$, then Proposition \ref{prop:murray} shows that $\F$ is realised by a group $G$ with $F^*(G) \cong \PSL_3(q)$.  In particular, $\F$ is a polynomial fusion system in this case. If $p=2$ then $S_{\Lambda}(q)$ has nilpotency class two and Theorem \ref{thm: upVMain}(1) follows from \cite[Proposition 4.2]{vB24}

So assume that $p$ is odd and $n\geq 2$. Then Propositions \ref{essentials} and \ref{ess.auto} give the possibilities for the configuration of essential subgroups and their automisers. In the exceptional case that $S=S_2(9)$ and $O^{3'}(\Aut_\F(V))\cong 2^.\PSL_3(4)$, Proposition \ref{prop:HS} shows that $\F$ is one of the two exotic Henke--Shpectorov systems. Thus if $V$ is $\F$-essential then $O^{p'}(\Aut_\F(V))\cong \mathrm{(P)SL}_2(q)$ and $V \cong V_n(q)$. Furthermore, in this situation either $Q$ or $R$ is also $\F$-essential.  The uniqueness of these fusion systems is then addressed in Theorem \ref{thm1Q} and Theorem \ref{thm2Q}. The only remaining case from Proposition \ref{essentials} is that $\E(\F)=R^S$, and this is the subject of Theorem \ref{justR}.

Next suppose that $S=S_\Lambda(q)$. Once again, if $q=p$ then $S$ has an elementary abelian subgroup of index $p$ and so $\F$ is known by \cite{p.index1, p.index2}. Hence we may assume that $q>p$ whence Proposition \ref{upVEssentials} implies that $\{V,R\} \subseteq \E(\F)$. The uniqueness of $\F$ is addressed in Theorem \ref{upVUnique}.
\end{proof}

\begin{appendices}

\section{Appendix}

 We begin this appendix with some group and module recognition results which, while not needed in the proofs of our main results, may still be of interest.

The following proposition is an analogue of \cref{lem: Vnqrecog} for the module $V_p(q)$.

\begin{proposition}\label{lem: VpRecog}
Suppose that $H$ is a finite group with $S\in\syl_p(H)$. Assume that $S\cong S_p(q)$ for $q>p$, where $p$ is an odd prime, and $H/O_p(H)\cong \SL_2(q)$. Then, writing $V:=O_p(H)$ and recognising $V$ as an $\FF_p\SL_2(q)$-module, we have that $V\cong V_p(q)$.
\end{proposition}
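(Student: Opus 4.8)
The plan is to argue along the lines of the module‑recognition results \cref{lem: Vnqrecog} and \cref{lem: upVRecog}, feeding the structural data for $S_p(q)$ and $V_p(q)$ recorded in \cref{Vpqstruct,(p+1)CVS} into the extension‑and‑recognition machinery of \cref{lem: Smith's lemma,lem: SL2split,lem:mod ident,lem:NoExt1,lem:NoExt,lem: spe modules} and, crucially, \cref{prop: indiden}. \emph{First} I would identify $V:=O_p(H)$ with the characteristic subgroup $V_p(q)$ of $S$ (in the typical application $H=V_p(q)\rtimes\SL_2(q)$ and this is immediate). In general: since $H/O_p(H)\cong\SL_2(q)$ has Sylow $p$-subgroups of order $q$, $|V|=q^{p+1}$; moreover $V\normaleq S$ with $S/V$ elementary abelian, so $\Phi(S)\le V$, and since $S=UV$ with $U$ abelian one has $[V,S]=S'=[V_p(q),S]$ of order $q^{p-1}$, hence $|V/[V,S]|=q^2$, and $Z(S)\le S'\le V$. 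As $V_p(q)$ is the unique abelian subgroup of $S$ of maximal order (\cref{(p+1)CVS}(7)), it suffices to check that $V$ is abelian; here $V'$ is a normal $p$-subgroup of $H$ contained in $[V_p(q),S]$, and a short analysis using the lower central series of $S$ (for which $[V_p(q),S;p-1]=Z(S)$ and $[V_p(q),S;p]=1$ by \cref{(p+1)CVS}(6)) rules out $V'\ne 1$.

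\emph{Second}, with $V=V_p(q)$ in hand I would record the constants needed. Since $q>p$, $C_V(S)=Z(S)$ has order $q$ by \cref{(p+1)CVS}(1), and as $V$ is abelian with $S=UV$ we get $C_V(U)=C_V(S)$, so $\dim_{\FF_p}C_V(U)=m$; also $|V/[V,S]|=q^2$ gives $\dim_{\FF_p}V/[V,U]=2m$; $U$ does not act quadratically on $V$ because $[V,S;2]\ne 1$ (\cref{(p+1)CVS}(6)); and by \cref{Vpqstruct} the subgroup $W:=\langle x^p,y^p\rangle_\KK$ is an $S$-submodule of $V$ with $V/W\cong V_{p-2}(q)$ over $\KK$ and $W$ a Frobenius twist of the natural module $V_1(q)$. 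Finally $H/C_H(V)\cong\SL_2(q)$ or $\PSL_2(q)$ and $U=S/V$ acts faithfully on $V$; the $\PSL_2(q)$ possibility will be excluded once a faithful (natural) submodule of $V$ is exhibited.

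\emph{Third}, I would determine $V$ as an $\FF_p(H/C_H(V))$-module, the argument running in parallel to the case analyses in the proofs of \cref{lem: Vnqrecog,lem: upVRecog}. By \cref{lem: Smith's lemma,lem: SL2split}, any composition factor realizable over a field $\mathbb L$ contributes $[\mathbb L:\FF_p]\le m$ to both $\dim_{\FF_p}C_\bullet(U)$ and $\dim_{\FF_p}(\bullet/[\bullet,U])$; comparing with $\dim_{\FF_p}C_V(U)=m$, $\dim_{\FF_p}V/[V,U]=2m$ and $\dim_{\FF_p}V=m(p+1)$ forces $V$ to have exactly two composition factors; \cref{lem:NoExt} excludes a trivial one; \cref{lem:mod ident} applied to suitable sections, together with $[V,S;p-1]=Z(S)\ne 1$, identifies the socle as a natural $\SL_2(q)$-module and the head as $V_{p-2}(q)|_{\FF_p}$; and \cref{lem:NoExt1} shows the extension is non-split, so $V$ is indecomposable with a submodule $\cong V_1(q)|_{\FF_p}$ of quotient $\cong V_{p-2}(q)|_{\FF_p}$. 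Now \cref{prop: indiden} applies with $k=p-2$ and $l=1$: hypotheses (1) and (2) hold by the above, and for $p=3$ hypothesis (3) holds because $U$ is not quadratic on $V$ and $\dim_{\FF_3}V/[V,U]=2m$. Hence $V\cong V_p(q)|_{\FF_p}$ or $V\cong\Lambda(q)|_{\FF_p}$; the latter is impossible, since it would force $|C_V(S)|=q^2$ by \cref{(p+1)CUpS}(1), contradicting $|C_V(S)|=|Z(S)|=q$ (equivalently, $S=UV$ would then be isomorphic to $S_\Lambda(q)\not\cong S_p(q)$). Therefore $V\cong V_p(q)|_{\FF_p}$.

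The hard parts will be Step~1, namely showing that $O_p(H)$ is abelian and hence equals $V_p(q)$, which requires working directly with the nilpotency structure of $S_p(q)$, and Step~3, the intrinsic determination of the composition series of $V$ from its restriction to $U$; the remainder is bookkeeping with \cref{(p+1)CVS} and the cited module lemmas.
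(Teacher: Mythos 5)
Your approach is genuinely different from the paper's. The paper disposes of this in three lines by duality: with $V$ identified with $V_p(q)$ as an $\FF_p[S/V]$-module, the $\FF_p$-dual $V^*$ is isomorphic to $\Lambda(q)$ as an $\FF_p[S/V]$-module; forming $H^*:=V^*\rtimes(H/V)$ gives a group with Sylow $p$-subgroup $S^*\cong S_\Lambda(q)$ and $H^*/O_p(H^*)\cong\SL_2(q)$, so \cref{lem: upVRecog} applies to identify $V^*\cong\Lambda(q)$ as an $\FF_p[H/V]$-module, and dualising back gives $V\cong V_p(q)$. Your plan instead redoes the whole recognition directly, which amounts to re-proving the content of \cref{lem: upVRecog} with $V_p(q)$ in place of $\Lambda(q)$. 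That is not wrong, but it forgoes the free lunch of duality, which is exactly why the authors proved \cref{lem: upVRecog} first.

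There is also a genuine gap in your Step~3: the claim that comparing $\dim_{\FF_p}C_V(U)=m$, $\dim_{\FF_p}V/[V,U]=2m$ and $\dim_{\FF_p}V=m(p+1)$ ``forces $V$ to have exactly two composition factors'' does not follow from \cref{lem: Smith's lemma,lem: SL2split} alone. For a composition series $0=V_0<V_1<\dots<V_k=V$ with factors $F_i$, one only has the inequalities $\dim C_V(U)\le\sum_i\dim C_{F_i}(U)$ and $\dim V/[V,U]\le\sum_i\dim F_i/[F_i,U]$, which go in the \emph{wrong} direction to bound $k$ from above (and trivial composition factors contribute as little as $1$ to each sum). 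The proofs of \cref{lem: Vnqrecog} and \cref{lem: upVRecog} do not use a dimension count to rule out extra composition factors; they run a careful induction on a maximal submodule $W$, using \cref{Snauto}(\ref{sn3}) and \cref{lem:NoExt} to show that every quotient $S/W$ is again of the form $S_j(q)$ and so inherits the inductive hypothesis. If you pursue the direct route you would need to reproduce that case analysis. Your Step~1 (that $O_p(H)$ is abelian, hence equal to $V_p(q)$) is also asserted rather than proved; the phrase ``a short analysis using the lower central series of $S$ rules out $V'\ne1$'' is not backed by an actual argument, though to be fair this identification is also treated as implicit in the paper's proofs of \cref{lem: Vnqrecog,lem: upVRecog}. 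The final exclusion of $\Lambda(q)$ via $|C_V(S)|=q\ne q^2$ is correct.
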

\begin{proof}
Write $V^*$ for the dual of the $\FF_p[H/V]$-module $V \cong V_p(q)$. Thus $V^*\cong \Lambda(q)$ as $\FF_p[S/V]$-modules and therefore as $\FF_p[H/V]$-modules by \cref{lem: upVRecog}. Finally, $V\cong (V^*)^*\cong V_p(q)$ as $\FF_p[H/V]$-modules, as desired.
\end{proof}

Suppose $q=3^m$, $G\cong \SL_2(q)$ and $S_0\in\syl_3(G)$. In \cref{prop: indiden} we recognised $\Lambda(q)$ from a putative indecomposable $\FF_3$-module $V$ with two composition factors each isomorphic to $V_1(q)$, on which $S=VS_0$ does not act quadratically and for which $|C_V(S)|=q^2$. These latter two hypotheses are justified by the following result, which shows that they arise when a fusion system on $S$ has additional essential subgroups.

\begin{proposition}\label{NilesRecog}
Suppose that $\F$ is a saturated fusion system on a $3$-group $S$, and let $q=3^m>3$. Assume that there is $V\normaleq S$ satisfying the following:
\begin{enumerate}
    \item $V$ elementary abelian;
    \item $V\in\mathcal{E}(\F)$ with $O^{3'}(\Out_{\F}(V))\cong \SL_2(q)$; and
    \item there is $W<V$ such that $W\cong V/W\cong V_1(q)$ as $\FF_3O^{3'}(\Aut_{\F}(V))$-modules.
\end{enumerate}
If $V\not\normaleq \F$ then $|C_V(S)|=q^2$ and $S$ does not act quadratically on $V$. In particular, $S\cong S_{\Lambda}(q)$.
\end{proposition}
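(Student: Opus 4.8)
The plan is to reduce everything to identifying $V$ as a module by means of Proposition~\ref{prop: indiden}, and then to eliminate the module $V_p(q)$; the quadratic case is handled separately as it produces a group outside the expected list.

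First I would fix parameters. Since $V$ is $\F$-centric and abelian, $C_S(V)=V$ and $\Aut_S(V)=S/V$; as $V$ is $\F$-essential it is fully $\F$-normalised, so $\Aut_S(V)\in\syl_3(\Aut_\F(V))$, and because $\Aut_\F(V)/O^{3'}(\Aut_\F(V))$ is a $3'$-group with $O^{3'}(\Aut_\F(V))\cong\SL_2(q)$ this forces $\Aut_S(V)\in\syl_3(\SL_2(q))$ and $|S:V|=q$. Hypothesis~(3) gives $\dim_{\FF_3}V=\dim_{\FF_3}W+\dim_{\FF_3}V/W=4m$, so $|V|=q^4$ and $|S|=q^5$. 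Restricting the $\Aut_S(V)$-invariant chain $1<W<V$, whose factors are natural $\SL_2(q)$-modules, shows $|C_W(S)|=|C_{V/W}(S)|=q$, whence $|C_V(S)|,\ |V/[V,S]|\in\{q,q^2\}$.

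Next I would dispose of the quadratic case. If $\Aut_S(V)$ acts quadratically on $V$, then $[V,S]\le C_V(S)$ forces $Z(S)=\Phi(S)=S'=[V,S]=C_V(S)$ of order $q^2$, $S$ of class $2$ and, via the quadratic action, $S$ of exponent $3$; moreover $V$ is the unique elementary abelian subgroup of $S$ of maximal order $q^4$ (any elementary abelian $A\not\le V$ has $|A\cap V|\le|C_V(A)|=q^2$), hence $V$ is characteristic in $S$. One then shows $\mathcal E(\F)=\{V\}$, which yields $V\trianglelefteq\F$ against the hypothesis: any $E\in\mathcal E(\F)\setminus\{V\}$ is non-abelian (otherwise $E\le V$, not $\F$-centric), $Z(S)\le C_S(E)\le E$ so $E'\le Z(S)$ and $\Out_S(E)=N_S(E)/E$ is elementary abelian, and then the simple section of $O^{3'}(\Out_\F(E))$ provided by Proposition~\ref{Strongly p-embedded Sylows} — isomorphic to $\PSL_2(3^j)$, $\PSL_3(4)$ or $\mathrm{M}_{11}$ — cannot act faithfully on $E/\Phi(E)$ on dimension grounds. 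So $S$ does not act quadratically on $V$, which is the second assertion of the proposition, and consequently $V$ is an indecomposable $\FF_3 O^{3'}(\Aut_\F(V))$-module, since a decomposable module with two natural composition factors is quadratic.

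Finally, $V$ being indecomposable of $\FF_3$-dimension $4m=m(p+1)$ with the submodule $W$ from hypothesis~(3), I would check that $C_V(S)$ and $V/[V,S]$ cannot both have order $q$ (for $q>9$ this is immediate from the classification of such indecomposables via Lemma~\ref{lem:NoExt1}, the residual case $q=9$ being absorbed into the next step), so that hypothesis~(3) of Proposition~\ref{prop: indiden} holds; that result then gives $V\cong V_p(q)|_{\FF_3}$ or $V\cong\Lambda(q)|_{\FF_3}$. If $V\cong V_p(q)|_{\FF_3}$, a model for $N_\F(V)$ — a finite group with normal subgroup $V$, quotient containing $\SL_2(q)$ and Sylow $3$-subgroup $S$ — is recognised to have $S\cong S_p(q)$, whence $V\trianglelefteq\F$ by Proposition~\ref{prop: V_pCon1}, a contradiction. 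Therefore $V\cong\Lambda(q)|_{\FF_3}$, which gives $|C_V(S)|=q^2$ by Lemma~\ref{(p+1)CUpS}(1) and, by the same model argument, $S\cong S_\Lambda(q)$. I expect the main obstacle to be the quadratic case: proving that the special exponent-$3$ group arising there supports no $\F$-essential subgroup besides $V$ requires a careful analysis of its subgroups against the minimal faithful $\FF_3$-dimensions of the groups in Proposition~\ref{Strongly p-embedded Sylows}.
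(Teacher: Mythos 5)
Your approach is genuinely different from the paper's. The paper never pins down the global isomorphism type of $S$: it works directly with a second essential subgroup $E\neq V$, tracking the filtration $Z(S)\le Z_2(S)\le V$, applying the FF-module lemma (Lemma~\ref{lem: SEFF}) to $\Omega(Z(E))$ and the radicality criterion (Lemma~\ref{lem:series}) to force $Z_2(S)\le E$, and then derives the contradictions with quadraticity or $|C_V(S)|=q$ from the size of $N_S(E)$. You instead first characterise $S$ in the quadratic case as a special exponent-$3$ group and in the non-quadratic case via module recognition (Prop.~\ref{prop: indiden}), and try to exclude extra essentials inside that abstract group. The non-quadratic half of your argument is sensible (and the elimination of $V_p(q)$ via Prop.~\ref{prop: V_pCon1} is a nice variation), though the assertion ``is recognised to have $S\cong S_p(q)$'' needs the extension $1\to V\to S\to S/V\to 1$ to split, which you do not address (to be fair, the paper's own ``In particular, $S\cong S_\Lambda(q)$'' is similarly terse).

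The genuine gap is in the quadratic case, as you suspect. Your claim that $\mathcal E(\F)=\{V\}$ because the simple sections supplied by Prop.~\ref{Strongly p-embedded Sylows} cannot act faithfully on $E/\Phi(E)$ ``on dimension grounds'' is not established. With $S$ special of order $q^5$, $Z(S)=S'$ of order $q^2$, an essential $E\neq V$ satisfies $Z(S)\le E$, $E'\le Z(S)$, and $E/\Phi(E)=E/E'$ can have $\FF_3$-dimension anywhere up to about $3m$; meanwhile $\Out_S(E)$ can be elementary abelian of order $3^j$ for $j$ up to about $3m-1$, and $\PSL_2(3^j)$ (resp.\ $\SL_2(3^j)$) has faithful $\FF_3$-representations of dimension $3j$ (resp.\ $2j$). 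A bare dimension count therefore does not close the door; one has to bring in the interaction of $E$ with $V$ and $Z_2(S)$ — precisely the mechanism of the paper's proof, where Lemma~\ref{lem: SEFF} shows $\Omega(Z(E))$ is essentially a natural module, pins $N_S(E)=Z_2(S)E$ of order $q^4$, and then quadraticity would force $[E,V]\le E\cap V=Z(S)\le E$, i.e.\ $E\normaleq S$, contradicting $|N_S(E)|<|S|$. Without an argument of this kind your quadratic case remains open, and since you derive $S$'s exponent-$3$ special structure first, you will also need to justify $S'=[V,S]$ and $S^3=1$ there, which again implicitly uses a splitting of $S$ over $V$.
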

\begin{proof}
Let $S$ and $V$ be as described in the proposition. Note that each $s\in S\setminus V$ acts cubically on $V$. Indeed, we have that $[V, s]W=[V, S]W$ and $[[V, S]W, s]=C_W(S)$ for all such $s$. It follows that $[V, S]W=Z_2(S)$ has order $q^3$.

Assume that $E\in\mathcal{E}(\F)\setminus \{V\}$ so that $Z_2(S)\le N_S(E)$. Suppose that $\Omega(Z(E))\not\le V$. Then $E\cap V\le C_V(\Omega(Z(E)))$ has order at most $q^2$ so that $|Z_2(S)E/E|\geq q$. An application of \cref{lem: SEFF} reveals that $S=\Omega(Z(E))V$, $E=\Omega(Z(E))$ and $Z_2(S)E=N_S(E)$. Moreover, $E\cap V=Z(S)$ has order $q^2$. If $S$ was quadratic on $V$ then since $S=EV$, we must have that $[E, V]\le C_V(E)=E\cap V=Z(S)$ from which we conclude that $E\normaleq S$, a contradiction. Hence, the proposition holds in this case.

Suppose now that $\Omega(Z(E))\le V$. Then $[Z_2(S), E]\le Z(S)\le \Omega(Z(E))$ and an application of \cref{lem:series} yields that $Z_2(S)\le E$. Then $E\normaleq S$ and $Z_2(S)$ is an elementary abelian subgroup of $E$ of order $q^3$. Let $A$ be an elementary abelian subgroup of $E$ of order at least $q^3$. Assume that $A\not\le V$. Then $q^3\leq |A| \leq |AV/V||C_V(A)|\leq q.q^2$ and so $|A|=q^3$. Suppose that every elementary abelian subgroup of $E$ of maximal order has order $q^3$, and there exists such an $A$ lying outside of $V$. In particular, $E\cap V=Z_2(S)$. Then $A\cap V$ is centralised by $S=AV$ and so $Z(S)=Z(E)$ has order $q^2$. If $S$ acted quadratically on $V$, $S'=[A, V]\le C_A(V)=Z(S)$ and \cref{lem:series} implies that $S\le E$, a contradiction.

Assume now that $E\cap V$ is the unique elementary abelian subgroup of $E$ of maximal order. Then \cref{lem:series} implies that $V\le E$ and so $V$ is characteristic in $E$. Further assuming that the conclusion of the proposition is false, we must have that $V$ is characteristic in every essential subgroup of $\F$. Moreover, $V$ is the unique maximal elementary abelian subgroup of $S$ and applying \cite[Proposition I.4.5]{AKO} we see that $V\normaleq \F$, a contradiction.
\end{proof}

We end the appendix with a general saturation result for fusion systems.

\begin{lemma}\label{l:fzssat}
Suppose $\F$ is an $\F^{cr}$-generated fusion system on $S$ such that $\F=C_\F(Z)$ for some $Z\le Z(S)$. If $\F/Z$ is saturated then $\F$ is saturated.
\end{lemma}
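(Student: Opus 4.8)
The plan is to deduce the saturation axioms for $\F$ from those of $\F/Z$ by passing between subgroups of $S$ containing $Z$ and subgroups of $S/Z$. First I would note that, since $\F=C_\F(Z)$ and $Z\le Z(S)$, every $\F$-morphism between subgroups containing $Z$ restricts to the identity on $Z$; in particular $Z\trianglelefteq\F$, so the quotient fusion system $\F/Z$ on $S/Z$ is defined, and by construction the projection $S\to S/Z$ induces surjections $\Hom_\F(P,Q)\twoheadrightarrow\Hom_{\F/Z}(P/Z,Q/Z)$ for all $Z\le P,Q\le S$. Since $\F$ is $\F^{cr}$-generated, it will suffice — by the standard reduction of the saturation conditions to the family $\F^{cr}$ (valid for $\F^{cr}$-generated systems; see \cite{BLO}, \cite{AKO}) — to show that every fully $\F$-normalised $P\in\F^{cr}$ is fully automised and receptive in $\F$.

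So I would fix such a $P$. As $P$ is $\F$-centric, $Z\le Z(S)\le C_S(P)\le P$, so $\bar P:=P/Z$ is defined; the bijection between the $\F$-conjugacy class of $P$ and the $\F/Z$-conjugacy class of $\bar P$ (every morphism, and its image, contains $Z$) together with $|N_S(P')|=|Z|\,|N_{S/Z}(P'/Z)|$ shows that $\bar P$ is fully normalised in $\F/Z$, hence fully automised and receptive there. The crux is then lifting these properties, and for this I would analyse the kernel $K:=\ker\bigl(\Aut_\F(P)\to\Aut_{\F/Z}(\bar P)\bigr)$: its elements act trivially on $\bar P$, and by $\F=C_\F(Z)$ also trivially on $Z$, so $K$ embeds into $\Hom(\bar P,Z)$ and is therefore an abelian $p$-group. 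Being normal in $\Aut_\F(P)$, and $P$ being $\F$-radical, we obtain $K\le O_p(\Aut_\F(P))=\Inn(P)\le\Aut_S(P)$; this is the step that converts information downstairs into information upstairs, and it is precisely where the hypothesis $\F=C_\F(Z)$ does its work.

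Given this, "fully automised" follows from a short order comparison: $\Aut_S(P)$ surjects onto $\Aut_{S/Z}(\bar P)$ with kernel $\Aut_S(P)\cap K=K$, while $\Aut_\F(P)$ surjects onto $\Aut_{\F/Z}(\bar P)$ with kernel the $p$-group $K$, and $\Aut_{S/Z}(\bar P)$ is Sylow in $\Aut_{\F/Z}(\bar P)$, so $|\Aut_S(P)|$ equals the full $p$-part of $|\Aut_\F(P)|$. For receptivity, given an $\F$-isomorphism $\phi\colon Q\to P$ I would push it down to $\bar\phi\colon\bar Q\to\bar P$, extend $\bar\phi$ over $N_{\bar\phi}$ using receptivity of $\bar P$, lift that extension to an $\F$-morphism $\psi'$ on the preimage of $N_{\bar\phi}$ (which contains $N_\phi$, because $\Aut_S(P)$ maps into $\Aut_{S/Z}(\bar P)$), and then correct $\psi'$ by an inner automorphism of $Q$ — available since $\ker(\Aut_\F(Q)\to\Aut(\bar Q))\le\Inn(Q)$ by the same $\F^{cr}$-argument applied to $Q$ — so that the restriction to $Q$ is exactly $\phi$. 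The main obstacle I anticipate is not a single hard estimate but the careful bookkeeping with quotient fusion systems — establishing the conjugacy- and normaliser-correspondences and the surjectivity of the relevant $\Hom$-maps, and correctly invoking the reduction of saturation to $\F^{cr}$ — so that the two lifting arguments above go through cleanly.
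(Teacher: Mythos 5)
Your proof is correct, but it takes a genuinely different route from the paper's, and it is worth noting what each approach buys. The paper picks a fully automised and receptive $\overline Y$ in $\overline\F$ downstairs, lifts it, shows the preimage $Y$ is fully $\F$-normalised by comparing normaliser orders via $|N_S(B)|=|Z|\cdot|N_{S/Z}(\overline B)|$, and then invokes the general machinery of \cite[Lemmas I.2.5, I.2.6]{AKO} to transfer ``fully automised and receptive'' upstairs; the receptivity step is left rather compressed (``we infer that there exists $\chi$''). Your argument instead analyses the kernel $K:=\ker\bigl(\Aut_\F(P)\to\Aut_{\F/Z}(\overline P)\bigr)$ directly: $\F=C_\F(Z)$ forces $K$ to act trivially on $Z$, and $P$ being $\F$-centric gives $Z\le Z(S)\le C_S(P)=Z(P)$, so $K$ embeds in $\Hom(\overline P,Z)$ and is an abelian normal $p$-subgroup; radicality of $P$ then yields $K\le O_p(\Aut_\F(P))=\Inn(P)$. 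This containment powers a clean Sylow count for full automisation and, applied also to $Q$ (which is in $\F^{cr}$ as these are conjugacy-class properties), makes explicit the lift-and-correct step in receptivity that the paper elides. The trade-off is that your proof uses the radical property of $P$ in an essential, local way, whereas the paper only uses $\F^{cr}$-generation globally via \cite[Theorem I.3.10]{AKO}; both are legitimate, and yours is arguably the more self-contained of the two.
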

\begin{proof}
Write $\overline{\F}:=\F/Z$. By \cite[Theorem I.3.10]{AKO} it suffices to show that $\F$ is $\F^{cr}$-saturated. If $P \in \F^{cr}$ then $\overline{P} \in \overline{\F}^{cr}$ by \cite[Proposition 5.60]{Craven} and since $\overline{\F}$ is $\overline{\F}^{cr}$-generated, $\overline{P}$ is conjugate to a fully $\overline{\F}$-automised, $\overline{\F}$-receptive subgroup $\overline{Y}$. Then $\overline{Y}$ is fully $\F$-normalised by \cite[Lemma I.2.6]{AKO} so that $|N_{\overline S}(\overline Y)|\geq |N_{\overline S}(\overline A)|$ for all $\overline A\in \overline{Y}^{\overline{\F}}$. Then for any $B\le S$ with $Z\le B$, we have that $|N_S(B)/Z|=|N_{\overline S}(\overline B)|$ and we deduce that $|N_S(Y)/Z|\geq |N_S(A)/Z|$. Therefore, $|N_S(Y)|\geq |N_S(A)|$ and $Y$ is fully $\F$-normalised. Applying \cite[Lemma I.2.5]{AKO} yields that $Y$ is fully $\F$-automised.

If $\varphi \in \Iso_\F(X,Y)$ then since $\overline{Y}$ is $\overline{\F}$-receptive there exists $\overline{\psi} \in \Hom_{\overline{\F}}(N_{\overline{\varphi}}, S)$ extending $\overline{\varphi} \in \Hom_{\overline{\F}}(\overline{X},\overline{Y})$. Since $\overline{N_\varphi} \subseteq N_{\overline{\varphi}}$, we infer that there exists $\chi \in \Hom_\F(N_\varphi,S)$ extending $\varphi$ and $Y$ is $\F$-receptive. This shows that $\F$ is $\F^{cr}$-saturated and so $\F$ is saturated.
\end{proof}
\end{appendices}

\bibliographystyle{alpha}
\bibliography{my.books}

\end{document}